\begin{document}

\title[Asymptotic stability of solitary waves for the 1D focusing cubic NLS]{asymptotic stability of solitary waves for the 1D focusing cubic Schr\"odinger equation}

\begin{abstract}
We establish the full asymptotic stability of solitary wave solutions for the 1D focusing cubic Schr\"odinger equation on the line under small perturbations in weighted Sobolev spaces, building upon our results in \cite{LL24}. The proof integrates the space-time resonances approach, based on the distorted Fourier transform, with modulation techniques to show modified scattering for the radiation term and convergence for the modulation parameters. A key challenge throughout the nonlinear analysis is the slow local decay of the radiation term, caused by  threshold resonances in the linearized operator. The presence of favorable null structures in the quadratic nonlinearities mitigates this problem through the use of normal form transformations. Another essential step in the proof involves developing a variant of the local smoothing estimate that incorporates a moving center.
\end{abstract}

\author[Y. Li]{Yongming Li}
\address{Department of Mathematics \\ Texas A\&M University \\ College Station, TX 77843, USA}
\email{liyo0008@tamu.edu}

\thanks{The author was partially supported by NSF CAREER grant DMS-2235233.}

\maketitle 

\tableofcontents

\section{Introduction} \label{sec:intro}
\subsection{Main result}
We consider the one-dimensional focusing cubic Schr\"odinger equation
\begin{equation} \label{equ:cubic_NLS}
\left\{ \begin{aligned}
i\pt \psi + \px^2 \psi + |\psi|^2 \psi &= 0, \quad (t,x) \in \bbR \times \bbR, \\
\psi(0) &= \psi_0.
\end{aligned} \right.
\end{equation}
The Cauchy problem for \eqref{equ:cubic_NLS} is globally well-posed both in the space $L^2(\bbR)$ as well as in the energy space $H^1(\bbR)$; see \cite{Tsutsumi87,GV78}. Moreover, any $H^1$-solution $\psi(t,x)$ to  \eqref{equ:cubic_NLS} conserves mass, momentum, and energy 
\begin{equation}\label{equ:conservation_laws}
M[\psi] := \frac12 \int_{\bbR} |\psi|^2 \, \ud x, \quad P[\psi] := \frac12 \Im \int_\bbR \partial_x \psi \overline{\psi} \,\ud x, \quad E[\psi] := \int_{\bbR} \biggl( \frac12 |\px \psi|^2 - \frac14 |\psi|^4 \biggr) \, \ud x.
\end{equation}
The conservation laws are generated by the invariances of \eqref{equ:cubic_NLS} under phase shifts, Galilei boosts, and translations: namely if $\psi(t,x)$ is a solution to \eqref{equ:cubic_NLS}, then for any $p, \gamma, \sigma \in \bbR$ the function
\begin{equation*}
e^{i p x} e^{-i p^2 t} e^{i \gamma} \psi(t, x - 2pt -\sigma)
\end{equation*}
is also a solution to \eqref{equ:cubic_NLS}. Moreover, there is the scaling invariance
\begin{equation*}
\psi(t,x) \mapsto \lambda \psi(\lambda^2 t, \lambda x), \quad \lambda > 0.
\end{equation*}
It is also well-known that the equation \eqref{equ:cubic_NLS} admits solitary wave solutions of the form $\psi(t,x) = e^{it\omega}\phi_\omega(x)$, where $\omega>0$ and $\phi_\omega(x)$ is the ground state (positive and decaying) of  the equation 
\begin{equation} \label{equ:intro_phi_omega_equation}
-\px^2 \phi_\omega + \omega \phi_\omega = \phi_\omega^3, \quad x \in \bbR.
\end{equation}
The explicit solutions to \eqref{equ:intro_phi_omega_equation}, modulo translations, are in fact given by 
\begin{equation*}
\phi_\omega(x) := \sqrt{\omega} \phi(\sqrt{\omega} x) = \sqrt{2\omega} \sech(\sqrt{\omega}x).
\end{equation*}
The invariances of \eqref{equ:cubic_NLS} then generate a 4-parameter family of solitary wave solutions
\begin{equation} \label{equ:intro_family_4parameter}
e^{i p x} e^{i (\omega - p^2) t} e^{i\gamma} \phi_\omega(x-2pt-\sigma), \quad (\omega, p, \gamma, \sigma) \in (0,\infty) \times \bbR^3.
\end{equation}

Orbital stability of the solitary waves in the energy space follows from the classical works of Cazenave-Lions \cite{82CazenaveLions}, Shatah-Strauss \cite{85ShatahStrauss}, and Weinstein \cite{Weinstein85,Weinstein86}. Using the complete integrability of \eqref{equ:cubic_NLS}, Mizumachi-Pelinovsky \cite{MizumachiPelinovsky12} proved, via the B\"acklund transformation, the orbital stability of solitary waves for \eqref{equ:cubic_NLS} in the weaker topology $L^2(\bbR)$. 

As pointed out by Martel \cite{Martel22}, the asymptotic stability of the solitary waves for \eqref{equ:cubic_NLS} cannot hold true in the energy space due to the existence of 2-solitons, as constructed by Olmedilla \cite{Olmedilla87} (see also \cite{ZakhShab72}), which are arbitrary close in the energy norm $H^1$ (but not in weighted $L^2$ spaces) to the soliton \eqref{equ:intro_family_4parameter}. Nevertheless, the asymptotic stability of the solitary waves for small perturbations in weighted $L^2$ spaces were established by Cuccagna-Pelinovsky \cite{CuccPeli14}, using the inverse scattering transform. See also \cite{Saalmann17, BorghJenMcL18} for related results using completely integrable techniques. 
 
The aim of this paper is to establish the asymptotic stability of the solitary waves \eqref{equ:intro_family_4parameter} for small perturbations in weighted Sobolev spaces by perturbative techniques. The main result is the following theorem, which extends our results in \cite{LL24} and removes the (even) symmetry restriction imposed in \cite{LL24} on the initial data.

\begin{theorem} \label{thm:main_theorem}
For any $\omega_0 \in (0,\infty)$ there exist constants $0 < \varepsilon_0 \ll 1$, $0 < \delta \ll 1$, and $C_0 \geq 1$ with the following property:
Let $(\gamma_0,p_0,\sigma_0) \in \bbR^3$ and let $u_0 \in H^1_x(\bbR) \cap L^{2,1}_x(\bbR)$  with
\begin{equation} \label{equ:theorem_statement_smallness_initial_condition}
\varepsilon := \| u_0 \|_{H^1_x(\bbR)} + \|\jx u_0\|_{L^2_x(\bbR)} \leq \varepsilon_0.
\end{equation}
Then the $H^1_x \cap L^{2,1}_x$--solution $\psi(t,x)$ to \eqref{equ:cubic_NLS} with initial condition
\begin{equation} \label{equ:theorem_statement_initial_condition}
\psi_0(x) = e^{ip_0(x-\sigma_0)} e^{i \gamma_0} \bigl( \phi_{\omega_0}(x-\sigma_0) + u_0(x-\sigma_0) \bigr)
\end{equation}
exists globally in time and there exist continuously differentiable paths $(\omega, \gamma,p,\sigma) \colon [0,\infty) \to (0,\infty) \times \bbR^3$ with
\begin{equation*}
	|\omega(0)-\omega_0| + |\gamma(0)-\gamma_0| + |p(0)-p_0| + |\sigma(0)-\sigma_0| \leq C_0 \eps
\end{equation*}
such that the following holds for all $t \geq 0$:
\begin{enumerate}[leftmargin=1.8em]
	\item Decomposition of the solution into a modulated solitary wave and a radiation term:
\begin{equation*}
	\psi(t,x) = e^{ip(t)(x-\sigma(t))} e^{i \gamma(t)} \bigl( \phi_{\omega(t)}(x-\sigma(t)) + u(t,x-\sigma(t)) \bigr).
\end{equation*}
\item Dispersive decay of the radiation term:
\begin{equation} \label{equ:theorem_statement_decay_radiation}
	\|u(t)\|_{L^{\infty}_x(\bbR)} \leq C_0\varepsilon \jt^{-\frac12}.
\end{equation}
\item Asymptotic behavior of the modulation parameters: there exist  $\omega_\infty, p_\infty \in (0,\infty) \times \bbR$ such that 
\begin{align} 
	|\omega(t)-\omega_\infty| + |p(t)-p_\infty| \leq C_0 \varepsilon \jt^{-1+\delta},\label{equ:theorem_statement_decay_modulation_parameters}\\
|\dot{\sigma}(t)-2p_\infty| +  |\dot{\gamma}(t)-p_\infty^2 -\omega_\infty| \leq C_0 \varepsilon \jt^{-1+\delta}.\label{equ:theorem_statement_decay_modulation_parameters2}
\end{align}
\item Asymptotics for the radiation term: there exist asymptotic profiles $W_+,W_-\in L_\xi^\infty(\bbR)$ such that for all $t \geq 1$,
\begin{equation} \label{eqn: theorem-u-asymptotics}
\| u(t,x-\sigma(t)) - u_\infty(t,x) \|_{L_x^\infty(\bbR)} \leq C_0 \varepsilon t^{-\frac35+\delta},
\end{equation}
with
\begin{equation}\label{equ:u_infty_formula}
\begin{split}
u_\infty(t,x) &:= \frac{1}{\sqrt{2t}} e^{i\Omega(t,x,\xi_*)} e^{-i\log(t)|W_+(\xi_*)|^2}W_+(\xi_*)m_{1,\infty}(t,x,\xi_*)\\
&\quad  - \frac{1}{\sqrt{2t}}e^{-i\Omega(t,x,\xi_*)} e^{i\log(t)|W_-(\xi_*)|^2} W_-(-\xi_*)m_{2,\infty}(t,x,-\xi_*)
\end{split}
\end{equation}
where
\begin{equation}\label{equ:theorem_asymptotics_notation}
\begin{split}
\xi_* &:= \frac{x-\sigma(t)}{2t},\\
\theta_{1,\infty}(t) &:= \int_0^t \big(\dot{\gamma}(s)-\omega_\infty + p_\infty^2-\dot{\sigma}(s)p_\infty \big)\,\ud s,\\
\theta_{2,\infty}(t) &:= \int_0^t \big(\dot{\sigma}(s) - 2p_\infty \big)\,\ud s, \\
m_{1,\infty}(t,x,\xi_*) &:= \frac{\big(\xi_*+i\sqrt{\omega_\infty}\tanh(\sqrt{\omega_\infty}(x-\sigma(t))\big)^2}{\big(|\xi_*|-i\sqrt{\omega_\infty}\big)^2},\\
m_{2,\infty}(t,x,\xi_*) &:= \frac{\big(\sqrt{\omega_\infty}\sech(\sqrt{\omega_\infty}(x-\sigma(t))\big)^2}{\big(|\xi_*|-i\sqrt{\omega_\infty}\big)^2},\\
\Omega(t,x,\xi_*)&:= - \frac{\pi}{4} + 2t(p_\infty-p(t))\xi_* -t \omega_\infty + t\xi_*^2 - \theta_{1,\infty}(t) + \theta_{2,\infty}(t)\xi_*.
\end{split}
\end{equation}

\end{enumerate}
Analogous statements hold for negative times $t \leq 0$.
\end{theorem}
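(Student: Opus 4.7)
The plan is to carry out a bootstrap argument in a moving frame adapted to the modulated soliton. First, I would insert the modulated ansatz of item (1) into \eqref{equ:cubic_NLS} to derive a matrix Schr\"odinger equation for $u$ with linearization $\mathcal{L}_{\omega(t),p(t)}$ plus a nonlinear forcing made of quadratic and cubic terms in $u$ together with modulation-source terms proportional to $(\dot\omega,\dot p,\dot\gamma-p^2-\omega,\dot\sigma-2p)$. Imposing four symplectic orthogonality conditions of $u(t,\cdot)$ against the generalized kernel of $\mathcal{L}_{\omega(t),p(t)}$ yields an implicit system for the modulation speeds which is invertible near $u=0$ by the implicit function theorem and produces ODEs whose right-hand sides are quadratic in $u$ in a suitable local norm. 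The initial values $(\omega(0),\gamma(0),p(0),\sigma(0))$ close to $(\omega_0,\gamma_0,p_0,\sigma_0)$ are chosen by the same implicit function argument applied at $t=0$.

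Next, after conjugating out the Galilean boost, I would diagonalize the linearized flow using the distorted Fourier transform $\widetilde{\mathcal{F}}$ associated to $\mathcal{L}_\omega$ constructed in \cite{LL24}, and set $f(t,\xi)=e^{it\xi^2}\widetilde{\mathcal{F}}u(t,\xi)$. Expanding the nonlinearity in the distorted Fourier basis, the equation for $f$ splits into a diagonal cubic resonance at frequency $\xi$ (responsible for the modified scattering), non-resonant cubic interactions controllable by direct energy estimates on $f$, and quadratic interactions. A key idea is to remove the quadratic interactions by normal-form transformations, exploiting that the relevant space-time phases $\xi^2\mp\eta_1^2\mp\eta_2^2$ have nondegenerate zero sets away from the thresholds, so that integration by parts in time upgrades every quadratic source term into a cubic one with an acceptable multiplier.

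The main obstacle, and the technical core of the paper, comes from the threshold resonances of $\mathcal{L}_\omega$ at $\xi=\pm\sqrt{\omega}$: these slow the local decay of $u$ to only $\jt^{-1/2}$ instead of the generic $\jt^{-1}$, so the modulation ODEs do not appear to be time-integrable at the required rate $\jt^{-1+\delta}$. The resolution, announced in the abstract, is to identify a null structure in the quadratic symbols of $\phi_\omega u^2$ and $\phi_\omega u\bar u$ which vanishes precisely at the threshold frequencies; after the matched normal form the effective source for $(\dot\omega,\dot p,\dot\gamma-p^2-\omega,\dot\sigma-2p)$ is genuinely cubic and hence integrable. A second obstacle, specific to the non-symmetric setting considered here, is that the soliton translates along a path $\sigma(t)$ which is not $2p_\infty t$, so the classical local smoothing estimate with weight $\langle x\rangle^{-1}$ is not sharp enough. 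I would instead prove a moving-center variant with weight $\langle x-\sigma(t)\rangle^{-1}$ via a positive commutator / virial argument using a multiplier adapted to the moving frame, feeding in the bootstrap bound $|\dot\sigma-2p|\lesssim\eps\jt^{-1+\delta}$ to absorb the commutator error coming from differentiation of the multiplier in time.

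Finally, I would close a bootstrap on three quantities: an $H^1$ energy for $u$, a weighted profile norm $\|\langle\xi\rangle\partial_\xi f(t)\|_{L^2}$ allowed to grow at most like $\jt^\delta$, and the dispersive decay $\jt^{1/2}\|u(t)\|_{L^\infty}$. With these in hand, standard stationary phase applied to the distorted Fourier inversion at the critical point $\xi_*=(x-\sigma(t))/(2t)$ gives \eqref{eqn: theorem-u-asymptotics}--\eqref{equ:u_infty_formula}: the factors $m_{1,\infty},m_{2,\infty}$ arise from the limits at $\omega_\infty$ of the Jost-function coefficients, the phase $\Omega$ collects the integrated modulation mismatches $\theta_{1,\infty},\theta_{2,\infty}$, and the logarithmic corrections $e^{\mp i\log(t)|W_\pm(\xi_*)|^2}$ come from the diagonal cubic resonance. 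Convergence of $\omega(t),p(t)$ and the rates \eqref{equ:theorem_statement_decay_modulation_parameters}--\eqref{equ:theorem_statement_decay_modulation_parameters2} follow by integrating the modulation ODEs once the bootstrap closes; the statement for $t\leq 0$ follows from the time-reversal symmetry of \eqref{equ:cubic_NLS}.
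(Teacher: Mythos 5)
Your proposal captures the overall architecture of the paper: modulation with symplectic orthogonality, distorted Fourier analysis of a gauged radiation variable, variable-coefficient normal forms exploiting null structure, a moving-center local smoothing estimate, and modified scattering via the diagonal cubic resonance. That is essentially the route the paper takes. However, there are three points where your sketch is either incomplete or would mislead the execution.

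First, you do not articulate the \emph{frozen-parameter} bootstrap. The paper does not diagonalize with respect to a time-dependent $\calH(\omega(t))$; rather, on each bootstrap window $[0,T]$ it freezes $\ulomega := \omega(T)$, $\ulp := p(T)$, gauges $\Vp := e^{i(p(t)-\ulp)y\sigma_3}U$, and works with the distorted Fourier transform of the profile relative to the time-independent operator $\calH(\ulomega)$. Closing the argument then requires a two-stage iteration (Propositions~\ref{prop:modulation_parameters} and \ref{prop:profile_bounds}) in which the reference parameters are updated via a continuity argument. Without this device your ``distorted Fourier transform associated to $\mathcal{L}_\omega$'' is ambiguous, and if interpreted as time-dependent it would require an entirely different (and much more delicate) linear theory.

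Second, you assert that after the normal form ``the effective source for $(\dot\omega,\dot p,\dot\gamma-p^2-\omega,\dot\sigma-2p)$ is genuinely cubic and hence integrable.'' This is overstated and contradicts Remark~\ref{remark1}. The normal form on the time integrals \eqref{intro_equ:omega_quadratic}--\eqref{intro_equ:momentum_quadratic} produces boundary terms of size $t^{-1}$ plus faster-decaying integrands, giving only $|\omega(t)-\omega(T)|+|p(t)-p(T)|\lesssim\eps\jt^{-1+\delta}$; the speeds $\dot\omega,\dot p$ are \emph{not} upgraded to a twice-integrable pointwise rate, which is precisely why the paper can only prove convergence of the derivatives $\dot\sigma - 2p_\infty$ and $\dot\gamma - p_\infty^2 - \omega_\infty$ at rate $\jt^{-1+\delta}$, and not convergence of $\sigma(t)-2p_\infty t$ or $\gamma(t)$ themselves. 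Relatedly, you conflate two distinct null structures: the profile-equation null structure (Lemma~\ref{lem:null_structure_radiation}, divisibility of $\wtilcalF_{+,\ulomega}[\calQ_{1,\ulomega}]$ by $\xi^2-\ulomega$, vanishing at the threshold frequencies) and the modulation-equation null structure (Lemma~\ref{lem:null_structure_modulation}, divisibility of $\lambda_{+-,\ulomega}(\xi_1,\xi_2)$ by $\xi_1^2-\xi_2^2$, vanishing on the time-resonant set $\xi_1=\pm\xi_2$). The latter is new to this paper and is the key cancellation for the momentum parameter; ``vanishing at the threshold frequencies'' does not describe it.

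Third, your plan to prove the moving-center local smoothing estimate via a positive commutator/virial argument is a legitimate alternative in spirit, but the paper instead proves Proposition~\ref{prop:local-smoothing} by a $TT^*$ reduction to bilinear kernel estimates with Littlewood--Paley decomposition and pointwise bounds on frequency-localized Schr\"odinger propagators. The crucial input is the mild growth $|\theta(t)-\theta(s)|\lesssim\eps|t-s|^\delta$, which enters as a perturbation of the kernel weight $\jap{z}$. A virial route would have to absorb the time derivative of the multiplier uniformly despite $\theta_2(t)$ growing like $t^\delta$; this is not obviously easier than the $TT^*$ argument and should be checked carefully if you pursue it.
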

We proceed with a few remarks on Theorem~\ref{thm:main_theorem}.
\begin{remark}\label{remark1}A significant challenge is to improve the convergence rates of the modulation parameters in \eqref{equ:theorem_statement_decay_modulation_parameters} and \eqref{equ:theorem_statement_decay_modulation_parameters2}. The main difficulty is due to the strong coupling  in the modulation equations, where the modulation parameters are forced by the slow local decay caused from threshold resonances in the quadratic nonlinearities. This interaction prevents the scaling $\dot{\omega}(t)$ and momentum $\dot{p}(t)$ parameters from being twice-integrable in time, even after performing normal form transformations on the leading quadratic nonlinearities contributions as we do in Section~\ref{sec:modulation_parameters}. The twice integrable decay rates on $\dot{\omega}(t)$ and $\dot{p}(t)$ are a necessary condition for improving the convergence rates for the phase shift $\gamma(t)$ and translation $\sigma(t)$ parameters in \eqref{equ:theorem_statement_decay_modulation_parameters2}.  Consequently, we only obtain the final convergence for the time derivatives of the phase shift and translation parameters, as stated in \eqref{equ:theorem_statement_decay_modulation_parameters2}.
\end{remark}
\begin{remark}\label{remark2}
Related to the previous remark, the bounds on the function $\theta_{2,\infty}(t)$ defined in \eqref{equ:theorem_asymptotics_notation} measure the precision of the center of the modulated solitary wave. The decay rates from \eqref{equ:theorem_statement_decay_modulation_parameters2} imply the slowly growing bound on the phase $|\theta_{2,\infty}(t)| \lesssim \eps \jt^\delta$. This bound reveals a small uncertainty in the localization of the soliton's center which may cause further losses in the weighted energy estimates for the remainder terms in the profile equation. Inspired by \cite[Section~6.3]{CL24}, we overcome this difficulty by developing smoothing estimates with moving centers in Section~\ref{sec:linear_decay}.
\end{remark}
\begin{remark}\label{remark3}
As a final remark, we note that the asymptotic stability of solitary waves for the focusing cubic Schr\"odinger equation has been successfully proven using inverse scattering techniques, with notable results from Cuccagna-Pelinovsky \cite{CuccPeli14}, Saalman \cite{Saalmann17}, and Borghese et al. \cite{BorghJenMcL18}. While these methods are powerful for completely integrable equations, our motivation is different. We aim to develop robust techniques for a broader class of nonlinear Schr\"odinger models with low-power nonlinearities where the analysis is further complicated by the slow local decay from threshold resonances, and a complex interplay between the modulation dynamics and the modified scattering of the radiation. 
\end{remark}

\subsection{Main challenges and outline of the proof}
The proof of Theorem~\ref{thm:main_theorem} builds upon and extends our work in \cite{LL24}, which is largely based on a combination of techniques related to the space-time resonances approach with \textit{distorted Fourier transform}, \textit{modulation}, and \textit{modified scattering}. To provide a roadmap for the reader, we will now summarize the main ideas of the proof and highlight its key challenges.

\textit{Linearized operator and distorted Fourier transform.}  The linearization of \eqref{equ:cubic_NLS} around a solitary wave $e^{it\omega}\phi_\omega(x)$, for a fixed scale $\omega >0$, leads to the following non-self-adjoint matrix Schr\"odinger operator 
\begin{equation}\label{eqn:calH}
\calH(\omega) := \begin{bmatrix}- \partial_x^2 + \omega & 0 \\ 0 & \partial_x^2 - \omega \end{bmatrix}  + \begin{bmatrix}
-2\phi_\omega^2(x) &- \phi_\omega^2(x)\\
\phi_\omega^2(x) & 2\phi_\omega^2(x)
\end{bmatrix}.
\end{equation}
As summarized in Proposition~\ref{prop:spectrum_of_calH}, the spectrum of this operator consists of the essential spectrum $(-\infty,-\omega]\cup[\omega,\infty)$ and a generalized kernel spanned by the eigenfunctions $\{Y_{j,\omega}(x)\}_{j=1}^4$ (see \eqref{eqn:nullspace of calH-omega}) which correspond to the invariances of \eqref{equ:cubic_NLS} under phase shifts, scaling, translations and Galilei boosts. Specifically for \eqref{equ:cubic_NLS}, the main features of the linearized operator \eqref{eqn:calH} are that $\lambda = 0$ is the only eigenvalue but there are threshold resonances at the edges $\lambda = \pm \omega$ of the essential spectrum. The functions associated to the threshold resonances are given explicitly by 
\begin{equation}\label{intro_equ:resonance_eq}
\Phi_{+,\omega}(x) := \frac{1}{\sqrt{2\pi}}\begin{bmatrix} \tanh^2(\sqrt{\omega}x) \\ -\sech^2(\sqrt{\omega}x) \end{bmatrix}, \quad \Phi_{-,\omega}(x) := \frac{1}{\sqrt{2\pi}}\begin{bmatrix} -\sech^2(\sqrt{\omega}x) \\	\tanh^2(\sqrt{\omega}x) \end{bmatrix}
\end{equation}
which belong to $L^\infty(\bbR) \times L^\infty(\bbR) \setminus L^2(\bbR) \times L^2(\bbR)$ and satisfy $\calH(\omega) \Phi_{\pm,\omega} = \pm\omega \Phi_{\pm,\omega}$. As in \cite{LL24}, we conduct our analysis in the frequency space, and the main tool is the \textit{distorted Fourier transform} which diagonalizes  $\calH(\omega)$ in the following sense: for each $\xi \in \bbR$, there exists a pair of bounded (vector-valued) basis functions $\Psi_{+,\omega}(x,\xi)$ and $\Psi_{-,\omega}(x,\xi)$ that satisfy the relations
\begin{equation}\label{intro_equ:dft_elements}
\calH(\omega)\Psi_{+,\omega}(x,\xi) = (\omega+\xi^2)\Psi_{+,\omega}(x,\xi), \quad \calH(\omega)\Psi_{-,\omega}(x,\xi) = -(\omega+\xi^2)\Psi_{-,\omega}(x,\xi).
\end{equation}
Denoting by $P_\mathrm{e}$ the Riesz projection to the essential spectrum of $\calH(\omega)$, the free  dispersive evolution can be recovered from the representation formula 
\begin{equation}\label{intro_equ:rep_formula}
e^{it\calH(\omega)}P_\mathrm{e}F(x) = \int_\bbR e^{it(\xi^2+\omega)}\wtilcalF_{+, \omega}[F](\xi) \Psi_{+,\omega}(x,\xi)\,\ud \xi - \int_\bbR e^{-it(\xi^2+\omega)}\wtilcalF_{-, \omega}[F](\xi) \Psi_{-,\omega}(x,\xi)\,\ud \xi 
\end{equation}
where the distorted Fourier transforms are the linear maps 
\begin{equation*}
\wtilcalF_{+, \omega}[F](\xi) := \langle F,\sigma_3 \Psi_{+,\omega}(\cdot,\xi)\rangle,\quad \wtilcalF_{-, \omega}[F](\xi) := \langle F,\sigma_3 \Psi_{-,\omega}(\cdot,\xi)\rangle,
\end{equation*}
for sufficiently smooth and decaying functions $F\colon\bbR \rightarrow \bbC^2$. Note that the slow local decay of the free Schr\"odinger waves \eqref{intro_equ:rep_formula}  is a consequence of the relation $\Psi_{\pm,\omega}(x,0) = \Phi_{\pm,\omega}(x)$, which connects the distorted Fourier theory to the threshold resonances associated to the functions defined in  \eqref{intro_equ:resonance_eq}. The slow local decay caused from these threshold resonances is the main obstacle for obtaining decay in many parts of the nonlinear analysis, as we will elaborate below.

\textit{Modulation and evolution equations.} The starting point of the nonlinear analysis is to use standard modulation and orbital stability techniques to decompose the perturbed solution into a modulated solitary wave and a radiation term that is orthogonal to the generalized eigenfunctions $\{\sigma_2Y_{j,\omega(t)}\}_{j=1}^4$, which spans the generalized kernel of $\calH(\omega(t))^*$. Namely, we choose the ansatz
\begin{equation*}
\psi(t,x) =  e^{ip(t)(x-\sigma(t))} e^{i \gamma(t)} \bigl( \phi_{\omega(t)}(x-\sigma(t)) + u(t,x-\sigma(t)) \bigr)
\end{equation*}
and allow the time-dependent modulation parameters $\big(\omega(t),p(t),\gamma(t),\sigma(t)\big)$ to be determined by the following  orthogonality conditions for the vectorial perturbation $U(t) :=\big(u(t),\baru(t)\big)^\top$ where 
\begin{equation}\label{intro_equ:orthogonal_conditions}
\big\langle U(t),\sigma_2 Y_{j,\omega(t)} \big\rangle = 0, \quad \text{for } j=1,\ldots,4,\quad t \geq 0.
\end{equation}
The resulting equation for the radiation term $U(t,y)$ in the moving frame coordinate $y := x-\sigma(t)$ becomes 
\begin{equation}\label{intro_equ:radiation_eq}
i\pt U - \calH(\omega)U = \dot{p} y\sigma_3U + i(\dot{\sigma}-2p)\py U + (\dot{\gamma}+p^2-\omega-p\dot{\sigma})\sigma_3 U + \calM_2 + \calQ_\omega(U)+\calC(U),
\end{equation}
and the radiation term is coupled to the modulation parameters through the following ODE system
\begin{equation}\label{intro_eq:modulation_eq}
\bbM \begin{bmatrix}
\dot{\gamma}+p^2-\omega-p\dot{\sigma}\\
\dot{\omega}\\
\dot{\sigma}-2p\\
\dot{p}
\end{bmatrix} =\begin{bmatrix}
\big\langle i\big(\calQ_\omega(U) + \calC(U)\big),\sigma_2 Y_{1,\omega}\big\rangle \\
\big\langle i\big(\calQ_\omega(U) + \calC(U)\big),\sigma_2 Y_{2,\omega}\big\rangle \\
\big\langle i\big(\calQ_\omega(U) + \calC(U)\big),\sigma_2 Y_{3,\omega}\big\rangle \\
\big\langle i\big(\calQ_\omega(U) + \calC(U)\big),\sigma_2 Y_{4,\omega}\big\rangle 
\end{bmatrix} 
\end{equation}
which are derived from differentiating the orthogonality conditions \eqref{intro_equ:orthogonal_conditions} in time. In the preceeding equations, $\sigma_2,\sigma_3$ are the Pauli matrices (see \eqref{eqn:pauli_matrices}), $\calM_2$ consist of spatially localized time-dependent terms involving the modulation parameters and the eigenfunctions $Y_{j,\omega(t)}$ (see \eqref{equ:setup_definition_calM2}), and the quadratic and cubic nonlinear terms are given by 
\begin{equation}\label{intro_equ:nonlinearity}
\calQ_\omega(U) := \begin{bmatrix}
-\phi_\omega(u^2+2u \baru) \\ \phi_\omega(\baru^2 + 2u \baru)
\end{bmatrix}, \quad \calC(U) := \begin{bmatrix}
-u \baru u\\ \baru u \baru
\end{bmatrix}.
\end{equation}
The time-dependent matrix $\bbM$ given on the left-hand side of \eqref{intro_eq:modulation_eq} is explicit in \eqref{equ:setup_matrix_modulation_equation}, and is invertible for small perturbations $U(t)$. By expanding the conservation laws \eqref{equ:conservation_laws} for the solution around the modulated solitary wave and using the coercivity properties of the Schr\"odinger operators $L_{\pm,\omega}$ (see \eqref{equ:operators_Lplusminus} and \cite{Weinstein85,Weinstein86}),  we obtain the \textit{orbital stability} estimate 
\begin{equation}\label{intro_equ:orbital_estimate}
\sup_{t \geq 0 } \big(\|U(t)\|_{H^1} + |\omega(t)-\omega_0| + |p(t)-p_0|\big) \lesssim \eps.
\end{equation}

\textit{Bootstrap setup.}\footnote{For the sake of exposition, we have oversimplified the lengthy technical details of the bootstrap strategy. The proof of the main theorem is an elaborate argument based on two bootstrap Propositions~\ref{prop:modulation_parameters} and \ref{prop:profile_bounds}.} In order to establish asymptotics and decay for the modulation parameters and for the radiation term $U(t)$, it is natural to pass to fixed time-independent parameters $\ulomega:=\omega(T)$, $\ulp:=p(T)$ in the evolution equation \eqref{intro_equ:radiation_eq} on a given bootstrap time interval $[0,T]$. We adopt the renormalization techniques in Collot-Germain \cite{CollotGermain23} by working on the transformed variable 
\begin{equation}\label{intro_equ:renormalized_rad_def}
\Vp(t,y) := e^{i(p(t)-\ulp)y\sigma_3}U(t,y) = \begin{bmatrix}
e^{i(p(t)-\ulp)y }u(t,y) \\e^{-i(p(t)-\ulp)y }\baru(t,y) 
\end{bmatrix}.
\end{equation}
Keeping only the main order terms, the new variable $\Vp(t)$ now satisfies the nonlinear evolution equation
\begin{equation}\label{intro_equ:renormalized_rad_eq}
i\pt \Vp - \calH(\ulomega)\Vp = \dot{\theta}_1(t)\sigma_3 \Vp + i \dot{\theta}_2(t) \py \Vp + \calQ_{\ulomega}(\Vp) + \calC(\Vp) +  \calR(t), 
\end{equation}
where $\dot{\theta}_1(t),\dot{\theta}_2(t)$ are real-valued scalar coefficients depending on the modulation parameters, and $\calR(t)$ is a remainder term. The advantage of the transformation \eqref{intro_equ:renormalized_rad_def} is that we filter out the problematic term $\dot{p}y\sigma_3 U$ in \eqref{intro_equ:radiation_eq} while we ensure that the norms for $\Vp(t)$ and $U(t)$ remain comparable thanks to the stability estimate \eqref{intro_equ:orbital_estimate}. 

Now write the spectral decomposition $\Vp(t) = \ulPd \Vp(t) + \ulPe \Vp(t)$ relative to the time-independent operator $\calH(\ulomega)$. Our objective is to establish decay for the dispersive component $\ulPe \Vp(t)$, as the favorable decay of the discrete component $\ulPd \Vp(t)$ is essentially guaranteed by the orthogonality conditions \eqref{intro_equ:orthogonal_conditions}. Like in \cite{LL24,CollotGermain23}, we proceed in the spirit of space-time resonances approach and introduce the profile of the radiation term by filtering out the linear evolution 
\begin{equation}
F_{\ulomega,\ulp}(t,y) := e^{it\calH(\ulomega)}\ulPe \Vp(t,y),
\end{equation}
and we analyze the evolution equations for the distorted Fourier transform of the profile $\tilf_{\pm, \ulomega,\ulp}(t,\xi) := \wtilcalF_{\pm, \ulomega}[F_{\ulomega,\ulp}(t)](\xi)$ which reads
\begin{equation}\label{intro_equ:profile_eq}
i\pt\big(e^{i\theta_1(t)}e^{-i\theta_2(t)\xi}\tilf_{+, \ulomega,\ulp}(t,\xi)\big)= e^{i\theta_1(t)}e^{-i\theta_2(t)\xi} e^{it(\xi^2+\ulomega)}\Big(\wtilcalF_{+, \ulomega}[\calQ_{\ulomega}(\ulPe\Vp)+\calC(\ulPe\Vp)] + \wtilcalR(t)\Big),
\end{equation}
where $\wtilcalR(t)$ is a remainder term. While the evolution equation for $\tilf_{-, \ulomega,\ulp}(t,\xi)$ can be written down, the simple relation in Lemma~\ref{lem:distFT_components_relation} makes it sufficient to analyze only the profile equation \eqref{intro_equ:profile_eq}. 

In view of the representation formula \eqref{intro_equ:rep_formula} and the linear dispersive estimate from Lemma~\ref{lem:linear_dispersive_decay}, in order to establish the linear decay rate $t^{-\frac12}$ on the radiation term, it suffices to bootstrap  uniform-in-time pointwise bounds
\begin{equation}\label{intro_equ:pointwise_estimate}
\big\| \big(\tilf_{+, \ulomega,\ulp}(t,\xi),\tilf_{-, \ulomega,\ulp}(t,\xi)\big) \big\|_{L_\xi^\infty \times L_\xi^\infty} \lesssim \eps,\quad t \in [0,T], 
\end{equation}
and slowly growing weighted energy estimates 
\begin{equation}\label{intro_equ:weighted_energy_estimate}
\big\| \big( \pxi\tilf_{+, \ulomega,\ulp}(t,\xi),\pxi\tilf_{-, \ulomega,\ulp}(t,\xi)\big) \big\|_{L_\xi^2 \times L_\xi^2} \lesssim \eps\jt^\delta, \quad t \in [0,T],
\end{equation}
where $0<\delta \ll 1$ is a small absolute constant. This functional setting is standard for performing the modified scattering analysis, as it accounts for the critical nature of the nonlocalized cubic nonlinearity in \eqref{intro_equ:nonlinearity}. However, because the evolution equation \eqref{intro_equ:profile_eq} is also strongly coupled to the modulation parameters, it is also necessary to bootstrap the following decay estimates 
\begin{equation}\label{intro_equ:bootstrap_mod}
|\omega(t)-\ulomega|\lesssim \eps \jt^{-1+\delta}, \quad |p(t)-\ulp|\lesssim \eps \jt^{-1+\delta},\quad t \in [0,T].
\end{equation}
As we will explain below, obtaining \eqref{intro_equ:bootstrap_mod} is significantly difficult due to the weak decay arising from the quadratic and cubic nonlinearities forced by the radiation term on the right hand side of the modulation equations \eqref{intro_eq:modulation_eq}.

\textit{Main challenges arising from slow local decay.} The decay of the spatially localized quadratic nonlinearity present in both the profile equation \eqref{intro_equ:profile_eq} and the modulation equations \eqref{intro_eq:modulation_eq} depends entirely on the \textit{local decay} properties of the radiation term.  A formal stationary phase analysis for the free Schr\"odinger wave representation formula  \eqref{intro_equ:rep_formula} reveals that the leading decay contribution to the free Schr\"odinger waves is determined by the functions \eqref{intro_equ:resonance_eq} associated to the threshold resonances. In fact, linear dispersive and local decay estimates for (general) matrix Schr\"odinger operators with threshold resonances were proven by the author in \cite[Theorem~1.1]{Li23}. Since we are working under limited smoothness in the frequency space, we follow the approach in \cite{LL24} by decomposing the radiation term \ref{intro_equ:renormalized_rad_def} as
\begin{equation}\label{intro_equ:decomp_rad}
\Vp(t,y) = h_+(t)\Phi_{+,\ulomega}(y) + h_-(t)\Phi_{-,\ulomega}(y) + R(t,y)
\end{equation}
where the coefficients behave as $h_\pm(t) \sim \eps t^{-\frac12} e^{\mp it\ulomega}$ and the remainder $R(t,y)$ satisfies the improved local decay bounds 
\begin{equation}\label{intro_equ:decomp_rad_rem_decay}
\| \jy^{-2}R(t,y)\|_{L_y^\infty} +\| \jy^{-3} \py R(t,y)\|_{L_y^2}\lesssim \eps \jt^{-1+\delta}, \quad t \in [0,T].
\end{equation}
Inserting the decomposition \eqref{intro_equ:decomp_rad} into the spatially localized quadratic nonlinearity on the distorted Fourier space \eqref{intro_equ:profile_eq} and tracking the space-time resonant terms in the expansion, we find the leading source term
\begin{equation}\label{intro_equ:resonant_quadratic}
e^{it(\xi^2+\ulomega)}\wtilcalF_{+, \ulomega}[\calQ_{\ulomega}(\ulPe\Vp(t))](\xi) = e^{it(\xi^2-\ulomega)}\big(e^{it\ulomega}h_+(t)\big)^2 \wtilcalF_{+, \ulomega}[\calQ_{1,\ulomega}](\xi) + \cdots,
\end{equation}
where the term $\calQ_{1,\ulomega}(y)$ can be computed from inserting $\Phi_{+,\ulomega}(y)$ into the quadratic nonlinearity. The phase of this source term vanishes at the resonant frequencies $\xi = \pm \sqrt{\ulomega}$, which can cause difficulty in extracting further decay. However, a key cancellation occurs. By an explicit computation, which was first observed by the author in \cite[Lemma~1.6]{Li23}, the term $\wtilcalF_{+, \ulomega}[\calQ_{1,\ulomega}](\xi)$ is divisible by the phase $(\xi^2-\ulomega)$, meaning it also vanishes at the resonant frequencies $\xi = \pm \sqrt{\ulomega}$. This crucial null structure allows us to remove the worst effects of the quadratic nonlinearities in \eqref{intro_equ:profile_eq} via a variable coefficient normal form transformation, which effectively turns the profile equation \eqref{intro_equ:profile_eq} into cubic order  where we can apply modified scattering techniques to infer dispserive decay for the radiation term \eqref{intro_equ:renormalized_rad_def}. 

As for the modulation parameters, we use the modulation equations \eqref{intro_eq:modulation_eq} and the fundamental theorem of calculus to write down the the leading quadratic contributions for $\omega(t)$ and $p(t)$:
\begin{align}
\omega(t)-\ulomega&= \omega(t)-\omega(T) = -\frac{i\sqrt{\ulomega}}{2} \int_t^T \big\langle i\calQ_{\ulomega}\big(\ulPe\Vp(s)\big),\sigma_2 Y_{1,\ulomega}\big\rangle \,\ud s + \cdots,\label{intro_equ:omega_quadratic}\\
p(t)-\ulp &=p(t)-p(T) = \frac{1}{4\sqrt{\ulomega}} \int_t^T \big\langle i\calQ_{\ulomega}\big(\ulPe\Vp(s)\big),\sigma_2 Y_{3,\ulomega}\big\rangle \,\ud s + \cdots. \label{intro_equ:momentum_quadratic}
\end{align}
A direct but naive approach to infer decay would be to insert the decomposition of the radiation term \eqref{intro_equ:decomp_rad} into the quadratic nonlinearities in \eqref{intro_equ:omega_quadratic} and \eqref{intro_equ:momentum_quadratic}. However, such an approach is clearly insufficient for establishing the almost twice integrable rates required by the bootstrap \eqref{intro_equ:bootstrap_mod}. Hence, we adopt the strategy developed in our previous work \cite{LL24} by inserting the representation formula \eqref{intro_equ:rep_formula} into \eqref{intro_equ:omega_quadratic} and \eqref{intro_equ:momentum_quadratic}, which yields several multilinear expressions. The most resonant quadratic contribution for the momentum parameter, for example, takes the form
\begin{equation}\label{intro_equ:resonant_quadratic_momentum}
\int_t^T \iint_{\bbR^2} e^{-is(\xi_1^2-\xi_2^2)}\tilf_{+, \ulomega,\ulp}(s,\xi_1)\tilf_{-, \ulomega,\ulp}(s,\xi_2)\lambda_{+-\ulomega}(\xi_1,\xi_2)\,\ud \xi_1\,\ud \xi_2 \,\ud s,
\end{equation}
where the quadratic spectral distribution $\lambda_{+-\ulomega}(\xi_1,\xi_2)$ results from the interaction of the distorted Fourier basis elements \eqref{intro_equ:dft_elements} and the generalized kernel eigenfunction $Y_{3,\ulomega}(y)$. The phase in \eqref{intro_equ:resonant_quadratic_momentum} clearly exhibits a large set of time resonances on $\{\xi_1=\xi_2\}\cup\{\xi_1=-\xi_2\}$. Nevertheless, from an explicit computation shown in Lemma~\ref{lem:null_structure_modulation}, we discover that the quadratic spectral distribution $\lambda_{+-\ulomega}(\xi_1,\xi_2)$ is divisible by the phase $(\xi_1^2-\xi_2^2)$. This new null structure (which was not recorded in \cite{LL24}) allow us to apply a normal form on the contribution \eqref{intro_equ:resonant_quadratic_momentum} and convert it into cubic order. We then observe that the resulting cubic interactions are non-resonant, whence we can apply another normal form, and turn them into quartic order terms and establish an almost twice integrable time decay $\lesssim \eps^4\js^{-2+\delta}$ for the integrand of \eqref{intro_equ:resonant_quadratic_momentum}, which suffices to conclude the desired decay rate \eqref{intro_equ:bootstrap_mod}. 

\textit{Smoothing estimates.} Lastly, we briefly explain the difficulty mentioned in Remark~\ref{remark2} concerning establishing weighted energy estimates for the profile. A key challenge arises from the $\xi$ derivative acting on the phase $e^{it(\xi^2+\ulomega)}$, which produces a strong contribution term in the Duhamel expansion of \eqref{intro_equ:profile_eq} as shown below 
\begin{align}
\pxi\big(e^{i\theta_1(t)}e^{-i\theta_2(t)\xi}\tilf_{+, \ulomega,\ulp}(t,\xi)\big) = \cdots + &\underbrace{\int_0^t e^{i\theta_1(s)}e^{-i\theta_2(s)\xi} e^{is(\xi^2+\ulomega)}(2is \xi) \wtilcalR(s,\xi)\,\ud s} + \cdots \label{intro_equ:remainder_term1}\\
&\approx \int_0^t \int_\bbR e^{i\theta_1(s)}e^{-i(y+\theta_2(s))\xi} e^{is(\xi^2+\ulomega)}(2is \xi) \calR(s,y)\,\ud y\,\ud s. \label{intro_equ:remainder_term2}
\end{align}
The term \eqref{intro_equ:remainder_term2} is obtained from $\wtilR(s,\xi)$ by schematically inverting the distorted Fourier transform, where $\calR(s,y)$ is spatially localized and decays at a near cubic rate $\lesssim \eps^2 \js^{-\frac{3}{2}+\delta}$. Since we propagate the slowly growing rate on the phase $|\theta_2(s)|\lesssim \eps \js^\delta$ throughout this paper, the spatial mismatch between the phase $e^{-i(y+\theta_2(s))\xi}$ and the localized remainder term $\calR(s,y)$ in \eqref{intro_equ:remainder_term2} renders standard local smoothing estimates (e.g. \cite[Lemma~4.4]{LL24}) ineffective for obtaining the desired bootstrap estimate \eqref{intro_equ:weighted_energy_estimate}. To overcome this challenge, we have developed smoothing estimates with a moving center in Proposition~\ref{prop:local-smoothing} to carefully handle this issue. The reader is referred to \eqref{equ:proof_weighted_estimate_3_expanded} for the application of such linear estimates.

\subsection{Related literature}
We conclude our introduction with a brief discussion of related works on the asymptotic stability of solitary waves of nonlinear Schr\"odinger equations (NLS). For a general overview on the topic of asymptotic stability of solitons, we refer the reader to the recent reviews \cite{Martel25_survey,Germain24Review} for the NLS equation and the earlier surveys in \cite{CuccMaeda_SurveyII, KMM17_Survey, Schl07, Tao_Survey}.  

The pioneering work by \cite{BusPerel92} established the asymptotic stability of solitary waves for the NLS equation under the assumption that the linearized operator does not have any non-zero eigenvalues or threshold resonances, and that the nonlinearity vanishes at a sufficiently high order. In \cite{BusPerel92}, the spectral and distorted Fourier theory for the matrix linearized Schr\"odinger operator, and its linear dispersive estimates were introduced but briefly sketched. 
The seminal work in \cite{KS06} culminated to a proof of the co-dimensional asymptotic stability for solitary wave solutions to the (mass) super-critical focusing NLS equation, building a rigorous justification and development of the linear theory introduced in \cite{BusPerel92}. More recently, \cite{CollotGermain23} established asymptotic stability of solitary waves for NLS equations allowing for the scattering-critical, cubic-order nonlinearity under the spectral assumption that there are no non-zero eigenvalues or threshold resonances in the spectrum of the linearized operator. The analysis in \cite{CollotGermain23} is a combination of modulation techniques and the space-time resonances approach based on the distorted Fourier transform for the linearized operator. Our analysis in this article and in \cite{LL24} further develops the framework of \cite{CollotGermain23} by allowing the presence of threshold resonances in the linearized operator. We also mention the articles \cite{BusPerel92, BusSul03, KS06,KS09_quintic_NLS,LL24, CollotGermain23, MasakiMurphySegata, Mizumachi08, Chen21} where asymptotic stability results for the NLS equation have been obtained.

For small perturbations in the energy space, the \textit{virial} method was recently applied to prove local asymptotic stability of solitary waves for NLS equation  with a cubic-quintic, focusing-defocusing nonlinearity in \cite{Martel22}. This method has seen further successes in \cite{Martel22, Martel23, Rialland23, Rialland24, CuccMaeda2404, CuccMaeda2405}, even allowing internal modes in the dynamics. An interesting observation from these articles is that threshold resonances in the cubic NLS model \eqref{equ:cubic_NLS} either bifurcate into internal modes or vanish entirely under small modifications to the cubic nonlinearity (see earlier spectral studies in \cite{PKA98,15ColesGustafson,CGNT08}).  We remark that the virial method was effectively applied in the earlier articles \cite{KMM17, KMM19, KMMV20} and in \cite{KM22, LL1, CuccMaeda_kink_23, CuccMaedaMurgScrob23, PalaciosPusateri24} for local asymptotic stability results in nonlinear Klein-Gordon (NLKG) settings. See also \cite{CuccMaeda19, CuccMaeda22} for the virial method applied in nonlinear Schr\"odinger equations with a trapping potential.

\textit{Other related works.} References on dispersive estimates and other spectral aspects for Schr\"odinger operators on the line can be found in \cite{00Weder,  04GoldbergSchlag, 07Goldberg, CGNT08, KS06, Li23, BusPerel92, 25ChenMou, 25ChenMou3, ColGerEli25_embedded}. We refer to \cite{KoSchlag25} for an introduction to the distorted Fourier transform, and we mention recent interesting developments in the distorted Fourier theory for linearized operators around Ginzburg-Landau vortices in  \cite{LanSha25,ColGerEli25,PalaciosPusateri24GL,LuhSchSha25}. We gained insights from modified scattering analysis of small solutions to cubic Schr\"odinger equations on the line from \cite{KatPus11,GermPusRou18,ChenPus19,ChenPus22} in the framework of space-time resonances method, and from other techniques in \cite{HN98, LS06, IT15, Del16, Naum16, Naum18, MasMurphSeg19, NaumWed22, 25KriSchWid}. 
Modified scattering results in NLKG equations in \cite{LS05_1, HN08, HN10, HN12, Del16_KG, Stingo18, Sterb16, LS15, LLS1, LLS2, LLSS, GP20} are also helpful to our analysis. 
We benefited largely from the investigation of long-time behavior of solutions to NLKG equations under the presence of a threshold resonance in the articles \cite{LLS1, LLS2, LLSS, LS1, LS2, CL24}. For asymptotic stability of (topological) solitons in scalar field theories, we mention \cite{KK11_1, KK11_2, GP20, LS1, GermPusZhang22, KairzhanPusateri22, LS2, CL24}. As pointed out in the beginning of the introduction, \cite{CuccPeli14, Saalmann17, BorghJenMcL18} proved asymptotic stability results for \eqref{equ:cubic_NLS} using completely integrable techniques, while classical orbital stability results were due to   \cite{Weinstein85,Weinstein86,85ShatahStrauss,87GrillakisShatahStrauss,82CazenaveLions}. Lastly, we mention that \cite{25ChenMou2} recently proved the co-dimensional asymptotic stability of multi-solitons for the (mass) super-critical focusing NLS.

\medspace

\noindent \textbf{Acknowledgements}. The author would like to thank his Ph.D. advisor Jonas L\"uhrmann for valuable comments on the manuscript and for our collaboration in \cite{LL24}.

\section{Notation} \label{sec:preliminaries}

\noindent {\it Notation and conventions.} 
All implicit constants in estimates of this paper may depend on the initial scaling parameter $\omega_0 \in (0,\infty)$ fixed in the statement of Theorem~\ref{thm:main_theorem}. We denote by $C > 0$ an absolute constant whose value may change from line to line. 
For non-negative $X, Y$ we write $X \lesssim Y$ if $X \leq C Y$ and the notation $X \ll Y$ indicates that the implicit constant should be regarded as small. Throughout, we use the Japanese bracket notation
\begin{equation*}
\jap{t} := (1+t^2)^{\frac12}, \quad \jap{x} := (1+x^2)^{\frac12}, \quad \jap{\xi} := (1+\xi^2)^{\frac12}.
\end{equation*}
We use the standard notation for the Lebesgue spaces $L^p$ and for the Sobolev spaces $H^k$, $W^{k,p}$. The space $L^{2,1}$ is defined by the weighted norm $\|f\|_{L^{2,1}} := \|\jx f\|_{L^2}$.

\medskip 

\noindent {\it Pauli matrices.}
We recall the definitions of the Pauli matrices
\begin{align} \label{eqn:pauli_matrices}
\sigma_1 = \begin{bmatrix} 0 & 1 \\ 1 & 0 \end{bmatrix}, \quad \sigma_2 = \begin{bmatrix} 0 & -i \\ i & 0 \end{bmatrix}, \quad \sigma_3 = \begin{bmatrix} 1 & 0 \\ 0 & -1 \end{bmatrix}.
\end{align}

\medskip 

\noindent {\it Inner products.}
We use the following $L^2$-inner product of two scalar-valued functions $f, g \colon \bbR \to \bbC$ by
\begin{equation} \label{equ:inner_product_scalar}
\langle f, g \rangle := \int_\bbR f(x) \barg(x) \, \ud x,
\end{equation}
and the $L^2$-inner product of two vector-valued functions $U, V \colon \bbR \to \bbC^2$ by
\begin{equation} \label{equ:inner_product_vector}
\langle U, V \rangle := \int_{\bbR} \bigl( u_1(x) \bar{v}_1(x) + u_2(x) \barv_2(x) \bigr) \, \ud x, \quad U := \begin{bmatrix} u_1 \\ u_2 \end{bmatrix}, \quad V := \begin{bmatrix} v_1 \\ v_2 \end{bmatrix}.
\end{equation}
\medskip 
\noindent {\it $\calJ$-invariance.} 
Following the terminology in \cite{KS06}, we say that a vector $U \in \bbC^2$ is $\calJ$-invariant if
\begin{equation*}
\overline{\sigma_1 U} = U.
\end{equation*}
The inner product \eqref{equ:inner_product_vector} of two $\calJ$-invariant vectors is real-valued.
The $\calJ$-invariance of the evolution equation \eqref{equ:setup_perturbation_equ} for the vectorial perturbation will allow us to go back from the system to the scalar cubic Schr\"odinger equation \eqref{equ:cubic_NLS}.

\medskip 
\noindent {\it Fourier transform.} 
Our conventions for the (flat) Fourier transform of a Schwartz function are
\begin{equation*}
\begin{aligned}
\widehat{\calF}[f](\xi) = \hatf(\xi) &:= \frac{1}{\sqrt{2\pi}} \int_\bbR e^{-ix\xi} f(x) \, \ud x, \\
\widehat{\calF}^{-1}[f](\xi) = \check{f}(\xi) &:= \frac{1}{\sqrt{2\pi}} \int_\bbR e^{ix\xi} f(x) \, \ud x.
\end{aligned}
\end{equation*}
The convolution laws for $g,h \in \calS(\bbR)$ then read 
\begin{equation*}
\widehat{\calF}\bigl[g \ast h\bigr] = \sqrt{2\pi} \hatg \hath, \quad \widehat{\calF}\bigl[g h\bigr] = \frac{1}{\sqrt{2\pi}} \hatg \ast \hath.
\end{equation*}
We recall that in the sense of tempered distributions
\begin{align}
\widehat{\calF}[1](\xi) &= \sqrt{2\pi} \delta_0(\xi), \label{equ:preliminaries_FT_one} \\
\widehat{\calF}[\tanh(\cdot)](\xi) &= -i \sqrt{\frac{\pi}{2}} \pvdots \cosech \Bigl(\frac{\pi}{2} \xi\Bigr). \label{equ:preliminaries_FT_tanh}
\end{align}

\section{Spectral Theory and Distorted Fourier Transform} \label{sec:spectral_and_dist_FT}
In this section, we recollect the spectral and distorted Fourier theory for the linearized operator \eqref{eqn:calH} developed in \cite{LL24}. 

\subsection{Spectrum of linearized operator}

The spectrum of the matrix Schr\"odinger operator $\calH(\omega)$ is  provided in the following proposition.

\begin{proposition} \label{prop:spectrum_of_calH}
Fix $\omega \in (0,\infty)$.
Then the following holds for the operator $\calH(\omega)$ defined in \eqref{eqn:calH} on $L^2(\bbR) \times L^2(\bbR)$ with domain $H^2(\bbR) \times H^2(\bbR)$:
\begin{enumerate}[leftmargin=1.8em]

\item Discrete spectrum: $0$ is the only eigenvalue with algebraic multiplicity equal to 4 and geometric multiplicity equal to 2.

\item Generalized nullspace: $\calN_\mathrm{g}(\calH(\omega)) = \ker(\calH(\omega)^2)$ is spanned by the generalized eigenfunctions
\begin{equation} \label{eqn:nullspace of calH-omega}
Y_{1,\omega} := \begin{bmatrix}
i\phi_\omega \\ -i\phi_\omega
\end{bmatrix}, \quad Y_{2,\omega} := \begin{bmatrix}
\partial_\omega \phi_\omega \\ \partial_\omega \phi_\omega
\end{bmatrix}, \quad Y_{3,\omega} := \begin{bmatrix}
\partial_x \phi_\omega \\ \partial_x \phi_\omega
\end{bmatrix}, \quad Y_{4,\omega} := \begin{bmatrix}
ix\phi_\omega \\ -ix\phi_\omega
\end{bmatrix},
\end{equation}
which satisfy
\begin{equation} \label{eqn:eig-eqn for nullspace of calH-omega}
\calH(\omega) Y_{1,\omega} = 0,\quad \calH(\omega) Y_{2,\omega} = i Y_{1,\omega}, \quad \calH(\omega) Y_{2,\omega} = 0, \quad \calH(\omega) Y_{4,\omega} = -2i Y_{3,\omega}.
\end{equation}
Moreover, we have $\calN_\mathrm{g}(\calH(\omega)^*) = \ker((\calH(\omega)^*)^2)= \mathrm{span}(\sigma_{2}Y_{k,\omega}:1\leq k\leq4)$, and the vectors $Y_{k,\omega}$ and $\sigma_2 Y_{k,\omega}$, $1 \leq k \leq 4$, are $\calJ$-invariant.
\item The essential spectrum of $\calH(\omega)$ is $(-\infty,-\omega] \cup [\omega,\infty)$ and there are no embedded eigenvalues.
\item Threshold resonances: The vector-valued functions 
\begin{equation}
\Phi_{+,\omega}(x) := \frac{1}{\sqrt{2\pi}}\begin{bmatrix} \tanh^2(\sqrt{\omega}x) \\ -\sech^2(\sqrt{\omega}x) \end{bmatrix}, \quad \Phi_{-,\omega}(x) := \sigma_1 \Phi_{+,\omega}(x) = \frac{1}{\sqrt{2\pi}}\begin{bmatrix} -\sech^2(\sqrt{\omega}x) \\	\tanh^2(\sqrt{\omega}x) \end{bmatrix}
\end{equation}
belong to $L^\infty(\bbR) \times L^\infty(\bbR) \setminus L^2(\bbR) \times L^2(\bbR)$ and satisfy
\begin{equation}
\calH(\omega) \Phi_{+,\omega} = \omega \Phi_{+,\omega}, \quad \calH(\omega) \Phi_{-,\omega} = -\omega \Phi_{-,\omega}.
\end{equation}
\end{enumerate}
\end{proposition}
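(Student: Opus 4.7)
The plan is to reduce everything to the classical spectral analysis of the pair of scalar Pöschl--Teller Schr\"odinger operators
\[
L_{+,\omega} := -\partial_x^2 + \omega - 3\phi_\omega^2, \qquad L_{-,\omega} := -\partial_x^2 + \omega - \phi_\omega^2,
\]
via the linear change of variables $v_1 := u_1 + u_2$ and $v_2 := u_1 - u_2$. A short computation starting from \eqref{eqn:calH} shows that $\calH(\omega)$ becomes the off-diagonal block operator $\begin{bsmallmatrix} 0 & L_{-,\omega} \\ L_{+,\omega} & 0 \end{bsmallmatrix}$ in these coordinates, so $\calH(\omega)U = \lambda U$ is equivalent to the coupled scalar system $L_{+,\omega}v_1 = \lambda v_2$, $L_{-,\omega}v_2 = \lambda v_1$. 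The spectra of $L_{\pm,\omega}$ are completely explicit: $L_{-,\omega} \geq 0$ with simple kernel $\mathrm{span}(\phi_\omega)$, while $L_{+,\omega}$ has one simple negative eigenvalue and simple kernel $\mathrm{span}(\partial_x\phi_\omega)$, and both have reflectionless absolutely continuous spectrum $[\omega,\infty)$ with no embedded eigenvalues.

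For parts (1) and (2), I would first verify the Jordan relations \eqref{eqn:eig-eqn for nullspace of calH-omega} by direct substitution, using $L_{-,\omega}\phi_\omega = 0$ and $L_{+,\omega}\partial_x\phi_\omega = 0$ (from differentiating \eqref{equ:intro_phi_omega_equation}), together with $L_{+,\omega}\partial_\omega\phi_\omega = -\phi_\omega$ (from scaling) and $L_{-,\omega}(x\phi_\omega) = -2\partial_x\phi_\omega$ (from Galilei invariance). To show this list is exhaustive, I recast $\calH(\omega)^2 U = 0$ in scalar coordinates, where it decouples into $L_{-,\omega}L_{+,\omega} v_1 = 0$ and $L_{+,\omega}L_{-,\omega} v_2 = 0$; applying the Fredholm alternative to the explicit Pöschl--Teller kernels yields a two-dimensional nullspace for each composition, producing algebraic multiplicity $4$ and geometric multiplicity $2$. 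The Jordan chains terminate at length two precisely because the Vakhitov--Kolokolov quantity $\langle \phi_\omega, \partial_\omega\phi_\omega\rangle = \tfrac12\partial_\omega\|\phi_\omega\|_{L^2}^2$ is strictly positive, so the next solvability condition fails. The $\calJ$-invariance of $Y_{k,\omega}$ and $\sigma_2 Y_{k,\omega}$ is immediate from the explicit formulas.

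For part (3), Weyl's theorem gives $\sigma_{\mathrm{ess}}(\calH(\omega)) = (-\infty,-\omega] \cup [\omega,\infty)$ since the potential part is a relatively compact perturbation of $\mathrm{diag}(-\partial_x^2+\omega,\, \partial_x^2-\omega)$. Non-real eigenvalues and real eigenvalues in the spectral gap $(-\omega,\omega)\setminus\{0\}$ are ruled out by the Grillakis--Shatah--Strauss / Vakhitov--Kolokolov framework using the same positive slope condition. Embedded eigenvalues on the continuous spectrum are excluded via the scalar system: an $L^2$ solution at $\lambda = \pm(\omega+\xi^2)$ with $\xi \neq 0$ would produce an $L^2$ eigenfunction of $L_{+,\omega}L_{-,\omega}$ at the strictly positive eigenvalue $(\omega+\xi^2)^2$, which can be excluded via the explicit Jost solutions of the reflectionless Pöschl--Teller operators.

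For part (4), the identity $\calH(\omega)\Phi_{+,\omega} = \omega\Phi_{+,\omega}$ reduces via the scalar system to the one-line computations $L_{-,\omega}(1) = \omega\bigl(1 - 2\sech^2(\sqrt{\omega}x)\bigr)$ and $L_{+,\omega}\bigl(1 - 2\sech^2(\sqrt{\omega}x)\bigr) = \omega$, both immediate from $\phi_\omega^2 = 2\omega\sech^2(\sqrt{\omega}x)$ and $\tanh^2 + \sech^2 = 1$. The analogous identity for $\Phi_{-,\omega}$ is then automatic from $\Phi_{-,\omega} = \sigma_1\Phi_{+,\omega}$ together with the intertwining $\sigma_1\calH(\omega)\sigma_1 = -\calH(\omega)$, which is immediate from \eqref{eqn:calH}. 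The step I expect to be the most delicate is confirming the absence of embedded eigenvalues and verifying that no further threshold resonances occur beyond the listed ones, since both require genuine use of the explicit reflectionless Pöschl--Teller structure rather than soft spectral arguments; everything else is either direct computation, Weyl's theorem, or bookkeeping with the symmetry-generated eigenvectors.
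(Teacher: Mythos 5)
The paper does not prove Proposition~\ref{prop:spectrum_of_calH} in-line; it simply cites \cite[Proposition~3.1]{LL24}. Your reduction to the scalar Pöschl--Teller operators $L_{\pm,\omega}$ via $v_1 = u_1 + u_2$, $v_2 = u_1 - u_2$ is the standard route and almost certainly the one followed in \cite{LL24} and its antecedents (e.g., \cite{Weinstein85,KS06}); your verifications of the Jordan relations \eqref{eqn:eig-eqn for nullspace of calH-omega}, the scalar identity $L_{+,\omega}(1-2\sech^2(\sqrt{\omega}x)) = \omega$, $L_{-,\omega}(1) = \omega(1-2\sech^2(\sqrt{\omega}x))$ for the threshold resonances, and the dimension count for $\ker\calH(\omega)^2$ via the solvability obstructions $\langle\phi_\omega,\partial_\omega\phi_\omega\rangle \neq 0$ and $\langle x\phi_\omega,\partial_x\phi_\omega\rangle \neq 0$ all check out.

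There is one genuine gap, in your treatment of part (1)/(3). You claim that real eigenvalues in the spectral gap $(-\omega,\omega)\setminus\{0\}$ (internal modes) are excluded ``by the Grillakis--Shatah--Strauss / Vakhitov--Kolokolov framework using the same positive slope condition.'' This is not correct: the GSS/VK positive-slope condition rules out \emph{unstable} (off-axis) eigenvalues and yields orbital stability, but it is entirely compatible with internal modes in the gap. Indeed, as the paper itself notes in the introduction (citing \cite{PKA98,15ColesGustafson,CGNT08}), small perturbations of the cubic nonlinearity \emph{do} produce internal modes while keeping the VK condition positive, so a soft GSS argument cannot possibly exclude them. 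The absence of gap eigenvalues for \eqref{eqn:calH} is a specific feature of the reflectionless Pöschl--Teller operators $L_{\pm,\omega}$ and has to be established by the same kind of explicit computation you (correctly) earmark for the exclusion of embedded eigenvalues---e.g., via the explicit Jost solutions or by analyzing $L_-^{1/2}L_+L_-^{1/2}$ on $\{\phi_\omega\}^\perp$ directly. You have the right tools on the table; the error is in reaching for GSS/VK where an explicit argument is required.
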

\begin{proof}
See \cite[Proposition~3.1]{LL24}.
\end{proof}

The Riesz projection onto the discrete spectrum of $\calH(\omega)$ is given by 
\begin{equation} \label{equ:definition_Pd}
P_\mathrm{d} := \frac{1}{2\pi i} \oint_\gamma \bigl(\calH(\omega) - zI\bigr)^{-1} \, \ud z,
\end{equation}
where $\gamma$ is a simple closed curve around the origin that lies within the resolvent set, while the projection onto the essential spectrum is defined by
\begin{equation*}
P_{\mathrm{e}} := I - P_{\mathrm{d}}.
\end{equation*}
We recall from \cite[Remark~9.5]{KS06} that both projections $P_\mathrm{d}$ and $P_\mathrm{e}$ preserve the space of $\calJ$-invariant functions on $L^2(\bbR) \times L^2(\bbR)$. The next lemma provides an explicit decomposition of $L^2(\bbR) \times L^2(\bbR)$ into the spectral subspaces associated  with the essential spectrum and with the discrete spectrum of $\calH(\omega)$.

\begin{lemma} \label{lemma: L2 decomposition}
There is the decomposition
\begin{equation} \label{eqn: L2 decomposition}
L^2(\bbR) \times L^2(\bbR) = \big(\calN_\mathrm{g}(\calH(\omega)^*) \big)^\perp + \calN_\mathrm{g}(\calH(\omega)),
\end{equation}
where the individual summands are linearly independent, but not necessarily orthogonal. The decomposition \eqref{eqn: L2 decomposition} is invariant under the flow of $\calH(\omega)$, and the projection $P_\mathrm{e}$ is the projection onto the orthogonal complement in \eqref{eqn: L2 decomposition}.
Explicitly, for $U \in L^2(\bbR) \times L^2(\bbR)$ it holds that
\begin{equation} \label{equ:spectral_decomposition_L2L2}
U = P_{\mathrm{e}} U + 	P_{\mathrm{d}}U
\end{equation}
where 
\begin{equation}\label{equ:project_Pd}
P_{\mathrm{d}}U \equiv  \sum_{j=1}^4 d_{j,\omega} Y_{j,\omega}
\end{equation}
with 
\begin{equation*}
d_{1,\omega} = \frac{\langle U, \sigma_2 Y_{2, \omega}\rangle}{\langle Y_{1,\omega}, \sigma_2 Y_{2,\omega}\rangle}, \quad d_{2,\omega} = \frac{\langle U, \sigma_2 Y_{1,\omega} \rangle}{\langle Y_{2,\omega}, \sigma_2 Y_{1,\omega}\rangle}, \quad d_{3,\omega} = \frac{\langle U, \sigma_2 Y_{4,\omega}\rangle}{\langle Y_{3, \omega}, \sigma_2 Y_{4,\omega}\rangle}, \quad d_{4,\omega} = \frac{\langle U, \sigma_2 Y_{3,\omega}\rangle}{\langle Y_{4, \omega}, \sigma_2 Y_{3,\omega}\rangle}.
\end{equation*}
We also have the following relations
\begin{align}\label{equ:innerproductrelations1}
\langle Y_{1,\omega},\sigma_2 Y_{j,\omega} \rangle &= \begin{cases}
-c_\omega \quad \text{for }j = 2,\\
0 \quad \text{otherwise},
\end{cases}\qquad \langle Y_{2,\omega},\sigma_2 Y_{j,\omega} \rangle = \begin{cases}
-c_\omega \quad \text{for }j = 1,\\
0 \quad \text{otherwise},
\end{cases}\\
\langle Y_{3,\omega},\sigma_2 Y_{j,\omega} \rangle &= \begin{cases}
\|\phi_\omega\|_{L^2}^2\quad \text{for }j = 4,\\
0 \quad \text{otherwise},
\end{cases}\qquad \langle Y_{4,\omega},\sigma_2 Y_{j,\omega} \rangle = \begin{cases}
\|\phi_{\omega}\|_{L^2}^2\quad \text{for }j = 3,\\
0 \quad \text{otherwise},\label{equ:innerproductrelations2}
\end{cases}
\end{align}
as well as 
\begin{equation}\label{equ:orthogonality_PeU}
\langle P_{\mathrm{e}}U,\sigma_2 Y_{j,\omega}\rangle = 0, \quad j \in \{1,\ldots,4\}.
\end{equation}
Moreover, the projection $P_{\mathrm{e}}$ is bounded as an operator from $\jx^\alpha W^{k,p} \rightarrow \jx^\alpha W^{k,p}$ for any $p \in [1,\infty]$, $\alpha \in \bbR$, and $k \geq 0$.
\end{lemma}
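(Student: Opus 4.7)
The plan is to combine the explicit spectral data of Proposition~\ref{prop:spectrum_of_calH} with standard Riesz projection theory, and then reduce the explicit coefficient formulas to a direct biorthogonality computation. Since $0$ is an isolated eigenvalue of $\calH(\omega)$ with finite algebraic multiplicity, the Riesz projection $P_{\mathrm{d}}$ defined in \eqref{equ:definition_Pd} projects $L^2(\bbR) \times L^2(\bbR)$ onto $\calN_{\mathrm{g}}(\calH(\omega))$ along $\mathrm{Ran}(P_{\mathrm{e}})$, and both projections commute with $\calH(\omega)$, yielding the flow invariance of the decomposition for free. Taking adjoints in the contour representation \eqref{equ:definition_Pd} identifies $P_{\mathrm{d}}^*$ as the Riesz projection for $\calH(\omega)^*$ with range $\calN_{\mathrm{g}}(\calH(\omega)^*) = \mathrm{span}\{\sigma_2 Y_{k,\omega}\}_{k=1}^4$, so that $\langle P_{\mathrm{e}} U, V\rangle = \langle U, (I - P_{\mathrm{d}}^*)V\rangle$ vanishes whenever $V \in \calN_{\mathrm{g}}(\calH(\omega)^*)$. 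This shows $\mathrm{Ran}(P_{\mathrm{e}}) \subseteq \calN_{\mathrm{g}}(\calH(\omega)^*)^\perp$ and, by the rank-$4$ structure of $P_{\mathrm{d}}$, equality; combined with $U = P_{\mathrm{d}} U + P_{\mathrm{e}} U$ this delivers the splitting \eqref{eqn: L2 decomposition}.

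Next I would pin down the biorthogonality table \eqref{equ:innerproductrelations1}-\eqref{equ:innerproductrelations2} by direct computation using the explicit forms from \eqref{eqn:pauli_matrices} and \eqref{eqn:nullspace of calH-omega}. Since $\phi_\omega$ and $\partial_\omega \phi_\omega$ are even in $x$ while $\partial_x \phi_\omega$ and $x\phi_\omega$ are odd, and $\sigma_2$ preserves the parity of each component, every pairing $\langle Y_{j,\omega}, \sigma_2 Y_{k,\omega}\rangle$ mixing the index sets $\{1,2\}$ and $\{3,4\}$ vanishes. The within-block diagonal entries also vanish after a short cancellation between the two components produced by $\sigma_2$. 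The two remaining entries reduce to the scalar integrals
\begin{equation*}
\langle Y_{2,\omega}, \sigma_2 Y_{1,\omega}\rangle = -2 \int_{\bbR} \phi_\omega \, \partial_\omega \phi_\omega \, \ud x = - \partial_\omega \| \phi_\omega \|_{L^2}^2 =: -c_\omega,
\end{equation*}
and, after an integration by parts,
\begin{equation*}
\langle Y_{4,\omega}, \sigma_2 Y_{3,\omega}\rangle = - 2 \int_{\bbR} x \, \phi_\omega \, \partial_x \phi_\omega \, \ud x = \| \phi_\omega \|_{L^2}^2.
\end{equation*}
With the table in hand, the intersection $\calN_{\mathrm{g}}(\calH(\omega)) \cap \calN_{\mathrm{g}}(\calH(\omega)^*)^\perp$ is trivial, since the biorthogonal pairing is nondegenerate on $\calN_{\mathrm{g}}(\calH(\omega))$, confirming the direct sum. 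The explicit coefficients $d_{j,\omega}$ then follow by writing $P_{\mathrm{d}} U = \sum_j d_{j,\omega} Y_{j,\omega}$, using \eqref{equ:orthogonality_PeU} (which itself holds by construction of $P_{\mathrm{e}}$) to obtain $\langle P_{\mathrm{d}} U, \sigma_2 Y_{k,\omega}\rangle = \langle U, \sigma_2 Y_{k,\omega}\rangle$, and solving the resulting block-diagonal $4\times 4$ linear system.

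For the boundedness statement it suffices to bound the finite-rank operator $P_{\mathrm{d}} = I - P_{\mathrm{e}}$. Since $\phi_\omega = \sqrt{2\omega}\sech(\sqrt{\omega}x)$ and all of its $x$- and $\omega$-derivatives decay exponentially, the vectors $Y_{j,\omega}$ and $\sigma_2 Y_{k,\omega}$ (including $Y_{4,\omega}$, which carries an $x$-factor) are Schwartz for every fixed $\omega > 0$. Consequently the testing functionals $U \mapsto \langle U, \sigma_2 Y_{k,\omega}\rangle$ are bounded on $\jx^\alpha W^{k,p}$ for any $\alpha \in \bbR$, $p \in [1,\infty]$, and $k \geq 0$ by H\"older's inequality, and the outputs $d_{j,\omega}(U) Y_{j,\omega}$ lie in every such weighted Sobolev space with the required norm bound. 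The main bookkeeping obstacle in the whole argument is the verification of the biorthogonality table across the sixteen index combinations, where one must correctly track the signs, parities, and $i$-factors introduced by $\sigma_2$; once those entries are identified, every other assertion of the lemma follows essentially mechanically.
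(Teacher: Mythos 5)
Your proof is correct and self-contained. The paper itself does not reproduce the argument for this lemma: it cites \cite[Lemma~3.8]{LL24} and only addresses the final boundedness claim (which is new relative to the cited lemma) by the same finite-rank-plus-Schwartz observation you make. Your Riesz-projection route is the standard one, and your bookkeeping checks out: the parity argument correctly kills the mixed pairings, the within-block diagonals cancel via the $\pm i$ asymmetry of $\sigma_2$, the two nonzero entries reduce to $-\partial_\omega\|\phi_\omega\|_{L^2}^2 = -2\omega^{-1/2} = -c_\omega$ (consistent with the paper's value of $c_\omega$ in \eqref{equ:setup_matrix_M1}) and, after an integration by parts, to $\|\phi_\omega\|_{L^2}^2$. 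The identification $P_{\mathrm{d}}^* = $ Riesz projection for $\calH(\omega)^*$, the codimension-4 counting to upgrade $\mathrm{Ran}(P_{\mathrm{e}}) \subseteq (\calN_{\mathrm{g}}(\calH(\omega)^*))^\perp$ to equality, and the nondegeneracy of the Gram matrix forcing trivial intersection are all sound. One cosmetic point: the value the paper actually uses, $c_\omega = 2\omega^{-1/2}$, is quoted elsewhere; you derived it from the definition $c_\omega = \partial_\omega\|\phi_\omega\|_{L^2}^2$, which indeed gives $\partial_\omega(4\sqrt\omega) = 2\omega^{-1/2}$, so there is no inconsistency, but it is worth flagging that this identification is part of verifying \eqref{equ:innerproductrelations1}. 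No gaps.
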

\begin{proof}
See \cite[Lemma~3.8]{LL24} for the proof. Note that the last statement about the boundedness of $P_{\mathrm{e}}$ is stronger than \cite[Lemma~3.8]{LL24} but the asserted operator bound is immediate from the definition $P_{\mathrm{e}} = I - P_\mathrm{d}$, the formula for $P_\mathrm{d}$ in \eqref{equ:project_Pd}, and the fact that $Y_{1,\omega},\ldots,Y_{4,\omega}$ are Schwarz functions.
\end{proof}

\subsection{Distorted Fourier Transform}
The following proposition expresses the evolution in terms of its distorted Fourier transform. 
\begin{proposition} \label{prop: representation formula}
Fix $\omega \in (0,\infty)$. For $x, \xi \in \bbR$ set 
\begin{equation} \label{eqn: Psi_pm_omega}
\begin{aligned}
\Psi_{+,\omega}(x,\xi) &:=
\begin{bmatrix}
\Psi_{1,\omega}(x,\xi)\\
\Psi_{2,\omega}(x,\xi)
\end{bmatrix}, \\
\Psi_{-,\omega}(x,\xi) &:= \sigma_1 \Psi_{+,\omega}(x,\xi) = \begin{bmatrix}
\Psi_{2,\omega}(x,\xi) \\
\Psi_{1,\omega}(x,\xi)
\end{bmatrix},
\end{aligned}
\end{equation}
where 
\begin{align}
\Psi_{1,\omega}(x,\xi) &:= \frac{1}{\sqrt{2\pi}}  \frac{\bigl(\xi + i \sqrt{\omega}\tanh(\sqrt{\omega}x)\bigr)^2}{(\vert \xi \vert - i \sqrt{\omega})^2} e^{i x \xi} \equiv \frac{1}{\sqrt{2\pi}} m_{1,\omega}(x,\xi) e^{ix\xi}, \label{eqn:m-1,omega} \\
\Psi_{2,\omega}(x,\xi) &:= \frac{1}{\sqrt{2\pi}}  \frac{\big(\sqrt{\omega}\sech(\sqrt{\omega}x))^2}{(\vert \xi \vert - i \sqrt{\omega}\big)^2} e^{i x \xi} \equiv \frac{1}{\sqrt{2\pi}} m_{2,\omega}(x,\xi) e^{i x \xi} \label{eqn:m-2,omega}.
\end{align}
Then we have for every $F, G \in \calS(\bbR) \times \calS(\bbR)$ that 
\begin{equation}
\begin{split} \label{eqn: e-itH Pe pairing formula}
\langle e^{it\calH(\omega)}\Pe F,G\rangle &= \int_\bbR e^{it(\xi^2+\omega)}\langle F,\sigma_3 \Psi_{+,\omega}(\cdot,\xi)\rangle \overline{\langle G, \Psi_{+,\omega}(\cdot,\xi)\rangle} \,\ud \xi\\
&\quad - \int_\bbR e^{-it(\xi^2+\omega)}\langle F,\sigma_3 \Psi_{-,\omega}(\cdot,\xi)\rangle \overline{\langle G, \Psi_{-,\omega}(\cdot,\xi)\rangle} \,\ud \xi,
\end{split}
\end{equation}
and
\begin{equation} \label{eqn: Pe pairing formula}
\langle \Pe F,G \rangle = \int_\bbR \langle F,\sigma_3 \Psi_{+,\omega}(\cdot,\xi)\rangle \overline{\langle G, \Psi_{+,\omega}(\cdot,\xi)\rangle} \,\ud \xi - \int_\bbR \langle F,\sigma_3 \Psi_{-,\omega}(\cdot,\xi)\rangle \overline{\langle G, \Psi_{-,\omega}(\cdot,\xi)\rangle} \,\ud \xi.
\end{equation}
These integrals are absolutely convergent because the integrands are rapidly decaying.
\end{proposition}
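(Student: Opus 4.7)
The plan is to establish the spectral representation in three stages: (i) verify that $\Psi_{\pm,\omega}(x,\xi)$ are generalized eigenfunctions of $\calH(\omega)$; (ii) derive the Plancherel-type pairing \eqref{eqn: Pe pairing formula} from Stone's formula applied to the non-self-adjoint $\calH(\omega)$; (iii) insert the spectral weight $e^{it\lambda}$ to obtain \eqref{eqn: e-itH Pe pairing formula}.

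First, I would confirm the eigenfunction relations $\calH(\omega)\Psi_{+,\omega}(\cdot,\xi) = (\xi^2+\omega)\Psi_{+,\omega}(\cdot,\xi)$ by a direct but careful computation. Substituting $\Psi_{j,\omega}(x,\xi) = \frac{1}{\sqrt{2\pi}}m_{j,\omega}(x,\xi)e^{ix\xi}$ into the eigenvalue equation, using $\phi_\omega^2(x) = 2\omega\sech^2(\sqrt{\omega}x)$ together with the elementary derivative identities for $\tanh$ and $\sech$, reduces the claim to a polynomial identity in the variables $\xi$, $\sqrt{\omega}\tanh(\sqrt{\omega}x)$, and $\sqrt{\omega}\sech(\sqrt{\omega}x)$. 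The factor $(|\xi|-i\sqrt{\omega})^{-2}$ in the normalization does not depend on $x$ and therefore only rescales the identity. The eigenfunction relation for $\Psi_{-,\omega}$ then follows from $\Psi_{-,\omega} = \sigma_1\Psi_{+,\omega}$ and the matrix identity $\sigma_1 \calH(\omega)\sigma_1 = -\calH(\omega)$, which is immediate from \eqref{eqn:calH}.

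Second, I would derive \eqref{eqn: Pe pairing formula} from Stone's formula applied to the contour integral \eqref{equ:definition_Pd} (complemented by $P_\mathrm{e} = I - P_\mathrm{d}$). The crucial structural observation is that $\calH(\omega)^* = \sigma_3\calH(\omega)\sigma_3$, so $\calH(\omega)$ is symmetric with respect to the bilinear form $(F,G) \mapsto \langle F,\sigma_3 G\rangle$; this explains why $\sigma_3\Psi_{\pm,\omega}(\cdot,\xi)$ arises as the natural dual basis in the pairing. Constructing the resolvent kernel $R(z;x,y) = (\calH(\omega) - zI)^{-1}(x,y)$ from Jost-type solutions on the two branches of the essential spectrum, and identifying these (after appropriate normalization) with $\Psi_{+,\omega}$ on $[\omega,\infty)$ and with $\Psi_{-,\omega}$ on $(-\infty,-\omega]$, one writes the jump of the resolvent across $\lambda > \omega$ as a rank-two object proportional to $|\Psi_{+,\omega}(\cdot,\xi)\rangle\langle\sigma_3\Psi_{+,\omega}(\cdot,\xi)|$ with $\xi^2 = \lambda - \omega$. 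The Jacobian $d\lambda = 2\xi\,d\xi$ from the change of variables combines with the Wronskian factor $(2\xi)^{-1}$ to yield a clean $d\xi$ measure; symmetrizing over $\xi \mapsto -\xi$ produces the first integral of \eqref{eqn: Pe pairing formula}. The branch $\lambda < -\omega$ contributes the second integral, with the overall minus sign coming from the orientation of the Stone contour.

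Finally, the time-evolution formula \eqref{eqn: e-itH Pe pairing formula} follows by inserting $e^{-it\lambda}$ into Stone's formula, which becomes $e^{it(\xi^2+\omega)}$ on the upper branch (because one integrates $e^{it\calH}$, not $e^{-it\calH}$, in the stated convention) and $e^{-it(\xi^2+\omega)}$ on the lower branch. Absolute convergence is straightforward from the boundedness of $m_{j,\omega}(x,\xi)$ in $x$ for each fixed $\xi$, combined with the Schwartz decay of $F,G$ which by repeated integration by parts in $x$ renders $\langle F,\sigma_3\Psi_{\pm,\omega}(\cdot,\xi)\rangle$ rapidly decaying in $\xi$. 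The main technical obstacle is the resolvent analysis near the threshold frequencies $\xi = 0$ (i.e.\ $\lambda = \pm\omega$), where the denominator $(|\xi|-i\sqrt{\omega})^{-2}$ embeds the threshold resonances $\Phi_{\pm,\omega} = \Psi_{\pm,\omega}(\cdot,0)$ into the spectral expansion; one must verify that although $m_{j,\omega}(x,\xi)$ is only Lipschitz in $\xi$ at the threshold, no true singularity appears in the Wronskian-weighted resolvent jump. This is precisely the point where the framework of \cite{LL24} (in turn building on the Krieger-Schlag--Buslaev-Perelman theory) supplies the rigorous limiting absorption principle for $\calH(\omega)$, and I would invoke it to complete the argument.
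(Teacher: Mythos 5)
Your outline is sound and matches the standard Buslaev--Perelman/Krieger--Schlag route that underlies the cited proof in \cite{LL24} (the paper itself gives no argument here beyond the citation). You have correctly isolated the three ingredients: the pointwise generalized eigenfunction relations for $\Psi_{\pm,\omega}(\cdot,\xi)$; the Stone-type resolvent-jump formula for the non-self-adjoint $\calH(\omega)$ across the two branches of the essential spectrum, with the $\sigma_3$-pairing arising from the genuine symmetry $\calH(\omega)^* = \sigma_3 \calH(\omega)\sigma_3$; and the insertion of the spectral phase together with the change of variables $\lambda = \omega + \xi^2$. The rank-two structure of the jump, the cancellation between the $2\xi$ Jacobian and the $(2\xi)^{-1}$ Wronskian, and the recovery of \eqref{eqn: e-itH Pe pairing formula} by applying $e^{it\calH(\omega)}$ to the eigenfunctions are all as you describe.

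Two small points to tighten. First, the parenthetical in your final paragraph garbles the bookkeeping: since $\calH(\omega)\Psi_{+,\omega}(\cdot,\xi) = (\omega+\xi^2)\Psi_{+,\omega}(\cdot,\xi)$ and one applies $e^{it\calH(\omega)}$, the spectral weight to insert on the branch $\lambda = \omega + \xi^2$ is $e^{it\lambda}$, directly giving $e^{it(\xi^2+\omega)}$; the lower branch $\lambda = -(\omega+\xi^2)$ gives $e^{-it(\xi^2+\omega)}$. Second, the denominator $(|\xi|-i\sqrt{\omega})^{-2}$ never vanishes on $\bbR$, so it does not ``embed'' the threshold resonance in the sense of a singularity: the threshold resonance enters because $\Psi_{\pm,\omega}(\cdot,0) = \Phi_{\pm,\omega}$ fail to decay, while the only regularity defect of $m_{j,\omega}(x,\xi)$ at $\xi=0$ is that $|\xi|$ is merely Lipschitz. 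You are right that the absence of a genuine singularity in the Wronskian-weighted jump is what the limiting absorption analysis of \cite{LL24,KS06,BusPerel92} must certify, and you appropriately defer to it for that step.
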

\begin{proof}
See \cite[Proposition~3.10]{LL24}.
\end{proof}
The functions $\Psi_{\pm, \omega}(x,\xi)$ form the distorted Fourier basis elements for the linearized operator $\calH(\omega)$. In view of the representation formulas \eqref{eqn: e-itH Pe pairing formula} and \eqref{eqn: Pe pairing formula}, we define the distorted Fourier transform relative to $\calH(\omega)$ of a vector-valued function $F \in \calS(\bbR) \times \calS(\bbR)$ by
\begin{equation} \label{eqn:def-dFT}
\wtilcalF_\omega[F](\xi) := \begin{bmatrix}\wtilcalF_{+, \omega}[F](\xi)\\ \wtilcalF_{-, \omega}[F](\xi)\end{bmatrix} := \begin{bmatrix}
\langle F,\sigma_3 \Psi_{+,\omega}(\cdot,\xi)\rangle \\ \langle F,\sigma_3 \Psi_{-,\omega}(\cdot,\xi)\rangle
\end{bmatrix}.
\end{equation}
By duality, we infer from Proposition~\ref{prop: representation formula} that for any $F \in \calS(\bbR)\times\calS(\bbR)$
\begin{equation} \label{eqn: representation e-itH}
\begin{aligned}
(e^{it\calH(\omega)}\Pe F)(x) &= \int_\bbR e^{it(\xi^2+\omega)}\wtilcalF_{+,\omega}[F](\xi)\Psi_{+,\omega}(x,\xi)\,\ud\xi \\
&\quad - \int_\bbR e^{-it(\xi^2+\omega)}\wtilcalF_{-,\omega}[F](\xi)\Psi_{-,\omega}(x,\xi)\,\ud\xi,
\end{aligned}
\end{equation}
and
\begin{equation} \label{eqn: representation Pe}
(\Pe F)(x) =   \int_\bbR \wtilcalF_{+,\omega}[F](\xi)\Psi_{+,\omega}(x,\xi)\,\ud\xi -\int_\bbR \wtilcalF_{-,\omega}[F](\xi)\Psi_{-,\omega}(x,\xi)\,\ud\xi.
\end{equation}
In view of the formulas \eqref{eqn: representation e-itH} and \eqref{eqn: representation Pe}, we define the inverse of the distorted Fourier transform relative to $\calH(\omega)$ of a vector-valued function $G = (G_1, G_2)^{\top} \in \calS(\bbR) \times \calS(\bbR)$ by
\begin{equation} \label{equ:def-inverse-dFT}
\wtilcalF_\omega^{-1}\bigl[G\bigr](x) := \int_\bbR G_1(\xi) \Psi_{+,\omega}(x,\xi) \, \ud \xi - \int_\bbR G_2(\xi) \Psi_{-,\omega}(x,\xi) \, \ud \xi.
\end{equation}
Then we can write \eqref{eqn: representation Pe} as $P_{\mathrm{e}} = \wtilcalF_\omega^{-1} \wtilcalF_\omega$. Moveover, we have the following representation formulas for the evolution $e^{it\calH(\omega)}$ on the distorted Fourier side.

\begin{corollary} \label{cor:distFT_of_propagator}
Fix $\omega \in (0,\infty)$. We have 
\begin{equation} \label{equ:wtilcalF_applied_to_P}
\wtilcalF_{\pm, \omega} P_{\mathrm{e}} = \wtilcalF_{\pm, \omega}, \quad \wtilcalF_{\pm, \omega} P_{\mathrm{d}} = 0,
\end{equation}
and the evolution $e^{it\calH(\omega)}$ satisfies
\begin{align}
\wtilcalF_{+, \omega}[e^{it\calH(\omega)}\Pe F](\xi) &= e^{ it(\xi^2+\omega)}\wtilcalF_{+, \omega}[F](\xi),\label{eqn: diag+}\\
\wtilcalF_{-, \omega}[e^{it\calH(\omega)}\Pe F](\xi) &= e^{- it(\xi^2+\omega)}\wtilcalF_{-, \omega}[F](\xi).	\label{eqn: diag-}
\end{align}
\end{corollary}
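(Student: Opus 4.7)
The plan is to derive both parts of the corollary from the eigenvalue structure established in Propositions~\ref{prop:spectrum_of_calH} and \ref{prop: representation formula}, in two steps: first the spectral projection identities, and then the evolution identities (which use the projection identities as their initial condition).

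Step 1 (projection identities). Since $P_\mathrm{e} = I - P_\mathrm{d}$, it suffices to prove $\wtilcalF_{\pm, \omega} P_\mathrm{d} = 0$, and by Lemma~\ref{lemma: L2 decomposition} it in turn suffices to verify $\wtilcalF_{\pm, \omega}[Y_{j, \omega}](\xi) = 0$ for every $\xi \in \bbR$ and $j = 1, \ldots, 4$. The key tool is the diagonalization identity
\[
\wtilcalF_{\pm, \omega}[\calH(\omega) F](\xi) = \pm(\omega + \xi^2)\, \wtilcalF_{\pm, \omega}[F](\xi), \quad F \in \calS(\bbR) \times \calS(\bbR),
\]
which I would derive by moving $\calH(\omega)$ off $F$ onto $\sigma_3 \Psi_{\pm, \omega}(\cdot, \xi)$ via the adjoint identity $\calH(\omega)^* = \sigma_3 \calH(\omega) \sigma_3$ (a direct consequence of the explicit matrix structure of $\calH(\omega)$), and then invoking the eigenvalue relation $\calH(\omega) \Psi_{\pm, \omega}(\cdot, \xi) = \pm(\omega + \xi^2) \Psi_{\pm, \omega}(\cdot, \xi)$ built into the construction \eqref{eqn: Psi_pm_omega}. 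Applied to the true kernel vectors $Y_{1, \omega}$ and $Y_{3, \omega}$, where $\calH(\omega) Y_{1, \omega} = \calH(\omega) Y_{3, \omega} = 0$, the identity gives $\wtilcalF_{\pm, \omega}[Y_{1, \omega}] = \wtilcalF_{\pm, \omega}[Y_{3, \omega}] = 0$ since $\omega + \xi^2 > 0$. For the generalized eigenvectors $Y_{2, \omega}$ and $Y_{4, \omega}$, iterating once together with \eqref{eqn:eig-eqn for nullspace of calH-omega} produces $(\omega + \xi^2)^2\, \wtilcalF_{\pm, \omega}[Y_{j, \omega}] = \wtilcalF_{\pm, \omega}[\calH(\omega)^2 Y_{j, \omega}] = 0$, so these also vanish.

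Step 2 (evolution identities). For fixed $F \in \calS(\bbR) \times \calS(\bbR)$ and $\xi \in \bbR$, set $g_\pm(t) := \wtilcalF_{\pm, \omega}[e^{it\calH(\omega)} P_\mathrm{e} F](\xi)$. Differentiating in $t$ and using $\partial_t(e^{it\calH(\omega)} P_\mathrm{e} F) = i\calH(\omega) e^{it\calH(\omega)} P_\mathrm{e} F$, together with the smoothness and decay of $e^{it\calH(\omega)} P_\mathrm{e} F$ inherited from \eqref{eqn: representation e-itH} and the uniform boundedness of the Jost multipliers $m_{1, \omega}, m_{2, \omega}$, one may commute $\partial_t$ past the distorted Fourier pairing and reuse the diagonalization identity from Step~1 to obtain the scalar ODE $\dot{g}_\pm(t) = \pm i(\omega + \xi^2) g_\pm(t)$. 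The initial condition $g_\pm(0) = \wtilcalF_{\pm, \omega}[P_\mathrm{e} F](\xi) = \wtilcalF_{\pm, \omega}[F](\xi)$ is exactly the projection identity from Step~1, and solving yields \eqref{eqn: diag+} and \eqref{eqn: diag-}.

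I do not anticipate any substantive obstacle here; the corollary is essentially a bookkeeping consequence of the representation formulas in Proposition~\ref{prop: representation formula}. The only technical items are (i) justifying the pairing against the non-$L^2$ vectors $\Psi_{\pm, \omega}(\cdot, \xi)$ when integrating by parts, handled by the Schwartz decay of $F$ together with uniform boundedness of $m_{1,\omega}$ and $m_{2, \omega}$, and (ii) commuting $\partial_t$ with the distorted Fourier pairing, which is immediate by dominated convergence using the pointwise bounds on the representation kernel.
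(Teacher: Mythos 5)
Your Step 1 is correct and is a natural way to establish $\wtilcalF_{\pm,\omega} P_{\mathrm{d}}=0$: the adjoint identity $\calH(\omega)^*=\sigma_3\calH(\omega)\sigma_3$ does hold (the diagonal part $\calH_0(\omega)$ commutes with $\sigma_3$ and is symmetric, while a quick matrix computation checks $\sigma_3\calV(\omega)\sigma_3 = \calV(\omega)^\top = \calV(\omega)^*$), and combined with $\calH(\omega)\Psi_{\pm,\omega}(\cdot,\xi)=\pm(\omega+\xi^2)\Psi_{\pm,\omega}(\cdot,\xi)$ it yields, for $F$ Schwartz, $\wtilcalF_{\pm,\omega}[\calH(\omega)F](\xi)=\pm(\omega+\xi^2)\wtilcalF_{\pm,\omega}[F](\xi)$; applying this twice and invoking $\calN_{\mathrm{g}}(\calH(\omega))=\ker(\calH(\omega)^2)$ kills each $Y_{j,\omega}$.

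For the evolution identities, however, the route the paper is set up for is a direct comparison of the two pairing formulas in Proposition~\ref{prop: representation formula}: apply \eqref{eqn: e-itH Pe pairing formula} to $\langle e^{it\calH(\omega)}\Pe F,G\rangle$ and separately apply \eqref{eqn: Pe pairing formula} to the same quantity after writing $e^{it\calH(\omega)}\Pe F = \Pe\bigl(e^{it\calH(\omega)}\Pe F\bigr)$; equating the two integral representations for all Schwartz $G$ (and using the injectivity built into Proposition~\ref{prop:mapping_properties_dist_FT}) gives \eqref{eqn: diag+}--\eqref{eqn: diag-} at once, without any further PDE argument. Your ODE approach in Step~2 is a legitimate alternative, but it carries a genuine technical burden that the comparison route avoids: to run the diagonalization identity along the flow you need the propagated state $e^{it\calH(\omega)}\Pe F$ to lie in a space where the pointwise pairing $\langle\cdot,\sigma_3\Psi_{\pm,\omega}(\cdot,\xi)\rangle$ converges and the integration by parts is justified. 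Schwartz regularity of $F$ alone does not a priori propagate, and $H^2$ preservation under the flow is not enough, since $\Psi_{\pm,\omega}(\cdot,\xi)$ is bounded but not $L^2$. What each approach buys: your eigenvalue/ODE route makes the ``diagonalization'' content transparent (the distorted Fourier transform converts $\calH(\omega)$ into multiplication because it is built from generalized eigenfunctions), at the cost of a regularity-propagation lemma; the direct comparison is pure bookkeeping once Proposition~\ref{prop: representation formula} is in place and requires no extra input.
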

\begin{proof}
See \cite[Corollary~3.12]{LL24}.
\end{proof}

Next, we recall the expressions for the action of $\sigma_3$ and of $\px$ on the distorted Fourier side.

\begin{lemma} \label{lem:distFT_applied_to_sigmathree_F}
Fix $\omega \in (0,\infty)$. 
Let $F= (f_1,f_2)^\top \in \calS(\bbR) \times \calS(\bbR)$.	We have
\begin{align*}
\wtilcalF_{+, \omega}[\sigma_3 F] &=\wtilcalF_{+, \omega}[ F] + \calL_{+,\omega}[F],	\\
\wtilcalF_{-, \omega}[\sigma_3 F] &= -\wtilcalF_{-, \omega}[ F] + \calL_{-,\omega}[F],
\end{align*}
where
\begin{equation*}
\begin{split}
\calL_{+,\omega}[F](\xi) &:= 2\langle f_2,\Psi_{2,\omega}(\cdot,\xi)\rangle,\\
\calL_{-,\omega}[F](\xi) &:= 2\langle f_1,\Psi_{2,\omega}(\cdot,\xi)\rangle.
\end{split}
\end{equation*}
Moreover, we have 
\begin{align*}
\wtilcalF_{+,\omega}[\px F] &= i \xi \wtilcalF_{+,\omega}[F] + \calK_{+,\omega}[F],  \\
\wtilcalF_{-,\omega}[\px F] &= i \xi \wtilcalF_{-,\omega}[F] + \calK_{-,\omega}[F],
\end{align*}
where 
\begin{align*}
\calK_{+,\omega}[F](\xi) &:= \frac{1}{\sqrt{2\pi}}\langle f_1,e^{ix\xi}\px m_{1,\omega}(x,\xi)\rangle -  \frac{1}{\sqrt{2\pi}}\langle f_2,e^{ix\xi}\px m_{2,\omega}(x,\xi)\rangle,\\
\calK_{-,\omega}[F](\xi) &:= \frac{1}{\sqrt{2\pi}}\langle f_1,e^{ix\xi}\px m_{2,\omega}(x,\xi)\rangle -  \frac{1}{\sqrt{2\pi}}\langle f_2,e^{ix\xi}\px m_{1,\omega}(x,\xi)\rangle.
\end{align*}
\end{lemma}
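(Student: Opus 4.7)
The lemma follows by unfolding the definitions in \eqref{eqn:def-dFT} and \eqref{eqn: Psi_pm_omega}, together with one integration by parts. My plan is to handle the $\sigma_3$ and $\px$ identities separately, both of which reduce to componentwise bookkeeping.

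For the action of $\sigma_3$, I would exploit that $\sigma_3$ flips the sign of the second component of any vector, so $\sigma_3 F = (f_1,-f_2)^\top$ while $\sigma_3 \Psi_{+,\omega} = (\Psi_{1,\omega},-\Psi_{2,\omega})^\top$ and $\sigma_3 \Psi_{-,\omega} = (\Psi_{2,\omega},-\Psi_{1,\omega})^\top$ by the explicit form of the basis. Expanding $\wtilcalF_{\pm,\omega}[\sigma_3 F] = \langle \sigma_3 F,\sigma_3 \Psi_{\pm,\omega}(\cdot,\xi)\rangle$ componentwise and comparing with the corresponding expression for $\wtilcalF_{\pm,\omega}[F]$, one sees that in the $+$ case the diagonal $f_1 \overline{\Psi_{1,\omega}}$ contribution is unchanged while the off-diagonal $f_2 \overline{\Psi_{2,\omega}}$ contribution flips sign, producing the remainder $2\langle f_2,\Psi_{2,\omega}(\cdot,\xi)\rangle = \calL_{+,\omega}[F](\xi)$. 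The intertwining $\Psi_{-,\omega} = \sigma_1 \Psi_{+,\omega}$ swaps the roles of the two components, which explains both the overall sign change in front of $\wtilcalF_{-,\omega}[F]$ and the fact that the $-$ remainder pairs $f_1$ rather than $f_2$ against $\Psi_{2,\omega}$.

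For the action of $\px$, I would integrate by parts in $x$; since $F$ is Schwartz and $\Psi_{j,\omega}(\cdot,\xi)$ is smooth and bounded on $\bbR$ together with its derivatives, no boundary terms appear, and one obtains $\wtilcalF_{\pm,\omega}[\px F](\xi) = -\langle F, \px(\sigma_3 \Psi_{\pm,\omega}(\cdot,\xi))\rangle$. Writing $\Psi_{j,\omega}(x,\xi) = \frac{1}{\sqrt{2\pi}} m_{j,\omega}(x,\xi)\, e^{ix\xi}$ and applying the product rule, the $x$-derivative splits into a plane-wave piece, which after passing through the conjugate yields the multiplier $i\xi$ in front of $\wtilcalF_{\pm,\omega}[F]$, and a piece from $\px m_{j,\omega}$ that assembles exactly into $\calK_{\pm,\omega}[F]$. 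There is no genuine technical obstacle; the only thing to be careful with is the sign bookkeeping introduced by the complex conjugation in the inner product, the $\sigma_3$ acting through the dual slot, and the index swap $m_{1,\omega} \leftrightarrow m_{2,\omega}$ when passing from $\Psi_{+,\omega}$ to $\Psi_{-,\omega}$. Once this is tracked consistently the stated identities read off directly.
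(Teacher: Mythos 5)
The paper itself gives no proof of this lemma, citing only \cite[Lemma~3.13]{LL24}, so there is nothing in-paper to compare against beyond confirming that your outline is the natural argument. Your treatment of the $\sigma_3$ identity is complete and correct: expanding $\langle \sigma_3 F, \sigma_3 \Psi_{\pm,\omega}\rangle$ componentwise and using $\Psi_{-,\omega} = \sigma_1 \Psi_{+,\omega}$ does produce the stated formulas with $\calL_{\pm,\omega}$ exactly.

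For the $\px$ identity, however, your claim that the pieces ``assemble exactly into $\calK_{\pm,\omega}[F]$'' is precisely the step you should not assert without doing. Carrying out the bookkeeping you yourself flag as the one delicate point, integration by parts gives
\begin{equation*}
\wtilcalF_{+,\omega}[\px F](\xi) = -\langle f_1, \px\Psi_{1,\omega}(\cdot,\xi)\rangle + \langle f_2, \px\Psi_{2,\omega}(\cdot,\xi)\rangle,
\end{equation*}
and since $\px\Psi_{j,\omega} = \tfrac{1}{\sqrt{2\pi}}(\px m_{j,\omega})\,e^{ix\xi} + i\xi\,\Psi_{j,\omega}$ while $\langle f, i\xi\,g\rangle = -i\xi\,\langle f, g\rangle$ for real $\xi$, this yields
\begin{equation*}
\wtilcalF_{+,\omega}[\px F](\xi) = i\xi\,\wtilcalF_{+,\omega}[F](\xi) \;-\; \calK_{+,\omega}[F](\xi),
\end{equation*}
with a \emph{minus} sign in front of $\calK_{+,\omega}$ rather than the plus sign in the lemma; the same computation gives $i\xi\,\wtilcalF_{-,\omega}[F] - \calK_{-,\omega}[F]$ for the other component. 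So the identity as stated is not consistent with the displayed definitions of $\calK_{\pm,\omega}$: either those definitions should carry the opposite overall sign, or the $+$ in front of $\calK_{\pm,\omega}$ should be a $-$. This discrepancy is harmless downstream, since $\calK_{\pm,\omega}$ is only ever treated as a remainder estimated in absolute value, but declaring that ``the stated identities read off directly'' skips exactly the verification that surfaces it. Aside from that, your argument is sound and would constitute a correct proof once the sign is recorded.
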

\begin{proof}
See \cite[Lemma~3.13]{LL24}.
\end{proof}

The relation between the distorted Fourier transforms $\wtilcalF_{+,\omega}$ and $\wtilcalF_{-,\omega}$ under complex conjugation is given below.

\begin{lemma} \label{lem:distFT_components_relation}
Fix $\omega \in (0,\infty)$. 
Let $F= (f,\barf)^\top$ for $f \in \calS(\bbR)$. Then we have
\begin{equation} \label{equ:distFT_components_relation}
\wtilcalF_{+,\omega}[F](\xi) = - \frac{(\vert \xi \vert - i \sqrt{\omega})^2}{(\vert \xi \vert + i \sqrt{\omega})^2} \overline{\wtilcalF_{-, \omega}[F](-\xi)}.
\end{equation}
\end{lemma}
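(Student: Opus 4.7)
The plan is to prove \eqref{equ:distFT_components_relation} by a direct computation, exploiting the explicit formulas for the Jost-type functions $\Psi_{1,\omega}, \Psi_{2,\omega}$ in \eqref{eqn:m-1,omega}--\eqref{eqn:m-2,omega} together with the $\calJ$-invariant structure of $F=(f,\barf)^\top$. The key algebraic observation is that the multipliers $m_{j,\omega}(x,\xi)$ transform in a very simple way under $\xi \mapsto -\xi$ because the denominator $(|\xi|-i\sqrt{\omega})^2$ is even in $\xi$ while the numerator of $\Psi_{1,\omega}$ flips sign inside the square and $\Psi_{2,\omega}$ has $\xi$-independent numerator. This lets us convert $\Psi_{j,\omega}(x,-\xi)$ into a scalar multiple of $\overline{\Psi_{j,\omega}(x,\xi)}$.

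First, I would compute the pointwise relations
\begin{equation*}
\Psi_{1,\omega}(x,-\xi) \;=\; \frac{(|\xi|+i\sqrt{\omega})^2}{(|\xi|-i\sqrt{\omega})^2}\,\overline{\Psi_{1,\omega}(x,\xi)}, \qquad
\Psi_{2,\omega}(x,-\xi) \;=\; \frac{(|\xi|+i\sqrt{\omega})^2}{(|\xi|-i\sqrt{\omega})^2}\,\overline{\Psi_{2,\omega}(x,\xi)}.
\end{equation*}
For $\Psi_{1,\omega}$ this uses that $\bigl(-\xi + i\sqrt{\omega}\tanh(\sqrt{\omega}x)\bigr)^2 = \overline{(\xi+i\sqrt{\omega}\tanh(\sqrt{\omega}x))^2}$ since $\xi$ and $\tanh(\sqrt{\omega}x)$ are real, combined with $e^{-ix\xi} = \overline{e^{ix\xi}}$ and $\overline{(|\xi|-i\sqrt{\omega})^2}=(|\xi|+i\sqrt{\omega})^2$; for $\Psi_{2,\omega}$ the numerator is real and $\xi$-independent so only the conjugation of the denominator and the exponential contribute.

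Next, starting from the definition \eqref{eqn:def-dFT} and using $\sigma_3 \Psi_{-,\omega} = (\Psi_{2,\omega},-\Psi_{1,\omega})^\top$ together with $F=(f,\barf)^\top$, I would write
\begin{equation*}
\wtilcalF_{-,\omega}[F](-\xi) \;=\; \int_\bbR \Bigl( f(x)\,\overline{\Psi_{2,\omega}(x,-\xi)} - \barf(x)\,\overline{\Psi_{1,\omega}(x,-\xi)} \Bigr)\,\ud x.
\end{equation*}
Taking complex conjugates, inserting the transformation rules above, and factoring out the common scalar gives
\begin{equation*}
\overline{\wtilcalF_{-,\omega}[F](-\xi)} \;=\; \frac{(|\xi|+i\sqrt{\omega})^2}{(|\xi|-i\sqrt{\omega})^2}\int_\bbR \Bigl( \barf(x)\,\overline{\Psi_{2,\omega}(x,\xi)} - f(x)\,\overline{\Psi_{1,\omega}(x,\xi)} \Bigr)\,\ud x.
\end{equation*}
Comparing with the expression for $\wtilcalF_{+,\omega}[F](\xi) = \int_\bbR \bigl( f \overline{\Psi_{1,\omega}(\cdot,\xi)} - \barf\, \overline{\Psi_{2,\omega}(\cdot,\xi)} \bigr)\,\ud x$ obtained from $\sigma_3 \Psi_{+,\omega}=(\Psi_{1,\omega},-\Psi_{2,\omega})^\top$, the bracket on the right-hand side above is exactly $-\wtilcalF_{+,\omega}[F](\xi)$, which rearranges to the claimed identity \eqref{equ:distFT_components_relation}.

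This is purely algebraic, so there is no serious obstacle; the only points requiring care are the bookkeeping of signs coming from the $\sigma_3$ factor in the inner product \eqref{eqn:def-dFT} and the convention \eqref{equ:inner_product_vector} that conjugates the second slot. Schwartz decay of $F$ justifies all interchanges of integration and complex conjugation.
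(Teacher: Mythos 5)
Your proof is correct. The pointwise transformation rules $\Psi_{j,\omega}(x,-\xi)=\tfrac{(|\xi|+i\sqrt{\omega})^2}{(|\xi|-i\sqrt{\omega})^2}\overline{\Psi_{j,\omega}(x,\xi)}$ are verified exactly as you say, and the sign bookkeeping with $\sigma_3$ and the conjugation in \eqref{equ:inner_product_vector} all check out; this is the same direct algebraic computation used in the paper (via \cite[Lemma~3.14]{LL24}).
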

\begin{proof}
See \cite[Lemma~3.14]{LL24}.
\end{proof}

\subsection{Mapping properties}
In the next lemma, we recall pseudo-differential bounds for \eqref{eqn:m-1,omega} and \eqref{eqn:m-2,omega}.

\begin{lemma} \label{lemma: PDO on m12}
For a fixed $\omega \in (0,\infty)$, we have for any $j \in \bbN_0$ and any $k\in \{0,1\}$ that
\begin{equation}\label{equ:PDO_on_m12}
\sup_{x,\xi \in \bbR} \, \bigl|\partial_x^j \partial_\xi^k m_{1,\omega}(x,\xi)\bigr| + \sup_{x,\xi \in \bbR} \, \bigl|\partial_x^j \partial_\xi^k m_{2,\omega}(x,\xi)\bigr| \lesssim_{\omega,j,k} 1.
\end{equation}
Moreover, both $m_{1,\omega}(x,\xi)$ and $m_{2,\omega}(x,\xi)$ can be written as sums of tensorized terms of the form $\fraka(x)\frakb(\xi)$, where $\fraka(x)$ is a smooth bounded function and $\frakb(\xi)$ is a bounded Lipschitz function.
\end{lemma}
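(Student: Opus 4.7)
The plan is to exploit the explicit algebraic form of $m_{1,\omega}$ and $m_{2,\omega}$ and the fact that the denominator $(\vert \xi\vert - i\sqrt{\omega})^2$ has modulus $\vert\xi\vert^2 + \omega$ which is bounded below by $\omega > 0$, so no singularities ever appear. I would first prove the tensorization statement, because the derivative bounds \eqref{equ:PDO_on_m12} then reduce to a separate analysis of the $x$-factors and $\xi$-factors.

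\textbf{Tensorization.} For $m_{2,\omega}$ the claim is immediate, since
\[
m_{2,\omega}(x,\xi) = \bigl(\omega\sech^2(\sqrt{\omega}x)\bigr)\cdot\frac{1}{(\vert\xi\vert - i\sqrt{\omega})^2},
\]
where $\mathfrak{a}(x) = \omega\sech^2(\sqrt{\omega}x)$ is Schwartz (hence smooth with all derivatives bounded) and $\mathfrak{b}(\xi) = (\vert\xi\vert - i\sqrt{\omega})^{-2}$. For $m_{1,\omega}$, I would expand the square in the numerator,
\[
\bigl(\xi + i\sqrt{\omega}\tanh(\sqrt{\omega}x)\bigr)^2 = \xi^2 + 2i\sqrt{\omega}\,\xi\tanh(\sqrt{\omega}x) - \omega\tanh^2(\sqrt{\omega}x),
\]
which produces the three-term decomposition
\[
m_{1,\omega}(x,\xi) = \frac{\xi^2}{(\vert\xi\vert - i\sqrt{\omega})^2} + 2i\sqrt{\omega}\tanh(\sqrt{\omega}x)\cdot\frac{\xi}{(\vert\xi\vert - i\sqrt{\omega})^2} - \omega\tanh^2(\sqrt{\omega}x)\cdot\frac{1}{(\vert\xi\vert - i\sqrt{\omega})^2}.
\]
Each of the $x$-factors $1,\ \tanh(\sqrt{\omega}x),\ \tanh^2(\sqrt{\omega}x)$ is smooth with all derivatives bounded.

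\textbf{Bounded Lipschitz of the $\xi$-factors.} Using $\xi^2 = \vert\xi\vert^2$ and $\xi = \mathrm{sgn}(\xi)\vert\xi\vert$, each $\xi$-factor is a bounded rational expression in $\vert\xi\vert$ (and in one case times $\mathrm{sgn}(\xi)$). Boundedness is evident from the lower bound on the denominator. For Lipschitz continuity, I would differentiate away from $\xi = 0$; a direct computation (e.g.\ $\partial_\xi[\xi/(\vert\xi\vert - i\sqrt{\omega})^2] = -(\vert\xi\vert+i\sqrt{\omega})/(\vert\xi\vert-i\sqrt{\omega})^3$) shows the derivative is bounded by $(\xi^2+\omega)^{-1}$, and the one-sided limits at $\xi=0$ coincide, giving global Lipschitz bounds.

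\textbf{The derivative estimates \eqref{equ:PDO_on_m12}.} With the tensorized form in hand, these follow by applying $\partial_x^j \partial_\xi^k$ separately to each term. The $\partial_x^j$ hits only the smooth Schwartz factors $\sech^2,\tanh,\tanh^2$ (producing bounded Schwartz functions), while the $k \in \{0,1\}$ $\xi$-derivative hits only the rational factor in $\vert\xi\vert$, whose bounded Lipschitz nature gives an $L^\infty$ bound. The main (and only) subtle point is the presence of $\vert\xi\vert$ in the denominator, which is why the statement restricts to $k \leq 1$; this is why I split the proof so as never to take more than one $\xi$-derivative of a term containing $\vert\xi\vert$. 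No step involves any nontrivial cancellation or oscillatory integration, so the proof is essentially a verification.
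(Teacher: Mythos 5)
Your proof is correct and takes the approach one would expect — the paper defers to \cite[Lemma~3.16]{LL24}, where the verification is essentially the same expansion-and-inspection argument: expand the square in the numerator of $m_{1,\omega}$, identify the tensor factors, and observe that the denominator never vanishes.

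One small inaccuracy in the reasoning, though it does not affect the conclusion: you assert that the one-sided limits of the $\xi$-derivative at $\xi=0$ coincide, and that this is what gives the Lipschitz bound. Neither half of this is quite right. For the factor $\frakb(\xi) = (\vert\xi\vert - i\sqrt{\omega})^{-2}$, which appears in both $m_{1,\omega}$ and $m_{2,\omega}$, the function is even in $\xi$, so its derivative is odd where it exists; a short computation gives $\frakb'(0^{\pm}) = \pm 2i\omega^{-3/2}$, so the one-sided limits are opposite and nonzero, and $\frakb$ is not $C^1$ at the origin. It happens that the limits do coincide for your chosen example $\xi(\vert\xi\vert-i\sqrt{\omega})^{-2}$, which is why the issue is easy to miss. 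More to the point, matching one-sided derivative limits is not needed for Lipschitz continuity: continuity of the function on $\bbR$ together with a uniform bound on the derivative on $\bbR\setminus\{0\}$ already gives a global Lipschitz constant (split any interval at the origin and apply the mean value theorem on each half). With that correction the argument is airtight, and the claimed bound \eqref{equ:PDO_on_m12} for $k\in\{0,1\}$ follows exactly as you describe by distributing $\partial_x^j\partial_\xi^k$ over the tensorized terms.
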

\begin{proof}
See \cite[Lemma~3.16]{LL24}.
\end{proof}
The following proposition records the mapping properties of the distorted Fourier transform.

\begin{proposition} \label{prop:mapping_properties_dist_FT}
The distorted Fourier transform \eqref{eqn:def-dFT} and its inverse \eqref{equ:def-inverse-dFT} are bounded from $L^p$ to $L^{p'}$ for $1 \leq p \leq 2$, and from $H^1$ to $L^{2,1}$ and from $L^{2,1}$ to $H^1$.
\end{proposition}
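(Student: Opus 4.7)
The plan is to reduce every assertion to the corresponding statement for the flat Fourier transform via the tensorization recorded in Lemma~\ref{lemma: PDO on m12}. Starting from the explicit formulas \eqref{eqn:m-1,omega}--\eqref{eqn:m-2,omega} and expanding the numerators, I would write the finite decomposition $m_{j,\omega}(x,\xi) = \sum_\ell \fraka_{j,\ell}(x)\,\frakb_{j,\ell}(\xi)$ for $j=1,2$, in which each $\fraka_{j,\ell}$ lies in $\{1,\tanh(\sqrt{\omega}\,\cdot),\tanh^2(\sqrt{\omega}\,\cdot),\sech^2(\sqrt{\omega}\,\cdot)\}$ and is therefore smooth and bounded with bounded derivatives of all orders, while each $\frakb_{j,\ell}$ is a constant multiple of $\xi^k(|\xi|-i\sqrt{\omega})^{-2}$ for some $k \in \{0,1,2\}$, hence bounded and globally Lipschitz on $\bbR$ (the corner of $|\xi|$ at $\xi=0$ produces matching one-sided derivatives, as an elementary computation confirms). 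Substituting into the definition \eqref{eqn:def-dFT} then yields, up to harmless constants and sign choices introduced by $\sigma_3$, the schematic representation
\begin{equation*}
\wtilcalF_{\pm,\omega}[F](\xi) \;=\; \sum_{\ell}\, \overline{\frakb_{\pm,\ell}(\xi)}\;\widehat{\calF}\bigl[\,\overline{\fraka_{\pm,\ell}}\; g_{\pm,\ell}\,\bigr](\xi),
\end{equation*}
where each $g_{\pm,\ell}$ is a scalar component of $F$; an analogous representation for $\wtilcalF_\omega^{-1}$ follows from \eqref{equ:def-inverse-dFT}. In other words, modulo bounded multiplications on the physical and frequency sides, each transform is a finite sum of flat Fourier transforms.

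From this representation the $L^p \to L^{p'}$ bound for $1 \leq p \leq 2$ is immediate: multiplication by $\overline{\fraka_{\pm,\ell}} \in L^\infty$ is bounded on $L^p$, Hausdorff--Young gives $\widehat{\calF}\colon L^p \to L^{p'}$, and multiplication by $\overline{\frakb_{\pm,\ell}} \in L^\infty$ is bounded on $L^{p'}$. For the mapping $H^1_x \to L^{2,1}_\xi$, the uniform $L^\infty$ bound on each $\frakb_{\pm,\ell}$ together with Plancherel yields
\begin{equation*}
\|\jxi \,\wtilcalF_{\pm,\omega}[F]\|_{L^2_\xi} \;\lesssim\; \sum_{\ell}\, \|\jxi \,\widehat{\calF}[\,\overline{\fraka_{\pm,\ell}}\,g_{\pm,\ell}\,]\|_{L^2_\xi} \;\lesssim\; \sum_{\ell}\, \|\overline{\fraka_{\pm,\ell}}\,g_{\pm,\ell}\|_{H^1_x} \;\lesssim\; \|F\|_{H^1_x},
\end{equation*}
where the last step uses that each $\fraka_{\pm,\ell}$ together with its derivative is bounded.

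For the mapping $L^{2,1}_x \to H^1_\xi$, I would distribute a $\xi$-derivative across the representation into two pieces. When $\partial_\xi$ falls on the outer factor $\overline{\frakb_{\pm,\ell}(\xi)}$, its bounded Lipschitz derivative combined with Plancherel controls the piece by $\|F\|_{L^2_x}$. When $\partial_\xi$ falls on the Fourier integral, the identity $\partial_\xi \widehat{\calF}[h](\xi) = -i\,\widehat{\calF}[x h](\xi)$ together with Plancherel produces the bound $\|x\,\overline{\fraka_{\pm,\ell}}\,g_{\pm,\ell}\|_{L^2_x} \lesssim \|F\|_{L^{2,1}_x}$. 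Combined with the $p=2$ Hausdorff--Young bound already established, this delivers $\|\wtilcalF_{\pm,\omega}[F]\|_{H^1_\xi} \lesssim \|F\|_{L^{2,1}_x}$, and the identical scheme applies verbatim to $\wtilcalF_\omega^{-1}$ thanks to its parallel tensorized structure. The only technical input is the tensorization from Lemma~\ref{lemma: PDO on m12}; the principal (quite mild) obstacle is confirming the global Lipschitz regularity of the $\frakb$-factors across the corner at $\xi = 0$, which is a short direct calculation.
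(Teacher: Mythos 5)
Your proposal is correct and follows the approach the paper clearly intends: the tensorization of $m_{1,\omega}$ and $m_{2,\omega}$ recorded in Lemma~\ref{lemma: PDO on m12} is stated immediately before the proposition precisely to enable the reduction to the flat Fourier transform that you carry out. The three estimates then follow from Hausdorff--Young, Plancherel, and the identity $\partial_\xi \widehat{\calF}[h] = -i\widehat{\calF}[xh]$ exactly as you describe, with the $\fraka$-factors acting as bounded smooth multipliers on the physical side and the $\frakb$-factors as bounded Lipschitz multipliers on the frequency side.

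One small factual imprecision: you assert that the one-sided derivatives of the $\frakb$-factors match across $\xi = 0$. This is true for $\frakb(\xi) = \xi^k(|\xi|-i\sqrt{\omega})^{-2}$ when $k \in \{1,2\}$, but for $k=0$ a direct computation gives $\frakb'(0^\pm) = \pm 2i\omega^{-3/2}$, which do not agree. This does not affect the argument, since Lipschitz continuity requires only that both one-sided derivatives be bounded, not that they coincide, and Lemma~\ref{lemma: PDO on m12} already guarantees $\frakb \in W^{1,\infty}$.
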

\begin{proof}
See \cite[Proposition~3.17]{LL24}.
\end{proof}
\section{Linear Decay Estimates} \label{sec:linear_decay}

In this section we revisit several decay estimates for the linear Schr\"odinger evolution from \cite{LL24}. 

\subsection{Dispersive decay}
The following lemma is a standard dispersive estimate.
\begin{lemma} \label{lem:linear_dispersive_decay}
Suppose $m \colon \bbR^2 \to \bbC$ satisfies
\begin{equation}\label{eqn: lem4.1-assumption-a}
\sup_{x,\xi \in \bbR} \, \bigl( |m(x,\xi)| + |\pxi m(x,\xi)| \bigr) \lesssim 1.
\end{equation}
Then there exists a constant $C \geq 1$ such that uniformly for all $t \geq 1$,
\begin{equation}\label{eqn:linear_dispersive_decay}
\biggl\| \int_\bbR e^{\pm i t \xi^2} e^{ix\xi} m(x,\xi) g(\xi) \, \ud \xi - \frac{\sqrt{\pi}}{t^{\frac12}} e^{\mp i \frac{x^2}{4t}} e^{\pm i \frac{\pi}{4}} m\Bigl(x, \mp \frac{x}{2t} \Bigr) g\Bigl(\mp\frac{x}{2t}\Bigr) \biggr\|_{L^\infty_x} \leq \frac{C}{t^{\frac34}} \| g\|_{H^1_\xi}.
\end{equation}
In particular, it follows that for all $t \geq 1$,
\begin{equation}\label{eqn:linear_dispersive_decay2}
\biggl\| \int_\bbR e^{\pm i t \xi^2} e^{ix\xi} m(x,\xi) g(\xi) \, \ud \xi \biggr\|_{L^\infty_x} \lesssim \frac{1}{t^{\frac12}} \|g\|_{L^\infty_\xi} + \frac{1}{t^{\frac34}} \| g(\xi)\|_{H^1_\xi}.
\end{equation}
\end{lemma}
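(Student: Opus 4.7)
The plan is to execute a standard stationary phase argument around the unique critical point of the quadratic phase. First, I complete the square by setting $\xi_* := \mp x/(2t)$, which gives the identity
\begin{equation*}
e^{\pm it\xi^2} e^{ix\xi} = e^{\mp ix^2/(4t)} \, e^{\pm it(\xi - \xi_*)^2}.
\end{equation*}
The integral on the left-hand side of \eqref{eqn:linear_dispersive_decay} therefore equals $e^{\mp ix^2/(4t)} J(x,t)$, where $J(x,t) := \int_\bbR e^{\pm it(\xi - \xi_*)^2} m(x,\xi) g(\xi)\,\ud\xi$. I would then decompose $m(x,\xi) g(\xi) = m(x,\xi_*) g(\xi_*) + h(x,\xi)$ with $h(x,\xi) := m(x,\xi) g(\xi) - m(x,\xi_*) g(\xi_*)$, and invoke the classical Fresnel identity $\int_\bbR e^{\pm it\eta^2}\,\ud\eta = \sqrt{\pi/t}\, e^{\pm i\pi/4}$ (as an improper Riemann integral) to extract the explicit leading-order term on the right-hand side of \eqref{eqn:linear_dispersive_decay}. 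What remains is to bound the remainder $E(x,t) := \int_\bbR e^{\pm it(\xi-\xi_*)^2} h(x,\xi)\,\ud\xi$ uniformly in $x$ by $C t^{-3/4}\|g\|_{H^1_\xi}$.

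For the remainder estimate, I would split the $\xi$-integration according to whether $|\xi - \xi_*| \leq t^{-1/2}$ or $|\xi - \xi_*| > t^{-1/2}$. From the hypothesis \eqref{eqn: lem4.1-assumption-a}, $m$ is bounded and Lipschitz in $\xi$ uniformly in $x$, while $g \in H^1(\bbR) \hookrightarrow C^{1/2}(\bbR)$ with $\|g\|_{C^{1/2}} \lesssim \|g\|_{H^1_\xi}$, yielding the pointwise bound
\begin{equation*}
|h(x,\xi)| \lesssim \|g\|_{H^1_\xi} \min\bigl(|\xi-\xi_*|^{1/2},\,1\bigr).
\end{equation*}
Integrating over the near region $|\xi - \xi_*| \leq t^{-1/2}$ gives the acceptable contribution $\lesssim t^{-3/4}\|g\|_{H^1_\xi}$. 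On the far region, I integrate by parts once using $e^{\pm it(\xi-\xi_*)^2} = (\pm 2it(\xi - \xi_*))^{-1} \partial_\xi e^{\pm it(\xi-\xi_*)^2}$. The boundary at infinity vanishes since $h(x,\xi)/(\xi - \xi_*) \to 0$ as $|\xi| \to \infty$, and the boundary term at $|\xi - \xi_*| = t^{-1/2}$ contributes $\lesssim t^{-3/4}\|g\|_{H^1_\xi}$ thanks to the H\"older bound at that scale. The two interior pieces are controlled as follows: $\int_{|\xi-\xi_*|>t^{-1/2}} \frac{|\partial_\xi h(x,\xi)|}{2t|\xi-\xi_*|}\,\ud\xi$ is estimated by Cauchy--Schwarz using $\|\partial_\xi h\|_{L^2_\xi} \lesssim \|g\|_{H^1_\xi}$ (which follows from \eqref{eqn: lem4.1-assumption-a}) and $\|(\xi-\xi_*)^{-1}\|_{L^2(|\xi-\xi_*|>t^{-1/2})} \lesssim t^{1/4}$, while $\int_{|\xi-\xi_*|>t^{-1/2}} \frac{|h(x,\xi)|}{2t(\xi-\xi_*)^2}\,\ud\xi$ is handled by a further split at $|\xi-\xi_*|=1$, applying the H\"older bound on $t^{-1/2} < |\xi-\xi_*| \leq 1$ and the uniform bound for $|\xi-\xi_*| > 1$. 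All contributions sum to $\lesssim t^{-3/4}\|g\|_{H^1_\xi}$, establishing \eqref{eqn:linear_dispersive_decay}.

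Estimate \eqref{eqn:linear_dispersive_decay2} is then an immediate consequence of \eqref{eqn:linear_dispersive_decay}: the explicit leading-order term is bounded pointwise in $L^\infty_x$ by $t^{-1/2}\sqrt{\pi}\,\|m\|_{L^\infty_{x,\xi}}\,\|g\|_{L^\infty_\xi} \lesssim t^{-1/2}\|g\|_{L^\infty_\xi}$, and the remainder is controlled by $t^{-3/4}\|g\|_{H^1_\xi}$. The main technical obstacle in the argument is the treatment of the remainder integral $E$ at the minimal regularity $g \in H^1_\xi$: one must extract the $C^{1/2}$-H\"older regularity from the Sobolev embedding to handle the neighborhood of the critical point $\xi_*$, and balance it against the oscillatory gain from a single integration by parts in the complementary region. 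The resulting endpoint rate $t^{-3/4}$ is sharp for this argument, being dictated precisely by the matching of the H\"older exponent $1/2$ with the width $t^{-1/2}$ of the stationary region.
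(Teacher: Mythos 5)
Your proof is correct and follows the standard stationary phase argument at critical $H^1$ regularity — complete the square to localize at $\xi_* = \mp x/(2t)$, extract the Fresnel leading term, then bound the remainder by splitting at the scale $|\xi - \xi_*| \sim t^{-1/2}$, using the H\"older bound $g \in C^{1/2}$ (from $H^1(\bbR)$) near the critical point and one integration by parts away from it. This is essentially the same approach as the cited proof in \cite[Lemma~4.1]{LL24}.
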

\begin{proof}
See \cite[Lemma~4.1]{LL24}.
\end{proof}

\subsection{Improved local decay}
The lemma below captures the leading local decay due to the threshold resonances $\Psi_{j,\omega}(x,0)$ of the linearized operator. 
\begin{lemma} \label{lem:improved_local_decay_difference_Psis}
Fix $\omega \in (0, \infty)$. Let $\Psi_{1,\omega}(x,\xi)$, $\Psi_{2,\omega}(x,\xi)$ be given by \eqref{eqn:m-1,omega}, \eqref{eqn:m-2,omega} respectively. Then there exists $C \geq 1$ such that for $j = 1,2$ we have uniformly for all $t \geq 0$ that
\begin{equation} \label{equ:improved_local_decay_difference_Psis_no_px}
\begin{aligned}
\biggl\| \jx^{-2} \int_\bbR e^{\pm i t \xi^2} \bigl( \Psi_{j,\omega}(x,\xi) - \Psi_{j,\omega}(x,0) \bigr) g(\xi) \, \ud \xi \biggr\|_{L^\infty_x} \leq \frac{C}{\jt} \Bigl( \|g\|_{L^\infty_\xi} + \|\pxi g\|_{L^2_\xi} + \|\jxi g\|_{L^2_\xi} \Bigr)
\end{aligned}
\end{equation}
and
\begin{equation} \label{equ:improved_local_decay_difference_Psis_with_px}
\begin{aligned}
\biggl\| \jx^{-3} \int_\bbR e^{\pm i t \xi^2} \px \bigl( \Psi_{j,\omega}(x,\xi) - \Psi_{j,\omega}(x,0) \bigr) g(\xi) \, \ud \xi \biggr\|_{L^2_x} \leq \frac{C}{\jt} \Bigl( \|g\|_{L^\infty_\xi} + \|\pxi g\|_{L^2_\xi} + \|\jxi g\|_{L^2_\xi} \Bigr).
\end{aligned}
\end{equation}
\end{lemma}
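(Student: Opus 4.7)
The plan is to establish the $\jt^{-1}$ improved local decay by performing a single integration by parts in $\xi$ against the oscillatory phase $e^{\pm it\xi^2}$, exploiting the fact that the integrand $\Psi_{j,\omega}(x,\xi)-\Psi_{j,\omega}(x,0)$ vanishes at $\xi=0$ to extract one factor of $\xi$, which converts into a $t^{-1}$ gain. The regime $t\in[0,1]$ is trivial: since $|\Psi_{j,\omega}(x,\xi)|\lesssim 1$ and $|\partial_x\Psi_{j,\omega}(x,\xi)|\lesssim\jxi$ by Lemma~\ref{lemma: PDO on m12}, and $\|g\|_{L^1_\xi}\lesssim\|\jxi g\|_{L^2_\xi}$ by Cauchy-Schwarz, the left-hand sides of \eqref{equ:improved_local_decay_difference_Psis_no_px} and \eqref{equ:improved_local_decay_difference_Psis_with_px} are controlled by $\|\jxi g\|_{L^2_\xi}$. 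Hence it suffices to treat $t\geq 1$, for which I fix a smooth cutoff $\chi\in C_c^\infty(\bbR)$ with $\chi\equiv 1$ on $[-1,1]$ and $\mathrm{supp}(\chi)\subset[-2,2]$ and split the $\xi$-integral into a low-frequency part (with factor $\chi(\xi)$) and a high-frequency part (with factor $1-\chi(\xi)$).

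On the low-frequency region I write
\begin{equation*}
\Psi_{j,\omega}(x,\xi)-\Psi_{j,\omega}(x,0)=\xi\,b_j(x,\xi), \qquad b_j(x,\xi):=\int_0^1\partial_\xi\Psi_{j,\omega}(x,s\xi)\,ds,
\end{equation*}
and since $\partial_\xi\Psi_{j,\omega}(x,\xi)=\frac{1}{\sqrt{2\pi}}(\partial_\xi m_{j,\omega}(x,\xi)+ixm_{j,\omega}(x,\xi))e^{ix\xi}$, Lemma~\ref{lemma: PDO on m12} yields $|b_j(x,\xi)|\lesssim\jx$ and $|\partial_\xi b_j(x,\xi)|\lesssim\jx^2$ on the support of $\chi$ (away from $\xi=0$). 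Using $\xi e^{\pm it\xi^2}=(\pm 2it)^{-1}\partial_\xi e^{\pm it\xi^2}$ and integrating by parts separately on $\{\xi>0\}$ and $\{\xi<0\}$ produces a $t^{-1}$ prefactor times a boundary contribution at $\xi=0$ -- equal to the jump of $\partial_\xi m_{j,\omega}(x,\cdot)$ at $0$ (inherited from the factor $(|\xi|-i\sqrt\omega)^{-2}$; uniformly bounded in $x$) multiplied by $g(0)$ -- plus a volume integral involving $\chi' b_j g$, $\chi\partial_\xi b_j\cdot g$, and $\chi b_j\partial_\xi g$. Bounding by $L^\infty_\xi$-$L^1_\xi$ duality on the compact support of $\chi$ together with Cauchy-Schwarz for the $\partial_\xi g$ term yields $\lesssim\jx^2(\|g\|_{L^\infty_\xi}+\|\partial_\xi g\|_{L^2_\xi})$, which is absorbed by the weight $\jx^{-2}$.

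On the high-frequency region no factor of $\xi$ needs to be extracted: one applies $e^{\pm it\xi^2}=(\pm 2it\xi)^{-1}\partial_\xi e^{\pm it\xi^2}$ (harmless since $|\xi|\geq 1$) and integrates by parts once to gain $t^{-1}$. After the IBP, the integrand is pointwise bounded on $|\xi|\geq 1$ by $\jx|\xi|^{-1}(|g|+|\partial_\xi g|)$ using $|\partial_\xi\Psi_{j,\omega}(x,\xi)|\lesssim\jx$, so Cauchy-Schwarz against $\||\xi|^{-1}\|_{L^2(|\xi|\geq 1)}<\infty$ controls the $\xi$-integral by $\jx(\|\jxi g\|_{L^2_\xi}+\|\partial_\xi g\|_{L^2_\xi})$, and the weight $\jx^{-2}$ leaves a uniform-in-$x$ bound. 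The proof of \eqref{equ:improved_local_decay_difference_Psis_with_px} follows by the same IBP scheme after computing
\begin{equation*}
\partial_x[\Psi_{j,\omega}(x,\xi)-\Psi_{j,\omega}(x,0)]=\tfrac{1}{\sqrt{2\pi}}i\xi m_{j,\omega}(x,\xi)e^{ix\xi}+\tfrac{1}{\sqrt{2\pi}}\bigl[\partial_x m_{j,\omega}(x,\xi)e^{ix\xi}-\partial_x m_{j,\omega}(x,0)\bigr],
\end{equation*}
where the first summand already carries an explicit factor of $\xi$ and the second vanishes at $\xi=0$ and admits the factorization $\xi\tilde b_j(x,\xi)$ with $|\tilde b_j(x,\xi)|\lesssim\jx$ by virtue of the Lipschitz bounds on $\partial_\xi\partial_x m_{j,\omega}$ from Lemma~\ref{lemma: PDO on m12} and the identity $(e^{ix\xi}-1)/\xi=ix\int_0^1 e^{isx\xi}\,ds$. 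The $\jx^2$ growth left over after the IBP is then rendered $L^2_x$-integrable by the heavier weight $\jx^{-3}$ since $\|\jx^{-1}\|_{L^2_x}<\infty$.

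The main technical obstacle is the careful bookkeeping of the $x$-growth acquired after integration by parts -- specifically, ensuring that the factor $\jx^2$ arising from the $(ix)^2m_{j,\omega}$-type terms in $\partial_\xi b_j$ is exactly balanced by the available weight ($\jx^{-2}$ in $L^\infty_x$ for \eqref{equ:improved_local_decay_difference_Psis_no_px} or $\jx^{-3}$ in $L^2_x$ for \eqref{equ:improved_local_decay_difference_Psis_with_px}) -- together with the correct handling of the jump of $\partial_\xi m_{j,\omega}$ at $\xi=0$ as a boundary contribution of size $\lesssim t^{-1}\|g\|_{L^\infty_\xi}$.
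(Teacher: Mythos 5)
Your overall strategy---extract one power of $\xi$ from the vanishing of $\Psi_{j,\omega}(x,\xi) - \Psi_{j,\omega}(x,0)$ at $\xi=0$, convert it into a $t^{-1}$ gain via a single integration by parts against $e^{\pm it\xi^2}$, split into low and high frequencies, and absorb the polynomial $x$-growth produced when $\partial_\xi$ falls on $e^{ix\xi}$ into the spatial weights---is the right one, and your treatment of the boundary term at $\xi=0$ coming from the kink of $(|\xi|-i\sqrt{\omega})^{-2}$ (integrating by parts separately on $\{\xi>0\}$ and $\{\xi<0\}$ and collecting the jump of $\partial_\xi m_{j,\omega}$) is handled correctly. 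For the $L^\infty_x$ estimate \eqref{equ:improved_local_decay_difference_Psis_no_px} the argument as stated closes.

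However, there is a genuine gap in your treatment of \eqref{equ:improved_local_decay_difference_Psis_with_px}. The summand $\tfrac{1}{\sqrt{2\pi}}\,i\xi\, m_{j,\omega}(x,\xi)e^{ix\xi}$ does carry an explicit factor of $\xi$, but once you absorb that $\xi$ into the phase derivative and integrate by parts, the term in which $\partial_\xi$ lands on $g$ reads
\begin{equation*}
\frac{1}{\mp 2 i t} \int_\bbR e^{\pm it\xi^2}\, i\, m_{j,\omega}(x,\xi)\, e^{ix\xi}\, \partial_\xi g(\xi)\,\ud\xi,
\end{equation*}
and $m_{j,\omega}$ does \emph{not} decay in $\xi$ (indeed $m_{1,\omega}(x,\xi)\to 1$ as $|\xi|\to\infty$). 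Your high-frequency step hinged on an extra $|\xi|^{-1}$ factor surviving the IBP, which made the Cauchy--Schwarz pairing $\||\xi|^{-1}\|_{L^2(|\xi|\geq 1)}\,\|\partial_\xi g\|_{L^2_\xi}$ available; here that $|\xi|^{-1}$ is consumed by the explicit $\xi$, and you are left trying to bound $\int_{|\xi|\geq 1}|\partial_\xi g|\,\ud\xi$, which is not controlled by $\|\partial_\xi g\|_{L^2_\xi}$. The same issue already surfaces in your short-time reduction: since $|\partial_x\Psi_{j,\omega}(x,\xi)|\lesssim\jxi$ rather than $\lesssim 1$, a direct pointwise bound would require $\|\jxi g\|_{L^1_\xi}$, not merely $\|g\|_{L^1_\xi}\lesssim\|\jxi g\|_{L^2_\xi}$, and $\|\jxi g\|_{L^1_\xi}$ is not dominated by $\|\jxi g\|_{L^2_\xi}$.

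The missing ingredient---and, I suspect, the very reason the lemma poses \eqref{equ:improved_local_decay_difference_Psis_with_px} as an $L^2_x$ estimate with the heavier $\jx^{-3}$ weight rather than in $L^\infty_x$---is Plancherel's theorem together with the tensorized structure $m_{j,\omega}(x,\xi)=\sum\fraka(x)\frakb(\xi)$ recorded in Lemma~\ref{lemma: PDO on m12}: pulling out $\fraka\in L^\infty_x$ and the bounded weight $\jx^{-3}$, the problematic integral becomes a flat Fourier transform in $x$ whose $L^2_x$-norm is exactly $\|\frakb\,\partial_\xi g\|_{L^2_\xi}\lesssim\|\partial_\xi g\|_{L^2_\xi}$ (and for the $t\leq 1$ regime $\|\xi\frakb\, g\|_{L^2_\xi}\lesssim\|\jxi g\|_{L^2_\xi}$). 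You need to rework the contribution of the $\tfrac{1}{\sqrt{2\pi}}\,i\xi\, m_{j,\omega}\,e^{ix\xi}$ summand in \eqref{equ:improved_local_decay_difference_Psis_with_px} along these lines; the second summand $\partial_x m_{j,\omega}(x,\xi)e^{ix\xi}-\partial_x m_{j,\omega}(x,0)$ stays bounded in $\xi$ and your original scheme applies to it without change.
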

\begin{proof}
See \cite[Lemma~4.2]{LL24}.
\end{proof}
The next lemma shows that high frequencies enjoy improved local decay.
\begin{lemma} \label{lem:improved_local_decay_simple} 
Let $\fraka \in W^{1,\infty}(\bbR)$.
Denote by $\chi_0(\xi)$ a smooth even non-negative cut-off function with $\chi_0(\xi) = 1$ for $|\xi| \leq 1$ and $\chi_0(\xi) = 0$ for $|\xi| \geq 2$. There exists $C \geq 1$ such that uniformly for all $t \geq 0$,
\begin{align}
\biggl| \int_\bbR e^{\pm i t \xi^2} \fraka(\xi) \bigl( 1 - \chi_0(\xi) \bigr) g(\xi) \, \ud \xi \biggr| &\leq \frac{C}{\jt} \Bigl( \|\pxi g(\xi)\|_{L^2_\xi} + \|\jxi g(\xi)\|_{L^2_\xi} \Bigr), \label{equ:improved_local_decay_simple_bound1} \\
\biggl\| \jx^{-2} \int_\bbR \bigl( e^{ix\xi} - 1 \bigr) e^{\pm i t \xi^2} \fraka(\xi) g(\xi) \, \ud \xi \biggr\|_{L^\infty_x} &\leq \frac{C}{\jt} \Bigl( \|\pxi g(\xi)\|_{L^2_\xi} + \|\jxi g(\xi)\|_{L^2_\xi} \Bigr). \label{equ:improved_local_decay_simple_bound2}
\end{align}
\end{lemma}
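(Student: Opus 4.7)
The plan is to split the argument into the short-time regime $t \leq 1$, where both estimates reduce to straightforward Cauchy--Schwarz bounds, and the long-time regime $t \geq 1$, where the oscillation of $e^{\pm i t \xi^2}$ must be exploited via integration by parts in $\xi$. For $t \leq 1$, the first inequality follows from the pointwise bound $|\fraka(\xi)(1-\chi_0(\xi))| \lesssim 1$ and Cauchy--Schwarz using $\|\jxi^{-1}\|_{L^2_\xi}<\infty$, giving $\lesssim \|\jxi g\|_{L^2_\xi}$; the second follows analogously from the trivial estimate $|e^{ix\xi}-1|\leq 2$ together with the harmless weight $\jx^{-2}$ on the outside, both absorbed into the constant $C/\jt$ since $\jt \sim 1$ there.

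For $t\geq 1$ in estimate \eqref{equ:improved_local_decay_simple_bound1}, I would apply the identity $e^{\pm it\xi^2}=\tfrac{1}{\pm 2it\xi}\pxi e^{\pm it\xi^2}$, which is valid on the support of $1-\chi_0(\xi)$ since $|\xi|\geq 1$ there, and so the division by $\xi$ is harmless. Integration by parts then produces a $1/t$ prefactor together with an integrand of the form $\pxi\bigl(\fraka(\xi)(1-\chi_0(\xi))g(\xi)/\xi\bigr)$, whose $L^1_\xi$ norm is controlled via Cauchy--Schwarz, using $\|\jxi^{-1}\|_{L^2(|\xi|\geq 1)}<\infty$, by $C(\|\pxi g\|_{L^2_\xi}+\|\jxi g\|_{L^2_\xi})$. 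Combining the two time regimes yields the bound with $1/\jt$.

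For $t\geq 1$ in estimate \eqref{equ:improved_local_decay_simple_bound2} the main obstacle is that $\xi$ can now be arbitrarily small, so division by $\xi$ is no longer admissible; the oscillation of $e^{ix\xi}-1$ must be used to compensate. To this end I would decompose the integral using $\chi_0(\xi) + (1-\chi_0(\xi))$. On the high-frequency piece the argument from the first estimate applies verbatim, with one extra occurrence of $\pxi$ hitting $e^{ix\xi}$ producing a factor of $|x|$ which is harmlessly absorbed by $\jx^{-2}$. On the low-frequency piece, the key step is the factorization $e^{ix\xi}-1=ix\xi\,\Phi(x\xi)$ with $\Phi(s):=\int_0^1 e^{is\tau}\,\ud\tau$ satisfying the uniform bound $|\Phi(s)|+|\Phi'(s)|\leq 1$. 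The extracted factor of $\xi$ pairs with the oscillatory phase via $\xi e^{\pm it\xi^2}=\tfrac{1}{\pm 2it}\pxi e^{\pm it\xi^2}$, and a single integration by parts yields a $1/t$ gain and an overall factor of $x$ in front; an additional factor of $x$ appears when $\pxi$ falls on $\Phi(x\xi)=x\Phi'(x\xi)$, but both factors of $x$ are absorbed by the outer weight $\jx^{-2}$. The remaining terms, in which $\pxi$ falls on $\fraka$, $\chi_0$, or $g$, are again bounded in $L^1_\xi$ via Cauchy--Schwarz by $C(\|\pxi g\|_{L^2_\xi}+\|\jxi g\|_{L^2_\xi})$, completing the proof.
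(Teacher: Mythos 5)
Your proof is correct. The paper itself only cites \cite[Lemma~4.3]{LL24} for this lemma, so there is no in-text proof to compare against, but the argument you give is the natural one and is surely essentially what LL24 does: handle $t\leq 1$ by Cauchy--Schwarz, and for $t\geq 1$ integrate by parts using $e^{\pm it\xi^2}=(\pm 2it\xi)^{-1}\pxi e^{\pm it\xi^2}$ on the high-frequency region, while on the low-frequency region first extract the vanishing $\xi$-factor via $e^{ix\xi}-1=ix\xi\,\Phi(x\xi)$ and then integrate by parts with $\xi e^{\pm it\xi^2}=(\pm 2it)^{-1}\pxi e^{\pm it\xi^2}$, absorbing the resulting powers of $|x|$ (at most two: one from the explicit prefactor, one from $\pxi\Phi(x\xi)=x\Phi'(x\xi)$) into the weight $\jx^{-2}$. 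All the terms produced by the Leibniz rule, including the $\xi^{-2}$ term from $\pxi(\xi^{-1})$ and the compactly supported $\chi_0'$ contributions, land in $L^1_\xi$ after Cauchy--Schwarz against $\jxi^{-1}$ or $\jxi^{-2}$, which is exactly why the right-hand side controls both $\|\pxi g\|_{L^2_\xi}$ and $\|\jxi g\|_{L^2_\xi}$; your accounting of these terms is accurate.
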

\begin{proof}
	See \cite[Lemma~4.3]{LL24}.
\end{proof}

\subsection{Smoothing estimates with a moving center}
The derivation of the weighted energy estimates for the spatialliy localized terms with cubic-type time decay happens in Section~\ref{sec:energy_estimates}. When a $\xi$ derivative hits the phase $e^{is\xi^2}$, a divergent factor of $s\xi$ appears, and we handle such terms by the use of local smoothing estimates. However, unlike \cite{LL24}, we cannot employ the standard local smoothing estimate (e.g. \cite[Lemma~4.4]{LL24}) to treat these terms in our setting due to the presence of a moving center with a small uncertainty. Instead, we have to prove the following proposition, where the smoothing estimates \eqref{equ:estimate_smoothing1} and \eqref{equ:estimate_smoothing2} incorporate a moving center. The reader is referred to \eqref{equ:proof_weighted_estimate_3_expanded} where such estimates are used. 
\begin{proposition}\label{prop:local-smoothing}
Let $\eta(\xi)$ be a smooth cutoff function supported on $[-4,4]$ with $0 \leq \eta \leq 1$ and $\eta(\xi) = 1$ for $\xi \in [-2,2]$.	Let $\fraka \in C^\infty(\bbR)$ with $\| \px^k \fraka \|_{L_x^\infty} \lesssim 1$ for all integers $k \geq 0$, and let $ \frakb \in W_\xi^{1,\infty}(\bbR)$. Suppose that $\theta:[0,\infty) \rightarrow \bbR$ satisfies $\theta(0)=0$ and $|\theta'(s)|\lesssim \eps \js^{-1+\delta}$ for all $s \geq 0$ and for some $0 \leq \eps,\delta \ll 1$.  Then for all $t\geq 0$, we have
\begin{equation}\label{equ:estimate_smoothing1}
\begin{split}
\left \| \int_0^t e^{is \xi^2}\big(\xi \eta(\xi)\big) e^{i\theta(s)\xi}\left[\int_\bbR e^{-ix\xi} \fraka(x) \frakb(\xi) F(s,x) \,\ud x\right]\,\ud s   \right \|_{L_\xi^2} \lesssim \big\|\jxm F(s,x) \big\|_{L_s^2([0,t];L_x^2)},
\end{split}	
\end{equation}
and
\begin{equation}\label{equ:estimate_smoothing2}
	\begin{split}
\left \| \int_0^t e^{is \xi^2}\big(1-\eta(\xi)\big) e^{i\theta(s)\xi}\left[\int_\bbR e^{-ix\xi} \fraka(x) \frakb(\xi) F(s,x) \,\ud x\right]\,\ud s   \right \|_{L_\xi^2} \lesssim \big\|\jxm F(s,x) \big\|_{L_s^2([0,t];L_x^2)}.
	\end{split}	
\end{equation}
\end{proposition}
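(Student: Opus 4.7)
The plan is to prove the two estimates separately. The high-frequency estimate \eqref{equ:estimate_smoothing2} is handled by integration by parts in time, while the frequency-localized estimate \eqref{equ:estimate_smoothing1} is reduced to the non-moving-center local smoothing estimate \cite[Lemma~4.4]{LL24} via a spatial translation identity.

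For \eqref{equ:estimate_smoothing2}, on the high-frequency support $|\xi| \geq 2$ of $1 - \eta(\xi)$, the smallness $|\theta'(s)| \lesssim \eps \ll 1$ ensures that $|\partial_s(s\xi^2 + \theta(s)\xi)| = |\xi||\xi + \theta'(s)| \geq |\xi|^2/2$. Integrating by parts in $s$ (after a standard density argument to justify the pointwise evaluation of $F$ in $s$) produces boundary terms at $s = 0, t$ and a bulk remainder with an extra factor of $(\xi^2 + \theta'(s)\xi)^{-1}$, uniformly bounded by $|\xi|^{-2} \leq 1/4$ on the support of $1-\eta$. Each resulting term reduces to a standard $L^2_\xi$ Fourier bound on expressions of the form $\int_\bbR e^{-ix\xi} \fraka(x)\, \widetilde\frakb(\xi,s)\, F(s,x)\, \ud x$, controlled by Plancherel, the boundedness of $\fraka, \frakb$, and Cauchy--Schwarz against the weighted $L^2_{s,x}$ norm of $F$.

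For \eqref{equ:estimate_smoothing1}, I exploit the translation identity
\[
e^{i\theta(s)\xi} \int_\bbR e^{-ix\xi} \fraka(x) F(s, x) \, \ud x = \int_\bbR e^{-iy\xi} \fraka(y + \theta(s)) F(s, y + \theta(s)) \, \ud y,
\]
which absorbs the moving-center phase into a pure spatial translation. Setting $\widetilde G(s, y) := \fraka(y + \theta(s)) F(s, y + \theta(s))$, this rewrites the left-hand side in a form where \cite[Lemma~4.4]{LL24} (the non-moving-center local smoothing) applies directly, yielding a bound by $\|\jxm \widetilde G\|_{L^2_{s, y}}$. Reverting to the $x$-variable by the change of variable $y = x - \theta(s)$, it remains to compare the weighted $L^2_{s,x}$-norms of $F$ with and without the shift $\theta(s)$.

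The main obstacle lies precisely in this last step: the naive Peetre-type bound $\jap{x - \theta(s)}^{\pm 1} \lesssim \jap{\theta(s)}\jap{x}^{\pm 1}$ introduces a factor $\jap{\theta(s)} \lesssim 1 + \eps \jap{s}^\delta$ that is unbounded in $s$, producing an unacceptable $\jap{t}^\delta$ loss in the bootstrap time and ruining the uniform-in-$t$ statement of the proposition. To circumvent this, I will either (a) expand $\jap{x - \theta(s)}^2 = \jap{x}^2 - 2 x \theta(s) + \theta(s)^2$ and treat each piece separately, using the integrated smallness $\int_0^t |\theta'(s)|\, \ud s \lesssim \eps \jap{t}^\delta$ balanced against the $\eps$ prefactor and the assumed decay structure of $F$; or (b) adapt the $TT^\ast$ proof of Lemma~4.4 of \cite{LL24} directly to incorporate the moving-center phase into the Schr\"odinger-type oscillatory kernel, using the mean-value-type bound $|\theta(s_1) - \theta(s_2)| \leq \int_{s_2}^{s_1} |\theta'(\sigma)|\, \ud\sigma \lesssim \eps$ on short time windows to compare the perturbed kernel with the standard one up to errors absorbable by Schur-type testing. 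In either route, the key point is to exploit the smallness of the \emph{derivative} $|\theta'(s)|$ rather than of $|\theta(s)|$ itself.
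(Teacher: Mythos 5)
Your route for the high-frequency estimate \eqref{equ:estimate_smoothing2} does not work. Integration by parts in $s$ against the phase $s\xi^2 + \theta(s)\xi$ produces (i) a boundary term at $s=t$ that is controlled by $\|F(t)\|_{L^2_x}$, i.e.\ by $\|F\|_{L^\infty_s L^2_x}$, which is not majorized by the right-hand side $\|\jxm F\|_{L^2_s L^2_x}$; (ii) a bulk term containing $\partial_s F(s,x)$, which is not assumed to lie in any controlled space; and (iii) a term with $\theta''(s)$, on which the proposition places no hypothesis. The $1/\xi^2$ gain from the phase derivative does nothing to help with the $s$-integration: after integration by parts you are still left with $\int_0^t \cdot \, ds$ against an $L^2_s$ quantity, so a crude Cauchy--Schwarz would cost a factor $t^{1/2}$. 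The point of a local smoothing estimate is exactly that it trades a derivative in $x$ for an $L^2$-in-time gain without touching $\partial_s F$; no integration by parts in time can reproduce this. The paper instead handles \eqref{equ:estimate_smoothing2} by a $TT^*$ argument with Littlewood--Paley decomposition, pairing the $t^{-1/2}$ dispersive bound on the frequency-localized kernel $K_{\mathrm{h},j}$ with a rapid non-stationary-phase decay $\lesssim_N 2^j(2^{2j}t)^{-N}$ off the light cone $|x|\sim 2^j|t|$, together with almost orthogonality of the $P_j$.

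For \eqref{equ:estimate_smoothing1}, your translation identity is the right starting point, and you correctly diagnose that a naive Peetre bound $\jap{x-\theta(s)}\lesssim\jap{\theta(s)}\jx$ loses an unacceptable $\jap{t}^\delta$. But you do not resolve this; you merely list two candidate fixes. Option (a) — expanding $\jap{x-\theta(s)}^2$ and hoping the decay structure of $F$ saves you — has no content here: $F$ is an arbitrary $L^2_{s,x}$ function, the proposition assumes nothing about its decay in $s$. Option (b) is the correct direction and is in fact what the paper does, but the "short time windows" heuristic $|\theta(s_1)-\theta(s_2)|\lesssim\eps$ is not the bound that closes the argument, since in the $TT^*$ bilinear form one must integrate over all $|t-s|$. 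The actual mechanism in the paper is: after the change of variables $u=t-s$, $z=x-y-b$, $h=y+b$ with $b=\theta(t)-\theta(s)$, the moving center enters only through $\jap{b}$ with the bound $|b|\lesssim\eps|u|^\delta$, and this sub-polynomial growth in $u$ is absorbed by the improved kernel decay $|\jx^{-1}K_{\mathrm{l},j}(u,z)|\lesssim\jap{u}^{-3/2}$, yielding a $u$-integral of $\jap{u}^{-3/2+\delta}$ which converges. The crucial point you miss is that the usable smallness is in the \emph{difference} $|\theta(t)-\theta(s)|$ as a function of $|t-s|$, not in $\theta$ pointwise or on short windows, and that this needs to be tested against the $L^1_u$ tail of the $TT^*$ kernel, not against a weight comparison in $x$.
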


Proposition~\ref{prop:local-smoothing} is inspired by \cite[Proposition~6.5]{CL24} (which was based on \cite{NS12}). It requires some preparations. Let $\eta(\xi)$ and $\theta(s)$ be given as in  Proposition~\ref{prop:local-smoothing} throughout this section. With the notation $D := -i \partial_x$ we define the following Fourier multipliers
\begin{align}
[\frakm_{\mathrm{l}}(D)f](x) &:= \widehat{\calF}^{-1}\big[\xi \eta(\xi)\hatf(\xi)\big](x),\\
[\frakm_{\mathrm{h}}(D)f](x) &:= \widehat{\calF}^{-1}\big[\big(1 - \eta(\xi)\big) \hatf(\xi)\big](x),\\
[\mathrm{T}_\theta(s,t)f](x) &:= \widehat{\calF}^{-1}\big[e^{i(\theta(t)-\theta(s))\xi} \hatf(\xi)\big](x) \equiv f(x+\theta(t)-\theta(s)).
\end{align}
The proof of Proposition~\ref{prop:local-smoothing} is implied by the following lemma.
\begin{lemma}\label{lemma:inhom_smoothing} Under the same assumptions as Proposition~\ref{prop:local-smoothing}, we have  
\begin{align}
\left\| \int_0^t e^{-is\px^2}\frakm_{\mathrm{l}}(D)[\mathrm{T}_{\theta}(0,s)F(s)] \,\ud s \right\|_{L_x^2} &\lesssim \left \| \jxm F \right\|_{L_s^2([0,t];L_x^2)},\label{equ:dual_hom_low}\\
\left\| \int_0^t e^{-is\px^2}\frakm_{\mathrm{h}}(D)[\mathrm{T}_{\theta}(0,s)F(s)] \,\ud s \right\|_{L_x^2} &\lesssim \left \| \jxm F \right\|_{L_s^2([0,t];L_x^2)}. \label{equ:dual_hom_high}
\end{align}
\end{lemma}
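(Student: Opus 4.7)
\emph{Proposal.} Both estimates in Lemma~\ref{lemma:inhom_smoothing} are inhomogeneous local smoothing bounds for the free Schr\"odinger evolution, perturbed by a time-dependent translation. My plan is to reduce to a bilinear kernel estimate via Plancherel and $TT^*$, derive dispersive decay of the kernel through stationary phase, and then absorb the moving center using its slow growth rate $|\theta(s)|\lesssim \eps\js^\delta$, obtained by integrating the hypothesis $|\theta'(s)|\lesssim \eps\js^{-1+\delta}$ from $\theta(0)=0$.

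\emph{Step 1: Fourier reformulation and $TT^*$.} Applying Plancherel in $x$ converts the LHS of \eqref{equ:dual_hom_low} and \eqref{equ:dual_hom_high} into
\begin{equation*}
\Bigl\|\int_0^t e^{is\xi^2}\,\frakm(\xi)\,e^{-i\theta(s)\xi}\,\widehat{F(s,\cdot)}(\xi)\,\ud s\Bigr\|_{L^2_\xi},\quad \frakm \in \{\xi\eta(\xi),\,1-\eta(\xi)\}.
\end{equation*}
Squaring this $L^2_\xi$-norm and inverting Plancherel in $x$ expresses the square as the bilinear form
\begin{equation*}
\int_0^t\int_0^t\int_\bbR\int_\bbR K_\frakm\bigl(s-s',\,x-y+\theta(s')-\theta(s)\bigr)\,F(s,x)\,\overline{F(s',y)}\,\ud x\,\ud y\,\ud s\,\ud s',
\end{equation*}
with kernel
\begin{equation*}
K_\frakm(\tau,z):=\frac{1}{2\pi}\int_\bbR |\frakm(\xi)|^2\,e^{i\tau\xi^2+iz\xi}\,\ud\xi.
\end{equation*}
The desired bounds then reduce to a Schur test for the weighted kernel $\jap{x}^{-1}\jap{y}^{-1}\,K_\frakm(s-s',\,x-y+\theta(s')-\theta(s))$ on the product space $((0,t)\times\bbR)^2$.

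\emph{Step 2: Kernel decay and the moving center.} A stationary phase analysis of the oscillatory integral defining $K_\frakm$ yields dispersive bounds of the schematic form
\begin{equation*}
|K_\frakm(\tau,z)|\lesssim \jap{\tau}^{-1/2}\,\jap{z/\jap{\tau}}^{-N},\quad N\in\bbN_0.
\end{equation*}
In the low-frequency case \eqref{equ:dual_hom_low}, the compact support of $\xi\eta(\xi)$ together with its vanishing at the origin is used to bound the stationary point contribution; in the high-frequency case \eqref{equ:dual_hom_high}, repeated integration by parts in $\xi$ using the support of $1-\eta(\xi)$ away from zero supplies the decay. The translation $\theta(s')-\theta(s)$ only shifts the spatial argument $z$; since the slow bound $|\theta(s')-\theta(s)|\lesssim \eps(\js^\delta + \jap{s'}^\delta)$ is negligible compared to the dispersive scale $\sqrt{\jap{s-s'}}$ on most of the integration region, one closes the Schur test by combining the kernel decay, the spatial weights $\jap{x}^{-1}\jap{y}^{-1}$, and the smallness of $\eps$.

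\emph{Main obstacle.} The principal difficulty is verifying that the Schur bounds are uniform in $t\geq 0$ despite the growing translation $\theta$. In the region where $|\theta(s')-\theta(s)|$ is comparable to the dispersive scale $\sqrt{\jap{s-s'}}$ or to $|x-y|$, the slow growth of $\theta$ competes directly with the kernel's localization, and one must carefully split into cases according to the relative sizes of $|s-s'|$, $|\theta(s')-\theta(s)|$, and $|x-y|$. Here the smallness of $\eps$ and the slow exponent $\delta\ll 1$ are essential for absorbing the polynomial losses incurred in each case, and are precisely the reason the estimate requires the moving-center framework of \cite{CL24} rather than the standard smoothing estimate of \cite{LL24}.
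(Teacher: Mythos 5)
Your framework---express the $L^2_x$-norm of $\calK^*F$ via $TT^*$ as a bilinear form against the kernel $K_{|\frakm|^2}(s-s',\,x-y+\theta(s')-\theta(s))$, estimate that kernel, and absorb the moving center through the slow growth $|\theta(s')-\theta(s)|\lesssim \eps\jap{s-s'}^\delta$---is indeed the paper's strategy, and your Step~1 is correct (the paper implements it via the explicit operator $\calK f := w\,e^{it\px^2}\frakm(D)\,\mathrm{T}_\theta(t,0)f$ with $w=\jap{x}^{-2}$, and then proves bounds equivalent to the Schur-type estimate you describe). However, your Step~2 has genuine gaps that would cause the argument to fail as written.

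First, the claimed schematic bound $|K_\frakm(\tau,z)|\lesssim \jap{\tau}^{-1/2}\jap{z/\jap{\tau}}^{-N}$ is false for the high-frequency multiplier $\frakm=1-\eta$: the symbol $(1-\eta(\xi))^2\to 1$ at infinity, so the stationary point $\xi_0=-z/(2\tau)$ lies inside the support for every sufficiently large $|z|/|\tau|$, and there is no uniform decay in $z/\tau$ across the full high-frequency regime. On the (unbounded) propagation cone the kernel is genuinely of size $|\tau|^{-1/2}$, and a direct Schur test then diverges. The paper circumvents this with a Littlewood--Paley decomposition $\frakm_{\mathrm{h}}(D)^2 = \sum_j \frakm_{\mathrm{h}}(D)^2 P_j$, an almost-orthogonality lemma ($I^M_{\mathrm h,j}(F,G)=I^M_{\mathrm h,j}(Q_jF,Q_jG)$), a dyadic non-stationary-phase bound $|K_{\mathrm h,j}(\tau,z)|\lesssim_N 2^j(2^{2j}\tau)^{-N}$ away from the cone $|z|\sim 2^j|\tau|$, and---crucially---the observation that on each dyadic cone $|z|\sim 2^j|\tau|$ the weight $\jap{h-b}^{-2}$ produces a factor $2^{-j}\jap{u}^{-2}$ that makes the $u$- and $j$-sums converge. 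None of these steps is automatic from a one-shot Schur test on the full high-frequency kernel.

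Second, even in the low-frequency case the decay you cite is weaker than what the argument requires. The factor $\xi$ in $\frakm_{\mathrm l}(\xi)=\xi\eta(\xi)$ gives $|\frakm_{\mathrm l}(\xi)|^2 = \xi^2\eta(\xi)^2$, and the paper exploits this to obtain the stronger \emph{local} decay $|\jap{z}^{-1}K_{\mathrm l,j}(\tau,z)|\lesssim\jap{\tau}^{-3/2}$; the extra power of $\tau$ is exactly what makes the weighted Schur integral convergent. Your proposed $\jap{\tau}^{-1/2}$ rate does not see this gain, and it is not enough to close the low-frequency estimate even after weighting.

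Third, the near-diagonal region $|s-s'|\lesssim 1$ cannot be handled by kernel decay at all (the kernel is not even defined pointwise at $\tau=0$ in the high-frequency case); the paper splits off $|t-s|\leq M$ and bounds it directly by $L^2$ conservation for $e^{it\px^2}$ together with the $L^2$-boundedness of $\frakm(D)$. Your outline does not separate this region. Finally, a minor bookkeeping point: the correct weight in the bilinear form is $\jap{x}^{-2}\jap{y}^{-2}$, not $\jap{x}^{-1}\jap{y}^{-1}$, since the target norm is $\|\jap{x}^2F\|_{L^2_sL^2_x}$.
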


We first present a short proof of Proposition~\ref{prop:local-smoothing} using Lemma~\ref{lemma:inhom_smoothing}.
\begin{proof}[Proof of Proposition~\ref{prop:local-smoothing}] By the Plancherel's identity and the estimate \eqref{equ:dual_hom_low}, we obtain 
\begin{equation*}
\begin{split}
&\left \| \int_0^t e^{is \xi^2}\big(\xi \eta(\xi)\big) e^{i\theta(s)\xi}\left[\int_\bbR e^{-ix\xi} \fraka(x) \frakb(\xi) F(s,x) \,\ud x\right]\,\ud s   \right \|_{L_\xi^2}\\
&\lesssim \| \frakb \|_{L_\xi^\infty} \left \| \int_0^t e^{is \xi^2}\big(\xi \eta(\xi)\big) e^{i\theta(s)\xi} \widehat{\calF}\left[\fraka(\cdot) F(s,\cdot)\right](\xi)\,\ud s   \right \|_{L_\xi^2}\\
&\lesssim \left\| \int_0^t e^{-is\px^2}\frakm_{\mathrm{l}}(D)[\mathrm{T}_{\theta}(0,s)a(\cdot)F(s,\cdot)] \,\ud s \right\|_{L_x^2} \\
&\lesssim \big\| \jxm \fraka(x) F(s,x) \big\|_{L_s^2([0,t];L_x^2)} \lesssim \big\| \jxm  F(s,x) \big\|_{L_s^2([0,t];L_x^2)}.
\end{split}
\end{equation*}
This proves \eqref{equ:estimate_smoothing1}. The proof for \eqref{equ:estimate_smoothing2} follows from \eqref{equ:dual_hom_high} similarly.
\end{proof}

The proof of Lemma~\ref{lemma:inhom_smoothing} can in turn be inferred from the following Lemma~\ref{lemma:dual_low_smoothing} and Lemma~\ref{lemma:dual_high_smoothing}.

\begin{lemma}\label{lemma:dual_low_smoothing} Let $w(x) := \jxmn$. Under the same assumptions as Proposition~\ref{prop:local-smoothing}, for any $F,G \in L_t^2([0,\infty);L_x^2(\bbR))$ it holds that
\begin{align}
\left| \int_0^\infty \int_0^t \left \langle e^{i(t-s)\px^2} \frakm_{\mathrm{l}}(D)^2 [\mathrm{T}_{\theta}(s,t)wF(s)], wG(t) \right \rangle \,\ud s \,\ud t \right| &\lesssim \| F \|_{L_t^2([0,\infty);L_x^2)} \| G \|_{L_t^2([0,\infty);L_x^2)}, \label{equ:dual_low_smoothing_0t}\\
\left| \int_0^\infty \int_t^\infty \left \langle e^{i(t-s)\px^2} \frakm_{\mathrm{l}}(D)^2 [\mathrm{T}_{\theta}(s,t)wF(s)], wG(t) \right \rangle \,\ud s \,\ud t \right| &\lesssim \| F \|_{L_t^2([0,\infty);L_x^2)} \| G \|_{L_t^2([0,\infty);L_x^2)}.\label{equ:dual_low_smoothing_ti}
\end{align}
\end{lemma}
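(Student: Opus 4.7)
The plan is to adapt the classical Kato local smoothing framework to the moving-center setting, following the approach of \cite[Proposition~6.5]{CL24}. The core strategy is to reduce the problem to the frequency side, where a change of variables $\zeta = \xi^2$ turns the Schr\"odinger phase into a Fourier-in-time kernel.

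First, by Plancherel in $x$, rewrite the pairing as
\begin{equation*}
\big\langle e^{i(t-s)\px^2}\frakm_{\mathrm{l}}(D)^2\mathrm{T}_{\theta}(s,t)wF(s),wG(t)\big\rangle = \int_\bbR e^{-i(t-s)\xi^2}\,\xi^2\eta(\xi)^2\,e^{i(\theta(t)-\theta(s))\xi}\,\widehat{wF(s)}(\xi)\,\overline{\widehat{wG(t)}(\xi)}\,\ud\xi,
\end{equation*}
since $\mathrm{T}_\theta(s,t)$ acts on the Fourier side as multiplication by $e^{i(\theta(t)-\theta(s))\xi}$. I would then split $\xi \in \bbR$ into positive and negative branches and substitute $\zeta = \xi^2$ on each. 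Crucially, the Jacobian $\ud\xi = \ud\zeta/(2\sqrt\zeta)$ combines with the factor $\xi^2 = \zeta$ coming from $\frakm_{\mathrm{l}}(D)^2$ to produce the non-singular weight $\tfrac{1}{2}\sqrt\zeta\,\eta(\sqrt\zeta)^2$, supported in $\zeta \in [0,16]$, and the Schr\"odinger phase $e^{-i(t-s)\xi^2}$ becomes the Fourier-in-time kernel $e^{-i(t-s)\zeta}$.

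Next I would absorb the moving-center phases into twisted profiles
\begin{equation*}
\mathsf{A}^\pm(s,\zeta) := e^{\mp i\theta(s)\sqrt\zeta}\,\widehat{wF(s)}(\pm\sqrt\zeta),\qquad \mathsf{B}^\pm(t,\zeta) := e^{\mp i\theta(t)\sqrt\zeta}\,\widehat{wG(t)}(\pm\sqrt\zeta),
\end{equation*}
so that the bilinear form splits as $\sum_\pm I_\pm$ with
\begin{equation*}
I_\pm = \int_0^{16}\tfrac{1}{2}\sqrt\zeta\,\eta(\sqrt\zeta)^2\int_0^\infty\int_0^t e^{-i(t-s)\zeta}\mathsf{A}^\pm(s,\zeta)\overline{\mathsf{B}^\pm(t,\zeta)}\,\ud s\,\ud t\,\ud\zeta.
\end{equation*}
Each $I_\pm$ would then be bounded by a $TT^*$-style retarded estimate, treating $\zeta$ as a fiber parameter: at fixed $\zeta$, the operator $\mathsf{A}^\pm(\cdot,\zeta)\mapsto\int_0^t e^{-i(t-s)\zeta}\mathsf{A}^\pm(s,\zeta)\,\ud s$ is bounded on $L^2_t$, and the non-singular $\sqrt\zeta$ weight provides the $L^1_\zeta$ integrability needed to sum over fibers. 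Finally, since $e^{\mp i\theta(s)\sqrt\zeta}$ is unimodular, Plancherel yields $\|\mathsf{A}^\pm\|_{L^2_{s,\zeta}}\lesssim\|wF\|_{L^2_{s,x}}\lesssim\|F\|_{L^2_{s,x}}$ (using $w\in L^\infty$), and similarly for $\mathsf{B}^\pm$, giving \eqref{equ:dual_low_smoothing_0t}. The estimate \eqref{equ:dual_low_smoothing_ti} follows by exchanging the roles of $s$ and $t$ via Fubini.

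The main technical obstacle is handling the moving-center phases $e^{\mp i\theta(s)\sqrt\zeta}$ within the retarded $TT^*$ analysis. The slow growth $|\theta(s)|\lesssim\eps\js^\delta$ means the chirp is unbounded in time, so it cannot be treated as an $L^\infty$ perturbation in a naive way. The resolution is a careful oscillatory-integral analysis: the instantaneous frequency of the chirp is $\theta'(s)\sqrt\zeta \lesssim \eps\js^{-1+\delta}\sqrt\zeta$, which is subdominant relative to the principal Fourier frequency $\zeta$. Integration by parts in $s$ (using that $\zeta - \theta'(s)\sqrt\zeta$ has definite sign for $\eps$ small and $\zeta$ bounded below, together with the $\sqrt\zeta$ weight to absorb the boundary/low-frequency contribution) can then reduce the chirp to a lower-order correction, mirroring the moving-center smoothing framework introduced in \cite[Proposition~6.5]{CL24} based on \cite{NS12}. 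The smallness of $\eps$ is essential for closing this perturbative step.
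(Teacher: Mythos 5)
Your reduction to the Fourier side contains a fatal gap. You claim that ``at fixed $\zeta$, the operator $\mathsf{A}(\cdot,\zeta)\mapsto\int_0^t e^{-i(t-s)\zeta}\mathsf{A}(s,\zeta)\,\ud s$ is bounded on $L^2_t$.'' This is false: conjugating by the unitary multiplier $e^{-it\zeta}$ reduces $T_\zeta$ to the Volterra operator $a\mapsto\int_0^t a(s)\,\ud s$, which is unbounded on $L^2_t([0,\infty))$ (test on $a=\mathbbm{1}_{[0,N]}$, whose image has $L^2$ norm $\sim N^{3/2}$). Since this fiberwise bound is the linchpin of your argument, the retarded estimate does not follow. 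There is also a structural warning sign: you bound $\|\mathsf{A}^\pm\|_{L^2_{s,\zeta}}\lesssim\|wF\|_{L^2_{s,x}}\lesssim\|F\|_{L^2_{s,x}}$ using only $w\in L^\infty$, so the spatial localization $w(x)=\jap{x}^{-2}$ plays no active role in your proof. That would yield the unweighted version of \eqref{equ:dual_low_smoothing_0t}, which is false --- take $\theta\equiv 0$, $F(s,x)=f(x)\mathbbm{1}_{[0,N]}(s)$, $G=F$ with $\hat f$ concentrated near $\xi=0$; the left side grows like $N^2$ against a right side of order $N$. Local smoothing is genuinely an interplay between the spatial weight and time decay, and your Fourier reduction discards the weight at the outset.

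The paper's proof proceeds entirely in physical space: it localizes in frequency via Littlewood--Paley (only $j\leq 4$ survive because of the compact support of $\eta^2\xi^2$), proves the kernel local-decay bound $|\jap{x}^{-1}K_{\mathrm{l},j}(\tau,x)|\lesssim\jap{\tau}^{-3/2}$, introduces a time-gap parameter $M$ in the bilinear form, and passes to variables $u=t-s$, $z=x-y-b$, $h=y+b$ with $b=\theta(t)-\theta(s)$, so that the moving center becomes a translation parameter controlled by $|b|\lesssim\eps|u|^\delta$. The weight $w$ on both factors is essential: it is precisely what makes the kernel bound $\jap{u}^{-3/2+\delta}$ integrable in $u$ after Cauchy--Schwarz, and the $|y|>|x|$ splitting via anti-symmetry lets one place the $\jap{z}$ loss on the dominant weight. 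The short-time piece $|t-s|\leq 1$ is handled separately by $L^2$ conservation. If you want to pursue a Fourier-side route, you would need a trace/restriction estimate on the paraboloid for the non-retarded bilinear form (where $w$ enters via an Agmon-type resolvent bound) together with a Christ--Kiselev argument to recover the retarded form --- neither of which appears in your proposal.
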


\begin{lemma}\label{lemma:dual_high_smoothing} Let $w(x) := \jxmn$. Under the same assumptions as Proposition~\ref{prop:local-smoothing}, for any $F,G \in L_t^2([0,\infty);L_x^2(\bbR))$ it holds that
\begin{align}
\left| \int_0^\infty \int_0^t \left \langle e^{i(t-s)\px^2} \frakm_{\mathrm{h}}(D)^2 [\mathrm{T}_{\theta}(s,t)wF(s)], wG(t) \right \rangle \,\ud s \,\ud t \right| &\lesssim \| F \|_{L_t^2([0,\infty);L_x^2)} \| G \|_{L_t^2([0,\infty);L_x^2)}, \label{equ:dual_high_smoothing_0t}\\
\left| \int_0^\infty \int_t^\infty \left \langle e^{i(t-s)\px^2} \frakm_{\mathrm{h}}(D)^2 [\mathrm{T}_{\theta}(s,t)wF(s)], wG(t) \right \rangle \,\ud s \,\ud t \right| &\lesssim \| F \|_{L_t^2([0,\infty);L_x^2)} \| G \|_{L_t^2([0,\infty);L_x^2)}. \label{equ:dual_high_smoothing_ti}
\end{align}
\end{lemma}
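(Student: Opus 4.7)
The plan is to transfer the moving translation $\mathrm{T}_\theta(s,t)$ off the Schr\"odinger propagator by a Plancherel manipulation in $x$, thereby reducing the estimate to a standard bilinear high-frequency Kato smoothing estimate. I focus on \eqref{equ:dual_high_smoothing_0t} first; the estimate \eqref{equ:dual_high_smoothing_ti} follows by a symmetric argument (interchanging the roles of $s,t$ and $F,G$). By Plancherel in $x$, I would rewrite
\begin{equation*}
\langle e^{i(t-s)\px^2}\frakm_{\mathrm{h}}(D)^2[\mathrm{T}_{\theta}(s,t)wF(s)], wG(t)\rangle = \int_{\bbR} e^{-i(t-s)\xi^2}(1-\eta(\xi))^2 e^{i(\theta(t)-\theta(s))\xi}\widehat{wF(s)}(\xi)\overline{\widehat{wG(t)}(\xi)}\,\ud\xi.
\end{equation*}
The crucial observation is that the moving-center phase factors multiplicatively, $e^{i(\theta(t)-\theta(s))\xi}=e^{i\theta(t)\xi}\cdot e^{-i\theta(s)\xi}$. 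Setting $a_s(\xi):=e^{-i\theta(s)\xi}\widehat{wF(s)}(\xi)$ and $b_t(\xi):=e^{-i\theta(t)\xi}\widehat{wG(t)}(\xi)$, these are the Fourier transforms of the translated functions $u_s(x):=[wF(s)](x-\theta(s))$ and $v_t(x):=[wG(t)](x-\theta(t))$; the cross phases cancel, and the bilinear form reduces to $I=\int_0^\infty\int_0^t\langle e^{i(t-s)\px^2}\frakm_{\mathrm{h}}(D)^2 u_s, v_t\rangle\,\ud s\,\ud t$, with the moving center fully absorbed into measure-preserving shifts.

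Next, I would apply the standard bilinear high-frequency Kato smoothing estimate. On the support $|\xi|\geq 2$ of $1-\eta$, Plancherel in $t$ combined with the substitution $\lambda=\xi^2$ (whose Jacobian $(2\sqrt{\lambda})^{-1}$ is bounded there) yields the homogeneous smoothing bound $\|e^{it\px^2}\frakm_{\mathrm{h}}(D) f\|_{L^\infty_x L^2_t}\lesssim \|f\|_{L^2_x}$, which is equivalent to a Fourier restriction estimate for the paraboloid $\tau=\xi^2$. Via $TT^*$-duality and the Christ--Kiselev lemma to handle the retarded integration, this yields a bilinear inequality that—after Cauchy--Schwarz in $\xi$ and Plancherel in $x$, and using that $w\in L^2_x\cap L^\infty_x$ for $w=\jx^{-N}$ with $N\geq 1$—produces the bound $|I|\lesssim \|wF\|_{L^2_sL^2_x}\|wG\|_{L^2_tL^2_x}\lesssim \|F\|_{L^2_sL^2_x}\|G\|_{L^2_tL^2_x}$, since the translates $u_s$, $v_t$ have $L^p_x$-norms coinciding with those of $wF(s)$, $wG(t)$.

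The main obstacle is ensuring the Fourier reduction in the first stage interacts compatibly with the Kato smoothing in the second, i.e., that the induced weights $w(\cdot-\theta(s))$ and $w(\cdot-\theta(t))$ at shifted positions do not spoil the smoothing. After the absorption the bilinear form only involves $L^p_x$-norms of the translates, so no growth factors from $|\theta(s)|\lesssim \eps\langle s\rangle^\delta$ appear at the end, but if a fixed-weight comparison is needed in any intermediate step (for instance when converting from the $L^\infty_x L^2_t$ homogeneous estimate to its dual), one uses Peetre-type bounds $w(x-\theta(s))\lesssim \langle \theta(s)\rangle^N w(x)$ together with the slow growth of $\theta$ and the strong polynomial decay of $w=\jx^{-N}$, choosing $N$ sufficiently large relative to $\delta$ to absorb any powers of $\langle s\rangle^{N\delta}$ that arise. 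This is the same mechanism used in the companion Lemma~\ref{lemma:dual_low_smoothing} for the low-frequency regime.
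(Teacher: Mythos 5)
Your Plancherel reduction at the start is correct: the $x$-Fourier transform diagonalizes $e^{i(t-s)\px^2}\frakm_\mathrm{h}(D)^2$ and $\mathrm{T}_\theta(s,t)$ simultaneously, the cross phase $e^{i(\theta(t)-\theta(s))\xi}$ factors, and the bilinear form becomes $\int_0^\infty\int_0^t\langle e^{i(t-s)\px^2}\frakm_\mathrm{h}(D)^2 u_s, v_t\rangle\,\ud s\,\ud t$ with $u_s(x)=[wF(s)](x-\theta(s))$, $v_t(x)=[wG(t)](x-\theta(t))$, whose $L^p_x$ norms match those of $wF(s)$, $wG(t)$. The gap is what comes next. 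Every form of Kato smoothing you might now invoke (local $L^2_{t,x}$ smoothing with a fixed spatial weight, or the $L^\infty_xL^2_t$ estimate and its dual $L^1_xL^2_t$) trades the frequency gain against a spatial weight centered at a \emph{fixed} point. In your reduced form the weight has become $w(x-\theta(s))$, centered at a point drifting with $|\theta(s)|\lesssim\eps\langle s\rangle^\delta$. To recenter it you must use a Peetre bound $w(x-\theta(s))\lesssim\langle\theta(s)\rangle^{N}w(x)\lesssim\langle s\rangle^{N\delta}w(x)$, and the factor $\langle s\rangle^{N\delta}$ is a \emph{growth in time} that destroys the $L^2_s$ integrability the smoothing estimate needs. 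Taking $N$ larger increases this exponent, so your final remark has the direction reversed — and $N$ is not free anyway, since $w=\jxmn$ is fixed in the statement. The sentence claiming that after the absorption ``no growth factors appear'' is where the argument actually breaks: the smoothing estimate you subsequently invoke necessarily reinstates a fixed spatial weight, and the moving-center loss resurfaces at exactly that point.

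The reason this absorption strategy is viable in the companion low-frequency Lemma~\ref{lemma:dual_low_smoothing} but fails here is the kernel decay rate. At low frequency the extra factor $\xi\eta(\xi)$ vanishes at the origin and upgrades the localized kernel to a local decay $|\jx^{-1}K_{\mathrm{l},j}(u,\cdot)|\lesssim\jap{u}^{-3/2}$, so a $\langle u\rangle^\delta$ loss from the moving center is harmless. At high frequency only the dispersive rate $|K_{\mathrm{h},j}(u,\cdot)|\lesssim|u|^{-1/2}$ is available, and a $\langle u\rangle^\delta$ loss is fatal. The paper's proof therefore does not attempt to commute the moving center out of the weight: it localizes to dyadic frequencies $2^j$, passes to the variables $u=t-s$, $z=x-y-b$, $h=y+b$ with $b=\theta(t)-\theta(s)$, and splits the $(u,z)$-plane into the non-stationary set, where repeated integration by parts gives arbitrary polynomial decay, and the light-cone region $|z|\simeq 2^j|u|$, where the Schur symmetrization $|h-b|>|h+z|$ together with $|b|\ll|u|$ forces $|h-b|\gtrsim 2^j|u|$, so that the weight $w(h-b)$ \emph{itself} contributes the integrable $\jap{u}^{-2}$ gain; an almost-orthogonality decomposition then closes the dyadic sum. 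That kernel-level geometric argument — trading the size of $|z|$ against $|h-b|$ under the high-frequency localization — is the ingredient your proposal is missing and cannot be replaced by Peetre.
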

The proof of Lemma~\ref{lemma:dual_low_smoothing} and of Lemma~\ref{lemma:dual_high_smoothing} require some technical preparations. In Subsection~\ref{subsec:low_energy} and Subsection~\ref{subsec:high_energy}, we provide proofs for \eqref{equ:dual_low_smoothing_0t} and \eqref{equ:dual_high_smoothing_0t} respectively, and we leave the analogous proofs for \eqref{equ:dual_low_smoothing_ti} and \eqref{equ:dual_high_smoothing_ti} to the reader. We first finish the proof of Lemma~\ref{lemma:inhom_smoothing} which is based on Lemma~\ref{lemma:dual_low_smoothing} and Lemma~\ref{lemma:dual_high_smoothing}.

\begin{proof}[Proof of Lemma~\ref{lemma:inhom_smoothing}]
For the proof of the estimate \eqref{equ:dual_hom_low} we introduce the operator $\calK: L_x^2 \rightarrow L_t^2([0,\infty);L_x^2)$ where
\begin{equation*}
(\calK f)(t,x) := w(x) e^{it\px^2} \frakm_{\mathrm{l}}(D) [\mathrm{T}_{\theta}(t,0)f](x),
\end{equation*}
with $w(x) := \jxmn$. We note that the adjoint of $\calK$ as an operator $\calK^*:L_t^2([0,\infty);L_x^2) \rightarrow L_x^2$ is given by 
\begin{equation*}
(\calK^*F)(x) = \left(\int_0^\infty e^{-is\px^2} \frakm_{\mathrm{l}}(D) [\mathrm{T}_{\theta}(0,s)w(\cdot)F(s,\cdot)]\,\ud s \right)(x).
\end{equation*}
Hence, the estimate \eqref{equ:dual_hom_low} follows from the boundedness of $\calK^*$ by restricting the time interval to $[0,t]$. By standard duality arguments, proving the boundedness of $\calK$ (and $\calK^*$) is equivalent to proving the boundedness of $\calK\calK^* : L_t^2([0,\infty);L_x^2) \rightarrow L_t^2([0,\infty);L_x^2)$ where
\begin{equation}\label{equ:proof_KKstar}
(\calK\calK^* F)(t,x) = w(x) \left(\int_0^\infty e^{i(t-s)\px^2}\frakm_{\mathrm{l}}(D)^2 [\mathrm{T}_{\theta}(t,s) w(\cdot)F(s,\cdot)]\right)(x).
\end{equation}
Then, we distinguish the time integration between $[0,t]$ and $[t,\infty)$, and infer the boundedness of \eqref{equ:proof_KKstar} from the following estimates 
\begin{equation*}
\begin{split}
\left\| w(x)\int_0^t e^{i(t-s)\px^2} \frakm_{\mathrm{l}}(D)^2 [\mathrm{T}_{\theta}(t,s)w(\cdot)F(\cdot)]\,\ud s \right\|_{L_t^2([0,\infty);L_x^2)} &\lesssim \| F \|_{L_t^2([0,\infty);L_x^2)},\\
\left\| w(x)\int_t^\infty e^{i(t-s)\px^2} \frakm_{\mathrm{l}}(D)^2 [\mathrm{T}_{\theta}(t,s)w(\cdot)F(\cdot)]\,\ud s \right\|_{L_t^2([0,\infty);L_x^2)} &\lesssim \| F \|_{L_t^2([0,\infty);L_x^2)},
\end{split}
\end{equation*}
where $F \in L_t^2([0,\infty);L_x^2)$. By duality, these two estimates  are equivalent to \eqref{equ:dual_low_smoothing_0t} and \eqref{equ:dual_low_smoothing_ti} respectively. The same argument given above can be applied to conclude \eqref{equ:dual_hom_high} using \eqref{equ:dual_high_smoothing_0t} and \eqref{equ:dual_high_smoothing_ti}.

\end{proof}
We now prove Lemma~\ref{lemma:dual_low_smoothing} and of Lemma~\ref{lemma:dual_high_smoothing}. First, we recall the standard  Littlewood-Paley decomposition.  Let $\psi \in C_c^\infty(\bbR)$ be a smooth non-negative even bump function with $\psi(\xi) = 1$ for $|\xi|\leq 1$ and $\psi(\xi) = 0$ for $|\xi| \geq 2$, and let $\varphi(\xi) := \psi(\xi) -  \psi(2\xi)$. We use the standard dyadic Littlewood-Paley decomposition $I = \sum_{j=0}^\infty P_j$ where 
\begin{align}
P_j g &:= \widehat{\calF}^{-1}[\varphi(2^{-j}\xi)\hatg(\xi)], \quad j \geq 1,\\
P_0 g &:= \widehat{\calF}^{-1}[\psi(\xi)\hatg(\xi)].
\end{align}

\subsubsection{Low energy case. Proof of Lemma~\ref{lemma:dual_low_smoothing} }\label{subsec:low_energy}

In this setting, we observe that $\widehat{\mathcal{F}}[\frakm_{\mathrm{l}}(D)^2f](\xi) = \big(\xi \eta(\xi)\big)^2 \hatf(\xi)$ has a compact support inside the interval $[-4,4]$ for any $f \in L^2$. Hence, we have 
\begin{equation}\label{equ:small_support_observation}
\frakm_{\mathrm{l}}(D)^2 P_j f \equiv 0, \quad \text{for $j \geq 5$.}
\end{equation}
In the next lemma, we establish improved local decay estimates for the integral kernels of frequency localized Schr\"odinger evolutions.
\begin{lemma}
Define the kernels
\begin{equation}\label{equ:low_kernel_def}
\begin{split}
K_{\mathrm{l},j}(t,x) &:= \int_\bbR e^{ix\xi} e^{-it\xi^2} \big(\xi \eta(\xi)\big)^2 \varphi(2^{-j}\xi)\,\ud \xi, \quad 1 \leq j \leq 4,\\
K_{\mathrm{l},0}(t,x) &:= \int_\bbR e^{ix\xi} e^{-it\xi^2} \big(\xi \eta(\xi)\big)^2 \psi(\xi)\,\ud \xi.
\end{split}
\end{equation}
Then for all $0 \leq j \leq 4$ we have 
\begin{equation}\label{equ:low_kernel_estimate}
\big|\jx^{-1}K_{\mathrm{l},j}(t,x)\big| \lesssim \jt^{-\frac32}.
\end{equation}
\end{lemma}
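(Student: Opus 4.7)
My plan is to split the analysis into the short-time regime $|t| \leq 1$ and the long-time regime $t \geq 1$. In the former, the integrand in \eqref{equ:low_kernel_def} is smooth and compactly supported in $\xi$ (uniformly in $j$, since $\eta \cdot \varphi(2^{-j}\cdot)$ has support contained in a fixed compact set), so the crude estimate $|K_{\mathrm{l},j}(t,x)| \leq \|\chi_j\|_{L^1_\xi} \lesssim 1$ combined with $\jx^{-1} \leq 1$ and $\jt \lesssim 1$ gives the claim.

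The long-time analysis rests on the observation that the symbol $\chi_j(\xi) := (\xi\eta(\xi))^2 \varphi(2^{-j}\xi)$ (respectively $(\xi\eta(\xi))^2 \psi(\xi)$ for $j=0$) vanishes quadratically at $\xi = 0$. Concretely, I would write $\chi_j(\xi) = \xi^2 g_j(\xi)$ with $g_j \in C_c^\infty(\bbR)$: for $j = 0$ this is immediate with $g_0 = \eta^2 \psi$; for $j \geq 1$ the factor $\varphi(2^{-j}\xi)$ is supported away from the origin, so $g_j = \eta^2 \varphi(2^{-j}\cdot)$ is smooth. (Incidentally, for $j \geq 3$ the symbol $\chi_j$ is identically zero because $\eta$ is supported in $[-4,4]$ while $\varphi(2^{-j}\xi)$ lives outside this set, so those cases are trivial.) Plugging this factorization into the definition and using the elementary identity
\begin{equation*}
\xi^2 e^{-it\xi^2} = \frac{1}{-2it}\pxi\bigl(\xi e^{-it\xi^2}\bigr) + \frac{1}{2it} e^{-it\xi^2},
\end{equation*}
a single integration by parts in $\xi$ rewrites $K_{\mathrm{l},j}(t,x)$ as a linear combination of three oscillatory integrals of the shape $\int_\bbR e^{ix\xi - it\xi^2} h(\xi)\,\ud\xi$ with $h \in C_c^\infty(\bbR)$ and scalar coefficients $\tfrac{x}{2t}$, $\tfrac{1}{2it}$, and $\tfrac{1}{2it}$ respectively.

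Each of these three oscillatory integrals is then bounded by $C t^{-1/2}$ uniformly in $x$ by the standard free Schr\"odinger dispersive estimate $\|e^{it\px^2} f\|_{L^\infty_x} \lesssim t^{-1/2}\|f\|_{L^1_x}$, applied to $f = \widehat{\calF}^{-1}[h] \in \calS(\bbR) \subset L^1$. Collecting the prefactors gives $|K_{\mathrm{l},j}(t,x)| \lesssim (|x|+1) t^{-3/2}$ for $t \geq 1$, and since $(|x|+1)\jx^{-1} \lesssim 1$, dividing by $\jx$ yields the desired bound $|\jx^{-1} K_{\mathrm{l},j}(t,x)| \lesssim \jt^{-3/2}$. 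I do not expect a real obstacle in executing this plan; the only point that requires care is verifying that the support of $\varphi(2^{-j}\xi)$ stays away from $\xi = 0$ so that the factorization $\chi_j = \xi^2 g_j$ with smooth $g_j$ is legitimate. Morally, the $\jx^{-1}$ weight absorbs the $|x|/t$ prefactor that arises when one trades the quadratic vanishing of the symbol at the origin (i.e., the threshold factor $(\xi\eta(\xi))^2$ built into $\frakm_{\mathrm{l}}$) for an upgrade from the dispersive rate $t^{-1/2}$ to $t^{-3/2}$.
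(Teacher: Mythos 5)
Your proof is correct and substantively the same as the paper's: both exploit the quadratic vanishing of $(\xi\eta(\xi))^2$ at $\xi=0$ via one integration by parts to trade $\xi^2$ for a $\tfrac{1}{t}$ gain, producing three oscillatory integrals with coefficients $\tfrac{x}{2t}$, $\tfrac{1}{2it}$, $\tfrac{1}{2it}$, each bounded by $t^{-1/2}$ via the free Schr\"odinger dispersive estimate, after which the $\jx^{-1}$ weight absorbs the linear-in-$x$ factor. The only differences are cosmetic (you pre-factor the symbol as $\xi^2 g_j$ and invoke a product identity, whereas the paper integrates by parts directly), plus your side remark that $\chi_j \equiv 0$ for $j \geq 3$, which is a valid but inessential observation since only finitely many $j$ appear anyway.
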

\begin{proof}
For short times $t \leq 1$, we have 
\begin{equation*}
\begin{split}
|K_{\mathrm{l},j}(t,x)| &\lesssim \int_\bbR| \xi^2\eta(\xi)^2 \varphi(2^{-j}\xi) |\,\ud \xi \leq 2^{3j} \int_{[-4 \times 2^{-j},4 \times 2^{-j}]} \gamma^2 \,\ud \gamma \lesssim 1, \quad 1 \leq j \leq 4,\\
|K_{\mathrm{l},0}(t,x)| &\lesssim 1.
\end{split}
\end{equation*}
For times $t \geq 1$, we integrate by parts to obtain 
\begin{equation*}
\begin{split}
K_{\mathrm{l},j}(t,x) &= \frac{x}{2t} \int_\bbR e^{ix\xi} e^{-it\xi^2} \xi \eta(\xi)^2 \varphi(2^{-j}\xi)\,\ud \xi + \frac{1}{2it} \int_\bbR e^{ix\xi} e^{-it\xi^2}\big(\xi \eta(\xi)^2\big)' \varphi(2^{-j}\xi)\,\ud \xi\\
&\quad +\frac{2^{-j}}{2it} \int_\bbR e^{ix\xi} e^{-it\xi^2} \xi \eta(\xi)^2 \varphi'(2^{-j}\xi)\,\ud \xi.
\end{split}
\end{equation*}
Using the standard estimate for the flat Schr\"odinger evolution $\| e^{it\px^2} \|_{L^1 \rightarrow L^\infty} \lesssim t^{-\frac12}$, we conclude 
\begin{equation*}
\begin{split}
&|\jx^{-1} K_{\mathrm{l},j}(t,x) | \\
&\lesssim t^{-\frac32} \left(\Big\| \widehat{\calF}^{-1}[\xi \eta(\xi)^2 \varphi(2^{-j}\xi)] \Big\|_{L_x^1} + \Big\| \widehat{\calF}^{-1}[(\xi \eta(\xi)^2)' \varphi(2^{-j}\xi)] \Big\|_{L_x^1} + \Big\| \widehat{\calF}^{-1}[\xi \eta(\xi)^2 \varphi'(2^{-j}\xi)] \Big\|_{L_x^1}\right)\\
&\lesssim_j t^{-\frac32},
\end{split}
\end{equation*}
as desired. The bound \eqref{equ:low_kernel_estimate} for $j=0$ also follows analogously.

\end{proof}

Next, we derive bounds for the following bilinear form.
\begin{lemma}\label{lemma:bilinear_low_j}
For $j \geq 0$, $M >0$, and for $F,G \in L_t^2([0,\infty);L_x^2)$ let 
\begin{equation}\label{equ:bilinear_form_low}
I_{\mathrm{l},j}^M(F,G) := \int_0^\infty \int_0^{\max\{t-M,0\}} \left \langle e^{i(t-s)\px^2}\frakm_\mathrm{l}(D)^2 [\mathrm{T}_{\theta}(t,s)P_j(wF(s))],wG(t) \right \rangle \,\ud s \,\ud t.
\end{equation}
Then, for $j \geq 0$, 
\begin{equation}\label{equ_bilinear_low_estimate}
\left|I_{\mathrm{l},j}^M(F,G)\right| \lesssim M^{-\frac12 + \delta} \|F \|_{L_t^2([0,\infty);L_x^2)} \|G \|_{L_t^2([0,\infty);L_x^2)}.
\end{equation}
\end{lemma}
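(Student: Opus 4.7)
The plan is to establish the bound by directly estimating the integral kernel of the propagator via the decay \eqref{equ:low_kernel_estimate}, combined with the spatial weights and Young's convolution inequality in time. Since $\frakm_{\mathrm{l}}(D)^2 P_j \equiv 0$ for all $j \geq 5$ by \eqref{equ:small_support_observation}, only the finite range $j \in \{0,1,2,3,4\}$ needs to be considered, so we fix such a $j$. Using $[\mathrm{T}_\theta(t,s) g](x) = g(x + \theta(s) - \theta(t))$ together with a change of variable $y \mapsto y + \theta(s) - \theta(t)$ to re-center the integral, I would obtain the kernel identity
\begin{equation*}
\big\langle e^{i(t-s)\px^2} \frakm_{\mathrm{l}}(D)^2 [\mathrm{T}_\theta(t,s) P_j(wF(s))], wG(t) \big\rangle = \iint_{\bbR^2} K_{\mathrm{l},j}\bigl(t-s,\, x - y + \theta(s) - \theta(t)\bigr) w(x) w(y) F(s,y) \overline{G(t,x)} \,\ud y\,\ud x,
\end{equation*}
where the Littlewood--Paley cutoff $P_j$ has been absorbed into the definition of $K_{\mathrm{l},j}$.

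For the spatial integral, I would apply the kernel estimate \eqref{equ:low_kernel_estimate}, which on the region $t - s \geq M \geq 1$ gives $|K_{\mathrm{l},j}(t-s, z)| \lesssim \langle z \rangle (t-s)^{-3/2}$. Writing $\langle z \rangle \leq \langle x \rangle + \langle y \rangle + |\theta(t)-\theta(s)|$ for $z = x-y+\theta(s)-\theta(t)$ and using $w = \jxmn \in L^2_x$, Cauchy--Schwarz applied to each of the three resulting pieces produces the pointwise estimate
\begin{equation*}
\bigl| \big\langle e^{i(t-s)\px^2} \frakm_{\mathrm{l}}(D)^2 [\mathrm{T}_\theta(t,s) P_j(wF(s))], wG(t) \big\rangle \bigr| \lesssim (t-s)^{-3/2}\bigl(1 + |\theta(t)-\theta(s)|\bigr) \|F(s)\|_{L^2_x} \|G(t)\|_{L^2_x}.
\end{equation*}

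The main obstacle, which is the new ingredient relative to the flat case $\theta \equiv 0$, is to replace the moving-center increment by a power of $(t-s)$ rather than the naive bound $|\theta(t)-\theta(s)| \lesssim \eps \langle t \rangle^\delta$, which is insufficient to close the time integration. Instead, I would integrate the hypothesis $|\theta'(\tau)| \lesssim \eps \langle \tau \rangle^{-1+\delta}$ to obtain
\begin{equation*}
|\theta(t)-\theta(s)| \leq \eps \int_s^t (1+\tau)^{-1+\delta}\,\ud\tau = \frac{\eps}{\delta}\bigl((1+t)^\delta - (1+s)^\delta\bigr),
\end{equation*}
and then invoke the elementary subadditivity inequality $(a+u)^\delta - a^\delta \leq u^\delta$ for $a, u \geq 0$ and $\delta \in (0,1]$ (which follows from concavity of $x \mapsto x^\delta$) with $a = 1+s$ and $u = t-s$. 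This yields the crucial bound $|\theta(t)-\theta(s)| \lesssim_\delta \eps (t-s)^\delta$ uniformly in $0 \leq s \leq t$.

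Substituting into the pointwise estimate, the integrand defining $I_{\mathrm{l},j}^M(F,G)$ is controlled by $\|F(s)\|_{L^2_x} \|G(t)\|_{L^2_x}$ times the time-lag kernel $k(t-s) := (t-s)^{-3/2} + \eps(t-s)^{-3/2+\delta}$ restricted to $t - s \geq M$, whose $L^1$ norm in the lag variable satisfies
\begin{equation*}
\int_M^\infty k(u)\,\ud u \;\lesssim_\delta\; M^{-1/2} + \eps M^{-1/2+\delta} \;\lesssim\; M^{-1/2+\delta}.
\end{equation*}
Young's convolution inequality applied in the time variable then delivers the desired bound
\begin{equation*}
\bigl|I_{\mathrm{l},j}^M(F,G)\bigr| \;\lesssim\; M^{-1/2+\delta} \|F\|_{L^2_t([0,\infty);L^2_x)} \|G\|_{L^2_t([0,\infty);L^2_x)},
\end{equation*}
which is precisely \eqref{equ_bilinear_low_estimate}.
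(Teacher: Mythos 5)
Your proof is correct and follows essentially the same strategy as the paper: the frequency-localized local decay bound for $K_{\mathrm{l},j}$, the crucial displacement estimate $|\theta(t)-\theta(s)| \lesssim_\delta \eps(t-s)^\delta$, Cauchy--Schwarz in space, and Young's convolution inequality in the lag variable $t-s$. You apply Cauchy--Schwarz directly in the original $(x,y)$ variables instead of after the paper's change of variables to $(u,z,h)$, and you skip the symmetrization to $\{|y|>|x|\}$; both are harmless simplifications in this low-frequency regime, and your self-contained derivation of the displacement bound via concavity of $x\mapsto x^\delta$ replaces the paper's citation of \cite[Lemma~6.8]{CL24}. One small slip: you wrote $(t-s)^{-3/2}$ and then restricted to $M\geq 1$, whereas \eqref{equ:low_kernel_estimate} actually gives the uniform bound $\jap{t-s}^{-3/2}$; keep the Japanese bracket and then $\int_M^\infty \jap{u}^{-3/2+\delta}\,\ud u\lesssim M^{-1/2+\delta}$ holds for all $M>0$, as the lemma requires (your version $\int_M^1 u^{-3/2}\,\ud u\simeq M^{-1/2}$ is not $\lesssim M^{-1/2+\delta}$ when $M<1$).
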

\begin{proof}
Let us set 
\begin{equation}
b = b(t,s) := \theta(t) - \theta(s).
\end{equation}
Then, we express the bilinear form \eqref{equ:bilinear_form_low} using the kernel \eqref{equ:low_kernel_def} where
\begin{equation*}
\begin{split}
I_{\mathrm{l},j}^M(F,G) &= \int_0^\infty \int_0^{\max\{t-M,0\}} \int_\bbR \int_\bbR  K_{\mathrm{l},j}(t-s,x-y-b)w(y)F(s,y) \cdot \overline{w(x)G(t,x)} \,\ud y \,\ud  x \,\ud s \,\ud t\\
&=   \int_\bbR \int_\bbR \int_{s=0}^\infty \int_{t = s+M}^\infty   K_{\mathrm{l},j}(t-s,x-y-b)w(y)F(s,y) \cdot \overline{w(x)G(t,x)} \,\ud t \,\ud s\,\ud y \,\ud  x .
\end{split}
\end{equation*}
Since the integral kernel \eqref{equ:low_kernel_def} is anti-symmetric, we may write 
\begin{equation*}
I_{\mathrm{l},j}^M(F,G) = J_{\mathrm{l},j}^M(F,G) + \overline{J_{\mathrm{l},j}^M(G,F)},
\end{equation*}
where
\begin{equation*}
J_{\mathrm{l},j}^M(F,G) :=  \iint_{\{|y| > |x|\}} \int_{s=0}^\infty \int_{t = s+M}^\infty   K_{\mathrm{l},j}(t-s,x-y-b)w(y)F(s,y) w(x) \overline{G(t,x)} \,\ud t \,\ud s\,\ud y \,\ud  x. 
\end{equation*}
Like in \cite[Lemma~6.10]{CL24}, we switch to the new variables $(t,s,y,x) \leftrightarrow (u,s,z,h)$, where 
\begin{equation}\label{equ:proof_new_variables}
u := t-s, \quad z := x- y -b, \quad h := y + b.
\end{equation}
In these new variables, we have 
\begin{equation}\label{equ:proof_J_bilinear_low}
\begin{split}
J_{\mathrm{l},j}^M(F,G) =  \iint_{\{|h-b| > |h+z|\}} \int_{s=0}^\infty \int_{u = M}^\infty  & K_{\mathrm{l},j}(u,z) w(h-b)\\
&\times F(s,h-b) w(h+z) \overline{G(s+u,h+z)} \,\ud u \,\ud s\,\ud h \,\ud  z. 
\end{split}
\end{equation}
From the local decay estimate \eqref{equ:low_kernel_estimate} we bound \eqref{equ:proof_J_bilinear_low} by 
\begin{equation*}
\begin{split}
|J_{\mathrm{l},j}^M(F,G)| \lesssim \iint_{\{|h-b| > |h+z|\}} \int_{s=0}^\infty \int_{u = M}^\infty  & \jap{u}^{-\frac32} |\jap{z} w(h-b) F(s,h-b)\\
&\times  w(h+z) \overline{G(s+u,h+z)}| \,\ud u \,\ud s\,\ud h \,\ud  z\\
\lesssim \iint_{\{|h-b| > |h+z|\}} \int_{s=0}^\infty \int_{u = M}^\infty  & \jap{u}^{-\frac32} |\jap{h-b} w(h-b) F(s,h-b)\\
&\times  w(h+z) \overline{G(s+u,h+z)}| \,\ud u \,\ud s\,\ud h \,\ud  z\\
\quad + \iint_{\{|h-b| > |h+z|\}} \int_{s=0}^\infty \int_{u = M}^\infty  & \jap{u}^{-\frac32} |w(h-b) F(s,h-b)\\
&\times  \jap{h+z}w(h+z) \overline{G(s+u,h+z)}| \,\ud u \,\ud s\,\ud h \,\ud  z\\
\quad + \iint_{\{|h-b| > |h+z|\}} \int_{s=0}^\infty \int_{u = M}^\infty  & \jap{u}^{-\frac32} \jap{b} |w(h-b) F(s,h-b)\\
&\times  w(h+z) \overline{G(s+u,h+z)}| \,\ud u \,\ud s\,\ud h \,\ud  z,
\end{split}
\end{equation*}
where we have also used $\jap{z} \lesssim \jap{h-b} + \jap{h+z} + \jap{b}$ in the last inequality. By \cite[Lemma~6.8]{CL24}, we have 
\begin{equation*}
|b| = |\theta(t)-\theta(s)| \lesssim \eps |t-s|^\delta\lesssim |u|^\delta.
\end{equation*}
Hence, by applying the Cauchy-Schwarz inequality and the above bound, we obtain that 
\begin{equation*}
|J_{\mathrm{l},j}^M(F,G)| \lesssim \int_{u=M}^\infty \jap{u}^{-\frac32+ \delta} \|F \|_{L_t^2 L_x^2} \|G \|_{L_t^2 L_x^2} \|\jap{x} w(x)\|_{L_x^2}^2 \,\ud u \lesssim M^{-\frac12 + \delta} \|F \|_{L_t^2 L_x^2} \|G \|_{L_t^2 L_x^2}.
\end{equation*}
The preceeding bound then implies the asserted estimate \eqref{equ_bilinear_low_estimate} as desired. 
\end{proof}

We now provide the proof of the estimate \eqref{equ:dual_low_smoothing_0t} for Lemma~\ref{lemma:dual_low_smoothing}.
\begin{proof}[Proof of Lemma~\ref{lemma:dual_low_smoothing}]
As noted before, we leave the second asserted estimate \eqref{equ:dual_low_smoothing_ti} for the reader and we provide the proof for the  estimate \eqref{equ:dual_low_smoothing_0t}. By taking \eqref{equ:small_support_observation} into account and inserting the Littlewood-Paley decomposition for $wF(s)$ in \eqref{equ:dual_low_smoothing_0t}, we find that 
\begin{equation*}
\begin{split}
\left| \int_0^\infty \int_0^t \left \langle e^{i(t-s)\px^2} \frakm_{\mathrm{l}}(D)^2 [\mathrm{T}_{\theta}(s,t)wF(s)], wG(t) \right \rangle \,\ud s \,\ud t \right| \leq \sum_{j=0}^4 | I_{\mathrm{l},j}^0(F,G)| \\
\leq  \sum_{j=0}^4 | I_{\mathrm{l},j}^{M=1}(F,G)| + \sum_{j=0}^4 | I_{\mathrm{l},j}^{M=1}(F,G) - I_{\mathrm{l},j}^0(F,G)|,
\end{split}
\end{equation*}
where we have also added the bilinear form \eqref{equ:bilinear_form_low} with the choice $M=1$, using the  triangle inequality.  The interaction of the second term is stronger since $|t-s|\leq 1$. In this case, the $L^2$ conservation for the Schr\"odinger propagator and the $L^2$ boundedness of $\frakm_{\mathrm{l}}(D)$ imply that 
\begin{equation}\label{equ:proof_low_estimate_dual}
\sum_{j=0}^4 | I_{\mathrm{l},j}^{M=1}(F,G) - I_{\mathrm{l},j}^0(F,G)| \lesssim \| \frakm_{\mathrm{l}}(D)^2 wF \|_{L_t^2L_x^2} \| w G \|_{L_t^2L_x^2} \lesssim \| F \|_{L_t^2L_x^2} \| G \|_{L_t^2L_x^2}.
\end{equation}
Hence, by combining \eqref{equ_bilinear_low_estimate} and \eqref{equ:proof_low_estimate_dual}, we obtain 
\begin{equation*}
\left| \int_0^\infty \int_0^t \left \langle e^{i(t-s)\px^2} \frakm_{\mathrm{l}}(D)^2 [\mathrm{T}_{\theta}(s,t)wF(s)], wG(t) \right \rangle \,\ud s \,\ud t \right| \lesssim \| F \|_{L_t^2L_x^2} \| G \|_{L_t^2L_x^2},
\end{equation*}
as desired.
\end{proof}

\subsubsection{High energy case. Proof of Lemma~\ref{lemma:dual_high_smoothing}}\label{subsec:high_energy}

In this setting, we note that $\frakm_{\mathrm{h}}(D)P_0 \equiv 0$ since the support of $(1-\eta(\xi))$ does not contain the interval $[-2,2]$. In the next lemma, we establish pointwise decay estimates for the frequency localized integral kernels for the Schr\"odinger evolution. 

\begin{lemma}
Define the kernels
\begin{equation}
\begin{split}
K_{\mathrm{h},j}(t,x) &:= \int_\bbR e^{ix\xi} e^{-it\xi^2} \big(1- \eta(\xi)\big)^2 \varphi(2^{-j}\xi)\,\ud \xi, \quad j \geq 1.
\end{split}
\end{equation}
Then for any $j \geq 1$ and for any $t\geq 1$ we have 
\begin{equation}\label{equ:kernel_high_disp}
|K_{\mathrm{h},j}(t,x)| \lesssim t^{-\frac12}.
\end{equation}
Furthermore, if $|x|>2^{j+4}|t|$ or $|x|<2^{j-4}|t|$, then for any $N\geq 1$,
\begin{equation}\label{equ:kernel_high_nonsp}
|K_{\mathrm{h},j}(t,x)| \lesssim_N 2^j (2^{2j}t)^{-N}.
\end{equation}
\end{lemma}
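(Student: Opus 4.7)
The plan is to rescale to the unit frequency shell $|\xi| \sim 1$ and then apply standard dispersive and non-stationary phase arguments separately. Performing the substitution $\xi = 2^j \zeta$ yields
\[
K_{\mathrm{h},j}(t,x) = 2^j \int_\bbR e^{iX\zeta - iT\zeta^2} b_j(\zeta)\,\ud\zeta, \qquad T := 2^{2j}t, \quad X := 2^j x,
\]
where $b_j(\zeta) := (1-\eta(2^j\zeta))^2 \varphi(\zeta)$ is a smooth function supported in $\{|\zeta| \in [1/2, 2]\}$ with derivatives bounded uniformly in $j \geq 1$. The uniformity in $j$ follows from the observation that for $j \geq 3$ the inner factor $1-\eta(2^j\zeta)$ equals $1$ identically on $\mathrm{supp}(\varphi)$, while the cases $j = 1, 2$ contribute only finitely many bump functions whose constants can be absorbed.

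For the dispersive bound \eqref{equ:kernel_high_disp}, I would interpret $K_{\mathrm{h},j}(t,\cdot)$ as the free Schr\"odinger evolution applied to $f_j := \widehat{\calF}^{-1}\bigl[(1-\eta)^2\varphi(2^{-j}\cdot)\bigr]$ and invoke the standard estimate $\|e^{-it\px^2}f\|_{L^\infty_x} \lesssim t^{-1/2}\|f\|_{L^1_x}$ valid for $t \geq 1$. Since the $L^1_x$ norm of the inverse Fourier transform of a frequency-localized bump is scale-invariant, the uniform-in-$j$ bounds on $b_j$ give $\|f_j\|_{L^1_x} \lesssim 1$ uniformly in $j$, which yields $|K_{\mathrm{h},j}(t,x)| \lesssim t^{-1/2}$.

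For the non-stationary bound \eqref{equ:kernel_high_nonsp}, the phase $\Phi(\zeta) := X\zeta - T\zeta^2$ has $\Phi'(\zeta) = X - 2T\zeta$ and $\Phi''(\zeta) = -2T$, with stationary point $\zeta_0 := X/(2T) = x/(2^{j+1}t)$. The hypotheses $|x| > 2^{j+4}|t|$ and $|x| < 2^{j-4}|t|$ translate respectively to $|\zeta_0| > 8$ and $|\zeta_0| < 1/32$. In either case, on $\mathrm{supp}(b_j) \subset \{|\zeta| \in [1/2,2]\}$ one verifies $|\zeta - \zeta_0| \gtrsim \max(|\zeta_0|, 1)$, and consequently $|\Phi'(\zeta)| = 2T|\zeta - \zeta_0| \gtrsim T$. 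Integration by parts $N$ times with the operator $L := -\partial_\zeta \circ (1/(i\Phi'))$ produces terms schematically of the form $(\partial_\zeta^k b_j)(\Phi'')^{N-k}/(\Phi')^{2N-k}$, each bounded by $T^{N-k}/T^{2N-k} = T^{-N}$ using $|\Phi''| \lesssim T$, $|\Phi'| \gtrsim T$, and $|\partial_\zeta^k b_j| \lesssim 1$. Integrating over the $O(1)$ support of $b_j$ yields
\[
|K_{\mathrm{h},j}(t,x)| \lesssim 2^j T^{-N} = 2^j (2^{2j}t)^{-N},
\]
which is the desired estimate.

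The argument is essentially a standard stationary/non-stationary phase exercise; the only point requiring some attention, and the mildest obstacle, is to confirm the uniformity in $j$ of the derivative bounds on the rescaled amplitude $b_j$, which as noted above reduces to checking that the nonlinear cutoff $1-\eta(2^j\zeta)$ stabilizes to $1$ on $\mathrm{supp}(\varphi)$ once $j$ is sufficiently large.
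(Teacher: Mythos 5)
Your proof is correct and takes essentially the same approach as the paper: the dispersive bound via $\|e^{-it\partial_x^2}\|_{L^1\to L^\infty}\lesssim t^{-1/2}$ applied to a frequency-localized bump, and the non-stationary bound via repeated integration by parts against $\Phi'$. The only cosmetic difference is that you rescale to the unit frequency shell at the outset and track uniformity in $j$ of the rescaled amplitude $b_j$, whereas the paper works in the original variable $\xi$ and tracks powers of $2^j$ (splitting into $j\leq j_0$ and $j>j_0$ for the dispersive bound, where the rescaling is performed only in the latter case); both presentations encode the same estimates.
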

\begin{proof}
We first establish \eqref{equ:kernel_high_disp} by distinguishing the cases for $1 \leq j \leq j_0$ and $j >j_0$ where we fix $j_0 := 4$. For $1 \leq j \leq j_0$, using the dispersive estimate $\| e^{it\px^2} \|_{L^1 \rightarrow L^\infty} \lesssim t^{-\frac12}$, we have
\begin{equation*}
|K_{\mathrm{h},j}(t,x)| \lesssim t^{-\frac12} \big\| \widehat{\calF}^{-1}[ (1-\eta(\xi))^2 \varphi(2^{-j}\xi)]\big\|_{L^1} \lesssim_{j_0} t^{-\frac12}.
\end{equation*}
On the other hand, for $j > j_0$, we observe that $\eta(\xi) \equiv 0$ for all $\xi \in \supp(\varphi(2^{-j}\cdot))$. Hence, for $j >j_0$, we have 
\begin{equation}\label{equ:proof_rescaling}
K_{\mathrm{h},j}(t,x) = 2^j \tilK(2^{2j}t,2^jx), \quad \tilK(s,y) := \int_\bbR e^{iy\xi} e^{-is\xi^2}\varphi(\xi)\,\ud \xi.
\end{equation}
Using again the estimate $\| e^{it\px^2} \|_{L^1 \rightarrow L^\infty} \lesssim t^{-\frac12}$, we find that $\sup_y |\tilK(s,y)| \lesssim s^{-\frac12}$. By rescaling according to \eqref{equ:proof_rescaling}, we obtain
\begin{equation*}
|K_{\mathrm{h},j}(t,x)| \lesssim 2^j |\tilK(2^{2j}t,2^j x)| \lesssim t^{-\frac12},
\end{equation*}
for all $j > j_0$. Thus, we have established \eqref{equ:kernel_high_disp} for all $j \geq 1$. Next, we show \eqref{equ:kernel_high_nonsp}. For $j\geq1$ we write
\begin{equation*}
K_{\mathrm{h},j}(t,x) = \int_\bbR e^{it\phi(\xi;x,t)}a_j(\xi)\,\ud \xi
\end{equation*}
with $a_j(\xi) := \big(1-\eta(\xi)\big)^2 \varphi(2^{-j}\xi)$ and $\phi(\xi;x,t) := -\xi^2 + \xi \frac{x}{t}$. Note that $\supp(a_j) \subset [-2^{j+1},-2^{j-1}]\cup[2^{j-1},2^{j+1}]$. For $|x|>2^{j+4}|t|$, we find by direct computation that for $j \geq 1$ on the support of $a_j(\xi)$, we have 
\begin{equation}\label{equ:proof_smoothing_phi'1}
|\pxi \phi(\xi;x,t)| =\left|-2\xi + \frac{x}{t}\right| \geq \left|\frac{x}{t}\right| -2 |\xi| \gtrsim 2^j.
\end{equation}
Similarly, for $|x|<2^{j-4}|t|$, $j \geq 1$, and for $\xi \in \supp(a_j)$,
\begin{equation}\label{equ:proof_smoothing_phi'2}
|\pxi \phi(\xi;x,t)| \geq 2 |\xi| - \left|\frac{x}{t}\right|  \gtrsim 2^j.
\end{equation}
Moreover, we note that $\pxi^2 \phi \equiv -2$. Integrating by parts in $\xi$ repeatedly, we obtain for any integer $N\geq1$ that 
\begin{equation*}
K_{\mathrm{h},j}(t,x) = \int_\bbR e^{it\phi(\xi;x,t)} L^N \big(a_j(\xi)\big)\,\ud \xi
\end{equation*}
where 
\begin{equation*}
L := - \pxi \left(\frac{1}{it \pxi \phi} \cdot\right).
\end{equation*}
Noting that $\pxi\big((\pxi \phi)^{-1}\big)= 2(\pxi \phi)^{-2}$, we conclude from \eqref{equ:proof_smoothing_phi'1} and \eqref{equ:proof_smoothing_phi'2} that 
\begin{equation*}
|K_{\mathrm{h},j}(t,x)| \lesssim_N 2^j (2^{2j} t)^{-N}
\end{equation*}
for all $|x|>2^{j+4}|t|$ or $|x|<2^{j-4}|t|$ and for all $j \geq 1$.
\end{proof}

Next, we derive bounds for the following bilinear form.
\begin{lemma}
For $j \geq 1$, $M \geq 1$, and for $F,G \in L_t^2([0,\infty);L_x^2)$ let 
\begin{equation}\label{equ:bilinear_form_high}
I_{\mathrm{h},j}^M(F,G) := \int_0^\infty \int_0^{\max\{t-M,0\}} \left \langle e^{i(t-s)\px^2}\frakm_\mathrm{h}(D)^2 [\mathrm{T}_{\theta}(t,s)P_j(wF(s))],wG(t) \right \rangle \,\ud s \,\ud t.
\end{equation}
Then we have  
\begin{equation}\label{equ_bilinear_high_estimate}
\left|I_{\mathrm{h},j}^M(F,G)\right| \lesssim M^{-1} \|F \|_{L_t^2([0,\infty);L_x^2)} \|G \|_{L_t^2([0,\infty);L_x^2)}.
\end{equation}
\end{lemma}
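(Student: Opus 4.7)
The proof should parallel the low-energy case in Lemma~\ref{lemma:bilinear_low_j}, adapted to leverage the frequency-localized kernel estimates \eqref{equ:kernel_high_disp} and \eqref{equ:kernel_high_nonsp}. First, I would perform the same change of variables $(t,s,y,x)\mapsto(u,s,z,h)$ with $u:=t-s$, $z:=x-y-b$, $h:=y+b$, where $b:=\theta(t)-\theta(s)$ satisfies $|b|\lesssim u^\delta$ by \cite[Lemma~6.8]{CL24}; the restriction $s\leq t-M$ becomes $u\geq M$. Then I would use the symmetry of the bilinear form to reduce to the half-space $\{|y|>|x|\}$, equivalently $\{|h-b|>|h+z|\}$, writing $I_{\mathrm{h},j}^M=J_{\mathrm{h},j}^M+\overline{J_{\mathrm{h},j}^M(G,F)}$ in analogy with the preceding lemma.

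The key new step is to split the $z$-integration into the non-stationary and stationary regions
\begin{equation*}
\mathcal{N}_j(u) := \{|z|<2^{j-4}u\}\cup\{|z|>2^{j+4}u\}, \qquad \mathcal{S}_j(u) := \{2^{j-4}u \leq |z| \leq 2^{j+4}u\},
\end{equation*}
and to invoke \eqref{equ:kernel_high_nonsp} and \eqref{equ:kernel_high_disp} in the respective regions. On $\mathcal{N}_j(u)$, the rapid decay $|K_{\mathrm{h},j}(u,z)|\lesssim_N 2^j(2^{2j}u)^{-N}$ with $N$ taken sufficiently large, combined with Cauchy--Schwarz in $h$ and in the time variable $s$, delivers a contribution strictly better than the target $M^{-1}\|F\|\|G\|$. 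On $\mathcal{S}_j(u)$, where $|z|\sim 2^j u \gg |b|\lesssim u^\delta$ for $j\geq 1$ and $u\geq M$ sufficiently large, the half-space restriction together with $|z+b|=|(h+z)-(h-b)|\leq |h-b|+|h+z|$ forces $|h-b|\gtrsim|z|$, yielding the pointwise weight bound $w(h-b)\lesssim |z|^{-3/2}$; Cauchy--Schwarz in $h$ then reduces the spatial integral to $|z|^{-3/2}\|F(s,\cdot)\|_{L^2}\|G(s+u,\cdot)\|_{L^2}$.

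The main obstacle is to extract enough $u$-decay from the stationary region $\mathcal{S}_j(u)$. The naive combination of $|K_{\mathrm{h},j}|\lesssim u^{-1/2}$, the weight $|z|^{-3/2}\sim (2^j u)^{-3/2}$, and the band length $\sim 2^j u$ yields only a pointwise factor of $2^{-j/2}u^{-1}$, which is not integrable over $u\in[M,\infty)$. To upgrade this to $u^{-2}$ and hence obtain $M^{-1}$ after the $u$-integration, I plan to exploit the oscillatory structure of the kernel in $\mathcal{S}_j(u)$ beyond its pointwise size. Concretely, I would apply Plancherel in the spatial variable $h$ to convert the inner spatial bilinear form into an oscillatory integral in the dual variable $\eta$ localized to $|\eta|\sim 2^j$, and then perform a Kato-type smoothing analysis of the high-frequency Schr\"odinger propagator on the interval $u\in[M,\infty)$. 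A distributional evaluation of $\int_M^\infty e^{-iu\eta^2}\,\ud u$ produces a compensating factor of order $\eta^{-2}\sim 2^{-2j}$, which precisely supplies the missing $2^{-j}u^{-1}$ decay and closes the estimate with a $j$-summable constant. The moving-center error contributes an additional phase $e^{-ib\eta}$ with $|b||\eta|\lesssim 2^j u^\delta$, and is absorbed as a negligible perturbation via an expansion $e^{-ib\eta}=1+O(2^j u^\delta)$ against the dominant oscillatory factor.
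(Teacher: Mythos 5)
Your setup — the change of variables $(t,s,y,x)\mapsto(u,s,z,h)$ with $b=\theta(t)-\theta(s)$, the symmetrization $I_{\mathrm{h},j}^M=J_{\mathrm{h},j}^M(F,G)+\overline{J_{\mathrm{h},j}^M(G,F)}$, and the split of the $(u,z)$-plane into a non-stationary region $\{|z|<2^{j-4}u\}\cup\{|z|>2^{j+4}u\}$ and a stationary band $\{2^{j-4}u\leq|z|\leq2^{j+4}u\}$ — coincides with the paper's, and your treatment of the non-stationary region via \eqref{equ:kernel_high_nonsp} is correct.

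However, there is a genuine gap in your treatment of the stationary band, and your proposed fix via Plancherel plus a distributional evaluation of the $u$-integral is not the right mechanism and would not close. The issue traces back to a bookkeeping error. First, the weight is $w(x)=\jap{x}^{-2}$ (not $\jap{x}^{-3/2}$), and on the half-space $\{|h-b|>|h+z|\}$ the triangle inequality $|z|\leq 2|h-b|+|b|$, together with $|z|\sim 2^j u$ and $|b|\lesssim u^\delta\ll u$, forces $|h-b|\gtrsim 2^j u$, giving $w(h-b)\lesssim 2^{-j}\jap{u}^{-2}$ \emph{pointwise}. Second — and this is the key point you missed — one should not pair this with the full measure $\sim 2^j u$ of the $z$-band. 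Instead, perform Cauchy–Schwarz in $(h,z)$ for fixed $(u,s)$, putting $\mathbbm{1}_{A^{\mathrm{c}}}(u,z)\jap{h+z}^{-2}$ in one factor and $F(s,h-b)G(s+u,h+z)$ in the other. The first factor costs only the $L^2_{h,z}$ norm of the band, $\sim(2^j u)^{1/2}$, while the second factor equals $\|F(s,\cdot)\|_{L^2}\|G(s+u,\cdot)\|_{L^2}$ exactly after the linear change of variables $(h,z)\mapsto(h-b,h+z)$. Combined with the dispersive bound $|K_{\mathrm{h},j}(u,z)|\lesssim u^{-1/2}$, the integrand in $(u,s)$ is bounded by
\begin{equation*}
u^{-\frac12}\cdot 2^{-j}\jap{u}^{-2}\cdot(2^j u)^{\frac12}\cdot\|F(s,\cdot)\|_{L^2}\|G(s+u,\cdot)\|_{L^2}
=2^{-\frac{j}{2}}\jap{u}^{-2}\|F(s,\cdot)\|_{L^2}\|G(s+u,\cdot)\|_{L^2},
\end{equation*}
which integrates over $u\in[M,\infty)$ and $s\in[0,\infty)$ to $\lesssim 2^{-j/2}M^{-1}\|F\|_{L_t^2L_x^2}\|G\|_{L_t^2L_x^2}$. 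No further oscillation needs to be extracted, and the moving-center shift $b$ enters only through the inequality $|b|\ll u$ and the change of variables; it is not a perturbation to be Taylor-expanded. Indeed your proposed expansion $e^{-ib\eta}=1+O(2^j u^\delta)$ with $|\eta|\sim 2^j$ is not a small-perturbation expansion at all (you note $|b||\eta|\lesssim 2^j u^\delta$ — this is large), and the distributional evaluation of $\int_M^\infty e^{-iu\eta^2}\,\ud u$ does not converge in a form that yields the claimed factor $\eta^{-2}$. Discard Phase 4 and replace it with the Cauchy–Schwarz accounting above.
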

\begin{proof}
Like in the proof of Lemma~\ref{lemma:bilinear_low_j}, we may write 
\begin{equation}\label{equ:proof_identity_I_hj}
I_{\mathrm{h},j}^M(F,G) = J_{\mathrm{h},j}^M(F,G) + \overline{J_{\mathrm{h},j}^M(G,F)},
\end{equation}
where
\begin{equation*}
J_{\mathrm{h},j}^M(F,G) :=  \iint_{\{|y| > |x|\}} \int_{s=0}^\infty \int_{t = s+M}^\infty   K_{\mathrm{h},j}(t-s,x-y-b)w(y)F(s,y) w(x) \overline{G(t,x)} \,\ud t \,\ud s\,\ud y \,\ud  x.
\end{equation*}
Switching to the new variables \eqref{equ:proof_new_variables}, we have 
\begin{equation*}
\begin{split}
J_{\mathrm{h},j}^M(F,G) = \iint_{\{|h-b| > |h+z|\}} \int_{s=0}^\infty \int_{u = M}^\infty  & K_{\mathrm{h},j}(u,z) w(h-b)\\
&\times F(s,h-b) w(h+z) \overline{G(s+u,h+z)} \,\ud u \,\ud s\,\ud h \,\ud  z. 
\end{split}
\end{equation*}
Then we subdivide the $(u,z)$-plane into two disjoint regions
\begin{equation*}
\begin{split}
A &:= \{(u,z): |z|< 2^{j-4}|u| \quad \text{or} \quad |z| > 2^{j+4}|u|\},\\
A^{\mathrm{c}}&:=\{(u,z): 2^{j-4}|u| \leq |z| \leq 2^{j+4}|u|\}.
\end{split}
\end{equation*}
To distinguish the two regions we insert sharp cut-offs functions $1 = \mathbbm{1}_{A} + \mathbbm{1}_{A^\mathrm{c}}$ into the bilinear form $J_{\mathrm{h},j}^M(F,G)$. In region $A$, we apply the non-stationary phase estimate \eqref{equ:kernel_high_nonsp} to obtain 
\begin{equation}\label{equ:proof_region_A_bound}
\begin{split}
&\iint_{\{|h-b| > |h+z|\}} \int_{s=0}^\infty \int_{u = M}^\infty   \mathbbm{1}_{A}(u,z) |K_{\mathrm{h},j}(u,z) w(h-b) F(s,h-b)\\
&\hspace{16em}\times w(h+z) \overline{G(s+u,h+z)}| \,\ud u \,\ud s\,\ud h \,\ud  z\\
&\lesssim_N \iint_{\{|h-b| > |h+z|\}} \int_{s=0}^\infty \int_{u = M}^\infty   2^j(2^{2j}u)^{-N-1}|w(h-b) F(s,h-b)\\
&\hspace{16em}\times w(h+z) \overline{G(s+u,h+z)}| \,\ud u \,\ud s\,\ud h \,\ud  z\\
&\lesssim 2^{-jN} M^{-N} \|F\|_{L_t^2 L_x^2} \|G\|_{L_t^2 L_x^2}.
\end{split}
\end{equation}
In the complement region $A^{\mathrm{c}}$, we note that 
\begin{equation*}
|b| \ll |u| \leq  2^{j}|u| \simeq |z| \leq |h-b| + |h+z| + |b|,
\end{equation*}
for all $j \geq 1$. Since $|h+z|<|h-b|$ in the domain of integration of $J_{\mathrm{h},j}^M(F,G)$ and since $|b|<|u|$ we find that 
\begin{equation}\label{equ:proof_h-b_bound}
w(h-b) = \jap{h-b}^{-2} \lesssim 2^{-j} \jap{u}^{-2}.
\end{equation}
Hence, using the pointwise estimate \eqref{equ:kernel_high_disp}, \eqref{equ:proof_h-b_bound}, and the Cauchy-Schwarz inequality repeatedly, we obtain that 
\begin{equation}\label{equ:proof_region_Ac_bound}
\begin{split}
&\iint_{\{|h-b| > |h+z|\}} \int_{s=0}^\infty \int_{u = M}^\infty   \mathbbm{1}_{A^{\mathrm{c}}}(u,z) |K_{\mathrm{h},j}(u,z) w(h-b) F(s,h-b)\\
&\hspace{16em}\times w(h+z) \overline{G(s+u,h+z)}| \,\ud u \,\ud s\,\ud h \,\ud  z\\
&\lesssim \iint_{\{|h-b| > |h+z|\}} \int_{s=0}^\infty \int_{u = M}^\infty   {u}^{-\frac12} \mathbbm{1}_{A^{\mathrm{c}}}(u,z) \jap{h-b}^{-2}|F(s,h-b)|\\
&\hspace{16em}\times \jap{h+z}^{-2} |G(s+u,h+z)| \,\ud u \,\ud s\,\ud h \,\ud  z\\
&\lesssim \int_{s=0}^\infty \int_{u = M}^\infty {u}^{-\frac12} 2^{-j}  \jap{u}^{-2} \| \mathbbm{1}_{A^{\mathrm{c}}}(u,z) \jap{h+z}^{-2}\|_{L_h^2 L_z^2}\\
&\hspace{16em} \times  \| F(s,h-b)G(s+u,h+z)\|_{L_h^2L_z^2}  \,\ud u \,\ud  s\\
&\lesssim \int_{s=0}^\infty \int_{u = M}^\infty {u}^{-\frac12} 2^{-j}  \jap{u}^{-2} u^{\frac12} 2^{\frac{j}{2}} \| F(s,h-b)G(s+u,h+z)\|_{L_h^2L_z^2}  \,\ud u \,\ud  s\\
&\lesssim M^{-1} \|F\|_{L_t^2 L_x^2} \|G\|_{L_t^2 L_x^2}.
\end{split}
\end{equation}
The identity \eqref{equ:proof_identity_I_hj} combined with the estimates \eqref{equ:proof_region_A_bound} and \eqref{equ:proof_region_Ac_bound} concludes the asserted estimate \eqref{equ_bilinear_high_estimate}.

\end{proof}

In the proof of Lemma~\ref{lemma:dual_high_smoothing} we will use the following almost orthogonality property 
\begin{equation}\label{equ:proof_almost_orthogonality}
I_{\mathrm{h},j}^M(F,G) = I_{\mathrm{h},j}^M(Q_j F,Q_j G) 
\end{equation}
where 
\begin{equation}
Q_j g := \sum_{|k-j|\leq 5} \left(P_k g + w^{-1}[P_k,w]g\right),
\end{equation}
with $w(x) = \jxmn$. The commutator term is small in the following sense:
\begin{equation}\label{equ:proof_commutator}
\| w^{-1} [P_k,w] g \|_{L_x^2} \lesssim 2^{-k} \| g \|_{L_x^2}.
\end{equation}

Finally, we are now ready to establish the proof of Lemma~\ref{lemma:dual_high_smoothing}.
\begin{proof}[Proof of Lemma~\ref{lemma:dual_high_smoothing}]
We provide the proof of \eqref{equ:dual_high_smoothing_0t} and leave the analagous details for the proof of \eqref{equ:dual_high_smoothing_ti} to the reader. We insert the Littlewood-Paley decomposition on the left hand side of \eqref{equ:dual_high_smoothing_0t} and use the almost orthogonality property \eqref{equ:proof_almost_orthogonality} to obtain 
\begin{equation}\label{equ:proof_47_1}
\begin{split}
&\left| \int_0^\infty \int_0^t \left \langle e^{i(t-s)\px^2} \frakm_{\mathrm{h}}(D)^2 [\mathrm{T}_{\theta}(s,t)wF(s)], wG(t) \right \rangle \,\ud s \,\ud t \right|\\
&\lesssim \sum_{j=1}^\infty | I_{\mathrm{h},j}^0(F,G)| =\sum_{j=1}^\infty | I_{\mathrm{h},j}^0(Q_j F, Q_j G)|,
\end{split}
\end{equation}
where we note that the summation starts from $j=1$ due to the support of $\frakm_{\mathrm{h}}(\xi)$. For each $j \geq 1$, we insert the term \eqref{equ:bilinear_form_high} with $M \equiv 1$ and write 
\begin{equation}\label{equ:proof_47_2}
I_{\mathrm{h},j}^0(Q_j F, Q_j G) = \left(I_{\mathrm{h},j}^0(Q_j F, Q_j G) - I_{\mathrm{h},j}^{1}(Q_j F, Q_j G)\right) + I_{\mathrm{h},j}^{1}(Q_j F, Q_j G).
\end{equation}
Using the $L^2$ conservation for the Schr\"odinger propagator and the $L^2$ boundedness of $\frakm_{\mathrm{h}}(D)$, we have 
\begin{equation}\label{equ:proof_47_3}
\left| I_{\mathrm{h},j}^0(Q_j F, Q_j G) - I_{\mathrm{h},j}^{1}(Q_j F, Q_j G)\right| \lesssim \|  Q_j F \|_{L_t^2L_x^2} \|  Q_j G \|_{L_t^2L_x^2}.
\end{equation}
We also obtain from \eqref{equ_bilinear_high_estimate} that 
\begin{equation}\label{equ:proof_47_4}
\left| I_{\mathrm{h},j}^{1}(Q_j F, Q_j G)\right| \lesssim\|  Q_j F \|_{L_t^2L_x^2} \|  Q_j G \|_{L_t^2L_x^2}.
\end{equation}
Hence, from \eqref{equ:proof_47_2}, the estimates \eqref{equ:proof_commutator}, \eqref{equ:proof_47_3}, and \eqref{equ:proof_47_4}, we conclude that the right hand side of \eqref{equ:proof_47_1} is bounded by 
\begin{equation*}
\begin{split}
\sum_{j=1}^\infty | I_{\mathrm{h},j}^0(Q_j F, Q_j G)| &\lesssim \sum_{j=0}^\infty 2^{-j} \| F \|_{L_t^2 L_x^2} \| G \|_{L_t^2 L_x^2} + \sum_{j=0}^\infty \sum_{|k_1 - j | \leq 5} \sum_{|k_2 - j | \leq 5} \| P_{k_1} F \|_{L_t^2 L_x^2} \| P_{k_2} G \|_{L_t^2 L_x^2}\\
&\lesssim  \| F \|_{L_t^2 L_x^2} \| G \|_{L_t^2 L_x^2},
\end{split}
\end{equation*}
as desired. 
\end{proof}

\section{Setting up the Analysis} \label{sec:setting_up}
In this section we begin setting up the nonlinear analysis for the asymptotic stability of the solitary wave solutions \eqref{equ:intro_family_4parameter} for the focusing cubic Schr\"odinger equation \eqref{equ:cubic_NLS}. The local well-posedness result for  \eqref{equ:cubic_NLS} in Lemma~\ref{lem:setup_local_existence} suffices for our bootstrap purposes. In Subsection~\ref{subsec:modulation}, we apply standard modulation techniques to decompose the solution to \eqref{equ:cubic_NLS} into a modulated solitary wave and a radiation term. In Subsections \ref{subsec:profile_equations}--\ref{subsec:normalform} we prepare the evolution equation for the profile using the distorted Fourier theory established earlier, and we apply  a normal form transformation on the profile equation to renormalize the resonant quadratic terms. In the last Subsection~\ref{subsec:cubic_spectral_distributions} we provide a full description for the nonlinear spectral distribution of the cubic nonlinear terms.

\subsection{Local existence theory} We recall the following local well-posedness result for \eqref{equ:cubic_NLS} in the space $H_x^1 \cap L_x^{2,1}$ by Ginibre and Velo \cite{GV78}.
\begin{lemma}[Local existence] \label{lem:setup_local_existence}
For any $\psi_0 \in H^1_x(\bbR) \cap L^{2,1}_x(\bbR)$ there exists a unique solution $\psi \in \calC([0,T_\ast); H^1_x \cap L^{2,1}_x)$ to \eqref{equ:cubic_NLS} with initial condition $\psi(0) = \psi_0$ defined on a maximal interval of existence $[0,T_\ast)$ for some $T_\ast > 0$. Moreover, the following continuation criterion holds
\begin{equation} \label{equ:setup_continuation_criterion}
T_\ast < \infty \quad \Longrightarrow \quad \limsup_{t \nearrow T_\ast} \| \psi(t) \|_{H^1_x \cap L^{2,1}_x} = \infty.
\end{equation}
\end{lemma}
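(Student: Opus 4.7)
This is a classical result, so the plan is to assemble the standard ingredients rather than prove anything new. The equation will be recast as the Duhamel integral equation
\begin{equation*}
\psi(t) = e^{it\px^2}\psi_0 + i\int_0^t e^{i(t-s)\px^2}\bigl(|\psi|^2\psi\bigr)(s)\,\ud s,
\end{equation*}
and a fixed-point argument will be performed in the Banach space
\begin{equation*}
X_T := \calC\bigl([0,T];H^1_x(\bbR)\cap L^{2,1}_x(\bbR)\bigr),
\end{equation*}
endowed with the norm $\|\psi\|_{X_T}:=\sup_{t\in[0,T]}\bigl(\|\psi(t)\|_{H^1_x}+\|\jap{x}\psi(t)\|_{L^2_x}\bigr)$.

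First I would handle the $H^1$ component by exploiting the one-dimensional Sobolev embedding $H^1_x\hookrightarrow L^\infty_x$, which gives the tame nonlinear bounds $\||\psi|^2\psi\|_{H^1_x}\lesssim \|\psi\|_{H^1_x}^3$ and the corresponding difference estimate $\||\psi_1|^2\psi_1-|\psi_2|^2\psi_2\|_{H^1_x}\lesssim(\|\psi_1\|_{H^1_x}^2+\|\psi_2\|_{H^1_x}^2)\|\psi_1-\psi_2\|_{H^1_x}$. Together with the unitarity of $e^{it\px^2}$ on $H^1_x$, these imply that the Duhamel map is a contraction on a ball of $C([0,T];H^1_x)$ provided $T$ is chosen small depending only on $\|\psi_0\|_{H^1_x}$.

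The main technical step, and the only mildly nontrivial one, is propagating the weight $\jap{x}$. For this I would use the commutator identity $[x,e^{it\px^2}]=-2it\px e^{it\px^2}$, equivalently the Galilean operator $J(t):=x+2it\px$, which commutes with the linear Schr\"odinger evolution. Applying $\jap{x}$ to the Duhamel formula and writing $x=J(t)-2it\px$ yields
\begin{equation*}
\|\jap{x}\psi(t)\|_{L^2_x}\lesssim \|\jap{x}\psi_0\|_{L^2_x}+T\|\px\psi_0\|_{L^2_x}+\int_0^t\Bigl(\|\jap{x}(|\psi|^2\psi)(s)\|_{L^2_x}+(t-s)\|\px(|\psi|^2\psi)(s)\|_{L^2_x}\Bigr)\,\ud s,
\end{equation*}
and the Sobolev embedding again bounds both integrands by $\|\psi(s)\|_{H^1_x}^2\|\psi(s)\|_{X_s}$. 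Combining with the $H^1$ estimate from the previous step, a Gronwall/contraction argument on the full norm $\|\cdot\|_{X_T}$ produces the unique solution on a short time interval whose length depends only on $\|\psi_0\|_{H^1_x\cap L^{2,1}_x}$.

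Finally, the continuation criterion \eqref{equ:setup_continuation_criterion} follows from the standard bootstrap: because the lifespan $T$ produced above depends monotonically on the initial data norm, if $\limsup_{t\nearrow T_\ast}\|\psi(t)\|_{H^1_x\cap L^{2,1}_x}<\infty$ one can restart the local theory at times arbitrarily close to $T_\ast$ with a uniform existence time, thereby extending the solution past $T_\ast$ and contradicting its maximality. No serious obstacle is expected; the only point that demands care is the tracking of the weight, for which the commutator identity above is the decisive tool.
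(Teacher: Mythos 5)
Your sketch is correct and follows the standard route; the paper itself does not prove this lemma but simply cites Ginibre--Velo \cite{GV78}, so there is no in-house argument to compare against. The commutator identity $[x,e^{it\px^2}]=-2it\px e^{it\px^2}$ (equivalently $J(t)e^{it\px^2}=e^{it\px^2}x$) is indeed the decisive tool for propagating the weight, and the rest of the contraction argument and the blow-up alternative are as you describe. One cosmetic slip: the bound ``$\|\psi(s)\|_{H^1_x}^2\|\psi(s)\|_{X_s}$'' should read $\|\psi(s)\|_{H^1_x}^2\|\psi(s)\|_{H^1_x\cap L^{2,1}_x}$, since $X_s$ is a space of time-dependent paths rather than of fixed-time states.
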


\subsection{Modulation and orbital stability}\label{subsec:modulation}
In this subsection, we perform standard modulation and orbital stability techniques to set up the perturbation equations. We consider small perturbations of the solitary waves	\eqref{equ:intro_family_4parameter} in the energy space, and we establish a decomposition of the corresponding solution to \eqref{equ:cubic_NLS} into a modulated solitary wave and a radiation term that is orthogonal to directions related to the invariances of \eqref{equ:cubic_NLS}. Moreover, we derive the modulation equations for the soliton parameters and the evolution equation for the radiation term.

\begin{proposition} \label{prop:modulation_and_orbital}
Let $\omega_0 \in (0, \infty)$. There exist constants $C_1 \geq 1$ and $0 < \varepsilon_1 \ll 1$ such that the following holds:
For any $(\gamma_0,p_0,\sigma_0) \in \bbR^3$ and any $u_0 \in H^1_x(\bbR) \cap L^{2,1}_x(\bbR)$ with $\|u_0\|_{H^1_x} \leq \varepsilon_1$, denote by $\psi(t,x)$ the $H^1_x \cap L^{2,1}_x$--solution to \eqref{equ:cubic_NLS} with initial condition 
\begin{equation} \label{equ:setup_modulation_orbital_initial_datum}
\psi_0(x) = e^{ip_0 (x-\sigma_0)} e^{i \gamma_0} \bigl( \phi_{\omega_0}(x-\sigma_0) + u_0(x-\sigma_0) \bigr)
\end{equation}
on its maximal interval of existence $[0, T_\ast)$ furnished by Lemma~\ref{lem:setup_local_existence}.
Then there exist unique continuously differentiable paths $(\omega, \gamma,p,\sigma) \colon [0,T_\ast) \to (0,\infty) \times \bbR^3$ so that the solution $\psi(t,x)$ to \eqref{equ:cubic_NLS} can be decomposed as
\begin{equation} \label{equ:setup_modulation_prop_decomposition}
\psi(t,x) = e^{ip(t)(x-\sigma(t))}e^{i \gamma(t)} \bigl( \phi_{\omega(t)}(x-\sigma(t)) + u(t,x-\sigma(t)) \bigr), \quad 0 \leq t < T_\ast,
\end{equation}
and the following properties hold:
\begin{itemize}[leftmargin=1.8em]
\item[(1)] Evolution equation\footnote{By direct inspection we find that the equation \eqref{equ:setup_perturbation_equ} is $\calJ$-invariant. Since $U(0) := \big(u(0),\baru(0)\big)^\top$ is $\calJ$-invariant, this ensures that  $U(t)$ is $\calJ$-invariant for all $t \geq 0$.} for $u(t,x-\sigma(t))$: 
\begin{equation} \label{equ:setup_perturbation_equ}
i\partial_t U - \calH(\omega)U = \calM_1 + \calM_2 + \calN(U), \quad U := \begin{bmatrix} u \\ \overline{u} \end{bmatrix},
\end{equation}
with
\begin{align}
\calH(\omega) &:= \calH_0(\omega) + \calV(\omega) :=  \begin{bmatrix}
	-\partial_x^2 + \omega & 0 \\ 0 & \partial_x^2 - \omega
\end{bmatrix}
+
\begin{bmatrix}
	-2\phi_\omega^2	 & - \phi_\omega^2 \\ \phi_\omega^2 & 2\phi_\omega^2
\end{bmatrix}, \\
\calM_1 &:= \dot{p}(x-\sigma) \sigma_3 U + i(\dot{\sigma}-2p)\partial_x U + (\dot{\gamma}+p^2-\omega - p\dot{\sigma})\sigma_3 U  \label{equ:setup_definition_calM1}\\
\calM_2  &:= -i(\dot{\gamma}+p^2-\omega - p\dot{\sigma}) Y_{1, \omega} - i \dot{\omega} Y_{2, \omega} + i(\dot{\sigma}-2p)Y_{3,\omega}-i\dot{p}Y_{4,\omega}, \label{equ:setup_definition_calM2} \\
\calN(U) &:= \calQ_\omega(U) + \calC(U) := \begin{bmatrix}
	-\phi_\omega(u^2 + 2u \bar{u}) \\ \phi_\omega(\baru^2 + 2u \baru)
\end{bmatrix}
+
\begin{bmatrix}
	-u \baru u \\ \baru u \baru
\end{bmatrix}. \label{equ:setup_definition_calN}
\end{align}

\item[(2)] Modulation equations
\begin{equation} \label{equ:setup_modulation_equation}
\mathbb{M} \begin{bmatrix}
	\dot{\gamma}+p^2-\omega - p\dot{\sigma} \\
	\dot \omega\\
	\dot{\sigma}-2p\\
	\dot{p}
\end{bmatrix}
= \begin{bmatrix}
	\langle i \calN(U), \sigma_2 Y_{1,\omega} \rangle \\
	\langle i \calN(U), \sigma_2 Y_{2, \omega} \rangle\\
	\langle i \calN(U), \sigma_2 Y_{3,\omega} \rangle \\
	\langle i \calN(U), \sigma_2 Y_{4,\omega} \rangle 					
\end{bmatrix}
\end{equation}
with
\begin{equation}\label{equ:setup_matrix_modulation_equation}
\bbM := \bbM_1(\omega) + \bbM_2(U,\omega),
\end{equation}
where
\begin{equation}\label{equ:setup_matrix_M1}
\bbM_1(\omega) := \begin{bmatrix}
	0 & c_\omega & 0 & 0  \\
	c_\omega & 0 &0 & 0 \\
	0 & 0 & 0 & - \|\phi_\omega\|_{L^2}^2\\
	0 & 0 & \|\phi_\omega\|_{L^2}^2 & 0					
\end{bmatrix} = \begin{bmatrix}
	0 & 2\omega^{-\frac12} & 0 & 0  \\
	2\omega^{-\frac12} & 0 &0 & 0 \\
	0 & 0 & 0 & - 4\omega^{\frac12}\\
	0 & 0 & 4\omega^{\frac12} & 0				
\end{bmatrix},
\end{equation}
and\footnote{The functions $Y_{1,\omega},\ldots,Y_{4,\omega}$ in the terms  $\calM_2$ and $\bbM_2(U,\omega)$ are evaluated at $y := x-\sigma(t)$.}	
\begin{equation} \label{equ:setup_matrix_M2}
\bbM_2(U,\omega):=
\begin{bmatrix}
	\langle U, \sigma_1Y_{1, \omega} \rangle &  \langle U,  \sigma_2 \partial_\omega Y_{1, \omega} \rangle &  \langle U,  -\sigma_2\py  Y_{1, \omega} \rangle &  \langle U,  y\sigma_1 Y_{1, \omega} \rangle \\
	\langle U, \sigma_1 Y_{2, \omega} \rangle &  \langle U, \sigma_2 \partial_\omega Y_{2, \omega} \rangle &  \langle U,  -\sigma_2\py  Y_{2, \omega} \rangle &  \langle U,  y\sigma_1 Y_{2, \omega} \rangle\\
	\langle U, \sigma_1 Y_{3, \omega} \rangle &  \langle U, \sigma_2 \partial_\omega Y_{3, \omega} \rangle&  \langle U,  -\sigma_2\py  Y_{3, \omega} \rangle &  \langle U,  y\sigma_1 Y_{3, \omega} \rangle\\
	\langle U, \sigma_1 Y_{4, \omega} \rangle &  \langle U, \sigma_2 \partial_\omega Y_{4, \omega} \rangle&  \langle U,  -\sigma_2\py  Y_{4, \omega} \rangle &  \langle U,  y\sigma_1 Y_{4, \omega} \rangle\\										
\end{bmatrix}.
\end{equation}

\item[(3)] Orthogonality: for $j\in \{1,\ldots,4\}$
\begin{equation} \label{equ:setup_orthogonality_radiation}
\langle U(t), \sigma_2 Y_{j, \omega(t)} \rangle = 0, \quad 0 \leq t < T_\ast.
\end{equation}

\item[(4)] Stability:
\begin{equation} \label{equ:setup_smallness_orbital}
\sup_{0 \leq t < T_\ast} \, \bigl( \|u(t)\|_{H^1_x} + |\omega(t) - \omega_0| + |p(t)-p_0| \bigr) \leq C_1 \|u_0\|_{H^1_x}.
\end{equation}

\item[(5)] Comparison estimate:
\begin{equation} \label{equ:setup_comparison_estimate}
|p(t)-p_0|\leq \frac{1}{2}\omega_0, \quad\frac12 \omega_0 \leq \omega(t) \leq 2 \omega_0, \quad 0 \leq t < T_{\ast}.
\end{equation}
\end{itemize}
\end{proposition}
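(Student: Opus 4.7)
My plan is to set up the modulation parameters by an implicit function theorem construction at $t=0$, propagate them by ODEs, and then derive the claimed evolution and modulation equations by direct computation.

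First I would handle existence of the decomposition. Define the map $\Theta: (u_0,\omega,\gamma,p,\sigma) \mapsto \bigl(\langle U_0(\cdot;\omega,\gamma,p,\sigma), \sigma_2 Y_{j,\omega}\rangle\bigr)_{j=1}^4$, where $U_0$ is obtained by unwinding the ansatz \eqref{equ:setup_modulation_prop_decomposition} applied to $\psi_0$. At the base point $(u_0,\omega,\gamma,p,\sigma)=(0,\omega_0,\gamma_0,p_0,\sigma_0)$ the matrix of partial derivatives with respect to $(\gamma,\omega,\sigma,p)$ equals $\mathbb{M}_1(\omega_0)$ from \eqref{equ:setup_matrix_M1}, which is invertible since $c_{\omega_0}\ne 0$ and $\|\phi_{\omega_0}\|_{L^2}\ne 0$. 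The implicit function theorem then yields unique $C^1$ parameters $(\omega(0),\gamma(0),p(0),\sigma(0))$ close to $(\omega_0,\gamma_0,p_0,\sigma_0)$ such that the orthogonality conditions \eqref{equ:setup_orthogonality_radiation} hold at $t=0$. For $t>0$, I would extend these parameters as the solution of the ODE system \eqref{equ:setup_modulation_equation}; invertibility of $\mathbb{M}=\mathbb{M}_1(\omega)+\mathbb{M}_2(U,\omega)$ for small $U$ follows by continuity since $\mathbb{M}_2$ is controlled by $\|U\|_{L^2}\|\langle y\rangle Y_{j,\omega}\|_{L^2}$ and the $Y_{j,\omega}$ are Schwartz.

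Next I would derive the evolution equation \eqref{equ:setup_perturbation_equ}. Substituting the ansatz \eqref{equ:setup_modulation_prop_decomposition} into \eqref{equ:cubic_NLS} and writing everything in the moving frame $y=x-\sigma(t)$, the linear terms coming from differentiating the phase and translation produce exactly the $\mathcal{M}_1$ and $\mathcal{M}_2$ contributions from \eqref{equ:setup_definition_calM1}--\eqref{equ:setup_definition_calM2}, with the $Y_{j,\omega}$ appearing because $\partial_\gamma$, $\partial_\omega$, $\partial_\sigma$, $\partial_p$ applied to $e^{ip(x-\sigma)}e^{i\gamma}\phi_\omega(x-\sigma)$ reproduce these generalized eigenfunctions up to the filtering $e^{ip y}e^{i\gamma}$. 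Using the ground-state equation \eqref{equ:intro_phi_omega_equation} to cancel the linear $\phi_\omega$-terms and expanding $|\phi_\omega+u|^2(\phi_\omega+u)$, the quadratic and cubic pieces produce $\mathcal{N}(U)=\mathcal{Q}_\omega(U)+\mathcal{C}(U)$ as in \eqref{equ:setup_definition_calN}. $\mathcal{J}$-invariance is preserved by construction since all terms in \eqref{equ:setup_perturbation_equ} respect $\overline{\sigma_1 \cdot}=\cdot$. The modulation system \eqref{equ:setup_modulation_equation} is then obtained by differentiating the orthogonality conditions $\langle U(t),\sigma_2 Y_{j,\omega(t)}\rangle\equiv 0$ in $t$, substituting $\partial_t U$ from \eqref{equ:setup_perturbation_equ}, and using the adjoint action $\langle \mathcal{H}(\omega)U,\sigma_2 Y_{j,\omega}\rangle=\langle U,\mathcal{H}(\omega)^*\sigma_2 Y_{j,\omega}\rangle=0$ since $\sigma_2 Y_{j,\omega}\in \mathcal{N}_{\mathrm{g}}(\mathcal{H}(\omega)^*)$. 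The $\mathbb{M}_1$ part comes from pairing $\mathcal{M}_2$ against $\sigma_2 Y_{j,\omega}$ via the inner product relations \eqref{equ:innerproductrelations1}--\eqref{equ:innerproductrelations2}, while $\mathbb{M}_2$ comes from pairing $\mathcal{M}_1$ and from the $\dot\omega$-derivative hitting $Y_{j,\omega(t)}$.

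For the stability statement \eqref{equ:setup_smallness_orbital}, the main input is the coercivity of the Schr\"odinger operators $L_{\pm,\omega}$ on the subspace orthogonal to the symmetry directions. I would expand the conserved quantities $M[\psi]$, $P[\psi]$, $E[\psi]$ from \eqref{equ:conservation_laws} around the modulated soliton $e^{ip(x-\sigma)}e^{i\gamma}\phi_\omega(x-\sigma)$ and form the Lyapunov functional
\begin{equation*}
\mathcal{E}(t) := E[\psi] + \omega(t) M[\psi] - p(t)\bigl(2P[\psi] - 2p(t) M[\psi]\bigr),
\end{equation*}
expressed in the comoving frame. After subtracting off the soliton value and using \eqref{equ:intro_phi_omega_equation} to kill the linear terms in $u$, the quadratic part becomes $\tfrac12 \langle L_{+,\omega}\Re u, \Re u\rangle + \tfrac12 \langle L_{-,\omega}\Im u, \Im u\rangle$ modulo cubic/quartic remainders controlled by $\|u\|_{H^1}^3$. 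The Weinstein coercivity estimates \cite{Weinstein85,Weinstein86} together with the orthogonality conditions \eqref{equ:setup_orthogonality_radiation} (which, via the parameters, enforce orthogonality to all four symmetry directions) yield $\|u(t)\|_{H^1}^2 \lesssim \mathcal{E}(t) - \mathcal{E}_{\mathrm{sol}}(\omega(t),p(t)) + \|u\|_{H^1}^3$; combined with conservation of $\mathcal{E}$ and a bootstrap on $|\omega(t)-\omega_0|+|p(t)-p_0|$ (recovered from the expansion of the momentum/mass identities) this closes to give \eqref{equ:setup_smallness_orbital}. The comparison estimate \eqref{equ:setup_comparison_estimate} is then immediate from smallness of $\varepsilon_1$.

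The main obstacle I anticipate is keeping the Lyapunov/coercivity argument clean in the presence of the four-dimensional symmetry group: one must check that the orthogonality conditions \eqref{equ:setup_orthogonality_radiation} provide control against all four null directions, and that the second-variation quadratic form (which mixes real and imaginary parts once Galilei boosts are included) is still coercive after subtracting these directions. The algebra is routine but the bookkeeping, especially tracking the dependence on both $\omega$ and $p$ simultaneously, is where one has to be careful; this is essentially the classical Grillakis–Shatah–Strauss framework adapted to the 4-parameter orbit, so I expect no conceptual difficulty beyond the computation.
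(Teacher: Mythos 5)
Your proposal follows the same four-step strategy as the paper: implicit function theorem at $t=0$, propagation via the modulation ODE system, direct derivation of the evolution and modulation equations, and a Lyapunov argument for the stability bound. The one place where you diverge is in the choice of Lyapunov functional. You take $\mathcal{E}(t) = E[\psi] + \omega(t)M[\psi] - 2p(t)P[\psi] + 2p(t)^2 M[\psi]$, with $p(t)$ time-dependent, whereas the paper uses $\mathbf{H}[\psi] = E[\psi] + (\omega(t)+p(0)^2)M[\psi] - 2p(0)P[\psi]$ with the momentum coefficient frozen at $p(0)$. After the Galilei shift to the comoving frame, your functional collapses to $E[\phi_\omega+u] + (\omega(t)+p(t)^2)M[\phi_\omega+u]$, while the paper's produces $E[\phi_\omega+u] + (\omega(t)+(p(t)-p(0))^2)M[\phi_\omega+u] + 2(p(t)-p(0))P[\phi_\omega+u]$. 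The consequence is that the paper gets the term $(p(t)-p(0))^2 g'(\omega(0))$ with a positive sign on the coercivity side, giving control of $|p(t)-p(0)|$ directly from the functional; in your version the $(p(t)-p(0))^2$ quantity reappears on the conservation side with the wrong sign, so the functional alone does not close the estimate on $p$. You do acknowledge this, pointing to the momentum/mass identities for the bootstrap on $|p(t)-p_0|$, and indeed momentum conservation gives $(p(t)-p(0))M[\psi_0] = P[u(0)] - P[u(t)]$, so $|p(t)-p(0)| \lesssim \|u\|_{H^1}^2$, which is strong enough to absorb the $(p(t)-p(0))^2$ term as quartic. So the argument does close, but requires an extra ingredient that the paper's cleaner functional builds in automatically. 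A smaller imprecision: the Jacobian at the base point is not literally $\mathbb{M}_1(\omega_0)$; there is an additional $p_0 c_{\omega_0}$ entry coming from the Galilei phase (the paper records this explicitly), though the matrix is still triangular-up-to-permutation and hence invertible, so nothing breaks.
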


\begin{remark}
As a consistency check, the case when $U\equiv 0$ should correspond to a transformed solitary wave. Indeed, from the modulation equations \eqref{equ:setup_modulation_equation} we obtain the Hamiltonian system
\begin{equation*}
	\begin{split}
\dot{\omega}=0, \quad \dot{p}=0, \quad 	\dot{\sigma}=2p,\quad  \dot{\gamma} = p^2+\omega, \\
\big(\omega(0),\gamma(0),p(0),\sigma(0)\big) = \big(\omega_0,\gamma_0,p_0,\sigma_0\big).
	\end{split}
\end{equation*}	
The system can be solved exactly and it gives a transformed solitary wave \eqref{equ:intro_family_4parameter}.
	\end{remark}
Although the proof of Proposition~\ref{prop:modulation_and_orbital} is standard, we provide the details below. We first record some identities related to the conservation laws \eqref{equ:conservation_laws}. 
\begin{lemma}
For any $f\in H^1(\bbR)$ and any $p \in \bbR$, we have
\begin{align}
M[e^{ipx}f] &= M[f],	\label{equ:setup_lemma_mass_shift}\\
P[e^{ipx}f] &=P[f] + pM[f], \label{equ:setup_lemma_momentum_shift}\\
E[e^{ipx}f] &=E[f]+2p P[f] + p^2M[f]. \label{equ:setup_lemma_energy_shift}
\end{align}
\end{lemma}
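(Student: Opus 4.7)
All three identities are pure algebraic manipulations: one substitutes $e^{ipx}f$ into the definitions in \eqref{equ:conservation_laws} and expands. I would handle them in the natural order of increasing complexity.

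The mass identity \eqref{equ:setup_lemma_mass_shift} is immediate from $|e^{ipx}f(x)|^2 = |f(x)|^2$. For the momentum identity \eqref{equ:setup_lemma_momentum_shift}, I would first compute
\begin{equation*}
\px\bigl(e^{ipx}f\bigr) \overline{e^{ipx}f} = ip|f|^2 + (\px f)\bar f.
\end{equation*}
Taking the imaginary part picks up $p|f|^2$ from the first term (since $|f|^2$ is real) plus the momentum density for $f$, and integrating with the factor $\tfrac12$ yields $P[e^{ipx}f] = pM[f] + P[f]$.

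For the energy identity \eqref{equ:setup_lemma_energy_shift}, the potential piece $\tfrac14\int |e^{ipx}f|^4\,\ud x = \tfrac14\int |f|^4\,\ud x$ is invariant, so only the kinetic term needs attention. I would expand
\begin{equation*}
\bigl|\px(e^{ipx}f)\bigr|^2 = \bigl|e^{ipx}(ipf + \px f)\bigr|^2 = p^2|f|^2 + |\px f|^2 + 2\,\Re\bigl(ip f \,\overline{\px f}\bigr),
\end{equation*}
and then use the elementary identity $\Re(iz) = -\Im(z)$ together with $\Im(\overline{z}) = -\Im(z)$ to rewrite the cross term as $2p\,\Im\bigl((\px f)\bar f\bigr)$. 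Integrating and multiplying by $\tfrac12$ gives the kinetic contribution
\begin{equation*}
\tfrac12 \int |\px f|^2\,\ud x + p^2 M[f] + 2p P[f],
\end{equation*}
which combined with the invariance of the potential term is exactly \eqref{equ:setup_lemma_energy_shift}.

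There is no genuine obstacle here; the only care needed is to keep track of the signs when converting between $\Re(i\,\cdot)$ and $\Im(\cdot)$ and between $\Im(\overline{z})$ and $\Im(z)$ so that the factors of $2p$ and $p^2$ emerge with the correct coefficients. The identities will then be applied later to decompose the conservation laws for the perturbed solution and obtain the coercivity estimate leading to \eqref{equ:setup_smallness_orbital}.
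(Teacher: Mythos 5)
Your proof is correct and is precisely the direct computation the paper invokes (the paper simply states ``These identities follow by direct computation''); the sign bookkeeping $\Re(iz) = -\Im(z)$ and $\Im(\bar f\,\px f)=\Im((\px f)\bar f)$ is exactly what's needed and you've handled it right.
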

\begin{proof}
These identities follow by direct computation.
\end{proof}
In the next lemma, we use the following expansions of the conservation laws around a solitary wave.

\begin{lemma} \label{lem:setup_expansions_conservation_laws}
Let $\omega \in (0, \infty)$ and $u \in H^1_x(\bbR)$. Set
\begin{equation*}
u_1 := \Re(u), \quad u_2 := \Im(u), \quad U := \begin{bmatrix} u \\ \baru \end{bmatrix}.
\end{equation*}
We have the following identities.
\begin{itemize}[leftmargin=1.8em]
\item[(a)] Expansion of the mass around a solitary wave:
\begin{equation} \label{equ:setup_expansion_mass}
M[\phi_\omega + u] - M[\phi_\omega] = M[u] - \frac12 \langle U, \sigma_2 Y_{1,\omega} \rangle.
\end{equation}
\item[(b)] Expansion of the momentum around a solitary wave:
\begin{equation}\label{equ:setup_expansion_momentum}
P[\phi_\omega+u] = \frac12\langle U,\sigma_2 Y_{3,\omega}\rangle + P[u]
\end{equation}
\item[(c)] Expansion of the energy around a solitary wave:
\begin{equation} \label{equ:setup_expansion_energy1}
\begin{aligned}
	E[\phi_\omega + u] - E[\phi_\omega] &= \frac12 \langle (-\px^2 - 3\phi_\omega^2) u_1, u_1 \rangle + \frac12 \langle (-\px^2 - \phi_\omega^2) u_2, u_2 \rangle + \frac{\omega}{2} \langle U, \sigma_2 Y_{1,\omega} \rangle \\ 
	&\quad - \frac12 \int_\bbR \phi_\omega \bigl( u^2 \baru + u \baru^2 \bigr) \, \ud x - \frac14 \int_\bbR |u|^4 \, \ud x
\end{aligned}
\end{equation}
as well as 
\begin{equation} \label{equ:setup_expansion_energy2}
\begin{aligned}
	E[\phi_\omega + u] - E[\phi_\omega] &= E[u] + \frac{\omega}{2} \langle U, \sigma_2 Y_{1,\omega} \rangle 
	- \frac14 \int_\bbR \phi_\omega^2 \bigl( u^2 + 4 u \baru + \baru^2 \bigr) \, \ud x \\
	&\quad - \frac12 \int_\bbR \phi_\omega \bigl( u^2 \baru + u \baru^2 \bigr) \, \ud x.
\end{aligned}
\end{equation}        
\end{itemize}
\end{lemma}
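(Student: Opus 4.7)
All three identities reduce to direct algebraic expansion combined with a single use of the ground state equation \eqref{equ:intro_phi_omega_equation}; the work is purely bookkeeping. I would begin by computing the columns $\sigma_2 Y_{j,\omega}$ appearing on the right-hand sides. From \eqref{eqn:nullspace of calH-omega} and \eqref{eqn:pauli_matrices} one finds $\sigma_2 Y_{1,\omega} = -\phi_\omega(1,1)^\top$ and $\sigma_2 Y_{3,\omega} = (-i\px\phi_\omega,\, i\px\phi_\omega)^\top$, so that the vector $L^2$ pairing \eqref{equ:inner_product_vector} provides the two key identifications
\begin{equation*}
\tfrac12\langle U, \sigma_2 Y_{1,\omega}\rangle = -\int_\bbR \phi_\omega \,\Re u\,\ud x, \qquad \tfrac12\langle U, \sigma_2 Y_{3,\omega}\rangle = -\int_\bbR (\px\phi_\omega)\,\Im u\,\ud x.
\end{equation*}
Identity (a) is then immediate from $|\phi_\omega+u|^2 = \phi_\omega^2 + 2\phi_\omega \Re u + |u|^2$ and the definition of $M$.

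For (b), I would expand $P[\phi_\omega+u]$ into four terms. The pure $\px\phi_\omega\cdot \phi_\omega$ piece contributes nothing (real total derivative), the pure $\px u \cdot \baru$ piece reproduces $P[u]$, and the two mixed cross terms combine, after integration by parts $\int\phi_\omega\px u\,\ud x = -\int (\px\phi_\omega)\, u\,\ud x$, into $-\int (\px\phi_\omega)\Im u\,\ud x$, which matches $\tfrac12\langle U,\sigma_2 Y_{3,\omega}\rangle$.

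For (c), I would expand both $|\px(\phi_\omega + u)|^2$ and $|\phi_\omega + u|^4 = (\phi_\omega^2 + 2\phi_\omega\Re u + |u|^2)^2$, integrate by parts on the gradient cross term to move the derivative onto $\phi_\omega$, and collect the pieces linear in $u$ as $\int(-\px^2\phi_\omega - \phi_\omega^3)\Re u\,\ud x$. This is the only non-cosmetic step: the ground state equation \eqref{equ:intro_phi_omega_equation} converts it to $-\omega\int\phi_\omega \Re u\,\ud x = \tfrac{\omega}{2}\langle U,\sigma_2 Y_{1,\omega}\rangle$. The cubic contributions assemble as $-\tfrac12\int\phi_\omega(u^2\baru + u\baru^2)\,\ud x$ via $\Re u \cdot |u|^2 = \tfrac12(u+\baru)u\baru$, and the quartic term is $-\tfrac14\int|u|^4\,\ud x$. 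The quadratic-in-$u$ pieces admit two equivalent groupings: setting $(\Re u)^2 = u_1^2$ and $|u|^2 = u_1^2 + u_2^2$ yields the Schr\"odinger-operator form $\tfrac12\langle(-\px^2 - 3\phi_\omega^2)u_1,u_1\rangle + \tfrac12\langle(-\px^2 - \phi_\omega^2)u_2,u_2\rangle$ of \eqref{equ:setup_expansion_energy1}, whereas substituting $(\Re u)^2 = \tfrac14(u+\baru)^2$ and isolating $E[u]$ regroups the remaining $\phi_\omega^2$-quadratic terms as $-\tfrac14\int \phi_\omega^2(u^2 + 4u\baru + \baru^2)\,\ud x$, producing \eqref{equ:setup_expansion_energy2}. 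There is no genuine obstacle; the only care needed is to track signs and complex conjugates in \eqref{equ:inner_product_vector} correctly when identifying the linear functionals $\int\phi_\omega\Re u\,\ud x$ and $\int(\px\phi_\omega)\Im u\,\ud x$ with the generalized-kernel pairings.
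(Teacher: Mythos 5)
Your proposal is correct and follows exactly the approach the paper intends, which it states simply as ``direct computation.'' The key identifications $\tfrac12\langle U,\sigma_2 Y_{1,\omega}\rangle = -\int\phi_\omega\Re u\,\ud x$ and $\tfrac12\langle U,\sigma_2 Y_{3,\omega}\rangle = -\int(\px\phi_\omega)\Im u\,\ud x$ are right (I verified the matrix products and the pairing \eqref{equ:inner_product_vector} carefully, including signs), the single use of the ground state equation \eqref{equ:intro_phi_omega_equation} to convert $-\int(\px^2\phi_\omega + \phi_\omega^3)\Re u$ into $-\omega\int\phi_\omega\Re u$ is the right and only non-trivial step, and the two regroupings of the quadratic pieces in (c) — via $(\Re u)^2 = u_1^2$ and $|u|^2 = u_1^2+u_2^2$ for \eqref{equ:setup_expansion_energy1}, versus $u^2 + 4u\baru + \baru^2 = 6u_1^2 + 2u_2^2$ and isolating $E[u]$ for \eqref{equ:setup_expansion_energy2} — both check out. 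The only minor remark is that for (b) the paper explicitly notes $P[\phi_\omega]=0$ by evenness, which you instead phrase as the pure $\px\phi_\omega\cdot\phi_\omega$ piece being a real total derivative; these are the same observation.
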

\begin{proof}
The identities follow by direct computation. For \eqref{equ:setup_expansion_momentum}, note that $P[\phi_\omega]=0$  by evenness.
\end{proof}

\begin{proof}[Proof of Proposition~\ref{prop:modulation_and_orbital}]
We denote by $\psi(t,x)$ the (local) $H^1_x \cap L^{2,1}_x$-solution to \eqref{equ:cubic_NLS} with initial condition \eqref{equ:setup_modulation_orbital_initial_datum} on its maximal interval of existence $[0,T_\ast)$ furnished by Lemma~\ref{lem:setup_local_existence}.

\medskip 

\noindent {\bf Step 1.} (Choice of parameters at $t=0$)
We show that there exist constants $C_1 \equiv C_1(\omega_0) \geq 1$ and $\kappa_1 \equiv \kappa_1(\omega_0)$ with $0 < \kappa_1 \ll 1$ such that if $\|u_0\|_{H^1_x} \leq \kappa_1$, then there exist $\big(\omega(0), \gamma(0), p(0), \sigma(0)\big) \in (0,\infty) \times \bbR^3$ satisfying 
\begin{equation} \label{equ:setup_modulation_proposition_proof_step1}
|\omega(0)-\omega_0| + |\gamma(0)-\gamma_0| + |p(0)-p_0| + |\sigma(0)-\sigma_0| \leq C_1 \|u_0\|_{H^1_x}
\end{equation}
as well as
\begin{equation} \label{equ:setup_modulation_proposition_proof_step1.b}
\langle U(0), \sigma_2 Y_{j,\omega(0)} \rangle= 0, 
\quad j \in \{1,\ldots,4\},
\end{equation}
with 
\begin{equation*}
U(0,x) 
= \begin{bmatrix} u(0,x) \\ \overline{u(0,x)} \end{bmatrix}, \quad u(0,x) := e^{-ip(0)(x)}e^{-i\gamma(0)} \psi_0(x+\sigma(0)) - \phi_{\omega(0)}(x).
\end{equation*}
From this construction we have $\|u(0,\cdot)\|_{H^1_x} \lesssim \|u_0\|_{H^1_x}$. Next,  consider the functional
\begin{equation}
\begin{split}
&\calK: H_x^1(\bbR) \times (0,\infty) \times \bbR^3 \longrightarrow \bbR^4\\
&\calK[\psi(\cdot),\omega,\gamma,p,\sigma] = \begin{bmatrix}
	\left \langle \begin{pmatrix}
		e^{-ip(\cdot-\sigma)}e^{-i\gamma}\psi(\cdot) - \phi_\omega(\cdot-\sigma)\\
		\overline{e^{-ip(\cdot-\sigma)}e^{-i\gamma}\psi(\cdot)} - \phi_\omega(\cdot-\sigma)
	\end{pmatrix},\sigma_2 Y_{j,\omega}(\cdot-\sigma)  \right \rangle
\end{bmatrix}_{j=1,\ldots,4}.
\end{split}
\end{equation}
By direct computation we find that 
\begin{equation*}
\calK[e^{ip_0(\cdot-\sigma_0)}e^{i\gamma_0}\phi_{\omega_0}(\cdot-\sigma_0),\omega_0,\gamma_0,p_0,\sigma_0]= \bf0,
\end{equation*}
and that the Jacobian 
\begin{equation*}
\frac{\partial \calK}{\partial(\omega, \gamma,p,\sigma)}\bigl[e^{ip_0(\cdot-\sigma_0)}e^{i\gamma_0}\phi_{\omega_0}(\cdot-\sigma_0),\omega_0,\gamma_0,p_0,\sigma_0\bigr] = \begin{bmatrix} c_{\omega_0} & 0 & 0 & 0 \\ 0 & c_{\omega_0} & 0 & p_0c_{\omega_0} \\ 0 & 0 & -\|\phi_{\omega_0}\|_{L^2}^2 & 0 \\ 0 & 0 & 0 & \|\phi_{\omega_0}\|_{L^2}^2\end{bmatrix}
\end{equation*}
is invertible. Moreover, an analogous computation shows that the Jacobian with respect to the modulation parameters is uniformly non-degenerate in a neighborhood of $(e^{ip_0(\cdot-\sigma_0)}e^{i\gamma_0}\phi_{\omega_0}(\cdot-\sigma_0),\omega_0,\gamma_0,p_0,\sigma_0)$. The claims \eqref{equ:setup_modulation_proposition_proof_step1}--\eqref{equ:setup_modulation_proposition_proof_step1.b} now follow from a quantitative version of the implicit function theorem, see for instance \cite[Remark 3.2]{Jendrej15}.

\medskip 

\noindent {\bf Step 2.} (Derivation of perturbation equation and modulation equations)
Next, given continuously differentiable paths $(\omega, \gamma,p,\sigma) \colon [0,T_\ast) \to (0,\infty) \times \bbR^3$ and a decomposition of $\psi(t,x)$ into
\begin{equation} \label{equ:setup_modulation_proposition_proof_decomposition}
\psi(t,x) = e^{ip(t)(x-\sigma(t))} e^{i\gamma(t)} \bigl( \phi_{\omega(t)}(x-\sigma(t)) + u(t,x-\sigma(t)) \bigr), \quad 0 \leq t < T_\ast,
\end{equation}
we formally compute, using the equations \eqref{equ:cubic_NLS} and \eqref{equ:intro_phi_omega_equation}, that the perturbation $u(t,x-\sigma(t))$ satisfies
\begin{equation}\label{equ: scalar_equation_u}
\begin{split}
&i\partial_t u+ \partial_x^2 u - \omega u +\phi_\omega^2(2u + \baru) \\
&= \dot{p}(x-\sigma) (\phi_\omega+u) + i(\dot{\sigma}-2p)(\partial_x \phi_\omega + \partial_x u)+ (\dot{\gamma}+p^2-\omega-p\dot{\sigma})(\phi_\omega + u) - i \dot{\omega}\partial_\omega \phi_\omega \\
&\quad - \phi_\omega(u^2 + 2u \baru) - \vert u \vert^2 u.	
\end{split}
\end{equation}
Hence, we obtain the evolution equation \eqref{equ:setup_perturbation_equ} for the vectorial perturbation $U(t) = \big(u(t),\baru(t)\big)^\top$.
By formally differentiating the orthogonality conditions 
\begin{equation} \label{equ:setup_modulation_proposition_proof_orthogonality}
\langle U(t), \sigma_2 Y_{j,\omega(t)} \rangle = 0, \qquad j \in \{1,\ldots,4\}
\end{equation}
and inserting the evolution equation \eqref{equ:setup_perturbation_equ}, we find  for $0 \leq t < T_\ast$ and for each $j \in \{1,\ldots,4\}$ that
\begin{equation}\label{equ:proof_modulation_setup1}
0=-i \big\langle \big(\calH(\omega)U + \calM_1 + \calM_2  + \calN(U)\big), \sigma_2 Y_{j,\omega}\big \rangle + \dot{\omega} \langle U,\sigma_2 \partial_\omega Y_{j,\omega}\rangle.
\end{equation}
We proceed to simplify each term in \eqref{equ:proof_modulation_setup1} to obtain the modulation equations \eqref{equ:setup_modulation_equation}. By $\sigma_2 \calH(\omega) = - \calH(\omega)^*\sigma_2$, the relations \eqref{eqn:eig-eqn for nullspace of calH-omega}, and \eqref{equ:setup_modulation_proposition_proof_orthogonality}, we have
\begin{equation*}
\langle \calH(\omega)U,\sigma_2 Y_{j,\omega}\rangle  = 0, \quad j \in \{1,\ldots,4\}.
\end{equation*}
By using the inner product relations \eqref{equ:innerproductrelations1}--\eqref{equ:innerproductrelations2}, we may further simplify the terms $-i\langle \calM_1 + \calM_2,\sigma_2 Y_{j,\omega}\rangle + \dot{\omega} \langle U,\sigma_2 \partial_\omega Y_{j,\omega}\rangle$ to obtain the $j$-th row of the left-hand side of \eqref{equ:setup_modulation_equation}. Thus, we obtain the system of modulation equations\eqref{equ:setup_modulation_equation}.

\medskip 

\noindent {\bf Step 3.} (ODE system for the modulation parameters) 
Given the $H^1_x \cap L^{2,1}_x$-solution $\psi(t,x)$ to \eqref{equ:cubic_NLS} with initial condition \eqref{equ:setup_modulation_orbital_initial_datum} on its maximal interval of existence $[0,T_\ast)$, we write 
$$u(t,x-\sigma(t)) = e^{-ip(t)(x-\sigma(t))}e^{-i\gamma(t)}\psi(t,x) - \phi_{\omega(t)}(x-\sigma(t))$$
and view the modulation equations \eqref{equ:setup_modulation_equation} as a system of first-order nonlinear ODEs for the modulation parameters $(\omega(t), \gamma(t),p(t),\sigma(t))$ on $[0,T_\ast)$ with initial conditions $(\omega(0),\gamma(0),p(0),\sigma(0))$  determined in Step~1. If $|\omega(t) - \omega(0)| + \|u(t)\|_{H^1_x} \ll 1$ are sufficiently small (depending on the size of $\omega_0$), the matrix $\bbM$ in \eqref{equ:setup_matrix_modulation_equation} is invertible and we obtain the following non-autonomous systems of differential equations for the modulation parameters,
\begin{equation}\label{equ:proof_mod_equations1}
 \begin{bmatrix}
	\dot{\gamma}+p^2-\omega - p\dot{\sigma} \\
	\dot \omega\\
	\dot{\sigma}-2p\\
	\dot{p}
\end{bmatrix}
= \mathbb{M}^{-1}\begin{bmatrix}
	\langle i \calN(U), \sigma_2 Y_{1,\omega} \rangle \\
	\langle i \calN(U), \sigma_2 Y_{2, \omega} \rangle\\
	\langle i \calN(U), \sigma_2 Y_{3,\omega} \rangle \\
	\langle i \calN(U), \sigma_2 Y_{4,\omega} \rangle 					
\end{bmatrix}.
\end{equation}
Note that the right-hand side of the ODE system \eqref{equ:proof_mod_equations1} depends Lipschitz continuously on the modulation parameters, and continuously in time. Thus, we obtain by the Picard-Lindel\"of theorem a local-in-time continuously differentiable solution $(\omega(t), \gamma(t),p(t),\sigma(t))$ to this system defined on some time interval $[0,T_0]$ with $0 < T_0 < T_\ast$. As long as $|\omega(t)-\omega(0)| + \|u(t)\|_{H^1_x} \ll 1$, this solution can be continued to the entire time interval $[0,T_\ast)$. In the next step, we achieve this stability control by expanding the conservation laws around the solitary wave from Lemma~\ref{lem:setup_expansions_conservation_laws} and by invoking the orthogonality conditions \eqref{equ:setup_modulation_proposition_proof_orthogonality}. 
\medskip 

\noindent {\bf Step 4.} (Orbital stability and conclusion) 
Following the general framework of  \cite{CuccagnaMaeda25} and \cite{MartelMerleTsai06} we consider the functional 
\begin{equation}\label{eqn:modulation_functional}
\bfH[\psi] := E[\psi] + \big(\omega(t) +p(0)^2\big)M[\psi] - 2p(0)P[\psi].
\end{equation}
For a fixed $\omega \in (0,\infty)$ we will also need  the function
\begin{equation}
g(\omega)  := E[\phi_\omega] + \omega M[\phi_\omega] \equiv \frac43 \omega^{\frac32},
\end{equation}
which is strictly increasing and convex on $(0,\infty)$ due to
\begin{equation*}
g'(\omega)=M[\phi_{\omega}] = 2\sqrt{\omega}>0,\quad g''(\omega) = c_{\omega}>0.		
\end{equation*}

We insert the decomposition \eqref{equ:setup_modulation_proposition_proof_decomposition} for the solution $\psi(t)$ into the functional \eqref{eqn:modulation_functional} and we use the invariance under phase rotation and translation with the identities \eqref{equ:setup_lemma_mass_shift}, \eqref{equ:setup_lemma_momentum_shift}, \eqref{equ:setup_lemma_energy_shift}  to compute that 
\begin{equation*}
\begin{split}
&\bfH[\psi(t)] =E[\phi_{\omega(t)}+u(t)] + \big(\omega(t) + \big(p(t)-p(0)\big)^2\big) M[\phi_{\omega(t)}+u(t)] + 2 \big(p(t)-p(0)\big)P[\phi_{\omega(t)}+u(t)]	.	
\end{split}
\end{equation*}
On the one hand, using the expansion identities \eqref{equ:setup_expansion_mass}, \eqref{equ:setup_expansion_momentum}, \eqref{equ:setup_expansion_energy1} and the orthogonality conditions \eqref{equ:setup_modulation_proposition_proof_orthogonality}, we obtain
\begin{equation*}
\begin{split}
\bfH[\psi(t)]&= g(\omega(t)) + \frac12 \langle L_{+,\omega(t)} u_1(t),u_1(t)\rangle + \frac12 \langle L_{-,\omega(t)}u_2(t),u_2(t)\rangle +\big(p(t)-p(0)\big)^2 g'(\omega(0))	\\
&\quad + 2 \big(p(t)-p(0)\big)P[u(t)]+ \big(p(t)-p(0)\big)^2\big(M[\phi_{\omega(t)}]-M[\phi_{\omega(0)}]+M[u(t)]\big)\\
&\quad - \frac12 \int_\bbR \phi_{\omega(t)}\big(u(t)^2\baru(t)+u(t)\baru(t)^2\big)\,\ud x - \frac12 \int_\bbR |u(t)|^4 \,\ud x
\end{split}
\end{equation*}
where $(u_1,u_2) := (\Re(u),\Im(u))$ and 
\begin{equation}\label{equ:operators_Lplusminus}
L_{+, \omega} := -\px^2 + \omega - 3 \phi_\omega^2, \quad L_{-,\omega} := -\px^2 + \omega - \phi_\omega^2, \quad \omega \in (0,\infty).
\end{equation}
On the other hand, by  \eqref{equ:setup_expansion_energy2} and conservation of mass and energy for solutions to \eqref{equ:cubic_NLS} at $t=0$, we infer from  the orthogonality conditions \eqref{equ:setup_modulation_proposition_proof_orthogonality} that
\begin{equation*}
\begin{split}
\bfH[\psi(t)]&= g(\omega(0))+g'(\omega(0)) \big(\omega(t)-\omega(0)\big) + \omega(t)M[u(0)] + E[u(0)] \\
&\quad - \frac14 \int_\bbR \phi_\omega^2 \bigl( u^2 + 4 u \baru + \baru^2 \bigr)\big|_{t=0} \, \ud x - \frac12 \int_\bbR \phi_\omega \bigl( u^2 \baru + u \baru^2 \bigr)\big|_{t=0} \, \ud x.
\end{split}
\end{equation*}
From the orthogonality conditions \eqref{equ:setup_modulation_proposition_proof_orthogonality} we have by \cite[Theorem 2.5]{Weinstein85} the coercivity estimate
\begin{equation*}
\langle L_{+, \omega(t)} u_1(t), u_1(t) \rangle + \langle L_{-,\omega(t)} u_2(t), u_2(t) \rangle \gtrsim \|u(t)\|_{H^1_x}^2,
\end{equation*}
and from the strict convexity of $g(\omega)$ we obtain the inequality 
\begin{equation*}
g(\omega(t))-g(\omega(0))-g'(\omega(0))\big(\omega(t)-\omega(0)\big) = \frac{c_{\omega(0)}}{2}\big(\omega(t)-\omega(0)\big)^2 + \calO\big(\big(\omega(t)-\omega(0)\big)^3\big).
\end{equation*}
By combining the identities for $\bfH[\psi(t)]$ and  using the preceeding estimates, we  deduce that there is a constant $C$ depending only on $\omega(0)$ such that 

\begin{equation*}
\begin{aligned}
&\|u(t)\|_{H^1_x}^2 + \bigl( \omega(t) - \omega(0) \bigr)^2 +\bigl( p(t) - p(0) \bigr)^2\\
&\leq C \Bigl(|p(t)-p(0)| \|u(t)\|_{H_x^1}^2 + |p(t)-p(0)|^2 \big(|\omega(t)-\omega(0)|+\|u(t)\|_{L_x^2}^2\big) + \|u(t)\|_{H^1_x}^3 + \|u(t)\|_{H^1_x}^4 \\ 
& \qquad+ |\omega(t)-\omega(0)| \|u(0)\|_{L^2_x}^2 + |\omega(t)-\omega(0)|^3  + \|u(0)\|_{H^1_x}^2 + \|u(0)\|_{H^1_x}^4 + |\omega(0)| \|u(0)\|_{L^2_x}^2 \Bigr).
\end{aligned}
\end{equation*}
From \eqref{equ:setup_modulation_proposition_proof_step1} we can bound the constant $C$ by another larger absolute constant $C'$ which depends only on $\omega_0$ and $\|u_0\|_{H_x^1}$. Hence, for sufficiently small $\|u(0)\|_{H^1_x} \lesssim \|u_0\|_{H^1_x} \ll 1$ depending only on the size of $\omega_0$  a standard continuity argument then gives the desired bound on the time interval $[0,T_0]$,
\begin{equation} \label{equ:setup_modulation_proposition_proof_orbital_stability_bound}
\|u(t)\|_{H^1_x} + |\omega(t)-\omega(0)| + |p(t)-p(0)| \lesssim \|u(0)\|_{H^1_x} \lesssim \|u_0\|_{H^1_x}, \quad 0 \leq t \leq T_0.
\end{equation}
By \eqref{equ:setup_modulation_proposition_proof_orbital_stability_bound} we can then continue the modulation parameters $\omega(t), \gamma(t), p(t),\sigma(t)$ to the entire interval $[0,T_\ast)$.
Finally, we obtain the comparison estimate \eqref{equ:setup_comparison_estimate} in the statement of Proposition~\ref{prop:modulation_and_orbital} from \eqref{equ:setup_modulation_proposition_proof_step1} and from \eqref{equ:setup_modulation_proposition_proof_orbital_stability_bound} by choosing $\eps_1=\eps_1(\omega_0)$ sufficiently small so that $\|u_0\|_{H^1_x}\leq \eps_1 ~\ll~1$. This finishes the proof of the proposition.
\end{proof}	

\subsection{Evolution equation for the profile}\label{subsec:profile_equations}
From now we fix the moving frame coordinate
\begin{equation}
y := x - \sigma(t),
\end{equation}
and we treat the radiation variable $U$ as a function of $t$ and $y$. From \eqref{equ:setup_perturbation_equ} the evolution equation for the radiation term in the moving frame coordinate reads
\begin{equation}\label{equ: evol1}
i\partial_t U - \calH(\omega)U = \dot{p}y \sigma_3 U + i(\dot{\sigma}-2p)\py U + (\dot{\gamma}+p^2-\omega-p \dot{\sigma})\sigma_3 U + \calM_2 + \calN(U).
\end{equation}

In order to analyze the dispersive behavior of the \eqref{equ: evol1}, we need to freeze  suitably chosen time-independent parameters $(\ulomega,\ulp)\in (0,\infty) \times \bbR$. Following the renormalization idea in \cite{CollotGermain23}, we work with the renormalized radiation variable 
\begin{equation}\label{equ: V-definition}
\Vp(t,y) := e^{i(p(t)-\ulp)y \sigma_3}U(t,y) \equiv \begin{bmatrix}
e^{i(p(t)-\ulp)y}u(t,y)\\
e^{-i(p(t)-\ulp)y}\baru(t,y)
\end{bmatrix} =: \begin{bmatrix}
\vp(t,y) \\ \barvp(t,y)
\end{bmatrix}.
\end{equation}
This transformation helps to gauge away the problematic term $\dot{p}y\sigma_3 U$ in \eqref{equ: evol1} while ensuring that the norms for $\Vp(t)$ and $U(t)$ are comparable. Moreover, the new variable $\Vp(t)$ remains $\calJ$-invariant. We then fix the reference operator $\calH(\ulomega)$ and use \eqref{equ: evol1} to compute that the renormalized variable $\Vp(t,y)$ satisfies the evolution equation
\begin{equation}\label{equ: V-equation}
i\pt \Vp - \calH(\ulomega)\Vp = \dot{\theta}_1(t) \sigma_3 \Vp + i\dot{\theta}_2(t) \py \Vp + \calQ_{\ulomega}(\Vp) + \calC(\Vp) + \calMod + \calE_1 + \calE_2 , \quad \Vp := \begin{bmatrix}
\vp \\ \barvp
\end{bmatrix},
\end{equation}
with the nonlinear and modulation terms 
\begin{align}
\calQ_{\ulomega}(\Vp) &:= \begin{bmatrix}
-\phi_{\ulomega}(\vp^2 + 2 \vp \barvp )\\
\phi_{\ulomega}(\barvp^2 + 2\vp  \barvp)
\end{bmatrix}, \quad \calC(\Vp) := \begin{bmatrix}
- \vp \barvp \vp\\
\barvp \vp \barvp
\end{bmatrix},\label{equ:setup_definition_nonl}\\
\calMod &:= e^{i(p-\ulp)y \sigma_3} \calM_2,\label{equ:setup_definition_Mod}
\end{align}
and time-dependent coefficients
\begin{align}
\dot{\theta}_1(t) &:= \big(\omega - \ulomega + (\dot{\sigma}-2\ulp)(p-\ulp) -(p-\ulp)^2 + (\dot{\gamma}+p^2-\omega-p\dot{\sigma})\big),\label{equ:def-dot-theta1}\\
\dot{\theta}_2(t) &:= \dot{\sigma}-2p + 2(p-\ulp).\label{equ:def-dot-theta2}
\end{align}
When passing to the fixed parameters $(\ulomega,\ulp)$, we produced the error terms in \eqref{equ: V-equation} which are defined by 
\begin{align}
\calE_1 &:= \calV(\omega) \Vp - \calV(\ulomega) \Vp = \begin{bmatrix}
-(\phi_{\omega}^2-\phi_{\ulomega}^2)(2 \vp + \barvp)\\
(\phi_{\omega}^2-\phi_{\ulomega}^2)(2 \barvp + \vp)
\end{bmatrix}+ \begin{bmatrix}
\phi_{\omega}^2(1-e^{2i(p-\ulp)y}) \barvp \\
-\phi_{\omega}^2(1-e^{-2i(p-\ulp)y}) \vp \\
\end{bmatrix}, \label{equ:def-calE1} \\
\calE_2 &:= \calQ_{\omega}(\Vp)-\calQ_{\ulomega}(\Vp) =  \begin{bmatrix}
-(\phi_\omega -	\phi_{\ulomega})( \vp^2 + 2 \vp \barvp)\\
(\phi_\omega -	\phi_{\ulomega})(\barvp^2 + 2 \vp \barvp)
\end{bmatrix} \label{equ:def-calE2}\\
&\hspace{10em}+ \begin{bmatrix}
\phi_{\omega}(1-e^{-i(p-\ulp)y}) \vp^2 + 2\phi_{\omega}(1-e^{i(p-\ulp)y}) \vp \barvp\\
-\phi_{\omega}(1-e^{i(p-\ulp)y})\barvp^2 - 2\phi_{\omega}(1-e^{-i(p-\ulp)y}) \vp \barvp
\end{bmatrix} \nonumber.
\end{align}
We also record the following identities that will be needed for the proof of the theorem
\begin{equation}\label{equ:theta_identities} 
	\dot{\theta}_1 = \dot{\gamma}-\ulp^2 - \ulomega + \ulp \dot{\theta}_2, \quad \dot{\theta}_2 = \dot{\sigma}-2\ulp.
\end{equation}

Next we perform a spectral decomposition of $\Vp(t,y)$ relative to the spectrum of the operator $\calH(\ulomega)$:
\begin{equation}
\Vp(t) = \ulPe \Vp(t) + \ulPd \Vp(t) 
\end{equation}
where the discrete component of $\Vp(t)$ is given by 
\begin{equation}
\ulPd \Vp(t,y) = \sum_{j=1}^4 d_{j,\ulomega,\ulp}(t) Y_{j,\ulomega}(y),	
\end{equation}
with
\begin{equation}
\begin{aligned}
d_{1,\ulomega,\ulp}(t) := \frac{\langle \Vp(t),\sigma_2 Y_{2,\ulomega}\rangle }{\langle Y_{1,\ulomega},\sigma_2 Y_{2,\ulomega} \rangle},	 \quad d_{2,\ulomega,\ulp}(t) := \frac{\langle \Vp(t),\sigma_2 Y_{1,\ulomega}\rangle }{\langle Y_{2,\ulomega},\sigma_2 Y_{1,\ulomega} \rangle}, \\
d_{3,\ulomega,\ulp}(t) :=\frac{\langle \Vp(t),\sigma_2 Y_{4,\ulomega}\rangle }{\langle Y_{3,\ulomega},\sigma_2 Y_{4,\ulomega} \rangle}, \quad d_{4,\ulomega,\ulp}(t) := \frac{\langle \Vp(t),\sigma_2 Y_{3,\ulomega}\rangle }{\langle Y_{4,\ulomega},\sigma_2 Y_{3,\ulomega} \rangle}.	
\end{aligned}
\end{equation}

We denote the components of the vector $\ulPe \Vp$ by $\ulPe \Vp(t) = \begin{bmatrix}
\ve & \barve
\end{bmatrix}^\top$
and we recall from Lemma~\ref{lemma: L2 decomposition} that the projection $\ulPe$ preserves the $\calJ$-invariant subspace: $\sigma_1\overline{(\ulPe \Vp)} = \ulPe \Vp$. In the spirit of the space-time resonances method, we define the profile $F_{\ulomega,\ulp}(t,y)$ for the radiation term relative to the fixed parameters $(\ulomega,\ulp)$ by 
\begin{equation}\label{equ:profile}
F_{\ulomega,\ulp}(t,y) := e^{it\calH(\ulomega)}(\ulPe \Vp)(t,y),
\end{equation}
and we note that $F_{\ulomega,\ulp}(t,y)$ is also a $\calJ$-invariant vector. Furthermore,  it follows from \eqref{equ: V-equation} that the evolution equation for the profile $F_{\ulomega,\ulp}(t)$ is given by 
\begin{equation}\label{equ:evol_equa}
i \pt 	F_{\ulomega,\ulp}(t)= e^{it\calH(\ulomega)}\ulPe\Big(\dot{\theta}_1(t) \sigma_3 \Vp + \dot{\theta}_2(t)i\py \Vp + \calQ_{\ulomega}(\Vp) + \calC(\Vp)+\calMod + \calE_1 + \calE_2 \Big)
\end{equation}
We then take the distorted Fourier transform of the profile
\begin{equation}\label{equ:profile_dFT}
\wtilcalF_{\ulomega}[F_{\ulomega,\ulp}(t,\cdot)](\xi) = \begin{bmatrix}
\wtilcalF_{+,\ulomega}[F_{\ulomega,\ulp}(t,\cdot)](\xi)\\
\wtilcalF_{-,\ulomega}[F_{\ulomega,\ulp}(t,\cdot)](\xi)
\end{bmatrix}
\end{equation}
and use the following notations for its components
\begin{equation}
\tilf_{+,\ulomega,\ulp}(t,\xi) := \wtilcalF_{+,\ulomega}[F_{\ulomega,\ulp}(t,\cdot)](\xi),  \qquad 	\tilf_{-,\ulomega,\ulp}(t,\xi) := \wtilcalF_{-,\ulomega}[F_{\ulomega,\ulp}(t,\cdot)](\xi).
\end{equation}
By Proposition~\ref{prop: representation formula} and \eqref{eqn: representation e-itH} we have the representation formula for the renormalized radiation variable in terms of the distorted Fourier transform of the profile
\begin{equation}\label{equ:setup_ulPe_V_repformula}
\begin{split}
(\ulPe \Vp)(t,y) &= \big( e^{-it\calH(\ulomega)}F_{\ulomega,\ulp}(t)\big)(y) \\
&= \int_\bbR e^{-it(\xi^2+\ulomega)}	\tilf_{+,\ulomega,\ulp}(t,\xi) \Psi_{+,\ulomega}(y,\xi)\,\ud \xi 	-\int_\bbR e^{it(\xi^2+\ulomega)}	\tilf_{-,\ulomega,\ulp}(t,\xi) \Psi_{-,\ulomega}(y,\xi)\,\ud \xi ,
\end{split}
\end{equation}
or in components
\begin{align}
\ve(t,y) = \int_\bbR e^{-it(\xi^2+\ulomega)}	\tilf_{+,\ulomega,\ulp}(t,\xi) \Psi_{1,\ulomega}(y,\xi)\,\ud \xi 	-\int_\bbR e^{it(\xi^2+\ulomega)}	\tilf_{-,\ulomega,\ulp}(t,\xi) \Psi_{2,\ulomega}(y,\xi)\,\ud \xi,\label{equ:setup_v_e_repformula}\\
\barve(t,y) = \int_\bbR e^{-it(\xi^2+\ulomega)}	\tilf_{+,\ulomega,\ulp}(t,\xi) \Psi_{2,\ulomega}(y,\xi)\,\ud \xi 	-\int_\bbR e^{it(\xi^2+\ulomega)}	\tilf_{-,\ulomega,\ulp}(t,\xi) \Psi_{1,\ulomega}(y,\xi)\,\ud \xi.\label{equ:setup_barv_e_repformula}
\end{align} 
Using Corollary~\ref{cor:distFT_of_propagator} and \eqref{equ:evol_equa} we find that the evolution equations for the components of the distorted Fourier transform of the profile are given by
\begin{align}
i\pt \tilf_{+,\ulomega,\ulp}(t,\xi) &= e^{it (\xi^2+\ulomega)}\wtilcalF_{+,\ulomega}\Big[\dot{\theta}_1 \sigma_3 \Vp + i\dot{\theta}_2\py \Vp + \calQ_{\ulomega}(\Vp) + \calC(\Vp)+\calMod + \calE_1 + \calE_2\Big],\label{equ:DFT-profile+1}\\
i\pt \tilf_{-,\ulomega,\ulp}(t,\xi) &= e^{-it (\xi^2+\ulomega)}\wtilcalF_{-,\ulomega}\Big[\dot{\theta}_1 \sigma_3 \Vp + i\dot{\theta}_2\py \Vp + \calQ_{\ulomega}(\Vp) + \calC(\Vp)+\calMod + \calE_1 + \calE_2\Big].\label{equ:DFT-profile-1}
\end{align}

Let us now recast the right-hand side of \eqref{equ:DFT-profile+1} and \eqref{equ:DFT-profile-1} into a form that is more suitable for the nonlinear analysis. By Lemma~\ref{lem:distFT_applied_to_sigmathree_F}, we may rewrite the distorted Fourier transform of $\sigma_3 \Vp$ and $ \partial_y \Vp$ by
\begin{align*}
\wtilcalF_{+,\ulomega}[\sigma_3 \Vp(t)](\xi) &= \wtilcalF_{+,\ulomega}[\Vp(t)](\xi) + \calL_{+,\ulomega}[\Vp(t)](\xi),\\
\wtilcalF_{-,\ulomega}[\sigma_3 \Vp(t)](\xi) &= -\wtilcalF_{-,\ulomega}[\Vp(t)](\xi) + \calL_{-,\ulomega}[\Vp(t)](\xi),
\end{align*} 
and
\begin{align*}
\wtilcalF_{+,\ulomega}[\partial_y \Vp(t)](\xi) = i \xi \wtilcalF_{+,\ulomega}[\Vp(t)](\xi) + \calK_{+,\ulomega}[\Vp(t)](\xi),\\
\wtilcalF_{-,\ulomega}[\partial_y \Vp(t)](\xi) = i \xi \wtilcalF_{-,\ulomega}[\Vp(t)](\xi) + \calK_{-,\ulomega}[\Vp(t)](\xi),
\end{align*}
where
\begin{align*}
\calL_{+,\ulomega}[\Vp(t)](\xi) &:= 2 \langle \barvp(t,\cdot),\Psi_{2,\ulomega}(\cdot,\xi)\rangle , \qquad \calL_{-,\ulomega}[\Vp(t)](\xi) := 2 \langle \vp(t,\cdot),\Psi_{2,\ulomega}(\cdot,\xi)\rangle ,\\
\calK_{+,\ulomega}[\Vp(t)](\xi) &:= \frac{1}{\sqrt{2\pi}} \langle \vp(t,y), e^{iy\xi}\py m_{1,\ulomega}(y,\xi)\rangle -\frac{1}{\sqrt{2\pi}} \langle  \barvp(t,y), e^{iy\xi}\py m_{2,\ulomega}(y,\xi)\rangle ,\\
\calK_{-,\ulomega}[\Vp(t)](\xi) &:= \frac{1}{\sqrt{2\pi}} \langle \vp(t,y), e^{iy\xi}\py m_{2,\ulomega}(y,\xi)\rangle -\frac{1}{\sqrt{2\pi}} \langle \barvp(t,y), e^{iy\xi}\py m_{1,\ulomega}(y,\xi)\rangle .
\end{align*}
Moreover, by Corollary~\ref{cor:distFT_of_propagator}, the following relation between the distorted Fourier transform of $\Vp(t)$ and of $F_{\ulomega,\ulp}(t)$ holds 
\begin{align*}
\wtilcalF_{+,\ulomega}[\Vp(t)](\xi)=\wtilcalF_{+,\ulomega}[\ulPe \Vp(t)](\xi) &= e^{-it(\xi^2+\ulomega)}\tilf_{+,\ulomega,\ulp}(t,\xi),\\
\wtilcalF_{-,\ulomega}[\Vp(t)](\xi)=\wtilcalF_{-,\ulomega}[\ulPe \Vp(t)](\xi) &= e^{it(\xi^2+\ulomega)}\tilf_{-,\ulomega,\ulp}(t,\xi).
\end{align*}

As for the quadratic and cubic terms, the leading contributions stem from the terms with all inputs given by $\ulPe \Vp(t)$ since we expect the discrete components $\ulPd \Vp(t)$ to decay faster. Correspondingly, we write 
\begin{equation*}
\calQ_{\ulomega}(\Vp) + \calC(\Vp) = \calQ_{\ulomega}(\ulPe \Vp) + \calC(\ulPe \Vp) + \calE_3 	
\end{equation*}
with a remainder term 
\begin{equation}\label{equ:def_calE_3}
\calE_3 = \calQ_{\ulomega}(\Vp) - \calQ_{\ulomega}(\ulPe \Vp) + \calC(\Vp) - \calC(\ulPe \Vp).
\end{equation}
Lastly, note that  $\wtilcalF_{+,\ulomega}\big[\calMod(t)] = \wtilcalF_{+,\ulomega}\big[\ulPe \calMod(t)]$ by \eqref{equ:wtilcalF_applied_to_P}. By inserting the preceeding identities for \eqref{equ:DFT-profile+1} and \eqref{equ:DFT-profile-1}, we obtain 
\begin{equation}\label{equ:profile-dft-+}
\begin{split}
\pt \tilf_{+,\ulomega,\ulp}(t,\xi)&= -i \dot{\theta}_1(t) \tilf_{+,\ulomega,\ulp}(t,\xi) + i \dot{\theta}_2(t)\xi \tilf_{+,\ulomega,\ulp}(t,\xi) - i \dot{\theta}_1(t)e^{it(\xi^2+\ulomega)}\calL_{+,\ulomega}[\Vp(t)](\xi) \\
&\quad + \dot{\theta}_2(t) e^{it(\xi^2+\ulomega)} \calK_{+,\ulomega}[\Vp(t)](\xi) -i e^{it(\xi^2+\ulomega)}\wtilcalF_{+,\ulomega}\big[\calQ_{\ulomega}(\ulPe \Vp(t)) + \calC(\ulPe \Vp(t))\big](\xi)\\
&\quad -i e^{it(\xi^2+\ulomega)}\wtilcalF_{+,\ulomega}\big[\ulPe \calMod(t) + \calE_1(t) + \calE_2(t) + \calE_3(t)\big](\xi),
\end{split}
\end{equation}
and 
\begin{equation}\label{equ:profile-dft-minus}
\begin{split}
\pt \tilf_{-,\ulomega,\ulp}(t,\xi) &= i \dot{\theta}_1(t) \tilf_{-,\ulomega,\ulp}(t,\xi) + i \dot{\theta}_2(t)\xi \tilf_{-,\ulomega,\ulp}(t,\xi) - i \dot{\theta}_1(t)e^{it(\xi^2+\ulomega)}\calL_{-,\ulomega}[\Vp(t)](\xi) \\
&\quad + \dot{\theta}_2(t)e^{-it(\xi^2+\ulomega)}\calK_{-,\ulomega}[\Vp(t)](\xi) -i e^{-it(\xi^2+\ulomega)}\wtilcalF_{-,\ulomega}\big[\calQ_{\ulomega}(\ulPe \Vp(t)) + \calC(\ulPe \Vp(t))\big](\xi)\\
&\quad -i e^{-it(\xi^2+\ulomega)}\wtilcalF_{-,\ulomega}\big[\ulPe \calMod(t) + \calE_1(t) + \calE_2(t) + \calE_3(t)\big](\xi).
\end{split}
\end{equation}
Finally, inspired by \cite[Proposition~9.5]{CollotGermain23}, we absorb the linear terms on the right-hand side of \eqref{equ:profile-dft-+} using integrating factors. By defining the phase functions
\begin{equation}\label{equ:def_theta(t)}
\theta_1(t) :=\int_0^t \dot{\theta}_1(s)\,\ud s, \quad 	\theta_2(t) :=\int_0^t \dot{\theta}_2(s)\,\ud s,
\end{equation}
we then rewrite \eqref{equ:profile-dft-+} as 
\begin{equation}\label{equ:DFT-profile+2}
\begin{split}
&\pt\big( e^{i\theta_1(t)} e^{-i\theta_2(t)\xi}\tilf_{+,\ulomega,\ulp}(t,\xi)\big)	\\
&\quad  = -i e^{i\theta_1(t)} e^{-i \theta_2(t) \xi} e^{it(\xi^2+\ulomega)}  \times\\
&\qquad \times \bigg(\wtilcalF_{+,\ulomega}\big[\calQ_{\ulomega}(\ulPe \Vp(t)) + \calC(\ulPe \Vp(t))\big](\xi) +\wtilcalF_{+,\ulomega}\big[\ulPe \calMod(t) + \calE_1(t) + \calE_2(t) + \calE_3(t)\big](\xi)\\
&\qquad \qquad + \dot{\theta}_1(t) \calL_{+,\ulomega}[\Vp(t)](\xi)  + i\dot{\theta}_2(\xi) \calK_{+,\ulomega}[\Vp(t)](\xi)\bigg).
\end{split}
\end{equation}
The evolution equation \eqref{equ:profile-dft-minus} for $\tilf_{-,\ulomega,\ulp}(t,\xi)$ can be rewitten as well but we will mostly work with the equation \eqref{equ:DFT-profile+2} for $\tilf_{+,\ulomega,\ulp}(t,\xi)$ to derive weighted energy estimates and pointwise estimates for both $\tilf_{+,\ulomega,\ulp}(t,\xi)$ and $\tilf_{-,\ulomega,\ulp}(t,\xi)$. The reason is that thanks to Lemma~\ref{lem:distFT_components_relation} there is the following scattering relation 
\begin{equation}\label{equ:setup_components_relation}
\tilf_{-,\ulomega,\ulp}(t,\xi) = - \frac{\big(|\xi|-i\sqrt{\ulomega}\big)^2}{\big(|\xi|+i\sqrt{\ulomega}\big)^2}\overline{\tilf_{+,\ulomega,\ulp}(t,-\xi)},
\end{equation}
so that bounds on $\tilf_{+,\ulomega,\ulp}(t,\xi)$ can be transferred to $\tilf_{-,\ulomega,\ulp}(t,\xi)$.

\subsection{Normal form transformation on the quadratic nonlinearities}\label{subsec:normalform}
Like in \cite{LL24} we implement a variable coefficient normal form to recast the quadratic nonlinearity in \eqref{equ:DFT-profile+2} into a better form. The starting point is to isolate the slow decaying part of the radiation term $\ulPe \Vp(t)$ due to the two threshold resonances on the edges of the essential spectrum of $\calH(\ulomega)$. We implement a low-pass frequency filter in \eqref{equ:setup_v_e_repformula} and \eqref{equ:setup_barv_e_repformula} by rewriting the components of $\ulPe \Vp(t)$ as
\begin{equation} \label{equ:setup_decompositions_usube_local_decay}
\begin{aligned}
\ve(t,y) &= h_{1,\ulomega,\ulp}(t) \Phi_{1,\ulomega}(y) - h_{2,\ulomega,\ulp}(t) \Phi_{2,\ulomega}(y) + R_{\vp,\ulomega}(t,y), \\
\barve(t,y) &= h_{1,\ulomega,\ulp}(t) \Phi_{2,\ulomega}(y) - h_{2,\ulomega,\ulp}(t) \Phi_{1,\ulomega}(y) + R_{\barvp,\ulomega}(t,y),
\end{aligned}
\end{equation}
where we recall the functions associated to the threshold resonances are defined by 
\begin{equation} \label{equ:setup_definitions_Phijulomega}
\begin{aligned}
\Phi_{1,\ulomega}(y) &:= \Psi_{1,\ulomega}(y,0) = \frac{1}{\sqrt{2\pi}} \tanh^2(\sqrt{\ulomega}y), \\
\Phi_{2,\ulomega}(y) &:= \Psi_{2,\ulomega}(y,0) = - \frac{1}{\sqrt{2\pi}} \sech^2(\sqrt{\omega}y). 
\end{aligned}
\end{equation}
The low-frequency amplitudes are given by 
\begin{equation} \label{equ:setup_definitions_h12ulomega}
\begin{aligned}
h_{1,\ulomega,\ulp}(t) &:= e^{-it\ulomega} \int_\bbR e^{-it\xi^2} \chi_0(\xi) \tilf_{+,\ulomega,\ulp}(t,\xi) \, \ud \xi, \\
h_{2,\ulomega,\ulp}(t) &:= e^{it\ulomega} \int_\bbR e^{it\xi^2} \chi_0(\xi) \tilf_{-,\ulomega,\ulp}(t,\xi) \, \ud \xi,
\end{aligned}
\end{equation}
where $\chi_0(\xi)$ is a smooth even non-negative bump function  with $\chi_0(\xi) = 1$ for $|\xi| \leq 1$ and $\chi_0(\xi) = 0$ for $|\xi| \geq 2$. 
Inserting the decompositions \eqref{equ:setup_decompositions_usube_local_decay} into the quadratic nonlinearity, we obtain the expansion of the quadratic nonlinearity
\begin{equation} \label{equ:setup_expansion_quadratic_nonlinearity}
\begin{aligned}
&\calQ_\ulomega\bigl( \ulPe \Vp(t)\bigr)(y) \\
&\quad = h_{1,\ulomega,\ulp}(t)^2 \calQ_{1,\ulomega}(y) + h_{1,\ulomega,\ulp}(t) h_{2,\ulomega,\ulp}(t) \calQ_{2,\ulomega}(y) + h_{2,\ulomega,\ulp}(t)^2 \calQ_{3,\ulomega}(y) + \calQsubrom\bigl((\ulPe \Vp)(t)\bigr)(y)
\end{aligned}
\end{equation}
with the following source terms
\begin{align*}
\calQ_{1,\ulomega}(y) &:= \begin{bmatrix}
-\phi_{\ulomega}(y) \bigl( \Phi_{1,\ulomega}(y)^2 + 2 \Phi_{1,\ulomega}(y) \Phi_{2,\ulomega}(y) \bigr) \\
\phi_{\ulomega}(y) \bigl( \Phi_{2,\ulomega}(y)^2 + 2 \Phi_{1,\ulomega}(y) \Phi_{2,\ulomega}(y) \bigr)
\end{bmatrix}, \\
\calQ_{2,\ulomega}(y) &:= \begin{bmatrix}
2 \phiulomega(y) \bigl( \Phioneulomega(y)^2 + \Phitwoulomega(y)^2 + \Phioneulomega(y) \Phitwoulomega(y) \bigr) \\
-2 \phiulomega(y) \bigl( \Phioneulomega(y)^2 + \Phitwoulomega(y)^2 + \Phioneulomega(y) \Phitwoulomega(y) \bigr)
\end{bmatrix}, \\
\calQ_{3,\ulomega}(y) &:= \begin{bmatrix}
- \phiulomega(y) \bigl( \Phitwoulomega(y)^2 + 2 \Phioneulomega(y) \Phitwoulomega(y) \bigr) \\ \phiulomega(y) \bigl( \Phioneulomega(y)^2 + 2 \Phioneulomega(y) \Phitwoulomega(y) \bigr)
\end{bmatrix},
\end{align*}
and a remainder term given by 
\begin{align}
\calQsubrom\bigl((\ulPe \Vp)(t)\bigr)(y) &:= \calQ_{\mathrm{r},1,\ulomega}\bigl((\ulPe \Vp)(t)\bigr)(y) + \calQ_{\mathrm{r},2,\ulomega}\bigl((\ulPe \Vp)(t)\bigr)(y) + \calQ_{\mathrm{r},3,\ulomega}\bigl((\ulPe \Vp)(t)\bigr)(y) \label{eqn:def_renormalized_quadratic}
\end{align}
consisting of the terms 
\begin{align*}
\calQ_{\mathrm{r},1,\ulomega}\bigl((\ulPe \Vp)(t)\bigr)(y) &:= \begin{bmatrix} - 2\phiulomega(y) \bigl( h_{1,\ulomega,\ulp}(t) \Phi_{1,\ulomega}(y) - h_{2,\ulomega,\ulp}(t) \Phi_{2,\ulomega}(y)\bigr) \bigl( R_{\vp,\ulomega}(t,y) + R_{\barvp,\ulomega}(t,y) \bigr)\\ 
2 \phiulomega(y) \bigl( h_{1,\ulomega,\ulp}(t) \Phi_{1,\ulomega}(y) - h_{2,\ulomega,\ulp}(t) \Phi_{2,\ulomega}(y) \bigr) R_{\barvp,\ulomega}(t,y) \end{bmatrix},  \\
\calQ_{\mathrm{r},2,\ulomega}\bigl((\ulPe \Vp)(t)\bigr)(y) &:= \begin{bmatrix} - 2 \phiulomega(y) \bigl( h_{1,\ulomega,\ulp}(t) \Phi_{2,\ulomega}(y) - h_{2,\ulomega,\ulp}(t) \Phi_{1,\ulomega}(y)\bigr) R_{\barvp,\ulomega}(t,y) \\ 
2 \phiulomega(y) \bigl( h_{1,\ulomega,\ulp}(t) \Phi_{2,\ulomega}(y) - h_{2,\ulomega,\ulp}(t) \Phi_{1,\ulomega}(y)\bigr) \bigl( R_{\vp,\ulomega}(t,y) + R_{\barvp,\ulomega}(t,y) \bigr) \end{bmatrix}, \\
\calQ_{\mathrm{r},3,\ulomega}\bigl((\ulPe \Vp)(t)\bigr)(y) &:= \begin{bmatrix} -\phiulomega(y) \bigl( R_{\vp,\ulomega}(t,y)^2 + 2 R_{\vp,\ulomega}(t,y) R_{\barvp,\ulomega}(t,y) \bigr) \\ \phiulomega(y) \bigl( R_{\barvp,\ulomega}(t,y)^2 + 2 R_{\vp,\ulomega}(t,y) R_{\barvp,\ulomega}(t,y) \bigr) \end{bmatrix}.
\end{align*}

By a formal stationary phase analysis, we expect the amplitudes behave like $h_{1,\ulomega,\ulp}(t) \sim t^{-\frac12} e^{-it\ulomega}$ and $h_{2,\ulomega,\ulp}(t) \sim t^{-\frac12} e^{it\ulomega}$ as $t \to \infty$. Correspondingly, we expect that the filtered amplitudes $\pt ( e^{it\ulomega} h_{1,\ulomega,\ulp}(t) )$ and $\pt ( e^{-it\ulomega} h_{2,\ulomega,\ulp}(t) )$ have stronger time decay. We filter out the phases from $h_{1,\ulomega,\ulp}(t)$ and $h_{2,\ulomega,\ulp}(t)$ to obtain the following decomposition for the quadratic term
\begin{equation} \label{equ:wtilcalF_of_Q1}
\begin{aligned}
&e^{i t(\xi^2+\ulomega)} \wtilcalF_{+, \ulomega}\bigl[ \calQ_\ulomega\bigl((\ulPe \Vp)(t)\bigr) \bigr](\xi) \\
&= e^{i t(\xi^2-\ulomega)} \bigl( e^{it\ulomega} h_{1,\ulomega,\ulp}(t) \bigr)^2 \wtilcalF_{+,\ulomega}\bigl[\calQ_{1,\ulomega}\bigr](\xi)+ e^{i t(\xi^2+\ulomega)} \bigl( e^{it\ulomega} h_{1,\ulomega,\ulp}(t) \bigr) \bigl( e^{-it\ulomega} h_{2,\ulomega,\ulp}(t) \bigr) \wtilcalF_{+,\ulomega}\bigl[\calQ_{2,\ulomega}\bigr](\xi) \\
&\quad + e^{i t(\xi^2+3\ulomega)} \bigl( e^{-it\ulomega} h_{2,\ulomega,\ulp}(t) \bigr)^2 \wtilcalF_{+,\ulomega}\bigl[\calQ_{3,\ulomega}\bigr](\xi) + e^{i t(\xi^2+\ulomega)} \wtilcalF\bigl[ \calQ_{\mathrm{r},\ulomega}\bigl((\ulPe \Vp)(t)\bigr) \bigr](\xi).
\end{aligned}
\end{equation}

While the phases $e^{i t(\xi^2+\ulomega)}$ and $e^{i t(\xi^2+3\ulomega)}$ in \eqref{equ:wtilcalF_of_Q1} have no time resonances, the phase $e^{i t(\xi^2-\ulomega)}$ does have time resonances at the bad frequencies $\xi = \pm \sqrt{\ulomega}$. However, the source term $\wtilcalF_{+,\ulomega}\bigl[\calQ_{1,\ulomega}\bigr](\xi)$ has a remarkable null structure, which was first observed by the author in \cite[Lemma 1.6]{Li23}.
\begin{lemma} \label{lem:null_structure_radiation}
For any $\ulomega \in (0,\infty)$ we have
\begin{equation*}
\wtilcalF_{+,\ulomega}\bigl[\calQ_{1,\ulomega}\bigr](\xi) = (\ulomega-\xi^2) \frac{1}{24\sqrt{\pi}} \frac{\xi^2 (\ulomega+\xi^2 )}{\ulomega^2(\vert \xi \vert+i\sqrt{\ulomega})^2} \sech\left( \frac{\pi}{2} \frac{\xi}{\sqrt{\ulomega}}\right).
\end{equation*}
\end{lemma}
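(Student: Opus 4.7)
The plan is to compute $\wtilcalF_{+,\ulomega}[\calQ_{1,\ulomega}](\xi)$ as a standard flat Fourier integral and to extract the null factor $(\ulomega-\xi^2)$ by direct algebraic inspection. By \eqref{eqn:def-dFT} and the form of $\sigma_3$ and $\Psi_{+,\ulomega}$ from Proposition~\ref{prop: representation formula}, one can write
\begin{equation*}
\wtilcalF_{+,\ulomega}[\calQ_{1,\ulomega}](\xi) \;=\; \frac{1}{\sqrt{2\pi}\,(|\xi|+i\sqrt{\ulomega})^2}\int_{\bbR}\mathcal{I}(y,\xi,\ulomega)\,e^{-iy\xi}\,\ud y,
\end{equation*}
where the scalar integrand $\mathcal{I}$ is built by pairing the two components of $\calQ_{1,\ulomega}$ against $\overline{m_{1,\ulomega}}$ and $-\overline{m_{2,\ulomega}}$. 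Since $\tanh(\sqrt{\ulomega}y)$, $\sech(\sqrt{\ulomega}y)$ and $\xi$ are all real, the conjugation simply flips every $i\sqrt{\ulomega}$ to $-i\sqrt{\ulomega}$, which produces the displayed factor $(|\xi|+i\sqrt{\ulomega})^{-2}$ and replaces the numerator of $m_{1,\ulomega}$ by $\xi^{2}-2i\xi\sqrt{\ulomega}\tanh(\sqrt{\ulomega}y)-\ulomega\tanh^{2}(\sqrt{\ulomega}y)$.

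Next I would insert $\phi_{\ulomega}=\sqrt{2\ulomega}\sech(\sqrt{\ulomega}y)$ and the formulas \eqref{equ:setup_definitions_Phijulomega} into $\calQ_{1,\ulomega}$, and then systematically apply $\tanh^{2}=1-\sech^{2}$ to rewrite $\mathcal{I}$ as a $\bbC$-linear combination of only seven monomials of the form $\sech^{k}(\sqrt{\ulomega}y)$ with $k\in\{1,3,5,7\}$ and $\tanh(\sqrt{\ulomega}y)\sech^{k}(\sqrt{\ulomega}y)$ with $k\in\{1,3,5\}$, each weighted by an explicit polynomial in $\xi$ and $\sqrt{\ulomega}$. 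The Fourier transform of every such monomial can be obtained from the base case $\widehat{\calF}[\sech](\eta)=\sqrt{\pi/2}\,\sech(\pi\eta/2)$ via a short ODE recursion: the identities $(\sech)''=\sech-2\sech^{3}$, $(\sech^{3})''=9\sech^{3}-12\sech^{5}$ and $(\sech^{5})''=25\sech^{5}-30\sech^{7}$ reduce each $\widehat{\sech^{2k+1}}$ to a polynomial multiple of $\widehat{\sech}$, while $(\sech^{k})'=-k\tanh\sech^{k}$ gives $\widehat{\calF}[\tanh\sech^{k}]=-\frac{i\eta}{k}\widehat{\sech^{k}}$. After the change of variable $y\mapsto y/\sqrt{\ulomega}$ and the substitution $\eta:=\xi/\sqrt{\ulomega}$, each of the seven Fourier transforms becomes an explicit polynomial in $\eta$ times $\sech(\pi\eta/2)/\sqrt{\ulomega}$, up to universal constants.

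Collecting all contributions and tracking the constant prefactors carefully, $\wtilcalF_{+,\ulomega}[\calQ_{1,\ulomega}](\xi)$ reduces to $-\tfrac{1}{2\sqrt{\pi}}\,\sech(\pi\xi/(2\sqrt{\ulomega}))/(|\xi|+i\sqrt{\ulomega})^{2}$ multiplied by a single polynomial $\mathcal{P}(\xi,\sqrt{\ulomega})$. The heart of the proof is then a purely algebraic verification that
\begin{equation*}
\mathcal{P}(\xi,\sqrt{\ulomega}) \;=\; \frac{\xi^{2}(\xi^{2}-\ulomega)(\xi^{2}+\ulomega)}{12\,\ulomega^{2}},
\end{equation*}
at which point the identity claimed in the lemma is immediate upon rearrangement. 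I expect this last step to be the main obstacle: the three contributions from the $\xi^{2}$-piece, the $\tanh$-piece and the $\ulomega$-piece of $\mathcal{I}$ are each individually nonvanishing at the thresholds $\xi=\pm\sqrt{\ulomega}$, and only after they are summed do the constant and $\eta^{2}$ coefficients of the combined polynomial cancel exactly, leaving $\tfrac{10}{120}\xi^{2}(\eta^{4}-1)=\tfrac{1}{12}\xi^{2}(\eta^{2}-1)(\eta^{2}+1)$ and hence the null factor $(\ulomega-\xi^{2})$. This cancellation is the key structural feature that drives the normal form transformation in Subsection~\ref{subsec:normalform}.
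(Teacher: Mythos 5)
Your plan is correct, and it is precisely the direct computation that the cited reference \cite[Lemma~5.4]{LL24} must contain (the paper's own proof is just that citation, so there is no internal argument to compare against). I verified the details: after pairing the two components of $\calQ_{1,\ulomega}$ against $\sigma_3\Psi_{+,\ulomega}$, expanding $(\xi-i\sqrt{\ulomega}\tanh)^{2}=(\xi^{2}-\ulomega)-2i\xi\sqrt{\ulomega}\tanh+\ulomega\sech^{2}$, and using $\tanh^{2}=1-\sech^{2}$, the scalar integrand does reduce to the seven monomials you list; your ODE recursions are correct, and over the common denominator $120$ the three contributions combine to the numerator $10\eta^{2}-10\eta^{6}=-10\,\eta^{2}(\eta^{2}-1)(\eta^{2}+1)$ with $\eta=\xi/\sqrt{\ulomega}$, which after restoring the prefactors gives exactly the stated formula. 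Two tiny slips in your narration, neither affecting the conclusion: it is the constant and $\eta^{4}$ coefficients of the degree-six polynomial that vanish after summing (the $\eta^{2}$- and $\eta^{6}$-coefficients survive as $10$ and $-10$), not the constant and $\eta^{2}$ coefficients; and the displayed $\tfrac{10}{120}\xi^{2}(\eta^{4}-1)$ should read $\tfrac{10}{120}\,\ulomega\,\eta^{2}(\eta^{4}-1)$ to match your $\mathcal{P}$.
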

\begin{proof}
See \cite[Lemma~5.4]{LL24}.
\end{proof}
To lighten the notation, we introduce the following functions
\begin{align}
\frakq_{1,\ulomega}(\xi) &:= (\xi^2-\ulomega)^{-1}\wtilcalF_{+,\ulomega}\bigl[\calQ_{1,\ulomega}\bigr](\xi),\label{equ:def_frakq_1}\\
\frakq_{2,\ulomega}(\xi) &:= (\xi^2+\ulomega)^{-1}\wtilcalF_{+,\ulomega}\bigl[\calQ_{2,\ulomega}\bigr](\xi),\label{equ:def_frakq_2}\\
\frakq_{3,\ulomega}(\xi) &:= (\xi^2+3\ulomega)^{-1}\wtilcalF_{+,\ulomega}\bigl[\calQ_{3,\ulomega}\bigr](\xi)\label{equ:def_frakq_3}.
\end{align}
Using differentiation by parts in time,  we perform a normal form on the quadratic term \eqref{equ:wtilcalF_of_Q1} by rewritting
\begin{equation} \label{equ:setup_quadratic_rewritten}
\begin{aligned}
&-i e^{i t(\xi^2+\ulomega)} \wtilcalF_{+, \ulomega}\bigl[ \calQ_\ulomega\bigl((\ulPe \Vp)(t)\bigr) \bigr](\xi) \\
&\qquad = - \pt \bigl( \wtilB_{\ulomega,\ulp}(t,\xi) \bigr) -i e^{it(\xi^2+\ulomega)} \Bigl( \calR_{\frakq,\ulomega,\ulp}(t,\xi) + \wtilcalF\bigl[ \calQ_{\mathrm{r},\ulomega}\bigl((\ulPe \Vp)(t)\bigr) \bigr](\xi) \Bigr),
\end{aligned}
\end{equation}
with the bilinear form defined by
\begin{equation}\label{equ:setup_definition_wtilBulomega}
\begin{aligned}
\wtilB_{\ulomega,\ulp}(t,\xi) &:= e^{it(\xi^2-\ulomega)} \bigl( e^{it\ulomega} h_{1,\ulomega,\ulp}(t) \bigr)^2 \frakq_{1,\ulomega}(\xi)  + e^{it(\xi^2+\ulomega)} \bigl( e^{it\ulomega} h_{1,\ulomega,\ulp}(t) \bigr) \bigl( e^{-it\ulomega} h_{2,\ulomega,\ulp}(t) \bigr) \frakq_{2,\ulomega}(\xi)\\
&\quad + e^{it(\xi^2+3\ulomega)} \bigl( e^{-it\ulomega} h_{2,\ulomega,\ulp}(t) \bigr)^2 \frakq_{3,\ulomega}(\xi),
\end{aligned}
\end{equation}
and the remainder term given by 
\begin{equation}\label{equ:setup_definition_wtilcalR_qulomega}
\begin{aligned}
\widetilde{\calR}_{\frakq,\ulomega,\ulp}(t,\xi) &:= 2i e^{-2it\ulomega} \bigl( e^{it\ulomega} h_{1,\ulomega,\ulp}(t) \bigr) \pt \bigl( e^{it\ulomega} h_{1,\ulomega,\ulp}(t) \bigr) \frakq_{1,\ulomega}(\xi) \\
&\quad + i \pt \bigl( e^{it\ulomega} h_{1,\ulomega,\ulp}(t) \bigr) \bigl( e^{-it\ulomega} h_{2,\ulomega,\ulp}(t) \bigr) \frakq_{2,\ulomega}(\xi) \\
&\quad + i \bigl( e^{it\ulomega} h_{1,\ulomega,\ulp}(t) \bigr) \pt \bigl( e^{-it\ulomega} h_{2,\ulomega,\ulp}(t) \bigr) \frakq_{2,\ulomega}(\xi) \\
&\quad + 2i e^{2it\ulomega} \bigl( e^{-it\ulomega} h_{2,\ulomega,\ulp}(t) \bigr) \pt \bigl( e^{-it\ulomega} h_{2,\ulomega,\ulp}(t) \bigr) \frakq_{3,\ulomega}(\xi).
\end{aligned}
\end{equation}
The regularity of $\wtilB_{\ulomega,\ulp}(t,\xi)$ and $\wtilcalR_{\frakq,\ulomega,\ulp}(t,\xi)$ is addressed in the remark below.
\begin{remark}[On the smoothness of $\frakq_{1,\ulomega},\frakq_{2,\ulomega},\frakq_{3,\ulomega}$] \label{remark:smoothness_of_q}
By observation, the components $\calQ_{1,\ulomega}(y)$, $\calQ_{2,\ulomega}(y)$,  $\calQ_{3,\ulomega}(y)$ in the leading part of the quadratic term $\calQ_\ulomega\bigl((\ulPe \Vp)(t)$ are real-valued Schwartz functions. Hence, by Lemma~\ref{lem:null_structure_radiation}, the functions  $\frakq_{1,\ulomega}(\xi),\frakq_{2,\ulomega}(\xi),\frakq_{3,\ulomega}(\xi)$ are rapidly decaying and smooth up to a factor of $(|\xi|+i\sqrt{\ulomega})^{-2}$ steming from the definition of the distorted Fourier basis elements \eqref{eqn:m-1,omega}--\eqref{eqn:m-2,omega}.
\end{remark}

After inserting \eqref{equ:setup_quadratic_rewritten} back into the equation  \eqref{equ:DFT-profile+2}, we arrive at the renormalized evolution equation 
\begin{equation}\label{equ:setup_evol_equ_renormalized_tilfplus}
\begin{aligned}
&\pt \Bigl( e^{i\theta_1(t)} e^{-i\theta_2(t)\xi} \bigl( \tilf_{+,\ulomega,\ulp}(t,\xi) + \wtilB_{\ulomega,\ulp}(t,\xi) \bigr) \Bigr)\\
&= -i e^{i\theta_1(t)} e^{-i\theta_2(t)\xi} e^{i t(\xi^2+\ulomega)} \Big(\wtilcalF_{+, \ulomega}\bigl[\calC\bigl((\ulPe \Vp)(t)\bigr)\bigr](\xi) +\widetilde{\calR}_{\ulomega,\ulp}(t,\xi) \Big) \\
&\quad \, + i e^{i\theta_1(t)} e^{-i\theta_2(t)\xi} \big(\dot{\theta}_1(t)-\dot{\theta}_2(t)\xi\big) \wtilB_{\ulomega,\ulp}(t,\xi)\\
\end{aligned}
\end{equation}
with the remainder term 
\begin{equation}\label{equ:setup_definition_wtilcalRulomega}
\begin{split}
\widetilde{\calR}_{\ulomega,\ulp}(t,\xi) &:= \wtilcalF_{+,\ulomega}\bigl[ \calQ_{\mathrm{r},\ulomega}\bigl((\ulPe \Vp)(t)\bigr) +\ulPe \calMod(t)+\calE_1(t)+\calE_2(t)+\calE_3(t)\bigr](\xi)\\
&\quad + \dot{\theta}_1(t) \calL_{+,\ulomega}[\Vp(t)](\xi)  + i \dot{\theta}_2(t) \calK_{+,\ulomega}[\Vp(t)](\xi)+\widetilde{\calR}_{\frakq,\ulomega,\ulp}(t,\xi).
\end{split}
\end{equation}
The renormalized evolution equation \eqref{equ:setup_evol_equ_renormalized_tilfplus} will be the starting point for the analysis of weighted energy estimates and of pointwise estimates in Sections~\ref{sec:energy_estimates} and \ref{sec:pointwise_profile} respectively. We show in Corollary~\ref{corollary:prep_Q_r-ulomega} that $\widetilde{\calR}_{\ulomega,\ulp}(t,\xi)$ is spatially localized with at least cubic-type $\jt^{-\frac32+\delta}$ time decay.

\subsection{Structure of the cubic nonlinearities} \label{subsec:cubic_spectral_distributions}

For the final preparations, we uncover the structure of the cubic nonlinear term in the equation \eqref{equ:profile-dft-+}. We recall the following nonlinear cubic spectral distributions in \cite[Sect.~6.1]{LL24}. The multilinear expression for the cubic nonlinearity is given by 
\begin{equation*}
\begin{split}
&\wtilcalF_{+, \ulomega}\bigl[\calC\big((\ulPe \Vp)(t)\big)\bigr](\xi) \\
&=\sum_{S}\fraks \iiint e^{it\Phi_{j_1 j_2 j_3}(\xi_1,\xi_2,\xi_3)}	\tilf_{j_1}(t,\xi_1)\overline{\tilf_{j_2}}(t,\xi_2)\tilf_{j_3}(t,\xi_3)\mu_{1,k_1,k_2,k_3,\ulomega}(\xi,\xi_1,\xi_2,\xi_3)\,\ud\xi_1 \,\ud\xi_2 \,\ud\xi_3 \\
&\quad +\sum_{S} \fraks  \iiint e^{-it\Phi_{j_1 j_2 j_3}(\xi_1,\xi_2,\xi_3)}	\overline{\tilf_{j_1}}(t,\xi_1)\tilf_{j_2}(t,\xi_2)\overline{\tilf_{j_3}}(t,\xi_3)\mu_{2,k_1,k_2,k_3,\ulomega}(\xi,\xi_1,\xi_2,\xi_3)\,\ud\xi_1 \,\ud\xi_2 \,\ud\xi_3.
\end{split}
\end{equation*}
We use the shortened notation $\tilf_{j} := \tilf_{j,\ulomega,\ulp}$ for $j \in\{+,-\}$, and the following set of notations: The set $S$ is given by
\begin{equation*}
S := \big\{ (j_1,k_1),(j_2,k_2),(j_3,k_3) \mid (j_\ell,k_\ell) \in  \{(+,1),(-,2)\}, \quad 1\leq \ell \leq 3 \big\},
\end{equation*}
and $\fraks = \fraks(j_1,j_2,j_3) \in \{-1,+1\}$ are signs defined by
\begin{equation*}
\fraks(j_1,j_2,j_3) := -(j_1 1)(j_2 1)(j_3 1).
\end{equation*}
The phase takes the form
\begin{equation*}
\Phi_{j_1 j_2 j_3}(\xi_1,\xi_2,\xi_3) := -j_1(\xi_1^2+\ulomega)+j_2(\xi_2^2+\ulomega)-j_3(\xi_3^2+\ulomega),
\end{equation*}
where $j_1,j_2,j_3 \in \{+,-\}$. The $16 = 8 + 8$ cubic spectral distributions are formally given by
\begin{align}
\mu_{1,k_1,k_2,k_3,\ulomega}(\xi,\xi_1,\xi_2,\xi_3) &:= \int_\bbR \overline{\Psi_{1,\ulomega}}(y,\xi) \Psi_{k_1,\ulomega}(y,\xi_1)\overline{\Psi_{k_2,\ulomega}}(y,\xi_2)\Psi_{k_3,\ulomega}(y,\xi_3) \,\ud y,\label{eqn: mu_1k1k2k3}\\
\mu_{2,k_1,k_2,k_3,\ulomega}(\xi,\xi_1,\xi_2,\xi_3) &:= \int_\bbR \overline{\Psi_{2,\ulomega}}(y,\xi) \overline{\Psi_{k_1,\ulomega}}(y,\xi_1)\Psi_{k_2,\ulomega}(y,\xi_2)\overline{\Psi_{k_3,\ulomega}}(y,\xi_3) \,\ud y,\label{eqn: mu_2k1k2k3}
\end{align}
where $k_1,k_2,k_3 \in \{1,2\}$. Note that only $\mu_{1,1,1,1,\ulomega}(\xi,\xi_1,\xi_2,\xi_3)$ is singular, while the other 15 cubic spectral distributions are regular in the sense of the following definition.

\begin{definition} \label{def:reg-cubic}
A cubic spectral distribution $\mu(\xi,\xi_1,\xi_2,\xi_3)$ is regular if it can be written as a linear combination of terms of the form
\begin{equation*}
\overline{\frakb_0}(\xi)	\frakb_1(\xi_1)	\overline{\frakb_2}(\xi_2)	\frakb_3(\xi_3) \kappa(\xi-\xi_1+\xi_2-\xi_3) \quad \text{or}\quad \overline{\frakb_0}(\xi)	\overline{\frakb_1}(\xi_1)	{\frakb_2}(\xi_2)	\overline{\frakb_3}(\xi_3) \kappa(\xi+\xi_1-\xi_2+\xi_3),
\end{equation*}
where the multipliers $\frakb_0,\frakb_1,\frakb_2,\frakb_3 \in W^{1,\infty}(\bbR)$ are of the form
\begin{equation}\label{eqn: symbol_frakb}
\frac{1}{(\vert \xi \vert - i\sqrt{\ulomega})^2},\quad 	\frac{\xi}{(\vert \xi \vert - i\sqrt{\ulomega})^2},\quad \text{or} \quad 	\frac{\xi^2}{(\vert \xi \vert - i\sqrt{\ulomega})^2},
\end{equation}
and where $\kappa(\xi) = \widehat{\calF}[\varphi](\xi)$ for  some Schwartz function $\varphi(y)$.
\end{definition}

The precise structure of the singular cubic spectral distribution is provided in the lemma below.

\begin{lemma} \label{lemma:cubic_NSD}
We have
\begin{equation}
\begin{split}
&	\mu_{1,1,1,1,\ulomega}(\xi,\xi_1,\xi_2,\xi_3) = \mu_{\delta_0,\ulomega}(\xi,\xi_1,\xi_2,\xi_3) + \mu_{\pvdots,\ulomega}(\xi,\xi_1,\xi_2,\xi_3) + \mu_{\mathrm{reg},\ulomega}(\xi,\xi_1,\xi_2,\xi_3),
\end{split}
\end{equation}
where $\mu_{\mathrm{reg},\ulomega}$ is a regular cubic spectral distribution in the sense of Definition~\ref{def:reg-cubic},  and as tempered distributions,
\begin{equation}
\mu_{\delta_0,\ulomega}(\xi,\xi_1,\xi_2,\xi_3) :=	\frac{1}{2\pi\sqrt{\ulomega}} \frac{\frakp_1\Big(\tfrac{\xi}{\sqrt{\ulomega}},\tfrac{\xi_1}{\sqrt{\ulomega}},\tfrac{\xi_2}{\sqrt{\ulomega}},\tfrac{\xi_3}{\sqrt{\ulomega}}\Big)}{\frakp\Big(\tfrac{\xi}{\sqrt{\ulomega}},\tfrac{\xi_1}{\sqrt{\ulomega}},\tfrac{\xi_2}{\sqrt{\ulomega}},\tfrac{\xi_3}{\sqrt{\ulomega}}\Big)}\delta_0\Big(\tfrac{\xi}{\sqrt{\ulomega}}-\tfrac{\xi_1}{\sqrt{\ulomega}}+\tfrac{\xi_2}{\sqrt{\ulomega}}-\tfrac{\xi_3}{\sqrt{\ulomega}}\Big),
\end{equation}
and
\begin{equation}
\mu_{\pvdots,\ulomega}(\xi,\xi_1,\xi_2,\xi_3) :=	\frac{1}{(2\pi)^{\frac32}}	\frac{\frakp_2\Big(\tfrac{\xi}{\sqrt{\ulomega}},\tfrac{\xi_1}{\sqrt{\ulomega}},\tfrac{\xi_2}{\sqrt{\ulomega}},\tfrac{\xi_3}{\sqrt{\ulomega}}\Big)}{\frakp\Big(\tfrac{\xi}{\sqrt{\ulomega}},\tfrac{\xi_1}{\sqrt{\ulomega}},\tfrac{\xi_2}{\sqrt{\ulomega}},\tfrac{\xi_3}{\sqrt{\ulomega}}\Big)} \sqrt{\frac{\pi}{2\ulomega}}\pvdots \cosech\left(\frac{\pi(\xi-\xi_1+\xi_2-\xi_3)}{2\sqrt{\ulomega}}\right),
\end{equation}
with $\frakp,\frakp_1,\frakp_2$ given by 
\begin{align}
\frakp(\xi,\xi_1,\xi_2,\xi_3) &:= (\vert \xi \vert+i)^2(\vert \xi_1 \vert-i)^2 (\vert \xi_2 \vert+i)^2(\vert \xi_3 \vert - i)^3,	\label{eqn: cubic-frakp}\\
\frakp_1(\xi,\xi_1,\xi_2,\xi_3)
&:= (\xi^2-1)(\xi_1^2-1)(\xi_2^2-1)(\xi_3^2-1)+ 16\xi\xi_1\xi_2\xi_3 \label{eqn: cubic-frakp1}\\
&\quad + 4 \Big(\xi\xi_1(\xi_2^2-1)(\xi_3^2-1) - \xi(\xi_1^2-1)\xi_2(\xi_3^2-1) +\xi(\xi_1^2-1)(\xi_2^2-1)\xi_3 \nonumber \\
&\quad \qquad +(\xi^2-1)\xi_1\xi_2(\xi_3^2-1) -(\xi^2-1)\xi_1(\xi_2^2-1)\xi_3+(\xi^2-1)(\xi_1^2-1)\xi_2\xi_3\Big),\nonumber \\
\frakp_2(\xi,\xi_1,\xi_2,\xi_3) &:= 2\Big((\xi^2-1)\xi_1(\xi_2^2-1)(\xi_3^2-1) -(\xi^2-1)(\xi_1^2-1)\xi_2(\xi_3^2-1)\label{eqn: cubic-frakp2}\\
&\quad \qquad +(\xi^2-1)(\xi_1^2-1)(\xi_2^2-1)\xi_3-\xi(\xi_1^2-1)(\xi_2^2-1)(\xi_3^2-1)\Big)\nonumber	\\
&\quad + 8\Big( (\xi^2-1)\xi_1\xi_2\xi_3 - \xi(\xi_1^2-1)\xi_2\xi_3 +\xi\xi_1(\xi_2^2-1)\xi_3 -\xi\xi_1\xi_2(\xi_3^2-1)\Big).\nonumber
\end{align}

Furthermore, we have the following properties:
\begin{enumerate}
\item Tensorized structure: both symbols $(\frakp_1/\frakp)(\xi,\xi_1,\xi_2,\xi_3)$ and $(\frakp_2/\frakp)(\xi,\xi_1,\xi_2,\xi_3)$ are a linear combination of the form $	\overline{\frakb_0}(\xi)	\frakb_1(\xi_1)	\overline{\frakb_2}(\xi_2)	\frakb_3(\xi_3)$ with  multipliers $\frakb_0,\frakb_1,\frakb_2,\frakb_3 \in W^{1,\infty}(\bbR)$,
\item Diagonal property: for any $\xi \in \bbR$, we have
\begin{equation}\label{eqn:cubic-diagonal-property}
\frac{\frakp_1(\xi,\xi,\xi,\xi)}{\frakp(\xi,\xi,\xi,\xi)} = 1 \quad \text{and} \quad \frac{\frakp_2(\xi,\xi,\xi,\xi)}{\frakp(\xi,\xi,\xi,\xi)} = 0,
\end{equation}
\item Vanishing property for $\mu_{\pvdots,\ulomega}$: in each tensorized term there is at least one frequency variable that vanishes at zero, that is, for every product $\overline{\frakb_0}(\xi)	\frakb_1(\xi_1) \overline{\frakb_2}(\xi_2) \frakb_3(\xi_3)$ in $(\frakp_2/\frakp)(\xi,\xi_1,\xi_2,\xi_3)$, there exists some $j \in \{0,1,2,3\}$ for which $\frakb_j(0) = 0$.
\end{enumerate}
\end{lemma}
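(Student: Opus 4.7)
The plan is to reduce the integral defining $\mu_{1,1,1,1,\ulomega}$ to elementary Fourier transforms by exploiting the explicit polynomial-in-$\tanh$ structure of the distorted Fourier basis element $\Psi_{1,\ulomega}$. First, substituting $\eta = \sqrt{\ulomega}\, y$ and collecting phases and denominators, I rewrite
\begin{equation*}
\mu_{1,1,1,1,\ulomega}(\xi,\xi_1,\xi_2,\xi_3) = \frac{1}{(2\pi)^2 \sqrt{\ulomega}\, \frakp(\tilxi,\tilxi_1,\tilxi_2,\tilxi_3)} \int_\bbR e^{i\eta \tilkappa}\, Q(\tanh \eta; \tilxi,\tilxi_1,\tilxi_2,\tilxi_3)\, \ud \eta,
\end{equation*}
where $\tilxi_j := \xi_j/\sqrt{\ulomega}$, $\tilkappa := -\tilxi+\tilxi_1-\tilxi_2+\tilxi_3$, and
\begin{equation*}
Q(\tau) := (\tilxi - i\tau)^2(\tilxi_1 + i\tau)^2 (\tilxi_2 - i\tau)^2 (\tilxi_3 + i\tau)^2.
\end{equation*}
The alternation of signs in $Q$ tracks the conjugation pattern in $\mu_{1,1,1,1,\ulomega}$, and the factor $\frakp$ in the denominator is precisely the product of the normalized denominators of the four $\Psi_{1,\ulomega}$ factors.

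Next, I decompose the polynomial $Q$ in $\tau$ into even and odd parts and use the identity $\tau^2 = 1 - \sech^2\eta$ recursively to write
\begin{equation*}
Q(\tau) = A(\tilxi,\tilxi_1,\tilxi_2,\tilxi_3) + B(\tilxi,\tilxi_1,\tilxi_2,\tilxi_3)\tau + S(\eta; \tilxi,\tilxi_1,\tilxi_2,\tilxi_3),
\end{equation*}
where $S$ is a linear combination of $\sech^{2k}\eta$ and $\tanh(\eta)\sech^{2k}\eta$ with $k \geq 1$, hence a Schwartz function in $\eta$ with polynomial coefficients. Evaluating at $\tau = \pm 1$ (which kills all $\sech^2$ contributions since $\tanh(\infty) = 1$) yields the explicit formulas
\begin{equation*}
A = \tfrac{1}{2}\bigl( Q|_{\tau=1} + Q|_{\tau=-1} \bigr) = \Re Q|_{\tau=1}, \qquad B = \tfrac{1}{2}\bigl( Q|_{\tau=1} - Q|_{\tau=-1} \bigr) = i \Im Q|_{\tau = 1},
\end{equation*}
since the $\tilxi_j$ are real and hence $Q|_{\tau=-1} = \overline{Q|_{\tau=1}}$. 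Applying the distributional Fourier identities \eqref{equ:preliminaries_FT_one} and \eqref{equ:preliminaries_FT_tanh} term by term gives $\int e^{i\eta \tilkappa} \ud \eta = 2\pi \delta_0(\tilkappa)$, $\int e^{i\eta\tilkappa} \tanh\eta \,\ud \eta = i\pi \pvdots\cosech(\pi\tilkappa/2)$, and the integral of $S(\eta;\cdot)$ against $e^{i\eta \tilkappa}$ is the inverse Fourier transform of a Schwartz function, contributing $\mu_{\mathrm{reg},\ulomega}$.

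It remains to identify $A$ with $\frakp_1$ and $\Im Q|_{\tau=1}$ with $\frakp_2$. Writing each factor of $Q|_{\tau=1}$ as $a_j + i b_j$ with $a_j = \tilxi_j^2 - 1$ and $b_j \in \{-2\tilxi,\, 2\tilxi_1,\, -2\tilxi_2,\, 2\tilxi_3\}$, I expand $\prod_{j=0}^{3}(a_j + i b_j) = \sum_{S \subset \{0,1,2,3\}} i^{|S|} \prod_{j \notin S} a_j \prod_{j \in S} b_j$ and collect real and imaginary parts according to the parity of $|S|$. The $|S| = 0$ and $|S| = 4$ contributions reproduce the first two summands of \eqref{eqn: cubic-frakp1}, the six $|S| = 2$ contributions reproduce the remaining bracketed terms of \eqref{eqn: cubic-frakp1} after accounting for the sign pattern of $b_j$; the $|S| = 1$ and $|S| = 3$ contributions yield the two brackets in \eqref{eqn: cubic-frakp2}. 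Matching the scalar prefactors and handling the sign flip from $\cosech$ being odd then gives the claimed decomposition into $\mu_{\delta_0,\ulomega} + \mu_{\pvdots,\ulomega} + \mu_{\mathrm{reg},\ulomega}$.

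Finally, I verify the three enumerated properties. The tensorized structure in (1) follows because $Q|_{\tau=1}/\frakp$ factors as $\prod_j \frakb_j(\tilxi_j)$ with $\frakb_j$ one of the bounded Lipschitz symbols in \eqref{eqn: symbol_frakb}, and $\frakp_1/\frakp$, $\frakp_2/\frakp$ are obtained as real and imaginary parts, i.e., linear combinations of such tensor products. The diagonal property (2) is immediate: at equal arguments, $Q|_{\tau=1} = ((\tilxi - i)(\tilxi + i))^4 = (\tilxi^2 + 1)^4 = \frakp(\tilxi,\tilxi,\tilxi,\tilxi)$, which is real, so $\frakp_1/\frakp = 1$ and $\frakp_2/\frakp = 0$. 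Property (3) is read off directly from \eqref{eqn: cubic-frakp2}: every term in $\frakp_2$ contains at least one linear factor $\tilxi_j$ rather than $(\tilxi_j^2 - 1)$, and dividing by $\frakp$ preserves the tensorized factorization so that one of the four factors $\frakb_j$ vanishes at $\xi_j = 0$. The main bookkeeping obstacle is the sign chase in step four, especially tracking the alternating signs of $b_j$ through the six $|S| = 2$ terms and the four $|S| = 3$ terms, but this is a finite combinatorial check.
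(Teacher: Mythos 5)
Your argument is correct, complete in outline, and takes the natural (and, as far as one can tell from the citation to \cite{LL24}, standard) approach for extracting the singular part of such a nonlinear spectral distribution: rescale $\eta = \sqrt{\ulomega}y$, reduce the polynomial $Q(\tanh\eta)$ modulo $\tanh^2\eta = 1 - \sech^2\eta$ to a constant, a $\tanh\eta$ term, and a Schwartz remainder, read off the constant and $\tanh$-coefficients from $Q(\pm 1)$, and apply the distributional Fourier identities \eqref{equ:preliminaries_FT_one}--\eqref{equ:preliminaries_FT_tanh}. I checked the identification $\Re Q|_{\tau=1} = \frakp_1$ and $\Im Q|_{\tau=1} = \frakp_2$ term by term across all $|S|=0,1,2,3,4$ subsets and the alternating sign pattern of the $b_j$ reproduces exactly the signs in \eqref{eqn: cubic-frakp1}--\eqref{eqn: cubic-frakp2}; the prefactor $\tfrac{1}{(2\pi)^{3/2}}\sqrt{\pi/(2\ulomega)} = \tfrac{1}{4\pi\sqrt{\ulomega}}$ also matches once you account for the extra minus sign from $B = i\Im Q|_{\tau=1}$ meeting $i\pi$ from \eqref{equ:preliminaries_FT_tanh} and for the oddness of $\pvdots\cosech$ under the argument flip $\tilkappa \mapsto -\tilkappa$. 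One observation worth passing along to the author: the exponent $3$ on $(|\xi_3|-i)$ in the definition \eqref{eqn: cubic-frakp} appears to be a typo for $2$; your diagonal computation $Q|_{\tau=1}(\xi,\dots,\xi) = \bigl((\xi-i)(\xi+i)\bigr)^4 = (\xi^2+1)^4$ requires all four factors of $\frakp$ to carry power $2$, and indeed the lemma's own claim \eqref{eqn:cubic-diagonal-property} fails with the power $3$, since then $\frakp(\xi,\dots,\xi) = (\xi^2+1)^4(|\xi|-i)$.
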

\begin{proof}
See \cite[Lemma~6.2]{LL24}.
\end{proof}
\section{Bootstrap Setup and Proof of Theorem~\ref{thm:main_theorem}} \label{sec:bootstrap_setup}

In this section we formulate the two main bootstrap propositions, and we establish the proof of Theorem~\ref{thm:main_theorem} as a consequence. 
The proofs of these two propositions will then occupy the remainder of the paper.

\subsection{The main bootstrap propositions}
Denoting by $0 < \delta \ll 1$ a small absolute constant, we define for $0 < T < T_\ast$,
\begin{equation} \label{equ:definition_bootstrap_norm_XT}
	\begin{aligned}
		&\bigl\| \bigl( \tilf_{+, \ulomega,\ulp}(t), \tilf_{-, \ulomega,\ulp}(t) \bigr) \bigr\|_{X(T)} \\
		&\quad := \sup_{0 \leq t \leq T} \, \Bigl( \bigl\| \bigl( \tilf_{+, \ulomega,\ulp}(t,\xi), \tilf_{-, \ulomega,\ulp}(t,\xi) \bigr) \bigr\|_{L^\infty_\xi} + \jt^{-\delta} \bigl\|  \bigl( \pxi \tilf_{+, \ulomega,\ulp}(t,\xi), \pxi \tilf_{-, \ulomega,\ulp}(t,\xi) \bigr) \bigr\|_{L^2_\xi} \Bigr).
	\end{aligned}
\end{equation}

In the first bootstrap proposition we establish decay of the modulation parameters $\omega(t)$ and $p(t)$ to their respective final values $\omega(T)$ and $p(T)$ on a given time interval $[0,T]$.

\begin{proposition}[Control of modulation parameters] \label{prop:modulation_parameters}
	Let $\omega_0 \in (0,\infty)$ and let $0 < \varepsilon_1 \ll 1$ be the small constant from the statement of Proposition~\ref{prop:modulation_and_orbital}.
	There exist constants $0 < \varepsilon_0 \ll \varepsilon_1 \ll 1$ and $C_0 \geq 1$ with the following properties:
	Let $(\gamma_0,p_0,\sigma_0) \in \bbR^3$ and let $u_0 \in H^1_x(\bbR) \cap L^{2,1}_x(\bbR)$  with $\varepsilon := \|u_0\|_{H^1_x \cap L^{2,1}_x} \leq \varepsilon_0$.
	Denote by $\psi(t,x)$ the solution to \eqref{equ:cubic_NLS} with initial condition
	\begin{equation}
		\psi_0(x) = e^{i p_0(x-\sigma_0)} e^{i\gamma_0} \bigl( \phi_{\omega_0}(x-\sigma_0) + u_0(x-\sigma_0) \bigr)
	\end{equation}
	on its maximal interval of existence $[0,T_\ast)$ furnished by Lemma~\ref{lem:setup_local_existence}.
	Let $(\omega, \gamma,p,\sigma) \colon [0,T_\ast) \to (0,\infty) \times \bbR^3$ be the unique continuously differentiable paths so that the decomposition
	\begin{equation}
		\psi(t,x) = e^{ip(t)(x-\sigma(t))} e^{i \gamma(t)} \bigl( \phi_{\omega(t)}(x-\sigma(t)) + u(t,x-\sigma(t)) \bigr), \quad 0 \leq t < T_\ast,
	\end{equation}
	satisfies (1)--(5) in the statement of Proposition~\ref{prop:modulation_and_orbital}.
	Fix $T \in (0,T_\ast)$, and fix $\ulomega \in (0,\infty)$, $\ulp \in \bbR$ so that $|\ulp-p_0|\leq \frac{1}{2}\omega_0$ and $\frac12 \omega_0 \leq \ulomega \leq 2 \omega_0$.
	Denote by $\bigl(\tilf_{+,\ulomega,\ulp}(t,\xi), \tilf_{-,\ulomega,\ulp}(t,\xi)\bigr)$ the components of the distorted Fourier transform \eqref{equ:profile_dFT} of the profile defined in \eqref{equ:profile}.
	Suppose
	\begin{align}
		\sup_{0 \leq t \leq T} \, \jt^{1-\delta}\Big( |\omega(t) - \ulomega| + |p(t) - \ulp|\Big) &\leq 2 C_0 \varepsilon, \label{equ:prop_modulation_parameters_assumption1} \\
		\bigl\| \bigl( \tilf_{+, \ulomega,\ulp}(t), \tilf_{-, \ulomega,\ulp}(t) \bigr) \bigr\|_{X(T)} &\leq 2 C_0 \varepsilon. \label{equ:prop_modulation_parameters_assumption2}
	\end{align}
	Then it follows that
	\begin{equation} \label{equ:prop_modulation_parameters_conclusion}
		\sup_{0 \leq t \leq T} \, \jt^{1-\delta}\Big( |\omega(t) - \omega(T)| + |p(t) - p(T)|\Big)\leq C_0 \varepsilon.
	\end{equation}
\end{proposition}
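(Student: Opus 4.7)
The plan is to start from the modulation equations \eqref{equ:setup_modulation_equation}, invert the matrix $\bbM = \bbM_1(\omega) + \bbM_2(U,\omega)$ using the orbital stability bound \eqref{equ:setup_smallness_orbital} (so that $\bbM_2$ is a small perturbation of the explicit invertible $\bbM_1$), and then represent
\[
 \omega(t)-\omega(T) = -\!\int_t^T \dot\omega(s)\,\ud s, \qquad p(t)-p(T) = -\!\int_t^T \dot p(s)\,\ud s,
\]
with the integrands expressed as inner products of $\calN(U)=\calQ_\omega(U)+\calC(U)$ against $\sigma_2 Y_{j,\omega(s)}$. The task reduces to showing that the integral in time of each such term is $O(\varepsilon\jt^{-1+\delta})$. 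The contributions from $\calC(U)$ are cubic in $U$; using the Duhamel representation \eqref{equ:setup_ulPe_V_repformula} together with the pointwise control \eqref{equ:definition_bootstrap_norm_XT} and the dispersive estimate of Lemma~\ref{lem:linear_dispersive_decay}, each factor of the renormalized radiation is at most $\varepsilon\jt^{-1/2}$ in $L^\infty_y$ on the support of the Schwartz weight $\sigma_2 Y_{j,\omega}$, yielding an integrable bound $\varepsilon^3\js^{-3/2}$ with plenty of room.

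The main work is the quadratic contribution $\langle i\calQ_\omega(U),\sigma_2 Y_{j,\omega(s)}\rangle$, which as highlighted in the introduction only has the slow local-decay rate $\varepsilon^2\js^{-1+\delta}$ coming from the threshold resonances in \eqref{equ:setup_decompositions_usube_local_decay}. First I would replace $\omega(s)$ by $\ulomega$ and $U(s)$ by $\Vp(s)=e^{i(p(s)-\ulp)y\sigma_3}U(s)$, using the bootstrap assumption \eqref{equ:prop_modulation_parameters_assumption1} to absorb the errors (the mismatch $e^{i(p-\ulp)y\sigma_3}-I$ is bounded pointwise by $|p(s)-\ulp||y|\lesssim\varepsilon\js^{-1+\delta}|y|$ on the Schwartz support), and decompose $\Vp = \ulPe\Vp + \ulPd \Vp$. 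The discrete part $\ulPd\Vp$ has its coefficients $d_{j,\ulomega,\ulp}(s)$ controlled by the orthogonality conditions \eqref{equ:setup_orthogonality_radiation} and the assumption \eqref{equ:prop_modulation_parameters_assumption1}, so its contribution to the quadratic nonlinearity is of size $\varepsilon^2\js^{-1+\delta}$ times something with better than one-derivative of localized decay, and the integration can be handled by an easy $s$-integration by parts. For the principal part one inserts the representation formula \eqref{equ:setup_ulPe_V_repformula} to get oscillatory integrals of the schematic form
\[
 \int_t^T \iint_{\bbR^2} e^{-is(\xi_1^2-\xi_2^2)}\tilf_{+,\ulomega,\ulp}(s,\xi_1)\tilf_{-,\ulomega,\ulp}(s,\xi_2)\,\lambda_{+-,\ulomega}(\xi_1,\xi_2)\,\ud\xi_1\,\ud\xi_2\,\ud s,
\]
as in \eqref{intro_equ:resonant_quadratic_momentum}, plus analogous $(++)$ and $(--)$ contributions whose phases $\mp 2is\ulomega\mp is(\xi_1^2+\xi_2^2)$ are coercive and so are integrable by one nonstationary-phase integration by parts in $s$.

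The critical $(+-)$ piece exhibits time resonances on $\{\xi_1=\pm\xi_2\}$, and the crux is to use the null structure I plan to record in Lemma~\ref{lem:null_structure_modulation}: the spectral distribution $\lambda_{+-,\ulomega}(\xi_1,\xi_2)$ is divisible by $(\xi_1^2-\xi_2^2)$, i.e.\ $\lambda_{+-,\ulomega}=(\xi_1^2-\xi_2^2)\mu_{+-,\ulomega}$ with $\mu_{+-,\ulomega}$ a bounded symbol of tensorized type (in the sense of Definition~\ref{def:reg-cubic}). This divisibility allows an exact normal form: integrating by parts in $s$ via $e^{-is(\xi_1^2-\xi_2^2)}=\frac{i}{\xi_1^2-\xi_2^2}\partial_s e^{-is(\xi_1^2-\xi_2^2)}$ cancels the dangerous denominator, yielding a boundary term (already of the desired size by \eqref{equ:prop_modulation_parameters_assumption2} and the representation \eqref{equ:wtilcalF_of_Q1} for the size of $\tilf$) plus a bulk term in which one of the factors $\tilf_\pm$ is replaced by $\partial_s \tilf_\pm$. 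Substituting the profile equation \eqref{equ:setup_evol_equ_renormalized_tilfplus}, the result is a cubic-order expression, whose phase is $-s(\xi_1^2\pm\xi_2^2\pm\xi_3^2\pm\xi_4^2)+$ quadratic terms in $\ulomega$; by inspection of signs (or coercivity) these cubic phases are non-resonant away from a negligible set, so a second normal form integration by parts in $s$ converts them to quartic order. Combining the $L^\infty_\xi\lesssim\varepsilon$ pointwise bound with the $H^1_\xi\lesssim\varepsilon\js^\delta$ weighted bound from the $X(T)$-norm and handling remainder terms via Remark~\ref{remark:smoothness_of_q}, Corollary~\ref{corollary:prep_Q_r-ulomega} (as anticipated in the text) and the estimates of Section~\ref{sec:linear_decay}, the quartic integrand will decay as $\varepsilon^4\js^{-2+\delta}$, whose integral on $[t,T]$ is dominated by $\varepsilon\jt^{-1+\delta}$ for $\varepsilon\ll1$. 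The hard part is organizing this two-step normal form so that the new cubic and quartic spectral distributions remain of tensorized type amenable to the $L^2_\xi$ weighted estimates and the local-smoothing of Proposition~\ref{prop:local-smoothing}; the null structure of Lemma~\ref{lem:null_structure_modulation} is precisely what makes the first step clean, and once it is in hand, the closure of the bootstrap with constant $C_0$ follows by choosing $\varepsilon_0$ small enough relative to the implicit constants.
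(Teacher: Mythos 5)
Your plan follows the paper's strategy closely: express $\omega(t)-\omega(T)$ and $p(t)-p(T)$ via the fundamental theorem of calculus, invert $\bbM$ around its leading time-independent piece, extract the leading quadratic and cubic terms (what the paper records as Lemma~\ref{lem:prep_modulation}), exploit the key null structure of $\nu_{+-,\ulomega}$ and $\lambda_{+-,\ulomega}$ (Lemma~\ref{lem:null_structure_modulation}) to run a normal form on the resonant quadratic integral, substitute the profile equation, and then bound the resulting cubic interactions by a further integration by parts in $s$ and the auxiliary decay estimates of Corollary~\ref{cor:consequences}. Two small inaccuracies: (i) Proposition~\ref{prop:local-smoothing} (smoothing with a moving center) is not actually invoked in this proof — it is a tool for the weighted energy estimates of Section~\ref{sec:energy_estimates}, and here the cubic terms are closed purely with the $L^\infty_\xi$ and $H^1_\xi$ bounds, the local decay decomposition \eqref{equ:consequences_usube_leading_order_local_decay_decomp}--\eqref{equ:consequences_barusube_leading_order_local_decay_decomp}, and the free-Schr\"odinger wave bounds \eqref{equ:preparation_flat_Schrodinger_wave_bound1}--\eqref{equ:preparation_flat_Schrodinger_wave_bound5}; (ii) the paper's ``second normal form'' is applied only to the leading amplitude product $\int_t^T h_{j_1}h_{j_2}\tilh_2\,\ud s$, exploiting the non-vanishing of the phase $\Lambda(j_1,j_2,j_3)\ulomega\in\{\pm\ulomega,\pm3\ulomega\}$, rather than to a generic $\xi$-phase $-s(\xi_1^2\pm\xi_2^2\pm\xi_3^2\pm\xi_4^2)$; the remaining cubic/quartic pieces from $\calI^{2,1},\dots,\calI^{2,7}$ are estimated directly. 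Also, the discrete-spectrum contribution decays as $\eps\js^{-3/2+\delta}$ (see \eqref{equ:consequences_discrete_components_decay}), so its quadratic pairing is already $\lesssim\eps^2\js^{-2+\delta}$ and needs no further integration by parts. None of these affect the validity of your outline; they are refinements of detail that the paper works out explicitly.
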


We defer the proof of Proposition~\ref{prop:modulation_parameters} to Section~\ref{sec:modulation_parameters}.
In the second bootstrap proposition we obtain control of the norm \eqref{equ:definition_bootstrap_norm_XT} of the components of the distorted Fourier transform of the profile of the renormalized radiation term.

\begin{proposition}[Profile bounds] \label{prop:profile_bounds}
	Let $\omega_0 \in (0,\infty)$ and let $0 < \varepsilon_1 \ll 1$ be the small constant from the statement of Proposition~\ref{prop:modulation_and_orbital}.
	There exist constants $0 < \varepsilon_0 \ll \varepsilon_1 \ll 1$ and $C_0 \geq 1$ with the following properties:
	Let $(\gamma_0,p_0,\sigma_0) \in \bbR^3$ and let $u_0 \in H^1_x(\bbR) \cap L^{2,1}_x(\bbR)$  with $\varepsilon := \|u_0\|_{H^1_x \cap L^{2,1}_x} \leq \varepsilon_0$.
	Denote by $\psi(t,x)$ the solution to \eqref{equ:cubic_NLS} with initial condition
	\begin{equation}
		\psi_0(x) = e^{i p_0(x-\sigma_0)} e^{i\gamma_0} \bigl( \phi_{\omega_0}(x-\sigma_0) + u_0(x-\sigma_0) \bigr)
	\end{equation}
	on its maximal interval of existence $[0,T_\ast)$ furnished by Lemma~\ref{lem:setup_local_existence}.
	Let $(\omega, \gamma,p,\sigma) \colon [0,T_\ast) \to (0,\infty) \times \bbR^3$ be the unique continuously differentiable paths so that the decomposition
	\begin{equation}
		\psi(t,x) = e^{ip(t)(x-\sigma(t))} e^{i \gamma(t)} \bigl( \phi_{\omega(t)}(x-\sigma(t)) + u(t,x-\sigma(t)) \bigr), \quad 0 \leq t < T_\ast,
	\end{equation}
	satisfies (1)--(5) in the statement of Proposition~\ref{prop:modulation_and_orbital}.
	Fix $T \in (0,T_\ast)$, and fix $\ulomega \in (0,\infty)$, $\ulp \in \bbR$ so that $|\ulp - p_0|\leq \frac{1}{2}\omega_0$ and $\frac12 \omega_0 \leq \ulomega \leq 2 \omega_0$. 
Denote by $\bigl(\tilf_{+,\ulomega,\ulp}(t,\xi), \tilf_{-,\ulomega,\ulp}(t,\xi)\bigr)$ the components of the distorted Fourier transform \eqref{equ:profile_dFT} of the profile defined in \eqref{equ:profile}.
	Suppose
	\begin{align}
		\sup_{0 \leq t \leq T} \, \jt^{1-\delta}\Big( |\omega(t) - \ulomega| + |p(t) - \ulp|\Big) &\leq 2 C_0 \varepsilon, \label{equ:prop_profile_bounds_assumption1} \\
		\bigl\| \bigl( \tilf_{+, \ulomega,\ulp}(t), \tilf_{-,\ulomega,\ulp}(t) \bigr) \bigr\|_{X(T)} &\leq 2 C_0 \varepsilon. \label{equ:prop_profile_bounds_assumption2}
	\end{align}
	Then it follows that
	\begin{equation} \label{equ:prop_profile_bounds_conclusion}
		\bigl\| \bigl( \tilf_{+,\ulomega,\ulp}(t), \tilf_{-,\ulomega,\ulp}(t) \bigr) \bigr\|_{X(T)} \leq C_0 \varepsilon.
	\end{equation}
\end{proposition}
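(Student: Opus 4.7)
The plan is to close the bootstrap by estimating each term in the Duhamel expansion of the renormalized profile equation \eqref{equ:setup_evol_equ_renormalized_tilfplus} separately in $L^\infty_\xi$ and in $\jt^{-\delta}L^2_\xi$ after one $\pxi$-derivative. Thanks to the scattering relation \eqref{equ:setup_components_relation}, it suffices to control $\tilf_{+,\ulomega,\ulp}$. The quadratic normal form correction $\wtilB_{\ulomega,\ulp}(t,\xi)$ is essentially harmless: the decomposition \eqref{equ:setup_decompositions_usube_local_decay}--\eqref{equ:setup_definitions_h12ulomega} combined with Lemma~\ref{lem:linear_dispersive_decay} yields $|\wtilB_{\ulomega,\ulp}(t,\xi)| \lesssim \eps^2 \jt^{-1}$ and a weighted $L^2_\xi$ analogue with loss $\jt^\delta$, both compatible with the target $C_0\eps$ bound for $\eps_0$ small.

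For the pointwise $L^\infty_\xi$ bound, I would run the standard modified scattering argument on the cubic nonlinearity $\wtilcalF_{+,\ulomega}[\calC(\ulPe \Vp)]$ using Lemma~\ref{lemma:cubic_NSD}. The $\mu_{\delta_0,\ulomega}$ piece, after exploiting the $\delta$-function and the diagonal identity \eqref{eqn:cubic-diagonal-property}, produces an exact $|\tilf_{+,\ulomega,\ulp}(t,\xi)|^2 \tilf_{+,\ulomega,\ulp}(t,\xi)$ resonant contribution that is removed by a logarithmic phase gauge. The principal value distribution $\mu_{\pvdots,\ulomega}$ is handled via the vanishing property $(3)$ of Lemma~\ref{lemma:cubic_NSD}, while $\mu_{\mathrm{reg},\ulomega}$ and the $15$ other regular spectral distributions are non-resonant on the diagonal and become integrable in time after differentiation by parts in $s$ or $\xi$, with each integrand bounded by $\eps^3 \js^{-1-\alpha}$ for some $\alpha>0$ via the bootstrap \eqref{equ:prop_profile_bounds_assumption2} and Proposition~\ref{prop:mapping_properties_dist_FT}. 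The remainder $\wtilcalR_{\ulomega,\ulp}(t,\xi)$ is spatially localized with cubic-type decay $\js^{-\frac32+\delta}$ by the forthcoming Corollary~\ref{corollary:prep_Q_r-ulomega}, and the modulation-driven term $(\dot\theta_1(t)-\dot\theta_2(t)\xi)\wtilB_{\ulomega,\ulp}$ decays at rate $\eps^3\js^{-2+\delta}$ after invoking $|\dot\theta_j(s)| \lesssim \eps \js^{-1+\delta}$ from Proposition~\ref{prop:modulation_parameters}.

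For the weighted $L^2_\xi$ estimate on $\pxi\tilf_{+,\ulomega,\ulp}$, I would commute $\pxi$ with the Duhamel integral in \eqref{equ:setup_evol_equ_renormalized_tilfplus}. When $\pxi$ falls on the profiles inside the cubic term, a H\"older-Sobolev bound together with \eqref{equ:prop_profile_bounds_assumption2} gives an $\eps^3 \js^{-1+\delta}$ integrand and therefore the allowable $\jt^\delta$ growth after time integration. Derivatives falling on $e^{-i\theta_2(t)\xi}$ reproduce a factor $\theta_2(t)$, which is absorbed using $|\theta_2(t)|\lesssim \eps \jt^\delta$ and the bootstrap on the profile itself. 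The dangerous case is when $\pxi$ hits the oscillatory phase $e^{is(\xi^2+\ulomega)}$, producing the factor $2is\xi$; after inverting the distorted Fourier transform on the spatially localized remainders one is reduced to an expression of the form displayed in \eqref{intro_equ:remainder_term2}, and this is precisely where the new local smoothing estimates of Proposition~\ref{prop:local-smoothing} are applied, yielding an $L^2_\xi$-in-time integral whose right-hand side is the $L^2_s L^2_y$ norm of $\jy^{-1}\calR(s,y)$. The latter is bounded by $\eps \jt^\delta$ thanks to the cubic-type local decay $\eps^2 \js^{-\frac32+\delta}$ of $\calR$ and the square-integrability of its time weight.

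The principal obstacle is this last step: the combination of the $2is\xi$ weight and the moving center shift $\theta_2(s)$, slowly growing at rate $\eps\js^\delta$, defeats the classical local smoothing estimate from \cite[Lemma~4.4]{LL24}, since the oscillation $e^{-i(y+\theta_2(s))\xi}$ and the spatial localization of $\calR(s,y)$ are misaligned; it is exactly the moving-center smoothing of Proposition~\ref{prop:local-smoothing} that has been designed to absorb this mismatch. A secondary delicate point is the weighted estimate for the singular $\mu_{\delta_0,\ulomega}$ contribution to the cubic term, for which one has to perform a further integration by parts in frequency in order to convert the apparent logarithmic loss into a manageable resonant $|\tilf|^2\tilf$ structure already controlled by the bootstrap \eqref{equ:prop_profile_bounds_assumption2}.
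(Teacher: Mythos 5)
Your proposal follows essentially the same route as the paper. The paper reduces Proposition~\ref{prop:profile_bounds} to a continuity argument combining the weighted energy estimate in Proposition~\ref{prop: weighted-energy-estimate} ($\|\pxi \tilf_{\pm,\ulomega,\ulp}(t)\|_{L^2_\xi} \lesssim \eps + \eps^2\jt^\delta$) and the pointwise bound in Proposition~\ref{prop:pointwise_estimate} ($\|\tilf_{\pm,\ulomega,\ulp}(t)\|_{L^\infty_\xi} \lesssim \eps + \eps^2$), and the content of your paragraphs corresponds precisely to the proofs of those two propositions: the reduction to $\tilf_{+,\ulomega,\ulp}$ via \eqref{equ:setup_components_relation}, the $\wtilB_{\ulomega,\ulp}$ bilinear correction, the decomposition of the cubic spectral distribution into $\delta_0$/$\pvdots$/regular parts, the logarithmic-phase gauge for the resonant $|\tilf|^2\tilf$ contribution, and the moving-center smoothing estimate of Proposition~\ref{prop:local-smoothing} to absorb the $2is\xi$ factor when $\pxi$ hits the oscillatory phase against a spatially localized remainder.

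One small inaccuracy in your closing paragraph: in the weighted $L^2_\xi$ estimate the singular $\mu_{\delta_0,\ulomega}$ piece does \emph{not} generate a logarithmic loss needing a frequency integration by parts or a gauge. After the change of variables $\eta_1 = \xi - \xi_1$, $\eta_2 = \xi_1 - \xi_2$ the phase factors as $2\eta_1\eta_2$ and the contribution is written as a triple convolution, bounded directly via Plancherel and the dispersive estimates \eqref{equ:preparation_flat_Schrodinger_wave_bound1}--\eqref{equ:preparation_flat_Schrodinger_wave_bound2}, yielding an integrand of size $\eps^3\js^{-1+\delta}$ whose time integral gives exactly the allowed $\jt^\delta$ growth. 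The frequency integration by parts (via the identity $\pxi\Phi = -\partial_{\xi_1}\Phi - \partial_{\xi_2}\Phi + 2\xi_4$) is instead used for the Hilbert-type $\mu_{\pvdots,\ulomega}$ contribution, combined there with the vanishing property (3) of Lemma~\ref{lemma:cubic_NSD} and the improved local decay \eqref{equ:prep_flat_local_decay}. The modified-scattering gauge removing the $|\tilf|^2\tilf$ resonance belongs only to the pointwise ($L^\infty_\xi$) argument. This is a misattribution rather than a gap; the rest of the outline is sound.
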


The proof of Proposition~\ref{prop:profile_bounds} follows by a standard continuity argument from Proposition~\ref{prop: weighted-energy-estimate}, Proposition~\ref{prop:pointwise_estimate}, and the local existence theory.
From the bootstrap assumptions in the statements of Proposition~\ref{prop:modulation_parameters} and of Proposition~\ref{prop:profile_bounds}, we now deduce several decay estimates and some auxiliary bounds that will be used throughout the remainder of the paper.

\begin{corollary} \label{cor:consequences}
	Let $\omega_0 \in (0,\infty)$ and let $0 < \varepsilon_1 \ll 1$ be the small constant from the statement of Proposition~\ref{prop:modulation_and_orbital}.
	There exist constants $0 < \varepsilon_0 \ll \varepsilon_1 \ll 1$ and $C_0 \geq 1$ with the following properties:
	Let $(\gamma_0,p_0,\sigma_0) \in \bbR^3$ and let $u_0 \in H^1_x(\bbR) \cap L^{2,1}_x(\bbR)$  with $\varepsilon := \|u_0\|_{H^1_x \cap L^{2,1}_x} \leq \varepsilon_0$.
	Denote by $\psi(t,x)$ the solution to \eqref{equ:cubic_NLS} with initial condition
	\begin{equation}
		\psi_0(x) = e^{i p_0(x-\sigma_0)} e^{i\gamma_0} \bigl( \phi_{\omega_0}(x-\sigma_0) + u_0(x-\sigma_0) \bigr)
	\end{equation}
	on its maximal interval of existence $[0,T_\ast)$ furnished by Lemma~\ref{lem:setup_local_existence}.
	Let $(\omega, \gamma,p,\sigma) \colon [0,T_\ast) \to (0,\infty) \times \bbR^3$ be the unique continuously differentiable paths so that the decomposition
	\begin{equation}
		\psi(t,x) = e^{ip(t)(x-\sigma(t))} e^{i \gamma(t)} \bigl( \phi_{\omega(t)}(x-\sigma(t)) + u(t,x-\sigma(t)) \bigr), \quad 0 \leq t < T_\ast,
	\end{equation}
	satisfies (1)--(5) in the statement of Proposition~\ref{prop:modulation_and_orbital}.
Fix $T \in (0,T_\ast)$, and fix $\ulomega \in (0,\infty)$, $\ulp \in \bbR$ so that $|\ulp- p_0|\leq \frac{1}{2}\omega_0$ and $\frac12 \omega_0 \leq \ulomega \leq 2 \omega_0$. 
Denote by $\bigl(\tilf_{+,\ulomega,\ulp}(t,\xi), \tilf_{-,\ulomega,\ulp}(t,\xi)\bigr)$ the components of the distorted Fourier transform \eqref{equ:profile_dFT} of the profile defined in \eqref{equ:profile}.
	Suppose
	\begin{align}
		\sup_{0 \leq t \leq T} \, \jt^{1-\delta}\Big( |\omega(t) - \ulomega| + |p(t) - \ulp|\Big) &\leq 2 C_0 \varepsilon, \label{equ:consequences_assumption1} \\
		\bigl\| \bigl( \tilf_{+, \ulomega,\ulp}(t), \tilf_{-, \ulomega,\ulp}(t) \bigr) \bigr\|_{X(T)} &\leq 2 C_0 \varepsilon. \label{equ:consequences_assumption2}
	\end{align}
	Then the following estimates hold:
	
	\begin{itemize}[leftmargin=1.8em]
		\item[(1)] Sobolev bound for the renormalized radiation variable and for the profile:
		\begin{equation}\label{equ:consequences_sobolev_bound_V}
			\sup_{0\leq t \leq T} \| \Vp(t)\|_{H_y^1} \lesssim \eps,
		\end{equation}		
		\begin{equation} \label{equ:consequences_sobolev_bound_profile}
			\sup_{0 \leq t \leq T} \, \Bigl( \bigl\|\jxi \tilf_{+, \ulomega,\ulp}(t)\bigr\|_{L^2_\xi} + \bigl\|\jxi \tilf_{-, \ulomega,\ulp}(t)\bigr\|_{L^2_\xi} \Bigr) \lesssim \varepsilon.
		\end{equation}
		
		\item[(2)] Decomposition of the renormalized radiation term:
		\begin{equation} \label{equ:consequences_decomposition_radiation}
			\Vp(t,y) = (\ulPe \Vp)(t,y) + \sum_{k=1}^4 d_{k,\ulomega,\ulp}(t) Y_{k,\ulomega}(y), \quad 0 \leq t \leq T,
		\end{equation}
		with
		\begin{align}
			\sup_{0 \leq t \leq T} \, \jt^{\frac12} \| (\ulPe \Vp)(t) \|_{L^\infty_y} &\lesssim \varepsilon, \label{equ:consequences_ulPe_U_disp_decay} \\
\sup_{0 \leq t \leq T} \, \jt^{\frac12}\|\jy^{-3} \py \ulPe\Vp(t)\|_{L^2_y} &\lesssim \varepsilon,\label{equ:consequences_pyulPeU_disp_decay}\\
\sup_{0 \leq t \leq T}\, \jt^{\frac32-\delta} \big( |d_{1,\ulomega,\ulp}(t)|+|d_{2,\ulomega,\ulp}(t)|+|d_{3,\ulomega,\ulp}(t)|+|d_{4,\ulomega,\ulp}(t)|\big)  &\lesssim  \varepsilon. \label{equ:consequences_discrete_components_decay}
		\end{align}
		
		\item[(3)] Dispersive decay for the radiation terms:
\begin{align}
\sup_{0 \leq t \leq T} \, \jt^{\frac12}\big( \|U(t)\|_{L^\infty_y}+\|\Vp(t)\|_{L^\infty_y}\big) &\lesssim \varepsilon, \label{equ:consequences_U_disp_decay} \\
\sup_{0 \leq t \leq T} \, \jt^{\frac12}\big( \|\jy^{-3}\py U(t)\|_{L^2_y}+\|\jy^{-3} \py \Vp(t)\|_{L^2_y}\big) &\lesssim \varepsilon.\label{equ:consequences_pyU_disp_decay}
\end{align}

		\item[(4)] Auxiliary decay estimates for the modulation parameters:
		\begin{align}
			\sup_{0 \leq t \leq T} \, \jt \bigl( |\dot{\omega}(t)| + |\dot{p}|  + |\dot{\gamma}(t) + p^2 - \omega(t) - p \dot{\sigma}| + |\dot{\sigma} - 2p| \bigr) &\lesssim \varepsilon, \label{equ:consequences_aux_bound_modulation1} \\
			\sup_{0 \leq t \leq T} \, \jt^{1-\delta} \big(|\dot{\theta}_1(t)| + | \dot{\theta}_2(t)|\big)  &\lesssim \varepsilon. \label{equ:consequences_aux_bound_modulation2}
		\end{align}
		
		\item[(5)] Growth bound for the phase:
		\begin{equation} \label{equ:consequences_growth_bound_theta}
			\sup_{0 \leq t \leq T} \, \jt^{-\delta} \big(|\theta_1(t)| + |\theta_2(t)|\big) \lesssim \varepsilon.
		\end{equation}
		
		\item[(6)] Auxiliary bounds for remainder terms in the evolution equations for the profiles:
		\begin{align}
			\sup_{0 \leq t \leq T} \, \jt^{\frac12} \Bigl( \bigl\| \calL_{+, \ulomega}[\Vp(t)](\xi) \bigr\|_{H_\xi^1} + \bigl\| \calL_{-,\ulomega}[\Vp(t)](\xi) \bigr\|_{H_\xi^1} \Bigr) &\lesssim \varepsilon, \label{equ:consequences_calL_bounds} \\
			\sup_{0 \leq t \leq T} \, \jt^{\frac12} \Bigl( \bigl\| \calK_{+, \ulomega}[\Vp(t)](\xi) \bigr\|_{H_\xi^1} + \bigl\| \calK_{-,\ulomega}[\Vp(t)](\xi) \bigr\|_{H_\xi^1} \Bigr) &\lesssim \varepsilon, \label{equ:consequences_calK_bounds} \\
			\sup_{0 \leq t \leq T} \, \jt^{2-\delta} \Big( \bigl\| \jym \ulPe \calMod(t) \bigr\|_{L^{2}_y}+\bigl\| \jym \py \ulPe \calMod(t) \bigr\|_{L^{2}_y} \Big)&\lesssim \varepsilon^2, \label{equ:consequences_ulPe_Mod_bounds} \\
			\sup_{0 \leq t \leq T} \, \jt^{\frac32 - \delta} \Big( \bigl\| \jym  \calE_1(t) \bigr\|_{L^{2}_y}+\bigl\| \jym \py \calE_1(t) \bigr\|_{L^{2}_y} \Big) &\lesssim \varepsilon^2, \label{equ:consequences_calE1_bounds} \\
			\sup_{0 \leq t \leq T} \, \jt^{2 - \delta} \Big( \bigl\| \jym  \calE_2(t) \bigr\|_{L^{2}_y}+\bigl\| \jym \py \calE_2(t) \bigr\|_{L^{2}_y} \Big) &\lesssim \varepsilon^3, \label{equ:consequences_calE2_bounds} \\
			\sup_{0 \leq t \leq T} \, \jt^{2 - \delta} \Big( \bigl\| \jym  \calE_3(t) \bigr\|_{L^{2}_y}+\bigl\| \jym \py \calE_3(t) \bigr\|_{L^{2}_y} \Big) &\lesssim \varepsilon^3. \label{equ:consequences_calE3_bounds} 
		\end{align}
		
		\item[(7)] Leading order local decay:
		\begin{align}
			\ve(t,y) &= h_{1,\ulomega,\ulp}(t) \Phi_{1,\ulomega}(y) - h_{2,\ulomega,\ulp}(t) \Phi_{2,\ulomega}(y) + R_{\vp,\ulomega}(t,y), \quad 0 \leq t \leq T, \label{equ:consequences_usube_leading_order_local_decay_decomp} \\
			\barve(t,y) &= h_{1,\ulomega,\ulp}(t) \Phi_{2,\ulomega}(y) - h_{2,\ulomega,\ulp}(t) \Phi_{1,\ulomega}(y) + R_{\barvp,\ulomega}(t,y), \quad 0 \leq t \leq T, \label{equ:consequences_barusube_leading_order_local_decay_decomp}
		\end{align}
		with
		\begin{align}
			\sup_{0 \leq t \leq T} \, \jt^{\frac12} \bigl( |h_{1,\ulomega,\ulp}(t)| + |h_{2,\ulomega,\ulp}(t)| \bigr) &\lesssim \varepsilon, \label{equ:consequences_h12_decay} \\
			\sup_{0 \leq t \leq T} \, \jt^{1-\delta} \Bigl( \bigl|\pt \bigl( e^{i t \ulomega} h_{1,\ulomega,\ulp}(t) \bigr) \bigr| + \bigl|\pt \bigl( e^{-i t \ulomega} h_{2,\ulomega,\ulp}(t) \bigr) \bigr| \Bigr) &\lesssim \varepsilon, \label{equ:consequences_h12_phase_filtered_decay}  \\
			\sup_{0 \leq t \leq T} \, \jt^{1-\delta} \Bigl( \bigl\|\jy^{-2} R_{\vp,\ulomega}(t,y)\bigr\|_{L^\infty_y} + \bigl\| \jy^{-2} R_{\barvp,\ulomega}(t,y) \bigr\|_{L^\infty_y} \Bigr) &\lesssim \varepsilon, \label{equ:consequences_Ru_local_decay} \\
			\sup_{0 \leq t \leq T} \, \jt^{1-\delta} \Bigl( \bigl\|\jy^{-3} \py R_{\vp,\ulomega}(t,y)\bigr\|_{L^2_y} + \bigl\| \jy^{-3} \py R_{\barvp,\ulomega}(t,y) \bigr\|_{L^2_y} \Bigr) &\lesssim \varepsilon. \label{equ:consequences_px_Ru_local_decay}
		\end{align}
		
		\item[(8)] Bounds for auxiliary free Schr\"odinger evolutions:
		
		\noindent Denote by $\chi_0(\xi)$ a smooth even non-negative cut-off function with $\chi_0(\xi) = 1$ for $|\xi| \leq 1$ and $\chi_0(\xi) = 0$ for $|\xi| \geq 2$. Given symbols $\fraka, \frakb \in W^{1,\infty}(\bbR)$, we define
\begin{equation*}
\begin{aligned}
v_{+,\ulomega,\ulp}(t,y) &:= e^{-i t \ulomega} \int_\bbR e^{i y \xi_1} e^{-it\xi_1^2} \fraka(\xi_1) \tilfplusulo(t,\xi_1) \, \ud \xi_1, \\
v_{-,\ulomega,\ulp}(t,y) &:= e^{i t \ulomega} \int_\bbR e^{i y \xi_2} e^{it\xi_2^2} \frakb(\xi_2) \tilfminusulo(t,\xi_2) \, \ud \xi_2.
\end{aligned}
\end{equation*}
Morover, we introduce the decompositions
\begin{align*}
v_{+,\ulomega,\ulp}(t,y) &= \tilde{h}_{1,\ulomega,\ulp}(t) + R_{v_+,\ulomega,\ulp}(t,y), \\
v_{-,\ulomega,\ulp}(t,y) &= \tilde{h}_{2,\ulomega,\ulp}(t) + R_{v_-,\ulomega,\ulp}(t,y),
\end{align*}
with
\begin{align*}
\tilde{h}_{1,\ulomega,\ulp}(t) &= e^{-i t \ulomega} \int_\bbR e^{-i t \xi_1^2} \chi_0(\xi_1) \fraka(\xi_1) \tilfplusulo(t,\xi_1) \, \ud \xi_1, \\
\tilde{h}_{2,\ulomega,\ulp}(t) &= e^{i t \ulomega} \int_\bbR e^{i t \xi_2^2} \chi_0(\xi_2) \frakb(\xi_2) \tilfminusulo(t,\xi_2) \, \ud \xi_2.
\end{align*}
Then the following estimates hold:
\begin{align}
\sup_{0 \leq t \leq T} \, \jt^{\frac12} \bigl( \|v_{+,\ulomega,\ulp}(t)\|_{L^\infty_y} + \|v_{-,\ulomega,\ulp}(t)\|_{L^\infty_y} \bigr) &\lesssim \varepsilon, \label{equ:preparation_flat_Schrodinger_wave_bound1} \\
\sup_{0 \leq t \leq T} \, \bigl( \|v_{+,\ulomega,\ulp}(t)\|_{L^2_y} + \|v_{-,\ulomega,\ulp}(t)\|_{L^2_y} \bigr) &\lesssim \varepsilon, \label{equ:preparation_flat_Schrodinger_wave_bound_L2} \\
\sup_{0 \leq t \leq T} \, \jt^{1-\delta} \bigl( \|\jy^{-1} \py v_{+,\ulomega,\ulp}(t)\|_{L^2_y} + \|\jy^{-1} \py v_{-,\ulomega,\ulp}(t)\|_{L^2_y} \bigr) &\lesssim \varepsilon, \label{equ:preparation_flat_Schrodinger_wave_bound2} \\
\sup_{0 \leq t \leq T} \, \jt^{\frac12} \bigl( |\tilde{h}_{1,\ulomega,\ulp}(t)| + |\tilde{h}_{2,\ulomega,\ulp}(t)| \bigr) &\lesssim \varepsilon, \label{equ:preparation_flat_Schrodinger_wave_bound3} \\
\sup_{0 \leq t \leq T} \, \jt^{1-\delta} \Bigl( \bigl| \pt \bigl( e^{it\ulomega} \tilde{h}_{1,\ulomega,\ulp}(t) \bigr) \bigr| + \bigl| \pt \bigl( e^{-it\ulomega} \tilde{h}_{2,\ulomega,\ulp}(t) \bigr) \bigr| \Bigr) &\lesssim \varepsilon, \label{equ:preparation_flat_Schrodinger_wave_bound4} \\
\sup_{0 \leq t \leq T} \, \jt^{1-\delta} \Bigl( \bigl\| \jy^{-2} R_{v_+,\ulomega,\ulp}(t,y) \bigr\|_{L^\infty_y} + \bigl\| \jy^{-2} R_{v_-,\ulomega,\ulp}(t,x) \bigr\|_{L^\infty_y} \Bigr) &\lesssim \varepsilon. \label{equ:preparation_flat_Schrodinger_wave_bound5}
		\end{align}
In the special case 
\begin{equation*}
\fraka(\xi) = \frakb(\xi) = \frac{\xi}{(|\xi|+i\sqrt{\ulomega})^2},
\end{equation*}		
the following improved local decay estimate holds
\begin{equation}\label{equ:prep_flat_local_decay}
\sup_{0\leq t \leq T} \jt^{1-\delta}\big( \| \jy^{-1} v_{+,\ulomega,\ulp}(t)\|_{L_y^2}+ \| \jy^{-1} v_{-,\ulomega,\ulp}(t)\|_{L_y^2}\big) \lesssim \eps.
\end{equation}

	\end{itemize}
\end{corollary}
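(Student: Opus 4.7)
\smallskip

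\noindent\textbf{Proof plan.} The strategy is to deduce all eight groups of estimates from the bootstrap hypotheses \eqref{equ:consequences_assumption1}--\eqref{equ:consequences_assumption2} together with the orbital stability bound \eqref{equ:setup_smallness_orbital}, the mapping properties of the distorted Fourier transform (Proposition~\ref{prop:mapping_properties_dist_FT}), the linear dispersive and local decay estimates of Section~\ref{sec:linear_decay}, and the modulation equations \eqref{equ:setup_modulation_equation}. I would proceed in the logical order (1), (4)--(5), (2)--(3), (7)--(8), (6), so that each step relies only on what is already established.

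The Sobolev bound \eqref{equ:consequences_sobolev_bound_V} is immediate from \eqref{equ:setup_smallness_orbital} and the pointwise identity $\Vp = e^{i(p-\ulp)y\sigma_3}U$, whose derivative contributes a harmless factor $|p-\ulp|\lesssim 1$; the profile bound \eqref{equ:consequences_sobolev_bound_profile} then follows by applying Proposition~\ref{prop:mapping_properties_dist_FT} to $e^{it\calH(\ulomega)}\ulPe\Vp$. For (4) I would invert the matrix $\bbM$ in \eqref{equ:setup_matrix_modulation_equation}, which is dominated by $\bbM_1(\ulomega)$ thanks to the smallness of $U$, and bound the pairings $|\langle i\calN(U),\sigma_2 Y_{j,\omega}\rangle|$ directly via $\|U\|_{L^\infty}\lesssim \varepsilon\jt^{-1/2}$, the localization of $Y_{j,\omega}$, and \eqref{equ:setup_smallness_orbital}; the $\dot\theta_j$ bounds then follow by substituting the resulting estimates into \eqref{equ:def-dot-theta1}--\eqref{equ:def-dot-theta2} and using \eqref{equ:consequences_assumption1}. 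Integrating \eqref{equ:consequences_aux_bound_modulation2} yields \eqref{equ:consequences_growth_bound_theta}. For (2), the discrete decomposition is Lemma~\ref{lemma: L2 decomposition}, and the dispersive bounds \eqref{equ:consequences_ulPe_U_disp_decay}--\eqref{equ:consequences_pyulPeU_disp_decay} come from inserting the representation formula \eqref{equ:setup_ulPe_V_repformula} into Lemma~\ref{lem:linear_dispersive_decay}, using the pseudo-differential bounds on $m_{k,\ulomega}$ from Lemma~\ref{lemma: PDO on m12}. For the discrete coefficient decay \eqref{equ:consequences_discrete_components_decay} the key observation is that the orthogonality conditions \eqref{equ:setup_orthogonality_radiation} give
\begin{equation*}
\langle \Vp(t),\sigma_2 Y_{j,\ulomega}\rangle = \langle U(t), e^{-i(p-\ulp)y\sigma_3}\sigma_2 Y_{j,\ulomega} - \sigma_2 Y_{j,\omega(t)}\rangle,
\end{equation*}
and the bracket is $O(|p-\ulp|+|\omega-\ulomega|)\lesssim \varepsilon\jt^{-1+\delta}$ in weighted norms, which when combined with $\|U\|_{L^\infty}\lesssim\varepsilon\jt^{-1/2}$ yields the claimed $\jt^{-3/2+\delta}$ rate; the bounds in (3) follow by undoing the renormalization.

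For (7) I would split $1=\chi_0(\xi)+(1-\chi_0(\xi))$ in the representation formulas \eqref{equ:setup_v_e_repformula}--\eqref{equ:setup_barv_e_repformula} and apply Lemma~\ref{lem:improved_local_decay_difference_Psis} and Lemma~\ref{lem:improved_local_decay_simple} to extract the threshold contributions $\Phi_{k,\ulomega}(y)=\Psi_{k,\ulomega}(y,0)$ with remainders obeying improved $\jt^{-1+\delta}$ local decay; the amplitude bound \eqref{equ:consequences_h12_decay} comes from Cauchy--Schwarz applied to \eqref{equ:setup_definitions_h12ulomega}. The delicate bound \eqref{equ:consequences_h12_phase_filtered_decay} on the filtered amplitudes is obtained by differentiating in $t$, which produces an extra factor of $\xi^2$ that we compensate by integration by parts in $\xi$ together with the renormalized profile equation \eqref{equ:setup_evol_equ_renormalized_tilfplus}, crucially using the null structure of Lemma~\ref{lem:null_structure_radiation}. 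Part (8) is proven analogously to (7); the improved local decay \eqref{equ:prep_flat_local_decay} exploits the explicit factor of $\xi$ in the symbol $\fraka=\frakb=\xi/(|\xi|+i\sqrt{\ulomega})^2$, which upon integration by parts becomes the $\py$ that is absorbed by the weight $\jy^{-1}$.

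The subtlest step is (6). The $\calL_\pm$ and $\calK_\pm$ bounds follow by direct application of Lemma~\ref{lem:linear_dispersive_decay} to the defining pairings, using the rapid decay of $\Psi_{2,\ulomega}(y,\xi)$ and $\py m_{k,\ulomega}(y,\xi)$. The $\calE_1,\calE_2,\calE_3$ bounds follow by Taylor-expanding the definitions \eqref{equ:def-calE1}--\eqref{equ:def-calE2} and \eqref{equ:def_calE_3}, producing explicit factors of $(\omega-\ulomega)$, $(p-\ulp)$, or extra copies of $\Vp,\ulPd\Vp$ that convert the bootstrap decay \eqref{equ:consequences_assumption1} and the dispersive decay of the radiation into the claimed $\jt^{-3/2+\delta}$ or $\jt^{-2+\delta}$ rates. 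The hardest piece is the sharp $\jt^{-2+\delta}$ bound \eqref{equ:consequences_ulPe_Mod_bounds} for $\ulPe\calMod$: a naive bound using only $|\dot\omega|,|\dot p|\lesssim \varepsilon\jt^{-1}$ from (4) and the $L^2$ boundedness of $\ulPe$ yields merely $\jt^{-1}$. The extra decay is recovered by noting that $\calMod = e^{i(p-\ulp)y\sigma_3}\calM_2$ is a linear combination of $e^{i(p-\ulp)y\sigma_3}Y_{j,\omega}$, and since $\ulPe Y_{j,\ulomega}=0$ we may write
\begin{equation*}
\ulPe\bigl[e^{i(p-\ulp)y\sigma_3}Y_{j,\omega}\bigr] = \ulPe\bigl[e^{i(p-\ulp)y\sigma_3}Y_{j,\omega} - Y_{j,\ulomega}\bigr],
\end{equation*}
whose right-hand side is of size $O(|p-\ulp|+|\omega-\ulomega|)\lesssim \varepsilon\jt^{-1+\delta}$ in weighted norms (using smoothness of $\omega\mapsto Y_{j,\omega}$ and the Schwartz nature of $Y_{j,\omega}$ together with the $\jx^\alpha W^{k,p}$-boundedness of $\ulPe$ from Lemma~\ref{lemma: L2 decomposition}). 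Multiplying this by the coefficient decay $\varepsilon\jt^{-1}$ from (4) yields exactly the $\varepsilon^2\jt^{-2+\delta}$ rate asserted in \eqref{equ:consequences_ulPe_Mod_bounds}; the bound on $\py\ulPe\calMod$ follows analogously after moving the derivative through the spatial weights.
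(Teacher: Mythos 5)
Your overall outline matches the paper's proof closely: the Sobolev bound via orbital stability and mapping properties, the discrete-coefficient identity exploiting the difference between $\sigma_2 Y_{j,\omega(t)}$ and $e^{-i(p-\ulp)y\sigma_3}\sigma_2 Y_{j,\ulomega}$, the modulation-equation bound, the splitting $1 = \chi_0 + (1-\chi_0)$ for item (7), and in particular the key trick $\ulPe\calMod = \ulPe(e^{i(p-\ulp)y\sigma_3}Y_{j,\omega} - Y_{j,\ulomega})$ for the extra decay in \eqref{equ:consequences_ulPe_Mod_bounds} — all of these are exactly the paper's arguments.

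However, there is a genuine gap in your claimed logical ordering ``(1), (4)--(5), (2)--(3), (7)--(8), (6)''. To bound $|\langle i\calN(U),\sigma_2 Y_{j,\omega}\rangle|\lesssim\eps^2\jt^{-1}$ in step (4), you explicitly invoke $\|U\|_{L^\infty}\lesssim\eps\jt^{-1/2}$, which is item (3); but (3) comes after (4) in your order. And (3) in turn requires (2): $\|U\|_{L^\infty}=\|\Vp\|_{L^\infty}\leq\|\ulPe\Vp\|_{L^\infty}+\sum_k|d_{k,\ulomega,\ulp}|\,\|Y_{k,\ulomega}\|_{L^\infty}$, so you need the discrete-coefficient decay before (3). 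Worse, your proof of the discrete-coefficient decay in (2) bounds $|\langle U,W'\rangle|\lesssim\|U\|_{L^\infty}\|W'\|_{L^1}$, again invoking the $\|U\|_{L^\infty}$ bound from (3) — this is a genuine circularity. The fix is to pair against $\ulPe\Vp$ (whose $L^\infty$ decay is proved directly from the representation formula \eqref{equ:setup_ulPe_V_repformula} and Lemma~\ref{lem:linear_dispersive_decay}, independently of the discrete piece) and absorb the $\ulPd\Vp$ contribution, which is of size $\eps\jt^{-1+\delta}\sum_k|d_{k,\ulomega,\ulp}|$ and can be moved to the left for $\eps$ small. The paper does this implicitly by keeping the $\ulPd\Vp$ contribution inside the matrix $\bbA(t)$ of the linear system \eqref{equ:proof_system_for_d}, so that the right-hand side involves only $\ulPe\Vp$. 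Once this absorption is in place, the correct acyclic order is (1), (2), (3), (4), (5), \dots (with the $\py\ulPe\Vp$ bound in (2) proved together with (7) as the paper does), and the remainder of your argument goes through.

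A smaller point: in (6) you describe the $\calL_\pm,\calK_\pm$ bounds as a "direct application of Lemma~\ref{lem:linear_dispersive_decay} to the defining pairings," but that lemma produces $L^\infty_x$ dispersive bounds, not $H^1_\xi$ bounds for a pairing. What one actually uses is the tensorized structure $m_{2,\ulomega}(y,\xi)=\fraka_{\ulomega}(y)\frakb_{\ulomega}(\xi)$ with $\fraka_\ulomega\in\calS$, $\frakb_\ulomega\in W^{1,\infty}$, and then Plancherel together with $\|\Vp\|_{L^\infty}\lesssim\eps\jt^{-1/2}$ (which itself comes from the dispersive lemma). This is what the paper does and it is only a slight rephrasing of your intent, but the phrasing as written does not quite produce the $H^1_\xi$ norm.
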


\begin{proof}[Proof of Corollary~\ref{cor:consequences}] We prove these asserted estimates item by item.
	
	\noindent \underline{Proof of (1).} Recalling the definition \eqref{equ: V-definition} for $\Vp(t) = (\vp(t),\barvp(t))^\top$, we find that $|\vp(t,y)| = |u(t,y)|$ and $|\py \vp(t,y)| \leq |p(t)-\ulp| |u(t,y)| + | \py u(t,y)|$ for all $y \in \bbR$. Hence, the orbital stability bound \eqref{equ:setup_smallness_orbital} and the bootstrap assumption \eqref{equ:consequences_assumption1} imply \eqref{equ:consequences_sobolev_bound_V}. Using the $\wtilcalF_{\pm,\ulomega}:H^1 \rightarrow L^{2,1}$ boundedness of the distorted Fourier transform (Proposition~\ref{prop:mapping_properties_dist_FT}) and using the Sobolev bound \eqref{equ:consequences_sobolev_bound_V}, we obtain for all $t \in [0,T]$ that 
	\begin{equation*}
		\|\jxi \tilf_{\pm,\ulomega,\ulp}(t,\xi)\|_{L_\xi^2} = \| \jxi \wtilcalF_{\pm,\ulomega}[e^{it\calH(\ulomega)}\ulPe \Vp(t)](\xi)\|_{L_\xi^2}= \| \jxi \wtilcalF_{\pm,\ulomega}[\Vp(t)](\xi)\|_{L_\xi^2} \lesssim \|\Vp(t)\|_{H_y^1}\lesssim \eps.
	\end{equation*}
	Thus, \eqref{equ:consequences_sobolev_bound_profile} holds.
	
\medskip 
	\noindent \underline{Proof of (2).} The spectral decomposition \eqref{equ:consequences_decomposition_radiation} for $\Vp(t)$ follows from \eqref{equ:spectral_decomposition_L2L2} in Lemma~\ref{lemma: L2 decomposition}. Let us prove \eqref{equ:consequences_ulPe_U_disp_decay}. For short times $0 \leq t \leq 1$, the bound $\|\ulPe \Vp(t)\|_{L_y^\infty} \lesssim \eps$ follows from the Sobolev embedding $H_y^1 \hookrightarrow L_y^\infty$, the $H^1$-boundedness of the projection $\ulPe$ by Lemma~\ref{lemma: L2 decomposition}, and the stability bound \eqref{equ:consequences_sobolev_bound_V}. For times $1 \leq t \leq T$, we infer from \eqref{eqn:linear_dispersive_decay} in Lemma~\ref{lem:linear_dispersive_decay} the dispersive estimate $\|\ulPe \Vp(t)\|_{L_y^\infty} \lesssim \eps t^{-\frac12}$ using the representation formula \eqref{equ:setup_ulPe_V_repformula} and the bounds \eqref{equ:consequences_assumption2}, \eqref{equ:consequences_sobolev_bound_profile}. The proof for the dispersive decay \eqref{equ:consequences_pyulPeU_disp_decay} is a consequence of the leading order decomposition \eqref{equ:consequences_usube_leading_order_local_decay_decomp}-\eqref{equ:consequences_barusube_leading_order_local_decay_decomp}, the fact that $\py\Phi_{1,\ulomega}(y),\py\Phi_{2,\ulomega}(y)$ are bounded functions, and the decay estimates \eqref{equ:consequences_discrete_components_decay}, \eqref{equ:consequences_h12_decay}, \eqref{equ:consequences_px_Ru_local_decay} (which will be proved below).

	Next, we prove the decay estimates for the discrete component $\ulPd \Vp(t) =  \sum_{k=1}^4 d_{k,\ulomega,\ulp}(t) Y_{k,\ulomega}(y)$.  By the orthogonality conditions \eqref{equ:setup_orthogonality_radiation}, we find that 
	\begin{equation}\label{equ:proof_orthogonality_condtions1}
		\langle \Vp(t),e^{i(p(t)-\ulp)\sigma_3 y}\sigma_2 Y_{j,\omega(t)}\rangle = \langle U(t),\sigma_2 Y_{j,\omega(t)}\rangle = 0, \qquad j\in \{1,\ldots,4\}.
	\end{equation}
	Hence we obtain from \eqref{equ:consequences_decomposition_radiation} the following system of linear equations for the coefficients 
	\begin{equation}\label{equ:proof_system_for_d}
		\bbA(t)	 \begin{bmatrix}
			d_{1,\ulomega,\ulp}(t)\\
			d_{2,\ulomega,\ulp}(t)\\
			d_{3,\ulomega,\ulp}(t)\\
			d_{4,\ulomega,\ulp}(t)
		\end{bmatrix} = \begin{bmatrix}
			- \langle \ulPe \Vp(t), e^{i(p(t)-\ulp)\sigma_3 y}\sigma_2 Y_{1,\omega(t)}\rangle \\
			- \langle \ulPe \Vp(t), e^{i(p(t)-\ulp)\sigma_3 y}\sigma_2 Y_{2,\omega(t)}\rangle \\
			- \langle \ulPe \Vp(t), e^{i(p(t)-\ulp)\sigma_3 y}\sigma_2 Y_{3,\omega(t)}\rangle \\
			- \langle \ulPe \Vp(t), e^{i(p(t)-\ulp)\sigma_3 y}\sigma_2 Y_{4,\omega(t)}\rangle 
		\end{bmatrix},
	\end{equation}
	where $\bbA(t)$ is a $4$ by $4$ matrix with entries 
	\begin{equation*}
		\big(\bbA(t)\big)_{j,k}=
		\langle Y_{j,\ulomega},e^{i(p(t)-\ulp)\sigma_3 y}\sigma_2 Y_{k,\omega(t)} \rangle 
		,\qquad j,k \in \{1,\ldots,4\}.
	\end{equation*}
	In order to infer decay on the coefficients, we proceed by expanding the leading order terms in the matrix $\bbA(t)$ and the right hand side of \eqref{equ:proof_system_for_d}. For each $j,k \in \{1,\ldots,4\}$ we write 
	\begin{equation*}
		\big(\bbA(t)\big)_{j,k}= \langle Y_{j,\ulomega},\sigma_2 Y_{k,\ulomega}\rangle + \langle Y_{j,\ulomega},\sigma_2 (Y_{k,\omega(t)} - Y_{k,\ulomega})\rangle +  \langle Y_{j,\ulomega},(e^{i(p(t)-\ulp)\sigma_3 y}-\bbI)\sigma_2 Y_{k,\omega(t)} \rangle 
	\end{equation*}
	where $\bbI$ is the 2 by 2 identity matrix. From \eqref{equ:innerproductrelations1}, \eqref{equ:innerproductrelations2} we find that  
	\begin{equation*}
		\bbA(t) = \bbM_1(\ulomega) + \calO(|\omega(t)-\ulomega|) + \calO(|p(t)-\ulp|)
	\end{equation*}
	where $\bbM_1(\ulomega)$ is exactly the same matrix as \eqref{equ:setup_matrix_M1} with $\ulomega$ as input. In view of the bootstrap assumptions \eqref{equ:consequences_assumption1} and the comparison estimate $\ulomega \in [\frac12 \omega_0,2\omega_0]$, we infer that the matrix $\bbA(t)$ is invertible with a uniform-in-time upper bound on its operator norm whose size depends only on $\omega_0$. By the orthogonality relations \eqref{equ:orthogonality_PeU}, we  may expand the right hand side of \eqref{equ:proof_system_for_d} by writing
	\begin{equation*}
		\begin{split}
			&\langle \ulPe \Vp(t), e^{i(p(t)-\ulp)\sigma_3 y}\sigma_2 Y_{k,\omega(t)}\rangle	= \langle \ulPe \Vp(t), e^{i(p(t)-\ulp)\sigma_3 y}\sigma_2 Y_{k,\omega(t)} - \sigma_{2} Y_{k,\ulomega}\rangle\\
			&=	\langle \ulPe \Vp(t), (e^{i(p(t)-\ulp)\sigma_3 y} - \bbI)\sigma_2 Y_{k,\omega(t)}\rangle + \langle \ulPe \Vp(t), \sigma_2 (Y_{k,\omega(t)} - Y_{k,\ulomega})\rangle, \qquad k \in \{1,\ldots,4\}.
		\end{split}
	\end{equation*}
	Thus by \eqref{equ:consequences_assumption1} and \eqref{equ:consequences_ulPe_U_disp_decay} we conclude the desired bound for all $t \in [0,T]$,
\begin{equation*}
	\begin{split}
\sum_{k=1}^4 |d_{k,\ulomega,\ulp}(t)| &\lesssim \|\bbA(t)^{-1}\|_{\mathrm{op}} \sum_{k=1}^4|\langle \ulPe \Vp(t), e^{i(p(t)-\ulp)\sigma_3 y}\sigma_2 Y_{k,\omega(t)}\rangle|  \\
&\lesssim_{\omega_0} \| \ulPe \Vp(t)\|_{L_y^\infty}	\big(|\omega(t)-\ulomega| + |p(t) - \ulp|\big) \lesssim \eps^2 \jt^{-\frac32 + \delta}.		
	\end{split}
\end{equation*}

\medskip 	\noindent \underline{Proof of (3).} The dispersive estimate \eqref{equ:consequences_U_disp_decay} for $U(t)$ and $\Vp(t)$ follows from the fact that $|u(t,y)|=|\vp(t,y)|$ for all $y \in \bbR$ and the bound \eqref{equ:consequences_ulPe_U_disp_decay}, \eqref{equ:consequences_discrete_components_decay}. Simlarly, the dispersive estimate for its derivative \eqref{equ:consequences_pyU_disp_decay}  follows from the pointwise estimate $|\py U(t,y)| \leq |p(t)-\ulp| |\Vp(t,y)| + |\py \Vp(t,y)|$ and the bounds \eqref{equ:consequences_ulPe_U_disp_decay}, \eqref{equ:consequences_pyulPeU_disp_decay}, \eqref{equ:consequences_discrete_components_decay}.

\medskip \noindent \underline{Proof of (4).} For the decay estimates \eqref{equ:consequences_aux_bound_modulation1}--\eqref{equ:consequences_aux_bound_modulation2} we return to the modulation equations \eqref{equ:setup_modulation_equation}. Recall that the time-dependent matrix \eqref{equ:setup_matrix_modulation_equation} on the left-hand side of the modulation equations \eqref{equ:setup_modulation_equation} is given by  $\bbM(t) = \bbM_1(\omega(t)) + \bbM_2(U(t),\omega(t))$. By inspecting \eqref{equ:setup_matrix_M1}, \eqref{equ:setup_matrix_M2} we find that $\bbM_1(\omega(t)) = \bbM_1(\ulomega) + \calO(|\omega(t)-\ulomega|)$ and  $\bbM_2(U(t),\omega(t)) = \calO(\eps)$ thanks to \eqref{equ:setup_smallness_orbital} and \eqref{equ:setup_comparison_estimate}. Hence, the matrix $\bbM(t)$ is invertible and the operator norm of its inverse has a uniform-in-time upper bound depending only on $\omega_0$. The asserted bound \eqref{equ:consequences_aux_bound_modulation1} then follows from the modulation equations using the dispersive estimate \eqref{equ:consequences_ulPe_U_disp_decay} and H\"older's inequality
	\begin{equation*}
		\begin{split}
			|\dot{\omega}(t)| + |\dot{p}|  + |\dot{\gamma}(t) + p^2 - \omega(t) - p \dot{\sigma}| + |\dot{\sigma} - 2p|  \lesssim \|\bbM(t)^{-1}\|_{\mathrm{op}} \|\calN(U)\|_{L_y^\infty}\Big(\sum_{k=1}^4 \| Y_{k,\omega(t)}\|_{L_y^1}\Big) \lesssim \eps^2 \jt^{-1}.
		\end{split}
	\end{equation*}
	The second bound \eqref{equ:consequences_aux_bound_modulation2} then follows from \eqref{equ:consequences_assumption1}, \eqref{equ:consequences_aux_bound_modulation1} using the formulas \eqref{equ:def-dot-theta1}, \eqref{equ:def-dot-theta2}.
	
	\medskip \noindent \underline{Proof of (5).} The growth bounds \eqref{equ:consequences_growth_bound_theta} for $\theta_1(t)$ and $\theta_2(t)$ follows from its definition \eqref{equ:def_theta(t)} and the decay estimate \eqref{equ:consequences_aux_bound_modulation2}.
	
\medskip \noindent \underline{Proof of (6).} We start with the proof of  \eqref{equ:consequences_calL_bounds} and  \eqref{equ:consequences_calK_bounds}. From Lemma~\ref{lem:distFT_applied_to_sigmathree_F} we have 
\begin{equation*}
\calL_{+, \ulomega}[\Vp(t)](\xi) = \frac{2}{\sqrt{2\pi}} \int_\bbR e^{-iy \xi} \barvp(t,y) \overline{m_{2,\ulomega}(y,\xi)} \,\ud y,
\end{equation*}
and
\begin{equation*}
\calK_{+,\ulomega}[\Vp(s)](\xi) = \frac{1}{\sqrt{2\pi}} \int_\bbR e^{-iy \xi}\Big( \vp(s,y)\overline{\py m_{1,\ulomega}(y,\xi)}+ \barvp(s,y)\overline{\py m_{2,\ulomega}(y,\xi)} \Big)\,\ud y.
\end{equation*}
By direct computation using \eqref{eqn:m-1,omega} and \eqref{eqn:m-2,omega} we find that 
\begin{align}
\overline{m_{2,\ulomega}(y,\xi)}&=\frac{\ulomega }{(|\xi|+i\sqrt{\ulomega})^2}\sech^2(\sqrt{\ulomega}y) =: \frakb_{\ulomega}(\xi)\fraka_{\ulomega}(y),\\
\overline{\py m_{2,\ulomega}(y,\xi)} &= - \frac{2\ulomega^{\frac32}}{(|\xi|+i\sqrt{\ulomega})^2}\tanh(\sqrt{\ulomega}y)\sech^2(\sqrt{\ulomega} y) \equiv \frakb_{\ulomega}(\xi) \fraka_{\ulomega}'(y),\\
\overline{\py m_{1,\ulomega}(y,\xi)} &= \frac{-2i\xi \sqrt{\ulomega}}{(|\xi|+i\sqrt{\ulomega})^2}\sech^2(\sqrt{\ulomega} y) - \frac{2\ulomega^{\frac32}}{(|\xi|+i\sqrt{\ulomega})^2}\tanh(\sqrt{\ulomega}y)\sech^2(\sqrt{\ulomega} y)\\
&\quad =: \frakc_{\ulomega}(\xi)\fraka_{\ulomega}(y) + \frakb_{\ulomega}(\xi) \fraka_{\ulomega}'(y),\nonumber
\end{align}
where we set
\begin{equation}\label{equ:def_fraka-b-c}
\fraka_{\ulomega}(y) := \sech^2(\sqrt{\ulomega}y), \quad \frakb_{\ulomega}(\xi) := \frac{\ulomega }{(|\xi|+i\sqrt{\ulomega})^2}, \quad \frakc_{\ulomega}(\xi) := \frac{-2i\xi \sqrt{\ulomega}}{(|\xi|+i\sqrt{\ulomega})^2},
\end{equation}
in order to lighten some notation. Since $\sech^2(\cdot)$ is a Schwartz function, we observe that $\fraka_{\ulomega}(y)$ is also Schwartz. Moreover, it is clear that $\frakb_{\ulomega}(\xi), \frakc_{\ulomega}(\xi)\in W^{1,\infty}(\bbR)$. By Plancherel's identity and the bound \eqref{equ:consequences_U_disp_decay}, we obtain that
\begin{equation*}
\begin{split}
\|\calL_{+, \ulomega}[\Vp(t)]\|_{H_\xi^1} &\lesssim \|\calL_{+, \ulomega}[\Vp(t)]\|_{L_\xi^2}+\|\pxi \calL_{+, \ulomega}[\Vp(t)]\|_{L_\xi^2} \\
&\lesssim \big(\|\frakb_{\ulomega} \|_{L_\xi^\infty} +\|\pxi\frakb_{\ulomega} \|_{L_\xi^\infty}\big)\| \fraka_{\ulomega}(y)\barvp(t,y) \|_{L_y^2}+ \|\frakb_{\ulomega} \|_{L_\xi^\infty} \|y \ \fraka_{\ulomega}(y)\barvp(t,y)\|_{L_y^2}\\
&\lesssim \|\Vp\|_{L_y^\infty} \| \frakb_{\ulomega} \|_{W^{1,\infty}} \| \jy \fraka_{\ulomega}(y)\|_{L_y^2} \lesssim_{\omega_0} \eps \jt^{-\frac12}.
\end{split}
\end{equation*}
Similarly, we have 
\begin{equation*}
\begin{split}
\|\calK_{+, \ulomega}[\Vp(t)]\|_{H_\xi^1} &\lesssim \|\calK_{+, \ulomega}[\Vp(t)]\|_{L_\xi^2}+\|\pxi \calK_{+, \ulomega}[\Vp(t)]\|_{L_\xi^2} \\
&\lesssim \|\Vp\|_{L_y^\infty} \big(\| \frakb_{\ulomega} \|_{W^{1,\infty}}+\| \frakc_{\ulomega} \|_{W^{1,\infty}}\big)\big( \| \jy \fraka_{\ulomega}(y)\|_{L_y^2}+\| \jy \fraka_{\ulomega}'(y)\|_{L_y^2}\big) \lesssim \eps_{\omega_0} \jt^{-\frac12}.
\end{split}
\end{equation*}
The bounds for $\|\calL_{-, \ulomega}[\Vp(t)]\|_{H_\xi^1}$ and $\|\calK_{-, \ulomega}[\Vp(t)]\|_{H_\xi^1}$ proceeds in the same manner, and we conclude  \eqref{equ:consequences_calL_bounds} and \eqref{equ:consequences_calK_bounds}. Next, we prove the estimates \eqref{equ:consequences_ulPe_Mod_bounds}-\eqref{equ:consequences_calE3_bounds} item by item. By \eqref{equ:setup_definition_calM2}, \eqref{equ:setup_definition_Mod}, and the orthogonality condition \eqref{equ:orthogonality_PeU}, we write 
\begin{equation}
\begin{split}
\ulPe\calMod(t) &= -i(\dot{\gamma}+p^2-\omega-p\dot{\sigma}) \ulPe\big(e^{i(p-\ulp)y \sigma_3}Y_{1,\omega}- Y_{1,\ulomega}\big) -i\dot{\omega} \ulPe\big(e^{i(p-\ulp)y \sigma_3}Y_{2,\omega}- Y_{2,\ulomega}\big)\\
&\quad +i(\dot{\sigma}-2p)\ulPe\big(e^{i(p-\ulp)y \sigma_3}Y_{3,\omega}- Y_{3,\ulomega}\big)-i \dot{p}\ulPe\big(e^{i(p-\ulp)y \sigma_3}Y_{4,\omega}- Y_{4,\ulomega}\big).
\end{split}
\end{equation}
Since the eigenfunctions are Schwartz functions, we infer from the boundedness of $\ulPe$ given by Lemma~\ref{lemma: L2 decomposition} and the bounds \eqref{equ:consequences_assumption1},  \eqref{equ:consequences_aux_bound_modulation1} that 
\begin{equation*}
\begin{split}
&\| \jym \ulPe\calMod(t)\|_{L_y^2}+\| \jym \py\ulPe\calMod(t)\|_{L_y^2} \\
&\lesssim \eps \jt^{-1} \sum_{k=1}^4 \left(\left \| \jym\big(e^{i(p-\ulp)y \sigma_3}Y_{k,\omega}- Y_{k,\ulomega}\big)\right\|_{L_y^2}+\left \| \jym \py\big(e^{i(p-\ulp)y \sigma_3}Y_{k,\omega}- Y_{k,\ulomega}\big)\right\|_{L_y^2}\right)\\
&\lesssim \eps \jt^{-1} \big(|\omega(t)-\ulomega| + |p(t)-\ulp|\big)\lesssim \eps^2 \jt^{-2+\delta}.
\end{split}
\end{equation*}
Similarly, using the fact that $\phi_\omega(y),\phi_\ulomega(y)$ are Schwartz functions, we obtain from \eqref{equ:def-calE1}, \eqref{equ:def-calE2}, \eqref{equ:consequences_assumption1}, \eqref{equ:consequences_U_disp_decay}, \eqref{equ:consequences_pyU_disp_decay} that 
\begin{equation*}
\begin{split}
&\| \jym \calE_1(t)\|_{L_y^2}+\| \jym \py \calE_1(t)\|_{L_y^2} \\
&= \big\| \jym \big(\calV(\omega)\Vp(t) - \calV(\ulomega)\Vp(t)\big)\big\|_{L_y^2}+ \big\| \jym \py \big(\calV(\omega)\Vp(t) - \calV(\ulomega)\Vp(t)\big) \big\|_{L_y^2}\\
&\lesssim \big(|\omega(t)-\ulomega| + |p(t)-\ulp|\big)\big(\|\Vp\|_{L_y^\infty} + \|\jy^{-3}\py \Vp\|_{L_y^2}\big)\lesssim \eps^2 \jt^{-\frac32 + \delta},
\end{split}
\end{equation*}
and
\begin{equation*}
\begin{split}
&\| \jym \calE_2(t)\|_{L_y^2}+\| \jym \py \calE_2(t)\|_{L_y^2} \\
&= \| \jym \big(\calQ_{\omega}(\Vp(t)) - \calQ_{\ulomega}(\Vp(t))\big)\|_{L_y^2}+ \| \jym \py \big(\calQ_{\omega}(\Vp(t)) - \calQ_{\ulomega}(\Vp(t))\big)\|_{L_y^2}\\
&\lesssim \big(|\omega(t)-\ulomega| + |p(t)-\ulp|\big)\big(\|\Vp\|_{L_y^\infty} + \|\jy^{-3}\py \Vp\|_{L_y^2}\big)^2 \lesssim \eps^3 \jt^{-2 + \delta}.
\end{split}
\end{equation*}
Hence, we have proved \eqref{equ:consequences_ulPe_Mod_bounds}-\eqref{equ:consequences_calE2_bounds}.

For the final remainder term, we recall from \eqref{equ:def_calE_3} that 
\begin{equation*}
\calE_3 = \calQ_{\ulomega}(\Vp)-\calQ_{\ulomega}(\ulPe\Vp)+\calC(\Vp)-\calC(\ulPe\Vp).
\end{equation*}
Upon inserting the spectral decomposition \eqref{equ:consequences_decomposition_radiation} for $\Vp(t,y)$ into the nonlinear terms, it becomes clear that $\calE_3(t,y)$ consists of quadratic or cubic terms with at least one input given by $d_{k,\ulomega,\ulp}(t)Y_{k,\ulomega}(y)$ for some $k \in \{1,\ldots,4\}$ and with either one or two copies of $\ulPe \Vp(t,y)$. Since the eigenfunctions and its derivative are spatially localized, we infer from \eqref{equ:consequences_ulPe_U_disp_decay}, \eqref{equ:consequences_pyulPeU_disp_decay}, \eqref{equ:consequences_discrete_components_decay} that 
\begin{equation*}
\begin{split}
&\| \jym \big(\calQ_{\ulomega}(\Vp(t)) - \calQ_{\ulomega}(\ulPe\Vp(t))\big)\|_{L_y^2}+ \| \jym \py \big(\calQ_{\ulomega}(\Vp(t)) - \calQ_{\ulomega}(\ulPe\Vp(t))\big)\|_{L_y^2}\\
&\lesssim \big(\|\ulPe\Vp\|_{L_y^\infty} + \|\jy^{-3} \py \ulPe\Vp\|_{L_y^2} + |d_{1,\ulomega,\ulp}(t)| + \cdots +|d_{4,\ulomega,\ulp}(t)|\big)\big(|d_{1,\ulomega,\ulp}(t)| + \cdots +|d_{4,\ulomega,\ulp}(t)|\big)\\
&\lesssim \eps \jt^{-\frac12}\cdot \eps \jt^{-\frac32 + \delta} \lesssim \eps^2 \jt^{-2+\delta},
\end{split}
\end{equation*}
as well as
\begin{equation*}
\begin{split}
&\| \jym \big(\calC(\Vp(t)) - \calC(\ulPe\Vp(t))\big)\|_{L_y^2}+ \| \jym \py \big(\calC(\Vp(t)) - \calC(\ulPe\Vp(t))\big)\|_{L_y^2}\\
&\lesssim \big(\|\ulPe\Vp\|_{L_y^\infty} + \|\jy^{-3} \py \ulPe\Vp\|_{L_y^2} + |d_{1,\ulomega,\ulp}(t)| + \cdots +|d_{4,\ulomega,\ulp}(t)|\big)^2\big(|d_{1,\ulomega,\ulp}(t)| + \cdots +|d_{4,\ulomega,\ulp}(t)|\big)\\
&\lesssim \eps^2 \jt^{-1}\cdot \eps \jt^{-\frac32 + \delta} \lesssim \eps^3 \jt^{-\frac52+\delta}.
\end{split}
\end{equation*}
The two preceeding estimates imply the asserted bound \eqref{equ:consequences_calE3_bounds}.

\medskip \noindent \underline{Proof of (7) and (8).} The main idea to prove the bounds \eqref{equ:consequences_h12_decay}--\eqref{equ:preparation_flat_Schrodinger_wave_bound5} is to use the dispersive and improved local decay estimates in Section~\ref{sec:linear_decay}, and to use the $L^2$ boundedness of the distorted Fourier transform. Since the proof of  \eqref{equ:consequences_h12_decay}--\eqref{equ:preparation_flat_Schrodinger_wave_bound5} is essentially identical to the proof of (7.30)--(7.39) in \cite[Corollary~7.3]{LL24} from our previous work, and we will omit repeating the lengthy proof here.  It remains to establish \eqref{equ:prep_flat_local_decay} assuming the special case $\fraka(\xi) = \frakb(\xi)= \xi(|\xi|+i\sqrt{\ulomega})^{-2}$. For short times $t \leq 1$, the bound is implied by \eqref{equ:preparation_flat_Schrodinger_wave_bound_L2} so we assume that $t \geq 1$. By  integration by parts, we have 
\begin{equation*}
\begin{split}
v_{+,\ulomega,\ulp}(t,y) &= \frac{i}{2t} e^{-it\ulomega}\int_\bbR e^{-it\xi^2} \pxi\left( e^{iy\xi} \frac{ \tilf_{+, \ulomega,\ulp}(t,\xi)}{(|\xi|+i\sqrt{\ulomega})^2} \right)\,\ud \xi\\
&=-\frac{y}{2t} e^{-it\ulomega}\int_\bbR e^{-it\xi^2}  e^{iy\xi} \frac{ \tilf_{+, \ulomega,\ulp}(t,\xi)}{(|\xi|+i\sqrt{\ulomega})^2} \,\ud \xi + \frac{i}{2t} e^{-it\ulomega}\int_\bbR e^{-it\xi^2} e^{iy\xi} \pxi\left(  \frac{ \tilf_{+, \ulomega,\ulp}(t,\xi)}{(|\xi|+i\sqrt{\ulomega})^2} \right)\,\ud \xi.
\end{split}
\end{equation*}
Using Plancherel's identity, \eqref{equ:consequences_assumption2}, and \eqref{equ:consequences_sobolev_bound_profile}, we obtain the asserted bound 
\begin{equation*}
\big\| \jy^{-1} v_{+,\ulomega,\ulp}(t,y)\big\|_{L_y^2} \lesssim t^{-1} \|\tilf_{+, \ulomega,\ulp}(t,\xi)\|_{H_\xi^1} \lesssim \eps \jt^{-1+\delta}.
\end{equation*}
The proof of \eqref{equ:prep_flat_local_decay} for $v_{-,\ulomega,\ulp}(t,y)$ is analogous. 
\end{proof}

\subsection{Proof of Theorem~\ref{thm:main_theorem}}

 Now we are in the position to prove Theorem~\ref{thm:main_theorem} based on the conclusions of Proposition~\ref{prop:modulation_parameters}, Proposition~\ref{prop:profile_bounds}, and Corollary~\ref{cor:consequences}.

 \begin{proof}[Proof of Theorem~\ref{thm:main_theorem}]
We fix $\omega_0 \in (0,\infty)$, and let $0 < \varepsilon_0 \ll 1$ and $C_0 \geq 1$ be the constants from the statements of Proposition~\ref{prop:modulation_parameters} and Proposition~\ref{prop:profile_bounds}. Let $\gamma_0,p_0,\sigma_0 \in \bbR$ be given and let $u_0 \in H^1_x(\bbR) \cap L^{2,1}_x(\bbR)$ satisfies the smallness initial condition  \eqref{equ:theorem_statement_smallness_initial_condition} in the statement of Theorem~\ref{thm:main_theorem}.

From Lemma~\ref{lem:setup_local_existence}, we have a local-in-time $H^1_x \cap L^{2,1}_x$--solution $\psi(t,x)$ to \eqref{equ:cubic_NLS} for the initial condition \eqref{equ:theorem_statement_initial_condition} given by 
 \begin{equation*}
     \psi(0,x) = e^{i p_0(x-\sigma_0)} e^{i \gamma_0} \bigl( \phi_{\omega_0}(x-\sigma_0) + u_0(x-\sigma_0) \bigr).
 \end{equation*}
The solution $\psi(t,x)$ exists on a maximal interval of existence $[0,T_\ast)$ for some $0 < T_\ast \leq \infty$, and the continuation criterion~\eqref{equ:setup_continuation_criterion} holds, and Proposition~\ref{prop:modulation_and_orbital} provides unique continuously differentiable paths $(\omega, \gamma,p,\sigma) \colon [0, T_\ast) \to (0,\infty) \times \bbR^3$ so that the decomposition of the solution
 \begin{equation*}
     \psi(t,x) = e^{i p(t)(x-\sigma(t))} e^{i\gamma(t)} \bigl( \phi_{\omega(t)}(x-\sigma(t)) + u(t,x-\sigma(t)) \bigr), \quad 0 \leq t < T_\ast,
 \end{equation*}
 satisfies $(1)$--$(5)$ in the statement of Proposition~\ref{prop:modulation_and_orbital}. Let $T_0 \in [0,T_\ast]$ be the exit time given by 
 \begin{equation*}
     \begin{aligned}
         T_0 := \sup \, \biggl\{ 0 \leq T < T_\ast \, \bigg| \quad &\bigl\| \bigl( \tilf_{+, \omega(T),p(T)}(t), \tilf_{-, \omega(T),p(T)}(t) \bigr) \bigr\|_{X(T)} \leq C_0 \varepsilon,\\
 &\quad \sup_{0 \leq t \leq T} \, \jt^{1-\delta} \Big(\bigl|\omega(t) - \omega(T)\bigr|+\bigl|p(t) - p(T)\bigr| \Big)\leq C_0 \varepsilon \biggr\},
     \end{aligned}
 \end{equation*}
 where $\bigl( \tilf_{+, \omega(T), p(T)}(t), \tilf_{-, \omega(T), p(T)}(t) \bigr)$ denote the components of the distorted Fourier transform~\eqref{equ:profile_dFT} of the profile of the radiation term \eqref{equ: V-definition} relative to the linearized operator $\calH\bigl(\omega(T)\big)$ with parameters $\omega(T)$ and $p(T)$. By the local existence theory one has $T_0 > 0$. 

We claim that $T_0 = T_\ast$. We argue by contradiction and suppose that $T_0 < T_\ast$. Let $\{T_n\}_{n \in \bbN} \subset [0,T_0)$ be a strictly monotone increasing sequence such that $T_n \nearrow T_0$ as $n \to \infty$. Since the paths $(\omega, \gamma,p,\sigma) \colon [0,T_\ast) \to (0,\infty) \times \bbR$ are continuous, it follows that
 \begin{equation*}
     \sup_{0 \leq t \leq T_0} \, \jt^{1-\delta} \Big(\bigl|\omega(t) - \omega(T_0)\bigr| + \big|p(t)-p(T_0)\big|\Big) \leq C_0 \varepsilon.
 \end{equation*}

By the mapping properties for the distorted Fourier transform from Proposition~\ref{prop:mapping_properties_dist_FT} we have 
 \begin{equation*} 
     \begin{aligned}
         \bigl\| \bigl( \tilf_{+, \omega(T_0), p(T_0)}(0,\xi), \tilf_{-, \omega(T_0),p(T_0)}(0,\xi) \bigr) \bigr\|_{L^\infty_\xi} + \bigl\|  \bigl( \pxi \tilf_{+, \omega(T_0), p(T_0)}(0,\xi), \pxi \tilf_{-, \omega(T_0), p(T_0)}(0,\xi) \bigr) \bigr\|_{L^2_\xi} \\
\lesssim_{\omega_0} \|u_0\|_{H^1_x \cap L^{2,1}_x} \lesssim \varepsilon.
     \end{aligned}
 \end{equation*}
Hence, we can use Proposition~\ref{prop:profile_bounds} with $\ulomega = \omega(T_0)$ and  $\ulp=p(T_0)$ fixed to conclude via a continuity argument that
 \begin{equation*}
     \bigl\| \bigl( \tilf_{+, \omega(T_0), p(T_0)}(t), \tilf_{-, \omega(T_0), p(T_0)}(t) \bigr) \bigr\|_{X(T_0)} \leq C_0 \varepsilon.
 \end{equation*}
Keeping $\omega(T_0)$ and $p(T_0)$ fixed, it follows by continuity that there exists $T_0 < \widetilde{T} < T_\ast$ such that
 \begin{align} 
     \bigl\| \bigl( \tilf_{+, \omega(T_0), p(T_0)}(t), \tilf_{-, \omega(T_0), p(T_0)}(t) \bigr) \bigr\|_{X(\wtilT)} &\leq 2C_0\varepsilon, \label{equ:proof_theorem_aux1} \\
     \sup_{0 \leq t \leq \wtilT} \, \jt^{1-\delta} \Big( \bigl|\omega(t) - \omega(T_0)\bigr| + \big|p(t)-p(T_0)\big|\Big) &\leq 2C_0\varepsilon. \label{equ:proof_theorem_aux2}
 \end{align}
Thanks to \eqref{equ:proof_theorem_aux1} and \eqref{equ:proof_theorem_aux2}, the assumptions of  Proposition~\ref{prop:modulation_parameters} are satisfied on  $[0, \wtilT]$ with $\ulomega = \omega(T_0)$ and $\ulp = p(T_0)$ fixed, and we infer from \eqref{equ:prop_modulation_parameters_conclusion} that
 \begin{equation} \label{equ:theorem_proof_wtilT_bound1}
     \begin{aligned}
         \sup_{0 \leq t \leq \wtilT} \, \jt^{1-\delta} \Big(\bigl|\omega(t) - \omega(\wtilT)\bigr| + \big|p(t)-p(\wtilT)\big| \Big) &\leq C_0 \varepsilon.
     \end{aligned}
 \end{equation}
Having established \eqref{equ:theorem_proof_wtilT_bound1}, we can now run a continuity argument using Proposition~\ref{prop:profile_bounds} on the time interval $[0, \wtilT]$ with $\ulomega = \omega(\wtilT)$ and $\ulp = p(\wtilT)$ to deduce that
 \begin{equation} \label{equ:theorem_proof_wtilT_bound2}
     \bigl\| \bigl( \tilf_{+, \omega(\wtilT), p(\wtilT)}(t), \tilf_{-, \omega(\wtilT), p(\wtilT)}(t) \bigr) \bigr\|_{X(\wtilT)} \leq C_0 \varepsilon.
 \end{equation}
But then \eqref{equ:theorem_proof_wtilT_bound1} and \eqref{equ:theorem_proof_wtilT_bound2} are a contradiction to $\wtilT > T_0$. Hence we conclude that $T_0 = T_\ast$.

Next, we claim that $T_\ast = \infty$. By the definition of the exit time we therefore have uniformly for all $0 < T < T_\ast$ that $ \bigl\| \bigl( \tilf_{+, \omega(T), p(T)}(t), \tilf_{-, \omega(T), p(T)}(t) \bigr) \bigr\|_{X(T)} \leq C_0\varepsilon$. In addition to  \eqref{equ:consequences_sobolev_bound_profile}, \eqref{equ:consequences_decomposition_radiation}, and \eqref{equ:consequences_discrete_components_decay}, the mapping properties of the distorted Fourier transform from Proposition~\ref{prop:mapping_properties_dist_FT}, we conclude that $\|u(t)\|_{H_y^1 \cap L_y^{2,1}} \lesssim \eps \jap{T_\ast}^\delta < \infty$ for all $0 \leq t <T_*$. Thus, the continuation criterion \eqref{equ:setup_continuation_criterion} implies $T_\ast = \infty$. 

Finally, we determine the scaling and momentum parameters of the final solition. Let $\{t_n\}_{n\in\bbN} \subset (0, \infty)$ be a strictly increasing sequence such that $t_n \nearrow \infty$ as $n \to \infty$. Using that for each $n \in \bbN$ we have
 \begin{equation}  \label{equ:proof_theorem_aux3}
     \sup_{0 \leq t \leq t_n} \, \jt^{1-\delta} \Big( \big|\omega(t) - \omega(t_n)\big| + \big|p(t)-p(t_n)\big|\Big) \leq C_0 \varepsilon,
 \end{equation}
we conclude from \eqref{equ:setup_comparison_estimate} that $\{ \omega(t_n) \}_n \subset (0,\infty)$ and $\{p(t_n)\} \subset \bbR$ are Cauchy sequences that satisfy
\begin{equation*}
|p(t_n)-p_0| \leq \frac12 \omega_0, \quad |\omega(t_n)-\omega_0| \leq \frac12 \omega_0, \quad \forall n \in \bbN.
\end{equation*}
Hence, there exists $\omega_\infty \in (0, \infty)$ with $|\omega_\infty - \omega_0| \leq \frac12 \omega_0$ such that $\omega(t_n) \to \omega_\infty$ as $n \to \infty$, and there exists $p_\infty \in \bbR$ with $|p_\infty - p_0|\leq \frac12 \omega_0$ such that $p(t_n) \to p_\infty$ as $n \to \infty$. The final values $\omega_\infty, p_\infty$ are independent of the chosen sequence $\{t_n\}_{n\in\bbN}$ thanks to the continuity of the paths $\omega(t),p(t)$. Moreover,  \eqref{equ:proof_theorem_aux3} implies the decay rate
 \begin{equation} \label{equ:proof_theorem_aux4}
     \sup_{0 \leq t < \infty} \, \jt^{1-\delta} \Big( \big|\omega(t) - \omega_\infty\big| + \big| p(t) - p_\infty \big| \Big)\leq C_0 \varepsilon.
 \end{equation}
Then from \eqref{equ:proof_theorem_aux4} and from Proposition~\ref{prop:profile_bounds} with $\ulomega = \omega_\infty$ and $\ulp = p_\infty$ fixed, we conclude again by continuity  that 
 \begin{equation} \label{equ:proof_theorem_uniform_profile_bounds}
  \sup_{0 \leq t < \infty} \, \Bigl( \bigl\| \bigl( \tilf_{+, \omega_\infty,p_\infty}(t), \tilf_{-, \omega_\infty,p_\infty}(t) \bigr) \bigr\|_{L^\infty_\xi} + \jt^{-\delta} \bigl\|  \bigl( \pxi \tilf_{+, \omega_\infty,p_\infty}(t), \pxi \tilf_{-, \omega_\infty,p_\infty}(t) \bigr) \bigr\|_{L^2_\xi} \Bigr) \leq C_0 \varepsilon.
 \end{equation}

Thanks to \eqref{equ:proof_theorem_aux4} and \eqref{equ:proof_theorem_uniform_profile_bounds}, the assumptions of Corollary~\ref{cor:consequences} are satisfied with $\ulomega = \omega_\infty$ and $\ulp = p_\infty$. We obtain the desired dispersive decay rate \eqref{equ:theorem_statement_decay_radiation} in Theorem~\ref{thm:main_theorem} as a consequence of \eqref{equ:consequences_U_disp_decay} from Corollary~\ref{cor:consequences}. The asserted asymptotics of the modulation paramteters \eqref{equ:theorem_statement_decay_modulation_parameters} and \eqref{equ:theorem_statement_decay_modulation_parameters2} in Theorem~\ref{thm:main_theorem} follows from \eqref{equ:proof_theorem_aux4} and \eqref{equ:consequences_aux_bound_modulation2} in combination with the formula \eqref{equ:theta_identities} respectively.

Lastly, it remains to prove the asymptotic formula \eqref{eqn: theorem-u-asymptotics} for the radiation term for Theorem~\ref{thm:main_theorem}. Using the Cauchy-in-time bounds \eqref{eqn: Cauchy-in-time-estimate}--\eqref{eqn: Cauchy-in-time-estimate2} in Proposition~\ref{prop:pointwise_estimate} with $\ulomega  = \omega_\infty$ and $\ulp = p_\infty$,  we infer that there exists asymptotic effective profiles $V_+,V_- \in L_\xi^\infty(\bbR)$ such that 
 \begin{align}
 \Big\Vert e^{-i\Lambda_{+,\infty}(t,\xi)} e^{i\theta_{1,\infty}(t)}e^{-i\theta_{2,\infty}(t)\xi}\tilf_{+,\omega_\infty,p_\infty}(t,\xi) -V_+(\xi)\Big \|_{L_\xi^\infty} &\lesssim \varepsilon^2 t^{-\frac{1}{10}+\delta}, \quad t \geq 1,\label{equ:proof_theorem_cauchy1}\\
  \Big\Vert e^{i\Lambda_{-,\infty}(t,\xi)} e^{-i\theta_{1,\infty}(t)}e^{-i\theta_{2,\infty}(t)\xi}\tilf_{-,\omega_\infty,p_\infty}(t,\xi) -V_-(\xi) \Big \|_{L_\xi^\infty} &\lesssim \varepsilon^2 t^{-\frac{1}{10}+\delta},  \quad t \geq 1,\label{equ:proof_theorem_cauchy2}
 \end{align}
where
\begin{equation*}
\Lambda_{\pm,\infty}(t,\xi) := \frac{1}{2}\int_1^t \frac{1}{s} |\tilf_{\pm, \omega_\infty,p_\infty}(s,\xi)|^2\,\ud s
\end{equation*}
and $\theta_{1,\infty}(t)$, $\theta_{2,\infty}(t)$ defined in \eqref{equ:theorem_asymptotics_notation}. We then multiply the differential equation \eqref{eqn: effective-ODE-profile} with the integrating factor $e^{-i|V_+(\xi)|^2 \log(t)}$ and repeat the arguments of the proof of Proposition~\ref{prop:pointwise_estimate} using \eqref{equ:proof_theorem_cauchy1} to infer that there exists an asymptotic profile $W_+ \in L_\xi^\infty(\bbR)$ with $|V_+(\xi)|= |W_+(\xi)|$ such that 
\begin{equation*}
\Big\Vert e^{-i|W_+(\xi)|^2\log(t)} e^{i\theta_{1,\infty}(t)}e^{-i\theta_{2,\infty}(t)\xi}\tilf_{+,\omega_\infty,p_\infty}(t,\xi) -W_+(\xi)\Big \|_{L_\xi^\infty} \lesssim \varepsilon^2 t^{-\frac{1}{10}+\delta}, \quad t \geq 1.
\end{equation*}
We then repeat these same arguments to infer the existence of $W_- \in L_\xi^\infty(\bbR)$ that satisfies $|V_-(\xi)|=|W_-(\xi)|$ and 
\begin{equation*}
\Big\Vert e^{i|W_-(\xi)|^2\log(t)} e^{-i\theta_{1,\infty}(t)}e^{-i\theta_{2,\infty}(t)\xi}\tilf_{-,\omega_\infty,p_\infty}(t,\xi) -W_-(\xi)\Big \|_{L_\xi^\infty} \lesssim \varepsilon^2 t^{-\frac{1}{10}+\delta}, \quad t \geq 1.
\end{equation*}
In the moving frame coordinate $y= x - \sigma(t)$, we recall that
\begin{equation*}
U(t,y) = e^{i(p_\infty - p(t))y\sigma_3}U_{p_{\infty}}(t,y) = e^{i(p_\infty - p(t))y\sigma_3} \Big(\ulPe U_{p_\infty}(t,y) + \ulPd U_{p_{\infty}}(t,y) \Big) , 
\end{equation*}
where the discrete component $\ulPd U_{p_{\infty}}(t,y) := \sum_{k=1}^4 d_{k,\omega_\infty,p_\infty}(t)Y_{k,\omega_\infty}(y)$ enjoy the better decay rate $|d_{k,\omega_\infty,p_\infty}(t)|\lesssim \eps \jt^{-\frac32 + \delta}$ thanks to \eqref{equ:consequences_discrete_components_decay}. The asserted asymptotics for the radiation term \eqref{eqn: theorem-u-asymptotics} now follows from applying the asymptotic formula \eqref{eqn:linear_dispersive_decay} to the representation of $\ulPe U_{p_\infty}(t,y)$ given in \eqref{equ:setup_v_e_repformula}. This finishes the proof of Theorem~\ref{thm:main_theorem}.
\end{proof}

\section{Control of the Modulation Parameters}\label{sec:modulation_parameters}

Our goal of this section is to show that  uniformly for all $0 \leq t \leq T$ it holds that 
\begin{equation}\label{equ:proof_mod_bootstrap}
|\omega(t) - \omega(T)| + |p(t) - p(T)| \lesssim \eps^2 \jt^{-1+\delta}.
\end{equation}
The asserted improved bootstrap bound \eqref{equ:prop_modulation_parameters_conclusion} in Proposition~\ref{prop:modulation_parameters} then follows from \eqref{equ:proof_mod_bootstrap}. Our strategy to prove \eqref{equ:proof_mod_bootstrap} essentially follows the proof in  \cite[Sect.~8]{LL24} with a few more technical details. The starting point is to use the fundamental theorem of calculus to write 
\begin{equation}\label{equ:proof_mod_FTC}
\omega(t) - \omega(T) = - \int_t^T \dot{\omega}(s) \,\ud s, \quad  p(t) - p(T)= - \int_t^T \dot{p}(s) \,\ud s,
\end{equation}
and insert the equations for $\dot{\omega}(s)$ and $\dot{p}(s)$ from the modulation equations \eqref{equ:setup_modulation_equation} to infer decay. One then has to prove an almost twice integrable rate $\eps^2\js^{-2+\delta}$ for the contributions to the integrand of \eqref{equ:proof_mod_FTC}. In the following preparatory lemma, we extract the leading quadratic and cubic terms for the right-hand of the modulation equations \eqref{equ:setup_modulation_equation}.

\begin{lemma}\label{lem:prep_modulation}
Suppose the assumptions in the statement of Proposition~\ref{prop:modulation_parameters} are in place. Then for each $j \in \{1,\ldots,4\}$ and for all $0 \leq s \leq T$, we have
\begin{equation}
\langle i \calN(U),\sigma_2 Y_{j,\omega(s)}\rangle = \langle i \calQ_{\ulomega}(\ulPe \Vp),\sigma_2 Y_{j,\ulomega}\rangle + \langle i \calC(\ulPe \Vp),\sigma_2 Y_{j,\ulomega}\rangle + \wtilcalE_j(s)
\end{equation}
with a remainder $\wtilcalE_j(s)$ that satisfies the estimate 
\begin{equation}\label{equ:BS_modulation_error}
\sup_{0 \leq s \leq T}	\js^{2-\delta}\big|\wtilcalE_j(s) \big| \lesssim \eps^2 .
\end{equation}

\end{lemma}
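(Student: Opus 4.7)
The plan is to view $\wtilcalE_j(s)$ as the sum of discrepancies arising from replacing (i) $U$ by $\Vp$, (ii) $\Vp$ by $\ulPe \Vp$, (iii) the moving scale $\omega(s)$ by the frozen $\ulomega$ inside $\phi_\omega$ and inside the generalized eigenfunction $Y_{j,\omega(s)}$, and then to show that each of these discrepancies is spatially localized and decays at the required rate $\eps^2 \js^{-2+\delta}$. Specifically, I would split
\begin{equation*}
\begin{aligned}
\wtilcalE_j(s) &= \underbrace{\big\langle i\big(\calQ_{\omega}(U)-\calQ_{\ulomega}(U)\big),\sigma_2 Y_{j,\omega}\big\rangle}_{\mathrm{I}_j}+\underbrace{\big\langle i\big(\calQ_{\ulomega}(U)-\calQ_{\ulomega}(\Vp)\big),\sigma_2 Y_{j,\omega}\big\rangle}_{\mathrm{II}_j}\\
&\quad +\underbrace{\big\langle i\big(\calQ_{\ulomega}(\Vp)-\calQ_{\ulomega}(\ulPe\Vp)\big),\sigma_2 Y_{j,\omega}\big\rangle}_{\mathrm{III}_j}+\underbrace{\big\langle i\calQ_{\ulomega}(\ulPe\Vp),\sigma_2(Y_{j,\omega}-Y_{j,\ulomega})\big\rangle}_{\mathrm{IV}_j}\\
&\quad +\underbrace{\big\langle i\big(\calC(U)-\calC(\Vp)\big),\sigma_2 Y_{j,\omega}\big\rangle+\big\langle i\big(\calC(\Vp)-\calC(\ulPe\Vp)\big),\sigma_2 Y_{j,\omega}\big\rangle}_{\mathrm{V}_j}\\
&\quad +\underbrace{\big\langle i\calC(\ulPe\Vp),\sigma_2(Y_{j,\omega}-Y_{j,\ulomega})\big\rangle}_{\mathrm{VI}_j},
\end{aligned}
\end{equation*}
and bound each summand separately using the auxiliary estimates collected in Corollary~\ref{cor:consequences}.

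The quadratic contributions $\mathrm{I}_j$--$\mathrm{IV}_j$ are the main terms, and in each case the integrand is localized by either the Schwartz function $Y_{j,\omega}$ or $\phi_\omega$. For $\mathrm{I}_j$, expanding $\phi_\omega^2-\phi_{\ulomega}^2$ yields a gain of $|\omega(s)-\ulomega|$, which combined with $\|U(s)\|_{L^\infty_y}^2$ and the bootstrap hypotheses \eqref{equ:consequences_assumption1}--\eqref{equ:consequences_U_disp_decay} gives $|\mathrm{I}_j|\lesssim \eps\js^{-1+\delta}\cdot \eps^2\js^{-1}\lesssim \eps^3\js^{-2+\delta}$. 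For $\mathrm{II}_j$, the change of variables $\Vp=e^{i(p-\ulp)y\sigma_3}U$ produces factors $1-e^{\pm i(p-\ulp)y}$ whose size on the support of $\phi_{\ulomega}Y_{j,\omega}$ is $\lesssim |p-\ulp||y|$, yielding the same bound via \eqref{equ:consequences_assumption1}. For $\mathrm{III}_j$, every term in the expansion contains at least one copy of $\ulPd\Vp=\sum_k d_{k,\ulomega,\ulp}Y_{k,\ulomega}$, so by \eqref{equ:consequences_ulPe_U_disp_decay} and \eqref{equ:consequences_discrete_components_decay} we get $|\mathrm{III}_j|\lesssim \eps\js^{-3/2+\delta}\cdot \eps \js^{-1/2}\lesssim \eps^2 \js^{-2+\delta}$. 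For $\mathrm{IV}_j$, the difference $Y_{j,\omega}-Y_{j,\ulomega}$ is controlled in $L^\infty_y$ (with rapid spatial decay) by $|\omega-\ulomega|$ times a Schwartz bound, which combined with $\|\ulPe\Vp\|^2_{L^\infty_y}$ again gives $\lesssim \eps^3\js^{-2+\delta}$.

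The cubic contributions $\mathrm{V}_j$ and $\mathrm{VI}_j$ are more forgiving thanks to the extra power of the radiation. For $\mathrm{V}_j$, proceeding as for $\mathrm{II}_j$ and $\mathrm{III}_j$ and using $\|U\|_{L^\infty_y},\|\Vp\|_{L^\infty_y}\lesssim \eps\js^{-1/2}$, together with the faster decay of $d_{k,\ulomega,\ulp}$ from \eqref{equ:consequences_discrete_components_decay}, we obtain $|\mathrm{V}_j|\lesssim \eps^4\js^{-2+\delta}$. For $\mathrm{VI}_j$, combining $|\omega-\ulomega|\lesssim \eps\js^{-1+\delta}$ with $\|\ulPe\Vp\|^3_{L^\infty_y}\lesssim \eps^3\js^{-3/2}$ produces $|\mathrm{VI}_j|\lesssim \eps^4\js^{-5/2+\delta}$. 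Summing all six contributions yields the claimed estimate \eqref{equ:BS_modulation_error}.

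There is no real analytic obstacle here: the lemma is essentially a careful bookkeeping exercise that reduces the full modulation nonlinearity to the two leading pairings $\langle i\calQ_{\ulomega}(\ulPe\Vp),\sigma_2 Y_{j,\ulomega}\rangle$ and $\langle i\calC(\ulPe\Vp),\sigma_2 Y_{j,\ulomega}\rangle$ that will be analyzed via distorted Fourier methods elsewhere. The only subtle point is verifying that the quadratic term $\mathrm{I}_j$ indeed meets the threshold $\js^{-2+\delta}$ with margin; this relies crucially on the fact that $|\omega(s)-\ulomega|$ is assumed to decay at the rate $\eps\js^{-1+\delta}$ in \eqref{equ:prop_modulation_parameters_assumption1}, and it is precisely this step that forces the quantitative form of the bootstrap hypothesis on the modulation parameters.
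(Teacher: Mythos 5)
Your decomposition is a valid telescoping and all six pieces are estimated correctly; this is essentially the same approach the paper takes. The only stylistic differences are that the paper applies the $Y_{j,\omega}\to Y_{j,\ulomega}$ replacement first rather than last, and it consolidates your $\mathrm{III}_j$ and the second half of $\mathrm{V}_j$ into the single term $\langle i\calE_3,\sigma_2 Y_{j,\ulomega}\rangle$ and then cites the pre-established bound \eqref{equ:consequences_calE3_bounds} instead of re-deriving it (one small slip in your write-up: for $\mathrm{I}_j$ the relevant difference is $\phi_\omega-\phi_{\ulomega}$, not $\phi_\omega^2-\phi_{\ulomega}^2$, since $\calQ$ is linear in the potential $\phi_\omega$, though the conclusion is unchanged).
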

\begin{proof}
For each $j\in \{1,\ldots,4\}$ we begin by decomposing the remainder term as follows
\begin{equation*}
	\begin{split}
\wtilcalE_j(s) &= 	\langle i \calN(U),\sigma_2 Y_{j,\omega(s)}\rangle - \langle i \calQ_{\ulomega}(\ulPe V),\sigma_2 Y_{j,\ulomega}\rangle - \langle i \calC(\ulPe V),\sigma_2 Y_{j,\ulomega}\rangle\\
&= \langle i\calQ_{\omega(s)}(U),\sigma_{2} (Y_{j,\omega(s)}-Y_{j,\ulomega})\rangle +  \langle i \big(\calQ_{\omega(s)}(U)-\calQ_{\ulomega}(U) \big),\sigma_2 Y_{j,\ulomega}\rangle +\langle i\calC(U),\sigma_{2} (Y_{j,\omega(s)}-Y_{j,\ulomega})\rangle  \\
&\quad + \langle i \big(\calQ_{\ulomega}(U)-\calQ_{\ulomega}(\Vp) \big),\sigma_2 Y_{j,\ulomega}\rangle + \langle i \big(\calC(U)-\calC(\Vp) \big),\sigma_2 Y_{j,\ulomega}\rangle + \langle i \calE_3,\sigma_2 Y_{j,\ulomega}\rangle
	\end{split}
\end{equation*}
where we recall $\calE_3$ from \eqref{equ:def_calE_3}. Then, using the bootstrap assumptions \eqref{equ:prop_modulation_parameters_assumption1} and the dispersive decay \eqref{equ:consequences_U_disp_decay}, we bound the terms 
\begin{equation*}
	\begin{split}
&|\langle i\calQ_{\omega(s)}(U),\sigma_{2} (Y_{j,\omega(s)}-Y_{j,\ulomega})\rangle| +  |\langle i \big(\calQ_{\omega(s)}(U)-\calQ_{\ulomega}(U) \big),\sigma_2 Y_{j,\ulomega}\rangle| + |\langle i\calC(U),\sigma_{2} (Y_{j,\omega(s)}-Y_{j,\ulomega})\rangle|		\\
&\quad + |\langle i \big(\calQ_{\ulomega}(U)-\calQ_{\ulomega}(\Vp) \big),\sigma_2 Y_{j,\ulomega}\rangle| + |\langle i \big(\calC(U)-\calC(\Vp) \big),\sigma_2 Y_{j,\ulomega}\rangle |\\
&\lesssim |\omega(s)-\ulomega| \big( \|U(s) \|_{L_y^\infty}^2 + \|U(s) \|_{L_y^\infty}^3 \big) + |p(s)-\ulp| \big( \|\Vp(s) \|_{L_y^\infty}^2 + \|\Vp(s) \|_{L_y^\infty}^3 \big) \lesssim \eps^3 \js^{-2+\delta},
	\end{split}
\end{equation*}
From \eqref{equ:consequences_calE3_bounds} we have $|\langle i \calE_3,\sigma_2 Y_{j,\ulomega}\rangle| \lesssim \eps^2 \js^{-2+\delta}$. Hence, we conclude \eqref{equ:BS_modulation_error} by combining the preceeding bounds.
\end{proof}

In the next lemma, we compute the spectral distributions which arises from the leading quadratic nonlinearities in the right-hand side of \eqref{equ:proof_mod_FTC}. 
\begin{lemma} \label{lem:null_structure_modulation}
Let 
\begin{equation}
\kappa_{\ulomega}(\xi_1+\xi_2) := \frac{\xi_1+\xi_2}{\sqrt{\ulomega}} \cosech\left(\frac{\pi}{2} \frac{\xi_1+\xi_2}{\sqrt{\ulomega}}\right) = \frac{\sqrt{\ulomega}}{\pi} \int_\bbR e^{iy(\xi_1+\xi_2)}\sech^2(\sqrt{\ulomega} y) \,\ud y.
\end{equation}
We have
\begin{align}
\nu_{++,\ulomega}(\xi_1, \xi_2) &= \bigl(\xi_1^2+\xi_2^2+2\ulomega\bigr) \frac{\sqrt{\ulomega}}{12}\frac{(\xi_1^2-4\xi_1\xi_2 + \xi_2^2-2\ulomega)}{(\vert \xi_1\vert - i\sqrt{\ulomega})^2( \vert \xi_2\vert-i\sqrt{\ulomega} )^2} \kappa_{\ulomega}(\xi_1+\xi_2) \label{eqn: nu++},	\\
\nu_{+-,\ulomega}(\xi_1, \xi_2) &= \bigl( \xi_1^2-\xi_2^2 \bigr) \frac{\sqrt{\ulomega}}{12}\frac{(\xi_1^2+2\xi_1\xi_2+\xi_2^2+4\ulomega)}{(\vert \xi_1\vert - i\sqrt{\ulomega})^2( \vert \xi_2\vert-i\sqrt{\ulomega} )^2} \kappa_{\ulomega}(\xi_1+\xi_2)	\label{eqn: nu+-},\\
\nu_{--,\ulomega}(\xi_1, \xi_2) &= - \nu_{++,\ulomega}(\xi_1, \xi_2),\label{eqn: nu--}
\end{align}
as well as 
\begin{align}
\lambda_{++,\ulomega}(\xi_1, \xi_2) &= \bigl(\xi_1^2+\xi_2^2+2\ulomega\bigr) \frac{i\sqrt{\ulomega}}{6}\frac{(\xi_1^2-\xi_1\xi_2+\xi_2^2+\ulomega)}{(\vert \xi_1\vert - i\sqrt{\ulomega})^2( \vert \xi_2\vert-i\sqrt{\ulomega} )^2}  (\xi_1+\xi_2)\kappa_{\ulomega}(\xi_1+\xi_2) \label{eqn: nu_p++},	\\
\lambda_{+-,\ulomega}(\xi_1, \xi_2) &= \bigl( \xi_1^2-\xi_2^2 \bigr) \frac{i\sqrt{\ulomega}}{6}\frac{(\xi_1^2-\xi_1\xi_2+\xi_2^2+\ulomega)}{(\vert \xi_1\vert - i\sqrt{\ulomega})^2( \vert \xi_2\vert-i\sqrt{\ulomega} )^2} (\xi_1-\xi_2)\kappa_{\ulomega}(\xi_1+\xi_2),	\label{eqn: nu_p+-}\\
\lambda_{--,\ulomega}(\xi_1,\xi_2) &= \lambda_{++,\ulomega}(\xi_1,\xi_2).\label{eqn: nu_p--}
\end{align}
See \eqref{equ:nu-NSD} and \eqref{equ:lambda-NSD} for the above definitions.
\end{lemma}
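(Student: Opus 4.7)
The plan is to establish these formulas by direct computation, substituting the explicit form of the distorted Fourier basis and the generalized kernel eigenfunctions into the definitions of $\nu_{\pm\pm,\ulomega}$ and $\lambda_{\pm\pm,\ulomega}$ from \eqref{equ:nu-NSD} and \eqref{equ:lambda-NSD}. Recall that  $\Psi_{j,\ulomega}(y,\xi) = (2\pi)^{-1/2} m_{j,\ulomega}(y,\xi) e^{iy\xi}$ with $m_{1,\ulomega}, m_{2,\ulomega}$ given explicitly in \eqref{eqn:m-1,omega}--\eqref{eqn:m-2,omega}, that $\phi_\ulomega(y) = \sqrt{2\ulomega}\sech(\sqrt{\ulomega}y)$, and that the quadratic nonlinearity in $\calQ_\ulomega$ as well as the eigenfunctions $Y_{1,\ulomega}, Y_{3,\ulomega}$ (see \eqref{eqn:nullspace of calH-omega}) are built from $\phi_\ulomega$ and $\partial_y \phi_\ulomega$, both of which are polynomial combinations of $\sech(\sqrt{\ulomega}y)$ and $\tanh(\sqrt{\ulomega}y)$.

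After substitution, each distribution can be decomposed as a finite sum of tensor products of the form
\begin{equation*}
\fraka_{\pm\pm}(\xi_1,\xi_2) \int_\bbR e^{i y(\pm\xi_1 \pm \xi_2)} \frakg(y) \, \ud y,
\end{equation*}
where $\fraka_{\pm\pm}$ is a rational function that collects the prefactors $(\vert\xi_j\vert \mp i\sqrt{\ulomega})^{-2}$ together with polynomial contributions in $\xi_1,\xi_2,\tanh(\sqrt{\ulomega}\cdot)$, while $\frakg(y)$ is a low-degree polynomial in $\sech(\sqrt{\ulomega}y)$ and $\tanh(\sqrt{\ulomega}y)$. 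The second step is to evaluate the resulting scalar Fourier integrals. The only building blocks that survive the parity constraints imposed by the spectral measure are the classical identities
\begin{equation*}
\widehat{\calF}\bigl[\sech^2(\sqrt{\ulomega}\cdot)\bigr](\eta) \;\propto\; \ulomega^{-1/2} \kappa_\ulomega(\eta), \qquad \widehat{\calF}\bigl[\tanh(\sqrt{\ulomega}\cdot)\bigr](\eta) \;\propto\; \ulomega^{1/2} \partial_\eta \kappa_\ulomega(\eta)/\eta,
\end{equation*}
together with their differentiated versions which all produce factors proportional to $\kappa_\ulomega(\eta)$ times a polynomial in $\eta$. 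This reduces each spectral distribution to a single rational expression in $(\xi_1,\xi_2,\ulomega)$ times $\kappa_\ulomega(\xi_1+\xi_2)$ (the $\pm\xi_1\pm\xi_2$ combinations in the exponentials all collapse to $\xi_1+\xi_2$ once one tracks the signs from the complex conjugations defining $\nu_{\pm\pm,\ulomega}$ and $\lambda_{\pm\pm,\ulomega}$).

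Once every integral has been evaluated, the remaining task is purely algebraic: expand and simplify the polynomial numerators into the announced factored form. For the scaling direction $Y_{1,\ulomega}$ (giving $\nu_{\pm\pm,\ulomega}$) the bookkeeping reveals the factor $\bigl(\xi_1^2+\xi_2^2+2\ulomega\bigr)$ in $\nu_{++,\ulomega}$ and $\bigl(\xi_1^2-\xi_2^2\bigr)$ in $\nu_{+-,\ulomega}$. For the momentum direction (giving $\lambda_{\pm\pm,\ulomega}$) the extra $\partial_y$ hitting $\phi_\ulomega$ in $Y_{3,\ulomega}$ produces, after integration by parts, an additional $(\xi_1+\xi_2)$ multiplier, which combines with the polynomial to yield the asserted factorizations with $(\xi_1+\xi_2)\kappa_\ulomega(\xi_1+\xi_2)$ and $(\xi_1-\xi_2)\kappa_\ulomega(\xi_1+\xi_2)$ respectively. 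Finally, the identities \eqref{eqn: nu--} and \eqref{eqn: nu_p--} follow almost for free from the relation $\Psi_{-,\ulomega} = \sigma_1 \Psi_{+,\ulomega}$ in \eqref{eqn: Psi_pm_omega} together with the $(\pm1)$ signs in the spectral pairings.

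The principal obstacle is the algebraic simplification in the last step, and above all the verification of the $(\xi_1^2-\xi_2^2)$ null factorization of $\lambda_{+-,\ulomega}$. This cancellation is not visible at the symbol level; it emerges only after the interaction of the polynomial structure of $m_{1,\ulomega}, m_{2,\ulomega}$ with the derivative falling on $\phi_\ulomega$ in $Y_{3,\ulomega}$. This is precisely the new null structure highlighted in the introduction and missed in \cite{LL24}; it will later enable the normal form that converts the resonant quadratic contribution \eqref{intro_equ:resonant_quadratic_momentum} to cubic order and ultimately yields the almost twice-integrable decay rate required for \eqref{intro_equ:bootstrap_mod}. A useful consistency check on the algebra is the sanity test $\lambda_{+-,\ulomega}(\xi,\xi) = 0$, which the final expression \eqref{eqn: nu_p+-} indeed satisfies.
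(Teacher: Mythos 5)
Your plan coincides with the paper's: insert the explicit $\Psi_{j,\ulomega}$ from \eqref{eqn:m-1,omega}--\eqref{eqn:m-2,omega} into \eqref{equ:nu-NSD} and \eqref{equ:lambda-NSD}, expand the integrand into weighted integrals of $\sech^m(\sqrt{\ulomega}y)\tanh^n(\sqrt{\ulomega}y)$, evaluate the resulting scalar Fourier integrals, and simplify the polynomial coefficients to the announced factored form. (The paper rescales to $\ulomega=1$ and, for $\nu_{\pm\pm}$, just cites \cite[Lemma~6.3]{LL24}, but those are cosmetic differences.) Your observation that pairing against $Y_{3,\ulomega}$ rather than $Y_{1,\ulomega}$ produces $\phi_\ulomega\phi_\ulomega'=\tfrac12\partial_y\phi_\ulomega^2$, hence an extra factor of $(\xi_1+\xi_2)$ after integration by parts, is the right heuristic; the paper achieves the same thing by writing $\phi_\ulomega\phi_\ulomega'\propto\sech^2\tanh$ and using the identity $\kappa_{m,1}(\eta)=\tfrac{i\eta}{m}\kappa_{m,0}(\eta)$.

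However, the proposal is a roadmap rather than a proof, and for a lemma whose entire content is an explicit computation this is a genuine gap. The step where the reduction actually happens --- expressing every $\kappa_{m,n}(\eta):=\int e^{iy\eta}\sech^m(y)\tanh^n(y)\,\ud y$ for $m\in\{2,4,6\}$, $n\in\{0,1\}$ as a polynomial in $\eta$ times $\kappa_{2,0}(\eta)$, which is exactly what makes a common factor of $\kappa_\ulomega(\xi_1+\xi_2)$ emerge --- is only gestured at via ``classical identities''. Moreover, the one identity you do write down for the building block, namely $\widehat{\calF}[\tanh(\sqrt{\ulomega}\cdot)](\eta)\propto\ulomega^{1/2}\partial_\eta\kappa_\ulomega(\eta)/\eta$, is incorrect: by \eqref{equ:preliminaries_FT_tanh} this transform is a \emph{principal-value} distribution $\propto\pvdots\cosech(\cdot)$, not a smooth multiple of $\kappa_\ulomega$. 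Fortunately this is a red herring, since $\phi_\ulomega^2$ and $\phi_\ulomega\phi_\ulomega'$ both carry a $\sech^2$ weight, so the isolated transform of $\tanh$ never actually appears; what appears are $\kappa_{m,n}$ with $m\geq 2$, which are honest Schwartz-class Fourier transforms. Finally, the crucial claim --- that after simplification the numerator of $\lambda_{+-,\ulomega}$ factors as $(\xi_1^2-\xi_2^2)(\xi_1^2-\xi_1\xi_2+\xi_2^2+\ulomega)(\xi_1-\xi_2)$ --- is precisely what must be verified and is not verified here; your diagonal check $\lambda_{+-,\ulomega}(\xi,\xi)=0$ is only a necessary condition and would also be satisfied by, say, a single factor of $(\xi_1-\xi_2)$ without the square.
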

\begin{remark}\label{remark: generic_null_expression}
From the above explicit expressions, we observe that the distributions $\nu_{+-,\ulomega}(\xi_1,\xi_2)$ and $\lambda_{+-,\ulomega}(\xi_1,\xi_2)$ are a linear combination of terms of the form
\begin{equation}\label{equ:proof_generic_exp}
(\xi_1^2-\xi_2^2)\fraka(\xi_1)\frakb(\xi_2)\kappa(\xi_1+\xi_2)
\end{equation}
where the symbols $\fraka(\xi_1)$, $\frakb(\xi_2)$ are of the form \eqref{eqn: symbol_frakb}, and where
\begin{equation*}
\kappa(\xi_1+\xi_2) = \int_\bbR e^{iy(\xi_1+\xi_2)}q(y)\,\ud y
\end{equation*}
for some Schwartz function $q(y)$. In order to see this property for $\lambda_{+-,\ulomega}(\xi_1,\xi_2)$ more clearly, we may rewrite the factor $(\xi_1-\xi_2)\kappa_{\ulomega}(\xi_1+\xi_2)$  in $\lambda_{+-,\ulomega}(\xi_1,\xi_2)$ as 
\begin{equation}\label{equ:rewrite_kappa}
(\xi_1-\xi_2)\kappa_{\ulomega}(\xi_1+\xi_2) = \tilde{\kappa}_\ulomega(\xi_1+\xi_2) - 2 \xi_2 \kappa_{\ulomega}(\xi_1+\xi_2) = -\tilde{\kappa}_\ulomega(\xi_1+\xi_2) + 2 \xi_2 \kappa_{\ulomega}(\xi_1+\xi_2)
\end{equation}
where 
\begin{equation*}
\tilde{\kappa}_\ulomega(\xi_1+\xi_2) := (\xi_1+\xi_2)\kappa_{\ulomega}(\xi_1+\xi_2) = i\int_\bbR e^{iy(\xi_1+\xi_2)}q_{\ulomega}'(y)\,\ud y, \quad q_{\ulomega}(y) := \frac{\sqrt{\ulomega}}{\pi}  \sech^2(\sqrt{\ulomega} y).
\end{equation*}
Thus, by expanding the terms in $\lambda_{+-,\ulomega}(\xi_1,\xi_2)$ using \eqref{equ:rewrite_kappa} repeatedly, we obtain the generic expression \eqref{equ:proof_generic_exp}.
\end{remark}
\begin{proof}[Proof of Lemma~\ref{lem:null_structure_modulation}]
The formulas for \eqref{eqn: nu++}--\eqref{eqn: nu--} were already proven in \cite[Lemma~6.3]{LL24}. The identity \eqref{eqn: nu_p--} follows from its definition in \eqref{equ:lambda-NSD}, and hence it remains to prove \eqref{eqn: nu_p++} and \eqref{eqn: nu_p+-}. 

Let $Q(y) := \sqrt{2}\sech(y)$ so that $\phi_{\ulomega}(y)\phi_\ulomega'(y) =  \ulomega^{\frac32}Q(\sqrt{\ulomega}y)Q'(\sqrt{\ulomega}y)$. We recall from \eqref{eqn:m-1,omega}--\eqref{eqn:m-2,omega} that $\Psi_{j,\ulomega}(y,\xi) = \Psi_{j,1}(\sqrt{\ulomega}y,\xi/\sqrt{\ulomega})$ for $j\in\{1,2\}$. By rescaling, we write 
\begin{equation}\label{equ:proof_scaling_momentum}
\lambda_{++,\ulomega}(\xi_1, \xi_2) = \ulomega \lambda_{++}\Big(\frac{\xi_1}{\sqrt{\ulomega}},\frac{\xi_2}{\sqrt{\ulomega}}\Big), \qquad \lambda_{+-,\ulomega}(\xi_1, \xi_2) = \ulomega \lambda_{+-}\Big(\frac{\xi_1}{\sqrt{\ulomega}},\frac{\xi_2}{\sqrt{\ulomega}}\Big)
\end{equation}
where
\begin{equation*}
\begin{split}
\lambda_{++}(\xi_1,\xi_2) &:= \int_\bbR \Bigl( \Psi_{1,1}(y,\xi_1) \Psi_{1,1}(y,\xi_2) + 2\Psi_{1,1}(y,\xi_1) \Psi_{2,1}(y,\xi_2) + 2\Psi_{2,1}(y,\xi_1) \Psi_{1,1}(y,\xi_2) \\
&\qquad \qquad +  \Psi_{2,1}(y,\xi_1) \Psi_{2,1}(y,\xi_2) \Bigr) Q(y)Q'(y) \, \ud y, \\
\lambda_{+-}(\xi_1,\xi_2) &:= \int_\bbR \Bigl( \Psi_{1,1}(y,\xi_1) \Psi_{2,1}(y,\xi_2) + 2\Psi_{1,1}(y,\xi_1) \Psi_{1,1}(y,\xi_2) + 2\Psi_{2,1}(y,\xi_1) \Psi_{2,1}(y,\xi_2) \\
&\qquad \qquad +  \Psi_{2,1}(y,\xi_1) \Psi_{1,1}(y,\xi_2) \Bigr) Q(y)Q'(y) \, \ud y.
\end{split}
\end{equation*}
We insert the expressions \eqref{eqn:m-1,omega}--\eqref{eqn:m-2,omega} for $\Psi_{1,1}(y,\cdot)$, $\Psi_{2,1}(y,\cdot)$ and repeatedly invoke the hyperbolic trigonometric identity $\tanh^2(y)=1 - \sech^2(y)$ to compute that
\begin{equation*}
\begin{split}
&\lambda_{++}(\xi_1,\xi_2)\\
&= \frac{1}{2\pi} \frac{1}{(|\xi_1|-i)^2} \frac{1}{(|\xi_2|-i)^2} \times \\
&\qquad \times \int_\bbR e^{iy(\xi_1+\xi_2)}\Big( \big(\xi_1 + i \tanh(y)\big)^2\big(\xi_2 + i \tanh(y)\big)^2 + 2\big(\xi_1 + i \tanh(y)\big)^2\sech^2(y) +\\
&\qquad \qquad \qquad \qquad \qquad + 2\sech^2(y) \big(\xi_2 + i \tanh(y)\big)^2+ \sech^4(y)\Big) \big(2\sech^2(y)\tanh(y)\big) \,\ud  y\\
&= \frac{1}{\pi} \frac{1}{(|\xi_1|-i)^2} \frac{1}{(|\xi_2|-i)^2} \times\\
&\qquad \times \Big( \big(1-\xi_1^2-\xi_2^2 - 4\xi_1\xi_2+\xi_1^2\xi_2^2\big) \kappa_{2,1} + \big(-6+4\xi_1\xi_2 + 3\xi_1^2 + 3\xi_2^2\big) \kappa_{4,1} + 6 \kappa_{6,1} \\
&\qquad \qquad +2i(\xi_1+\xi_2)(\xi_1\xi_2-1)\kappa_{2,0}+ 2i(\xi_1+\xi_2)(4-\xi_1\xi_2) \kappa_{4,0} - 6i(\xi_1+\xi_2)\kappa_{6,0}\Big)
\end{split}
\end{equation*}
and
\begin{equation*}
\begin{split}
&\lambda_{+-}(\xi_1,\xi_2)\\
&= \frac{1}{2\pi} \frac{1}{(|\xi_1|-i)^2} \frac{1}{(|\xi_2|-i)^2} \times \\
&\qquad \times \int_\bbR e^{iy(\xi_1+\xi_2)}\Big(2\big(\xi_1 + i \tanh(y)\big)^2 \big(\xi_2 + i \tanh(y)\big)^2 + \big(\xi_1 + i \tanh(y)\big)^2\sech^2(y) +\\
&\qquad \qquad \qquad \qquad \qquad + \sech^2(y) \big(\xi_2 + i \tanh(y)\big)^2+ 2\sech^4(y)\Big) \big(2\sech^2(y)\tanh(y)\big) \,\ud  y\\
&= \frac{1}{\pi} \frac{1}{(|\xi_1|-i)^2} \frac{1}{(|\xi_2|-i)^2} \times \\
&\qquad \times \Big( 2\big(1-\xi_1^2-\xi_2^2 - 4\xi_1\xi_2+\xi_1^2\xi_2^2\big) \kappa_{2,1} + \big(-6+8\xi_1\xi_2 + 3\xi_1^2 + 3\xi_2^2\big) \kappa_{4,1} + 6 \kappa_{6,1} \\
&\qquad \qquad +4i(\xi_1+\xi_2)(\xi_1\xi_2-1)\kappa_{2,0}+ 2i(\xi_1+\xi_2)(5-2\xi_1\xi_2) \kappa_{4,0} - 6i(\xi_1+\xi_2)\kappa_{6,0}\Big)
\end{split}
\end{equation*}
with the shortened notation 
\begin{equation}
\kappa_{m,n} \equiv \kappa_{m,n}(\xi_1+\xi_2) := \int_\bbR e^{i y(\xi_1+\xi_2)} \sech^m(y)\tanh^n(y) \,\ud y, \qquad m,n \in \bbN_0.
\end{equation}
The explicit formulas for $\kappa_{m,n}$ were derived in \cite[Appendix~A]{LL24}, and \cite[Lemma~A.1]{LL24} implies the following identities
\begin{equation*}
\begin{split}
\kappa_{2,1} &= \frac{1}{2}i(\xi_1+\xi_2) \kappa_{2,0},\quad \kappa_{4,1} = \frac{1}{24}i(\xi_1+\xi_2)\big(4 + (\xi_1+\xi_2)^2\big) \kappa_{2,0},\\
\kappa_{6,1} &= \frac{1}{720}i(\xi_1+\xi_2)\big(4 + (\xi_1+\xi_2)^2\big)\big(16 + (\xi_1+\xi_2)^2\big) \kappa_{2,0},\\
\kappa_{4,0} &= \frac{1}{6}\big(4 + (\xi_1+\xi_2)^2\big) \kappa_{2,0}, \quad \kappa_{6,0} = \frac{1}{120}\big(4 + (\xi_1+\xi_2)^2\big)\big(16 + (\xi_1+\xi_2)^2\big)  \kappa_{2,0}.
\end{split}
\end{equation*}

We insert these identities back into $\lambda_{++}$ and $\lambda_{+-}$ to obtain
\begin{equation*}
\begin{split}
&\lambda_{++}(\xi_1,\xi_2) = \frac{i}{2\pi} \frac{1}{(|\xi_1|-i)^2} \frac{1}{(|\xi_2|-i)^2} \fraka_{++}(\xi_1,\xi_2) \cdot (\xi_1+\xi_2) \kappa_{2,0}(\xi_1+\xi_2),\\
&\lambda_{+-}(\xi_1,\xi_2) = \frac{i}{\pi} \frac{1}{(|\xi_1|-i)^2} \frac{1}{(|\xi_2|-i)^2} \fraka_{+-}(\xi_1,\xi_2) \cdot (\xi_1+\xi_2) \kappa_{2,0}(\xi_1+\xi_2),
\end{split}
\end{equation*}
where $\fraka_{++}(\xi_1,\xi_2)$ and $\fraka_{+-}(\xi_1,\xi_2)$ are the following polynomials
\begin{equation*}
\begin{split}
\fraka_{++}(\xi_1,\xi_2) &:=  \big(1-\xi_1^2-\xi_2^2 - 4\xi_1\xi_2+\xi_1^2\xi_2^2\big) + \frac{1}{12}\big(-6+4\xi_1\xi_2 + 3\xi_1^2 + 3\xi_2^2\big) \big(4 + (\xi_1+\xi_2)^2\big)\\
&\quad  + \frac{1}{60}\big(4 + (\xi_1+\xi_2)^2\big)\big(16 + (\xi_1+\xi_2)^2\big) + 4(\xi_1\xi_2-1) + \frac{2}{3}(4-\xi_1\xi_2)\big(4 + (\xi_1+\xi_2)^2\big)\\
&\quad - \frac{1}{10}\big(4 + (\xi_1+\xi_2)^2\big)\big(16 + (\xi_1+\xi_2)^2\big),
\end{split}
\end{equation*}
\begin{equation*}
\begin{split}
\fraka_{+-}(\xi_1,\xi_2) &:= 2(1-\xi_1^2-\xi_2^2-4\xi_1\xi_2+\xi_1^2 \xi_2^2) + \frac{1}{12}\big(-6+8\xi_1\xi_2 + 3\xi_1^2 + 3\xi_2^2\big)\big(4 + (\xi_1+\xi_2)^2\big)\\
&\quad + \frac{1}{60}\big(4 + (\xi_1+\xi_2)^2\big)\big(16 + (\xi_1+\xi_2)^2\big) + 8(\xi_1\xi_2-1)+ \frac{2}{3}(5-2\xi_1\xi_2)\big(4 + (\xi_1+\xi_2)^2\big) \\
&\quad - \frac{1}{10}\big(4 + (\xi_1+\xi_2)^2\big)\big(16 + (\xi_1+\xi_2)^2\big).
\end{split}
\end{equation*}
By patient direct computation, we have the simplified and factorized expressions
\begin{equation*}
\begin{split}
\fraka_{++}(\xi_1,\xi_2) &= \frac{1}{6}\big(2+3\xi_1^2 + \xi_1^4 - 2\xi_1 \xi_2 - \xi_1^3\xi_2 + 3\xi_2^2 + 2 \xi_1^2 \xi_2^2 - \xi_1 \xi_2^3 + \xi_2^4\big)\\
&= \frac{1}{6}\big(\xi_1^2 + \xi_2^2 + 2\big) \big(\xi_1^2 - \xi_1 \xi_2 + \xi_2^2+1\big),
\end{split}
\end{equation*}
and
\begin{equation*}
\begin{split}
\fraka_{+-}(\xi_1,\xi_2) &= \frac{1}{6}\big(\xi_1^2 + \xi_1^4 - 2\xi_1 \xi_2 - 3\xi_1^3\xi_2 + \xi_2^2 + 4 \xi_1^2 \xi_2^2 - 3\xi_1 \xi_2^3 + \xi_2^4\big)\\
&= \frac{1}{6}\big(\xi_1 - \xi_2\big)^2 \big(\xi_1^2 - \xi_1 \xi_2 + \xi_2^2+1\big).
\end{split}
\end{equation*}
Thus, we obtain 
\begin{equation*}
\begin{split}
\lambda_{++}(\xi_1,\xi_2) &= \frac{i}{6\pi} (\xi_1^2 + \xi_2^2 + 2) \frac{(\xi_1^2 - \xi_1 \xi_2 + \xi_2^2+1) (\xi_1+\xi_2)}{(|\xi_1|-i)^2 (|\xi_2|-i)^2} \kappa_{2,0}(\xi_1+\xi_2),\\
\lambda_{+-}(\xi_1,\xi_2) &= \frac{i}{6\pi} (\xi_1^2 - \xi_2^2) \frac{(\xi_1^2 - \xi_1 \xi_2 + \xi_2^2+1) (\xi_1-\xi_2)}{(|\xi_1|-i)^2 (|\xi_2|-i)^2} \kappa_{2,0}(\xi_1+\xi_2).
\end{split}
\end{equation*}
By rescaling these identities via \eqref{equ:proof_scaling_momentum} and using the fact that $\kappa_{2,0}(\ulomega^{-\frac12}(\xi_1+\xi_2)) \equiv \pi \kappa_{\ulomega}(\xi_1+\xi_2)$, we conclude \eqref{eqn: nu_p++} and \eqref{eqn: nu_p+-}.
\end{proof}

\subsection{Proof of Proposition~\ref{prop:modulation_parameters}}\label{subsec:Proof_modulation}
In this section, we prove the main modulation bootstrap estimate \eqref{equ:proof_mod_bootstrap}. From the modulation equations \eqref{equ:setup_modulation_equation} and Lemma~\ref{lem:prep_modulation} we obtain  for all times $0 \leq s \leq T$ that 
\begin{equation}\label{equ:proof_mod_equations}
\begin{split}
\begin{bmatrix}
	\dot{\gamma}+p^2-\omega-p\dot{\sigma}\\
	\dot{\omega}\\
	\dot{\sigma}-2p\\
	\dot{p}
\end{bmatrix} 
&=	\bbM(s)^{-1} \left(\begin{bmatrix}  \langle i \calQ_{\ulomega}(\ulPe \Vp),\sigma_2 Y_{1,\ulomega}\rangle \\
\langle i \calQ_{\ulomega}(\ulPe \Vp),\sigma_2 Y_{2,\ulomega}\rangle \\
\langle i \calQ_{\ulomega}(\ulPe \Vp),\sigma_2 Y_{3,\ulomega}\rangle \\
\langle i \calQ_{\ulomega}(\ulPe \Vp),\sigma_2 Y_{4,\ulomega}\rangle  \end{bmatrix} +  \begin{bmatrix} \langle i \calC(\ulPe \Vp),\sigma_2 Y_{1,\ulomega} \rangle\\
\langle i \calC(\ulPe \Vp),\sigma_2 Y_{2,\ulomega} \rangle\\
\langle i \calC(\ulPe \Vp),\sigma_2 Y_{3,\ulomega} \rangle\\
\langle i \calC(\ulPe \Vp),\sigma_2 Y_{4,\ulomega} \rangle \end{bmatrix} + \begin{bmatrix} \wtilcalE_{1} \\ \wtilcalE_{2} \\ \wtilcalE_{3} \\ \wtilcalE_{4}\end{bmatrix}\right)
\end{split}	
\end{equation}
with 
\begin{equation}\label{equ:proof_mod_bbM}
\bbM(s) = \begin{bmatrix}
0 & 2\omega^{-\frac12} & 0 & 0  \\
2\omega^{-\frac12} & 0 &0 & 0 \\
0 & 0 & 0 & - 4\omega^{\frac12}\\
0 & 0 & 4\omega^{\frac12} & 0				
\end{bmatrix}	+ \begin{bmatrix}
\langle U, \sigma_1Y_{1, \omega} \rangle &  \langle U,  \sigma_2 \partial_\omega Y_{1, \omega} \rangle &  \langle U,  -\sigma_2\py  Y_{1, \omega} \rangle &  \langle U,  y\sigma_1 Y_{1, \omega} \rangle \\
\langle U, \sigma_1 Y_{2, \omega} \rangle &  \langle U, \sigma_2 \partial_\omega Y_{2, \omega} \rangle &  \langle U,  -\sigma_2\py  Y_{2, \omega} \rangle &  \langle U,  y\sigma_1 Y_{2, \omega} \rangle\\
\langle U, \sigma_1 Y_{3, \omega} \rangle &  \langle U, \sigma_2 \partial_\omega Y_{3, \omega} \rangle&  \langle U,  -\sigma_2\py  Y_{3, \omega} \rangle &  \langle U,  y\sigma_1 Y_{3, \omega} \rangle\\
\langle U, \sigma_1 Y_{4, \omega} \rangle &  \langle U, \sigma_2 \partial_\omega Y_{4, \omega} \rangle&  \langle U,  -\sigma_2\py  Y_{4, \omega} \rangle &  \langle U,  y\sigma_1 Y_{4, \omega} \rangle\\										
\end{bmatrix}.
\end{equation}
The matrix $\bbM(s)$ is invertible  since the first matrix on the right-hand side of \eqref{equ:proof_mod_bbM} is invertible using the comparison estimate $\omega(s) \in [\frac12 \omega_0,2\omega_0]$, while the second matrix of \eqref{equ:proof_mod_bbM} has a small operator norm of size  $\calO(\eps)$ thanks to the stability bound \eqref{equ:setup_smallness_orbital}. Hence, we have uniformly the operator norm bound for the inverse matrix,
\begin{equation}\label{equ:proof_bbM_inv_opnorm}
	\sup_{0 \leq s \leq T} \| \bbM(s)^{-1}\| \lesssim_{\omega_0} 1.
\end{equation}

Since we need to pursue stronger decay estimates for $\dot{\omega}(s)$ and $\dot{p}(s)$, this bound is insufficient for our purposes. We instead decompose the inverse of $\bbM(s)$ into a time-independent leading order term plus decaying terms, and we begin to do so by expanding 
\begin{equation}\label{equ:proof_bbM_expand}
	\bbM(s) = \bbM_{\ulomega} + \bbA_\ulomega(s) + \bbB(s)
\end{equation}
with
\begin{align}
\bbM_{\ulomega} &:=	\begin{bmatrix}
	0 & 2\ulomega^{-\frac12} & 0 & 0  \\
	2\ulomega^{-\frac12} & 0 &0 & 0 \\
	0 & 0 & 0 & - 4\ulomega^{\frac12}\\
	0 & 0 & 4\ulomega^{\frac12} & 0				
\end{bmatrix}, \\
\bbA_\ulomega(s) &:= \begin{bmatrix}
\langle \ulPe \Vp, \sigma_1Y_{1, \ulomega} \rangle &  \langle \ulPe \Vp,  \sigma_2 \partial_\omega Y_{1, \ulomega} \rangle &  \langle \ulPe \Vp,  -\sigma_2\py  Y_{1, \ulomega} \rangle &  \langle \ulPe \Vp,  y\sigma_1 Y_{1, \ulomega} \rangle \\
\langle \ulPe \Vp, \sigma_1 Y_{2, \ulomega} \rangle &  \langle \ulPe \Vp, \sigma_2 \partial_\omega Y_{2, \ulomega} \rangle &  \langle \ulPe \Vp,  -\sigma_2\py  Y_{2, \ulomega} \rangle &  \langle \ulPe \Vp,  y\sigma_1 Y_{2, \ulomega} \rangle\\
\langle \ulPe \Vp, \sigma_1 Y_{3, \ulomega} \rangle &  \langle \ulPe \Vp, \sigma_2 \partial_\omega Y_{3, \ulomega} \rangle&  \langle \ulPe \Vp,  -\sigma_2\py  Y_{3, \ulomega} \rangle &  \langle \ulPe \Vp,  y\sigma_1 Y_{3, \ulomega} \rangle\\
\langle \ulPe \Vp, \sigma_1 Y_{4, \ulomega} \rangle &  \langle \ulPe \Vp, \sigma_2 \partial_\omega Y_{4, \ulomega} \rangle&  \langle \ulPe \Vp,  -\sigma_2\py  Y_{4, \ulomega} \rangle &  \langle \ulPe \Vp,  y\sigma_1 Y_{4, \ulomega} \rangle\\										
\end{bmatrix},	\label{equ:proof_bbA}
\end{align}
and $\bbB(s) := \bbM(s) - \bbM_{1}(\ulomega) - \bbA_\ulomega(s)$. From \eqref{equ:proof_bbM_expand} one finds that 
\begin{equation}\label{equ:proof_bbM_inv_expand}
\bbM(s)^{-1} = \bbM_{\ulomega}^{-1} - \bbM_{\ulomega}^{-1} \bbA_\ulomega(s) \bbM_{\ulomega}^{-1} + \bbD(s),
\end{equation}
where
\begin{equation}\label{equ:proof_bbM_ulomega_inv}
\bbM_{\ulomega}^{-1} = \begin{bmatrix}
0 & \frac12 \ulomega^{\frac12} & 0 & 0 \\
\frac12 \ulomega^{\frac12} & 0 & 0 & 0 \\
0 & 0 & 0 & \frac14 \ulomega^{-\frac12}\\
0 & 0 & -\frac14 \ulomega^{-\frac12} & 0 
\end{bmatrix},
\end{equation}
and
\begin{equation*}
\bbD(s) := \bbM(s)^{-1} \Big(-\bbB(s) \bbM_{\ulomega}^{-1} + \bbA_\ulomega(s) \bbM_{\ulomega}^{-1}\bbA_\ulomega(s)\bbM_{\ulomega}^{-1} + \bbB(s)\bbM_{\ulomega}^{-1}\bbA_\ulomega(s)\bbM_{\ulomega}^{-1}\Big).
\end{equation*}
Using \eqref{equ:consequences_ulPe_U_disp_decay}, we obtain the operator norm estimate 
\begin{equation}\label{equ:proof_bbA_opnorm}
\sup_{0\leq s \leq T} \js^{\frac12} \| \bbM_{\ulomega}^{-1} \bbA_\ulomega(s) \bbM_{\ulomega}^{-1} \| \lesssim_{\omega_0} \sup_{0\leq s \leq T} \js^{\frac12} \| \bbA_\ulomega(s) \| \lesssim_{\omega_0} \eps,
\end{equation}
and using \eqref{equ:prop_modulation_parameters_assumption1}, \eqref{equ:consequences_ulPe_U_disp_decay}, \eqref{equ:consequences_U_disp_decay}, as well as \eqref{equ:consequences_discrete_components_decay}, we infer that 
\begin{equation}\label{equ:proof_bbB_opnorm}
\sup_{0 \leq s \leq T} \js^{1-\delta} \|\bbB(s)\| \lesssim_{\omega_0} \eps.
\end{equation}
The two preceeding bounds combined with \eqref{equ:proof_bbM_inv_opnorm} also imply that 
\begin{equation}\label{equ:proof_bbD_opnorm}
\sup_{0 \leq s \leq T} \js^{1-\delta} \|\bbD(s)\| \lesssim_{\omega_0} \eps.
\end{equation}

After inserting \eqref{equ:proof_bbM_inv_expand} into \eqref{equ:proof_mod_equations}, we obtain for all $0 \leq s \leq T$
\begin{equation}\label{equ:proof_mod_equations_expanded}
\begin{split}
\begin{bmatrix}
	\dot{\gamma}+p^2-\omega-p\dot{\sigma}\\
	\dot{\omega}\\
	\dot{\sigma}-2p\\
	\dot{p}
\end{bmatrix} &= \bbM_{\ulomega}^{-1} \bfQ(s) +\bbM_{\ulomega}^{-1} \bfC(s)- \bbM_{\ulomega}^{-1} \bbA_\ulomega(s) \bbM_{\ulomega}^{-1} \bfQ(s)\\
&\quad  - \bbM_{\ulomega}^{-1} \bbA_\ulomega(s) \bbM_{\ulomega}^{-1} \bfC(s) + \bbD(s) \bfQ(s) + \bbD(s) \bfC(s) + \bbM(s)^{-1} \bfE(s)
\end{split}
\end{equation}
where we use the following notation for the vectors
\begin{equation}\label{equ:proof_vector_def}
\begin{split}
\bfQ(s) &:= \Big(\langle i \calQ_{\ulomega}(\ulPe \Vp(s)),\sigma_2 Y_{j,\ulomega}\rangle\Big)_{1\leq j \leq 4},\\
\bfC(s) &:= \Big(\langle i \calC(\ulPe \Vp(s)),\sigma_2 Y_{j,\ulomega}\rangle\Big)_{1\leq j \leq 4},\\
\bfE(s) &:= \Big(\wtilcalE_j(s)\Big)_{1\leq j \leq 4}.
\end{split}
\end{equation}

Based on the preceeding bounds on the operator norms, it is straightforward to determine the desired decay rates for the last four terms on the right-hand side of \eqref{equ:proof_mod_equations_expanded} that contributes to $\dot{\omega}(s)$ and $\dot{p}(s)$. Indeed, using the pointwise decay of $\ulPe \Vp(s)$ from \eqref{equ:consequences_ulPe_U_disp_decay} and the operator norm estimates \eqref{equ:proof_bbA_opnorm}, \eqref{equ:proof_bbD_opnorm}, we deduce the following acceptable bounds for all $0 \leq s \leq T$:
\begin{equation*}
\begin{split}
\big\| \bbM_{\ulomega}^{-1} \bbA_\ulomega(s) \bbM_{\ulomega}^{-1} \bfC(s)\big \| \lesssim \eps^4\js^{-2}, \qquad \big\| \bbD(s) \bfQ(s)\big \| \lesssim \eps^3\js^{-2+\delta}, \qquad \big\| \bbD(s) \bfC
(s)\big \| \lesssim \eps^4\js^{-\frac52+\delta}.
\end{split}
\end{equation*}
The bounds \eqref{equ:BS_modulation_error} and \eqref{equ:proof_bbM_inv_opnorm} also imply for all $0 \leq s \leq T$ that 
\begin{equation*}
\big\| \bbM(s)^{-1} \bfE(s) \big\| \lesssim \eps^3 \js^{-2+\delta}.
\end{equation*}
Integrating these bounds in time from $t \leq s \leq T$ leads to acceptable estimates for the contributions to \eqref{equ:proof_mod_bootstrap}. 

Therefore it remains to infer decay on the first three terms on the right-hand side of \eqref{equ:proof_mod_equations_expanded}, which we will carry out below using normal forms. The quadratic term $\bbM_{\ulomega}^{-1} \bfQ(s)$ is the most challenging term which additionally relies on the crucial null structures found in quadratic spectral distributions given in Lemma~\ref{lem:null_structure_modulation}. 

\medskip

\noindent \underline{Contribution of $\int_t^T \bbM_{\ulomega}^{-1} \bfQ(s) \,\ud s$ to $|\omega(t)-\omega(T)|$ and $|p(t)-p(T)|$}:
By inspection, the contribution 
to $\big(\omega(t)-\omega(T)\big)$ is given by 
\begin{equation}\label{equ:proof_modulation_omega}
- \frac{1}{2} \sqrt{\ulomega} \int_t^T \big\langle i \calQ_{\ulomega}(\ulPe \Vp(s)),\sigma_2 Y_{1,\ulomega} \big\rangle  \,\ud s = - \frac{i}{2} \sqrt{\ulomega} \int_t^T \big\langle  \big(\ve(s)^2 - \barve(s)^2\big),\phi_{\ulomega}^2 \big\rangle  \,\ud s,
\end{equation}
while the contribution to $\big(p(t)-p(T)\big)$ is given by
\begin{equation}\label{equ:proof_modulation_p}
\frac{1}{4\sqrt{\ulomega}} \int_t^T \big\langle i \calQ_{\ulomega}(\ulPe \Vp(s)),\sigma_2 Y_{3,\ulomega} \big\rangle  \,\ud s = \frac{1}{4\sqrt{\ulomega}}\int_t^T \big\langle  \big(\ve(s)^2 + 4 \ve \barve + \barve(s)^2\big),\phi_{\ulomega}\phi_{\ulomega}' \big\rangle  \,\ud s .
\end{equation}
We insert the representation formulas \eqref{equ:setup_v_e_repformula} for $\ve(s,y)$ and \eqref{equ:setup_barv_e_repformula} for $\barve(s,y)$ to obtain the following multilinear expressions on the distorted Fourier side for \eqref{equ:proof_modulation_omega}
\begin{equation}
\begin{split}
&\int_t^T \big\langle  \big(\ve(s)^2 - \barve(s)^2\big),\phi_{\ulomega}^2 \big\rangle\,\ud s\\
&= \int_t^T \iint e^{-is(\xi_1^2+\xi_2^2 + 2\ulomega)} \tilf_{+, \ulomega,\ulp}(s,\xi_1) \tilf_{+,\ulomega,\ulp}(s,\xi_2)  \nu_{++,\ulomega}(\xi_1,\xi_2)\,\ud \xi_1\,\ud \xi_2 \,\ud s\\
&\quad -2 \int_t^T \iint e^{-is(\xi_1^2-\xi_2^2)} \tilf_{+, \ulomega,\ulp}(s,\xi_1) \tilf_{-,\ulomega,\ulp}(s,\xi_2)  \nu_{+-,\ulomega}(\xi_1,\xi_2)\,\ud \xi_1\,\ud \xi_2 \,\ud s\\
&\quad +\int_t^T \iint e^{is(\xi_1^2+\xi_2^2 + 2\ulomega)} \tilf_{-, \ulomega,\ulp}(s,\xi_1) \tilf_{-,\ulomega,\ulp}(s,\xi_2)  \nu_{--,\ulomega}(\xi_1,\xi_2)\,\ud \xi_1\,\ud \xi_2 \,\ud s\\
&=: \calI_{++}(t;T)+\calI_{+-}(t;T)+\calI_{--}(t;T)
\end{split}
\end{equation}
with the following nonlinear spectral distributions
\begin{equation}\label{equ:nu-NSD}
\begin{split}
\nu_{++,\ulomega}(\xi_1, \xi_2) &:= \int_\bbR \bigl( \Psi_{1,\ulomega}(y,\xi_1) \Psi_{1,\ulomega}(y,\xi_2) - \Psi_{2,\ulomega}(y,\xi_1) \Psi_{2,\ulomega}(y,\xi_2) \bigr) \phi_\ulomega(y)^2 \, \ud y, \\
\nu_{+-,\ulomega}(\xi_1, \xi_2) &:= \int_\bbR \bigl( \Psi_{1,\ulomega}(y,\xi_1) \Psi_{2,\ulomega}(y,\xi_2) - \Psi_{2,\ulomega}(y,\xi_1) \Psi_{1,\ulomega}(y,\xi_2) \bigr) \phi_\ulomega(y)^2 \, \ud y,\\
\nu_{--,\ulomega}(\xi_1, \xi_2) &:= \int_\bbR \bigl( \Psi_{2,\ulomega}(y,\xi_1) \Psi_{2,\ulomega}(y,\xi_2) - \Psi_{1,\ulomega}(y,\xi_1) \Psi_{1,\ulomega}(y,\xi_2) \bigr) \phi_\ulomega(y)^2 \, \ud y.
\end{split}
\end{equation}
Similarly, for \eqref{equ:proof_modulation_p} we compute that 
\begin{equation}
\begin{split}
&\int_t^T \big\langle  \big(\ve(s)^2 +4 \ve(s)\barve(s)+ \barve(s)^2\big),\phi_{\ulomega}\phi_{\ulomega}' \big\rangle\,\ud s\\
&= \int_t^T \iint e^{-is(\xi_1^2+\xi_2^2 + 2\ulomega)} \tilf_{+, \ulomega,\ulp}(s,\xi_1) \tilf_{+,\ulomega,\ulp}(s,\xi_2)  \lambda_{++,\ulomega}(\xi_1,\xi_2)\,\ud \xi_1\,\ud \xi_2 \,\ud s\\
&\quad -2 \int_t^T \iint e^{-is(\xi_1^2-\xi_2^2)} \tilf_{+, \ulomega,\ulp}(s,\xi_1) \tilf_{-,\ulomega,\ulp}(s,\xi_2)  \lambda_{+-,\ulomega}(\xi_1,\xi_2)\,\ud \xi_1\,\ud \xi_2 \,\ud s\\
&\quad +\int_t^T \iint e^{is(\xi_1^2+\xi_2^2 + 2\ulomega)} \tilf_{-, \ulomega,\ulp}(s,\xi_1) \tilf_{-,\ulomega,\ulp}(s,\xi_2)  \lambda_{--,\ulomega}(\xi_1,\xi_2)\,\ud \xi_1\,\ud \xi_2 \,\ud s\\
&=: \calJ_{++}(t;T)+\calJ_{+-}(t;T)+\calJ_{--}(t;T)
\end{split}
\end{equation}
where 
\begin{equation}\label{equ:lambda-NSD}
\begin{aligned}
\lambda_{++,\ulomega}(\xi_1, \xi_2) &:= \int_\bbR \Bigl( \Psi_{1,\ulomega}(y,\xi_1) \Psi_{1,\ulomega}(y,\xi_2) + 2\Psi_{1,\ulomega}(y,\xi_1) \Psi_{2,\ulomega}(y,\xi_2) \\
&\qquad \qquad + 2\Psi_{2,\ulomega}(y,\xi_1) \Psi_{1,\ulomega}(y,\xi_2) +  \Psi_{2,\ulomega}(y,\xi_1) \Psi_{2,\ulomega}(y,\xi_2) \Bigr) \phi_\ulomega(y)\phi_\ulomega'(y) \, \ud y, \\
\lambda_{+-,\ulomega}(\xi_1, \xi_2) &:= \int_\bbR \Bigl( \Psi_{1,\ulomega}(y,\xi_1) \Psi_{2,\ulomega}(y,\xi_2) + 2\Psi_{1,\ulomega}(y,\xi_1) \Psi_{1,\ulomega}(y,\xi_2) \\
&\qquad \qquad + 2\Psi_{2,\ulomega}(y,\xi_1) \Psi_{2,\ulomega}(y,\xi_2) +  \Psi_{2,\ulomega}(y,\xi_1) \Psi_{1,\ulomega}(y,\xi_2) \Bigr) \phi_\ulomega(y)\phi_\ulomega'(y) \, \ud y,\\
\lambda_{--,\ulomega}(\xi_1, \xi_2) &:= \int_\bbR \Bigl( \Psi_{1,\ulomega}(y,\xi_1) \Psi_{1,\ulomega}(y,\xi_2) + 2\Psi_{1,\ulomega}(y,\xi_1) \Psi_{2,\ulomega}(y,\xi_2) \\
&\qquad \qquad + 2\Psi_{2,\ulomega}(y,\xi_1) \Psi_{1,\ulomega}(y,\xi_2) +  \Psi_{2,\ulomega}(y,\xi_1) \Psi_{2,\ulomega}(y,\xi_2) \Bigr) \phi_\ulomega(y)\phi_\ulomega'(y) \, \ud y.
\end{aligned}
\end{equation}	

Our aim is to prove for any $0 \leq t \leq T$ that 
\begin{equation*}
\big| \calI_{++}(t;T) \big| + \big|\calI_{+-}(t;T)\big| + \big|\calI_{--}(t;T) \big| + \big| \calJ_{++}(t;T) \big| + \big|\calJ_{+-}(t;T)\big| + \big|\calJ_{--}(t;T) \big|\lesssim \eps^2 \jt^{-1+\delta}.
\end{equation*}
These estimates will be achieved by exploiting the oscillations in the phases of the multilinear expressions. We note that the phase $e^{-is(\xi_1^2-\xi_2^2)}$ in $\calI_{+-}(t;T)$ and $\calJ_{+-}(t;T)$ contains the hyper-planes $\{\xi_1=\pm \xi_2\}$ as the set of time resonances, while the other terms do not have time resonances. However, thanks to the remarkable null structures in the distributions in $\nu_{+-,\ulomega}(\xi_1,\xi_2)$ and $\lambda_{+-,\ulomega}(\xi_1,\xi_2)$, we find that the expressions for $\frac{\nu_{+-,\ulomega}(\xi_1,\xi_2)}{\xi_1^2 - \xi_2^2}$ and $\frac{\lambda_{+-,\ulomega}(\xi_1,\xi_2)}{\xi_1^2 - \xi_2^2}$ remain smooth and bounded. See Lemma~\ref{lem:null_structure_modulation}. This key observation for $\nu_{+-,\ulomega}(\xi_1,\xi_2)$ was already recorded in \cite[Lemma~6.3]{LL24} but for it is new for $\lambda_{+-,\ulomega}(\xi_1,\xi_2)$. These null structures allows us to perform an integration by parts in time for  $\calI_{+-}(t;T)$ and $\calJ_{+-}(t;T)$ to infer decay.  Subsequently, we outline how to control the contributions of the remaining non-resonant terms $\calI_{--}(t;T)$, $\calI_{++}(t;T)$, $\calJ_{--}(t;T)$, and $\calJ_{++}(t;T)$. 

In view of Remark~\ref{remark: generic_null_expression}, the integrals $\calI_{+-}(t;T)$ and $\calJ_{+-}(t;T)$ are linear combinations of terms of the schematic form
\begin{equation*}
\begin{split}
\calI(t;T) := \int_t^T \iint e^{-is(\xi_1^2 - \xi_2^2)} (\xi_1^2-\xi_2^2) \fraka(\xi_1)\tilf_{+, \ulomega,\ulp}(s,\xi_1) \frakb(\xi_2) \tilf_{-, \ulomega,\ulp}(s,\xi_2) \kappa(\xi_1+\xi_2) \,\ud \xi_1 \,\ud \xi_2 \,\ud s,
\end{split}
\end{equation*}
where $\fraka(\eta)$ and $\frakb(\eta)$ are symbols of the form 
\begin{equation}\label{equ:proof_symbol_type}
\frac{1}{(|\eta|-i\sqrt{\ulomega})^2}, \quad \frac{\eta}{(|\eta|-i\sqrt{\ulomega})^2}, \quad \text{or} \quad \frac{\eta^2}{(|\eta|-i\sqrt{\ulomega})^2},
\end{equation}
and  $\kappa(\cdot)$ is the Fourier transform of some Schwartz function $q(y)$, that is,
\begin{equation}\label{equ:proof_FT_kappa}
\kappa(\xi_1+\xi_2) = \int_\bbR e^{iy(\xi_1+\xi_2)}q(y)\,\ud y.
\end{equation}
In what follows, we prove that $|\calI(t;T)| \lesssim \eps^2 \jt^{-1+\delta}$ which suffices for us to conclude the same bounds for $\calI_{+-}(t;T)$ and $\calJ_{+-}(t;T)$. 

First, we integrate by parts in time on $\calI(t;T)$ to get
\begin{equation*}
\begin{split}
\calI(t;T) &= i \left[ \iint e^{-is(\xi_1^2-\xi_2^2)}\fraka(\xi_1)\tilf_{+, \ulomega,\ulp}(s,\xi_1) \frakb(\xi_2) \tilf_{-, \ulomega,\ulp}(s,\xi_2) \kappa(\xi_1+\xi_2) \,\ud \xi_1 \,\ud \xi_2\right]_{s=t}^{s=T}\\
&\quad - i \int_t^T \iint e^{-is(\xi_1^2 - \xi_2^2)} \fraka(\xi_1)\ps \tilf_{+, \ulomega,\ulp}(s,\xi_1) \frakb(\xi_2) \tilf_{-, \ulomega,\ulp}(s,\xi_2) \kappa(\xi_1+\xi_2) \,\ud \xi_1 \,\ud \xi_2 \,\ud s\\
&\quad - i \int_t^T \iint e^{-is(\xi_1^2 - \xi_2^2)} \fraka(\xi_1)\tilf_{+, \ulomega,\ulp}(s,\xi_1) \frakb(\xi_2) \ps \tilf_{-, \ulomega,\ulp}(s,\xi_2) \kappa(\xi_1+\xi_2) \,\ud \xi_1 \,\ud \xi_2 \,\ud s\\
&=: \calI^1(t;T)+\calI^2(t;T)+\calI^3(t;T).
\end{split}
\end{equation*}
Then, we define the following free Schr\"odinger waves
\begin{align}
&v_+(s,y) := e^{-is\ulomega}\int_\bbR e^{iy \xi_1} e^{-is\xi_1^2} \fraka(\xi_1) \tilf_{+, \ulomega,\ulp}(s,\xi_1) \,\ud \xi_1, \\
&v_-(s,y) := e^{is\ulomega}\int_\bbR e^{iy \xi_2} e^{is\xi_2^2} \frakb(\xi_1) \tilf_{-, \ulomega,\ulp}(s,\xi_2) \,\ud \xi_2.
\end{align}
These free waves satisfy the bounds \eqref{equ:preparation_flat_Schrodinger_wave_bound1}--\eqref{equ:preparation_flat_Schrodinger_wave_bound5}. By reverting the Fourier transform \eqref{equ:proof_FT_kappa} in $\calI^1(t;T)$, we have
\begin{equation*}
\calI^1(t;T) = i \int_\bbR v_+(T,y)v_-(T,y)q(y)\,\ud y - i \int_\bbR v_+(t,y)v_-(t,y)q(y)\,\ud y.
\end{equation*}
By \eqref{equ:preparation_flat_Schrodinger_wave_bound1} we obtain the sufficient bound
\begin{equation*}
\left|\calI^1(t;T)\right| \lesssim \sup_{t\leq s \leq T}\big( \|q\|_{L_y^1} \| v_+(s)\|_{L_y^\infty} \| v_-(s)\|_{L_y^\infty} \big)\lesssim \eps^2 \jt^{-1}.
\end{equation*}

For the purpose of bounding  the two remaining terms in $\calI(t;T)$, we shall insert the right-hand side of the evoluation equations \eqref{equ:profile-dft-+} for $\ps\tilf_{+, \ulomega,\ulp}(s,\xi_1)$ into $\calI^2(t;T)$; respectively \eqref{equ:profile-dft-minus} for $\ps\tilf_{-, \ulomega,\ulp}(s,\xi_2)$ into $\calI^3(t;T)$. We provide detailed treatment for the term  $\calI^2(t;T)$ and leave the analogous details for the term $\calI^3(t;T)$ to the reader. Upon inserting \eqref{equ:profile-dft-+}, we have\footnote{in order to lighten the  notation, we have temporarily drop the subscripts $(\ulomega,\ulp)$ from $\tilf_{\pm, \ulomega,\ulp}$.} 
\begin{equation*}
\begin{split}
\calI^2(t;T) &= - \int_{t}^{T} \dot{\theta}_1(s) \iint e^{-is(\xi_1^2-\xi_2^2)} \fraka(\xi_1) \tilf_+(s,\xi_1) \frakb(\xi_2) \tilf_-(s,\xi_2) \kappa(\xi_1+\xi_2) \,\ud \xi_1 \,\ud \xi_2 \,\ud s\\
&\quad + \int_{t}^{T} \dot{\theta}_2(s) \iint e^{-is(\xi_1^2-\xi_2^2)}  \xi_1 \fraka(\xi_1)\tilf_+(s,\xi_1) \frakb(\xi_2) \tilf_-(s,\xi_2) \kappa(\xi_1+\xi_2) \,\ud \xi_1 \,\ud \xi_2 \,\ud s\\
&\quad - \int_{t}^{T} \dot{\theta}_1(s) \iint e^{is(\xi_2^2+\ulomega)} \fraka(\xi_1) \calL_{+, \ulomega}[\Vp(s)](\xi_1) \frakb(\xi_2) \tilf_-(s,\xi_2) \kappa(\xi_1+\xi_2) \,\ud \xi_1 \,\ud \xi_2 \,\ud s\\
&\quad -i \int_{t}^{T} \dot{\theta}_2(s) \iint e^{is(\xi_2^2+\ulomega)}   \fraka(\xi_1) \calK_{+, \ulomega}[\Vp(s)](\xi_1) \frakb(\xi_2) \tilf_-(s,\xi_2) \kappa(\xi_1+\xi_2) \,\ud \xi_1 \,\ud \xi_2 \,\ud s\\
&\quad - \int_{t}^{T}  \iint e^{is(\xi_2^2+\ulomega)}   \fraka(\xi_1) \wtilcalF_{+,\ulomega}[\calQ_{\ulomega}(\ulPe \Vp(s))](\xi_1) \frakb(\xi_2) \tilf_-(s,\xi_2) \kappa(\xi_1+\xi_2) \,\ud \xi_1 \,\ud \xi_2 \,\ud s\\
&\quad - \int_{t}^{T}  \iint e^{is(\xi_2^2+\ulomega)}   \fraka(\xi_1) \wtilcalF_{+,\ulomega}[\calC(\ulPe \Vp(s))](\xi_1) \frakb(\xi_2) \tilf_-(s,\xi_2) \kappa(\xi_1+\xi_2) \,\ud \xi_1 \,\ud \xi_2 \,\ud s\\
&\quad - \int_{t}^{T}  \iint e^{is(\xi_2^2+\ulomega)}   \fraka(\xi_1) \wtilcalF_{+,\ulomega}[\ulPe \calMod(s) + \calE_1(s) + \calE_2(s) + \calE_3(s)](\xi_1) \\
&\hspace{20em}\times \frakb(\xi_2) \tilf_-(s,\xi_2) \kappa(\xi_1+\xi_2) \,\ud \xi_1 \,\ud \xi_2 \,\ud s\\
&=: \calI^{2,1}(t;T)+\calI^{2,2}(t;T)+\calI^{2,3}(t;T)+\calI^{2,4}(t;T)+\calI^{2,5}(t;T)+\calI^{2,6}(t;T)+\calI^{2,7}(t;T).
\end{split}
\end{equation*}
We then proceed to estimate these integrals term by term. 

Using \eqref{equ:proof_FT_kappa}, we may rewrite 
\begin{equation*}
\begin{split}
\calI^{2,1}(t;T) = - \int_t^T \dot{\theta}_1(s) \int_\bbR q(y) v_+(s,y) v_-(s,y) \,\ud y \,\ud s.
\end{split}
\end{equation*}
By H\"older's inequality,  \eqref{equ:consequences_aux_bound_modulation2}, and \eqref{equ:preparation_flat_Schrodinger_wave_bound1}, we get 
\begin{equation*}
\big|\calI^{2,1}(t;T)\big| \lesssim \int_t^T |\dot{\theta}_1(s)| \|q \|_{L_y^1} \| v_+(s)\|_{L_y^\infty} \| v_-(s)\|_{L_y^\infty} \,\ud s \lesssim \int_t^T \eps \js^{-1+\delta} \cdot \eps^2 \js^{-1} \,\ud s \lesssim \eps^3 \jt^{-1+\delta}.
\end{equation*}

Noting that
\begin{equation*}
-i\py v_+(s,y) = \int_\bbR e^{-is(\xi_1^2+\ulomega)} e^{iy\xi_1}  \xi_1 \fraka(\xi_1) \tilf_{+}(s,\xi_1)\,\ud \xi_1 ,
\end{equation*}
we have 
\begin{equation*}
\begin{split}
\calI^{2,2}(t;T) = -i \int_t^T \dot{\theta}_2(s) \int_\bbR q(y) \py v_+(s,y) v_-(s,y) \,\ud y \,\ud s.
\end{split}
\end{equation*}
Hence \eqref{equ:consequences_aux_bound_modulation2}, \eqref{equ:preparation_flat_Schrodinger_wave_bound1}, and \eqref{equ:preparation_flat_Schrodinger_wave_bound2} imply that 
\begin{equation*}
\begin{split}
\big| \calI^{2,2}(t;T) \big| &\lesssim \int_t^T |\dot{\theta}_2(s)| \| \jy q(y) \|_{L_y^2} \| \jy^{-1} \py v_+(s,y) \|_{L_y^2} \| v_-(s,y) \|_{L_y^\infty} \,\ud y \,\ud s \\
&\lesssim \int_t^T \eps \js^{-1+\delta} \eps \js^{-1+\delta} \eps \js^{-\frac12}\,\ud s\lesssim \eps^3 \jt^{-\frac32 + \delta}.
\end{split}
\end{equation*}
Similarly, we can rewrite
\begin{equation*}
\begin{split}
\calI^{2,3}(t;T) &= - \int_t^T \dot{\theta}_1(s) \int_\bbR q(y) \left(\int_\bbR e^{iy\xi_1} \fraka(\xi_1) \calL_{+, \ulomega}[\Vp(s)](\xi_1)\,\ud \xi_1\right) v_-(s,y) \,\ud y \,\ud s,\\
\calI^{2,4}(t;T) &= -i \int_t^T \dot{\theta}_2(s) \int_\bbR q(y) \left(\int_\bbR e^{iy\xi_1} \fraka(\xi_1) \calK_{+, \ulomega}[\Vp(s)](\xi_1)\,\ud \xi_1\right) v_-(s,y) \,\ud y \,\ud s.
\end{split}
\end{equation*}
Using Plancherel's identity and the bounds \eqref{equ:consequences_aux_bound_modulation2}, \eqref{equ:consequences_calL_bounds}, \eqref{equ:consequences_calK_bounds}, \eqref{equ:preparation_flat_Schrodinger_wave_bound1}, we establish that
\begin{equation*}
\begin{split}
\big|\calI^{2,3}(t;T)\big| &\lesssim \int_t^T |\dot{\theta}_1(s)| \|q \|_{L_y^2} \big\| \calL_{+, \ulomega}[\Vp(s)]\big\|_{L_\xi^2} \| v_-(s)\|_{L_y^\infty} \,\ud s\\
&\lesssim \int_t^T \eps \js^{-1+\delta} \cdot \eps \js^{-\frac12} \cdot \eps \js^{-\frac12} \,\ud s \lesssim \eps^3 \jt^{-1+\delta},
\end{split}
\end{equation*}
and
\begin{equation*}
\begin{split}
\big|\calI^{2,4}(t;T)\big| &\lesssim \int_t^T |\dot{\theta}_2(s)| \|q \|_{L_y^2} \big\| \calK_{+, \ulomega}[\Vp(s)]\big\|_{L_\xi^2} \| v_-(s)\|_{L_y^\infty} \,\ud s\\
&\lesssim \int_t^T \eps \js^{-1+\delta} \cdot \eps \js^{-\frac12} \cdot \eps \js^{-\frac12} \,\ud s \lesssim \eps^3 \jt^{-1+\delta}.
\end{split}
\end{equation*}

For the term $\calI^{2,5}(t;T)$, we have 
\begin{equation*}
\calI^{2,5}(t;T) =  \int_t^T \int_\bbR q(y)  \left(\int_\bbR e^{iy\xi_1} \fraka(\xi_1) \wtilcalF_{+, \ulomega}[\calQ_{\ulomega}(\ulPe \Vp(s))](\xi_1) \,\ud \xi_1 \right)   v_-(s,y) \,\ud y \,\ud s.
\end{equation*}
Upon inserting the expressions for the quadratic nonlinearity and the distorted Fourier transform, we find that 
\begin{equation*}
\begin{split}
\wtilcalF_{+, \ulomega}[\calQ_{\ulomega}(\ulPe \Vp(s))](\xi_1) &= - \int_\bbR \phi_{\ulomega}(x) \ve(s,x) \big(\ve(s,x)+2  \barve(s,x)\big) \overline{\Psi_{1,\ulomega}(x,\xi_1)}\,\ud x\\
&\quad - \int_\bbR \phi_{\ulomega}(x) \barve(s,x) \big(\barve(s,x) + 2 \ve(s,x)\big) \overline{\Psi_{2,\ulomega}(x,\xi_1)}\,\ud x.
\end{split}
\end{equation*}
We will use the decomposition for $v_-(s,y)$ given by 
\begin{equation}\label{equ:proof_v-_decomp}
v_-(s,y) = \tilh_2(s)+R_{v_-}(s,y),
\end{equation}
where 
\begin{equation*}
\tilh_2(s) := e^{is\ulomega}\int_\bbR e^{is\xi^2} \chi_0(\xi) \frakb(\xi) \tilf_{-, \ulomega,\ulp}(s,\xi)\,\ud \xi.
\end{equation*}
Note that the above decomposition satisfies the estimates \eqref{equ:preparation_flat_Schrodinger_wave_bound3}--\eqref{equ:preparation_flat_Schrodinger_wave_bound5}. By inserting the leading order decomposition \eqref{equ:consequences_usube_leading_order_local_decay_decomp} and \eqref{equ:consequences_barusube_leading_order_local_decay_decomp} for $\ve(s,x)$ and $\barve(s,x)$ respectively, and \eqref{equ:proof_v-_decomp} for $v_-(s,y)$, we find that the term $\calI^{2,5}(t;T)$ is a sum of terms of the form 
\begin{equation}\label{equ:proof_I_1}
I_1(t;T) := C_{k_1,k_2,\ell} \int_t^T h_{j_1,\ulomega}(s) h_{j_2,\ulomega}(s) \tilh_2(s) \,\ud s, \quad j_1,j_2,k_1,k_2,\ell \in \{1,2\},
\end{equation}
with a constant $C_{k_1,k_2,\ell}$ given by 
\begin{equation}\label{equ:proof_C}
C_{k_1,k_2,\ell} = \int_\bbR q(y) \int_\bbR e^{iy\xi_1}\fraka(\xi_1) \int_\bbR \phi_{\ulomega}(x) \Phi_{k_1,\ulomega}(x) \Phi_{k_2,\ulomega}(x) \overline{\Psi_{\ell,\ulomega}(x,\xi_1)}\,\ud x \,\ud \xi_1 \,\ud y,
\end{equation}
and terms of the form
\begin{equation}\label{equ:proof_I_2}
I_2(t;T) := \int_t^T \int_\bbR q(y) \left(\int_\bbR e^{iy\xi_1} \fraka(\xi_1) \int_\bbR \phi_{\ulomega}(x) g_1(s,x) g_2(s,x) \overline{\Psi_{\ell,\ulomega}(x,\xi_1)}\,\ud x \,\ud \xi_1 \right) g_3(s,y)\,\ud y \,\ud s,
\end{equation}
with $\ell \in \{1,2\}$ and where the inputs $g_1(s,x), g_2(s,x)$ are given by $R_{\vp,\ulomega}(s,x)$, $R_{\barvp,\ulomega}(s,x)$, or $h_{j,\ulomega,\ulp}(s)\Phi_{k,\ulomega}(x)$, $ j,k \in\{1,2\}$, and the input $g_3(s,y)$ is given by $\tilh_2(s)$ or $R_{v_-}(s,y)$. 

Note that the constant  \eqref{equ:proof_C} is bounded thanks to the spatial localization of $q(y)$ and $\phi_\ulomega(x)$, and the tensorized sutrcture of $\Psi_{\ell,\ulomega}(x,\xi_1)$; see Lemma~\ref{lemma: PDO on m12}. Hence, it suffices to estimate the $s$-integral in \eqref{equ:proof_I_1} to estimate $I_1(t;T)$. Since $I_1(t;T)$ has the same structure as the term \eqref{equ:proof_mod_I_lead} below, we refer the reader to \eqref{equ:proof_h3} for the details. The conclusion is that 
\begin{equation*}
\left|I_1(t;T)\right| \lesssim \eps^3 \jt^{-1+\delta}.
\end{equation*}

For the term $I_2(t;T)$, we may assume that at least one of the inputs $g_j(s,\cdot)$, $1\leq j \leq 3$ is a remainder term among $R_{\vp,\ulomega}(s,x)$, $R_{\barvp,\ulomega}(s,x)$, or $R_{v_-}(s,y)$. For example, we provide details for the case where $g_1(s,x) = R_{\vp,\ulomega}(s,x)$, $g_2(s,x) = h_{1,\ulomega,\ulp}(s)\Phi_{1,\ulomega}(x)$, and $g_3(s,y) = \tilh_2(s)$. By applying Cauchy-Schwarz inequality, the Plancherel's identity, and then using the bounds \eqref{equ:consequences_h12_decay}, \eqref{equ:consequences_Ru_local_decay}, \eqref{equ:preparation_flat_Schrodinger_wave_bound3}, \eqref{equ:preparation_flat_Schrodinger_wave_bound5}, we have 
\begin{equation*}
\begin{split}
&\left |\int_t^T \int_\bbR q(y) \left(\int_\bbR e^{iy\xi_1} \fraka(\xi_1) \int_\bbR \phi_{\ulomega}(x) R_{\vp,\ulomega}(s,x) h_{1,\ulomega,\ulp}(s)\Phi_{1,\ulomega}(x) \overline{\Psi_{\ell,\ulomega}(x,\xi_1)}\,\ud x \,\ud \xi_1 \right) \tilh_2(s)\,\ud y \,\ud s\right |\\
&\lesssim \int_t^T \| \jy^2 q(y)\|_{L^2} \|\jx^2 \phi_{\ulomega}(x)\|_{L^2} \| \jx^{-2} R_{\vp,\ulomega}(s,x)\|_{L^\infty} |h_{1,\ulomega,\ulp}(s)| |\tilh_2(s)|\,\ud s\\
&\lesssim \int_t^T \eps \js^{-1+\delta} \cdot \eps^2 \js^{-1} \,\ud s \lesssim \eps^3 \jt^{-1+\delta}.
\end{split}
\end{equation*}
This is a sufficient bound and we conclude the discussion for the term $\calI^{2,5}(t;T)$.

For the next term, one has
\begin{equation*}
\calI^{2,6}(t;T) = \int_t^T \int_\bbR q(y)  \left(\int_\bbR e^{iy\xi_1} \fraka(\xi_1) \wtilcalF_{+, \ulomega}[\calC(\ulPe \Vp(s))](\xi_1) \,\ud \xi_1 \right)   v_-(s,y) \,\ud y \,\ud s.
\end{equation*}
We need to determine the type of symbol $\fraka(\cdot)$, as specified in \eqref{equ:proof_symbol_type}, to estimate this term.

\underline{Case 1:} $\fraka(\xi_1) = \frac{1}{(|\xi_1|-i\sqrt{\ulomega})^2}$ or $\fraka(\xi_1) = \frac{\xi_1}{(|\xi_1|-i\sqrt{\ulomega})^2}$. 

In this case, we  note that $\fraka(\cdot) \in H^1$. By examining the structure of the cubic term $\wtilcalF_{+, \ulomega}[\calC(\ulPe \Vp(s))](\xi_1)$ given in Section~\ref{subsec:cubic_spectral_distributions}, we find that the term 
\begin{equation*}
\int_\bbR e^{iy\xi_1} \fraka(\xi_1) \wtilcalF_{+, \ulomega}[\calC(\ulPe \Vp(s))](\xi_1) \,\ud \xi_1.
\end{equation*}
is a linear combination of three types of contributions: regular type, Hilbert type, and a Dirac delta type. Let $\frakn_0,\frakn_1,\frakn_2,\frakn_3 \in W^{1,\infty}(\bbR)$ be symbols of the form \eqref{eqn: symbol_frakb}. A regular contribution is of the form 
\begin{equation*}
I_{\mathrm{reg}}(s,y) := \int_\bbR e^{iy\xi_1} \fraka(\xi_1)  \frakn_0(\xi_1) \widehat{\calF}[\varphi(\cdot)w_{1}(s,\cdot)w_{2}(s,\cdot)w_{3}(s,\cdot)](\xi_1)\,\ud \xi_1 
\end{equation*}
with $\varphi\in \calS(\bbR)$ a Schwartz function and $w_{j}(s,y)$, $1 \leq j \leq 3$ are free waves given by 
\begin{equation*}
\int_\bbR e^{iy\eta} e^{\mp is(\eta^2+\ulomega)} \frakn_j(\eta) \tilf_{\pm, \ulomega,\ulp}(s,\eta)\,\ud \eta,
\end{equation*}
or complex conjugates thereof. A Hilbert type contribution is of the form 
\begin{equation*}
I_{\pvdots}(s,y) := \int_\bbR e^{iy\xi_1} \fraka(\xi_1) \frakn_0(\xi_1) \widehat{\calF}[\tanh(\sqrt{\ulomega}\cdot) v_1(s,\cdot) \overline{v_2(s,\cdot)} v_3(s,\cdot)](\xi_1)\,\ud \xi_1, 
\end{equation*}
while a Dirac delta type contribution is of the form
\begin{equation*}
I_{\delta_0}(s,y) := \int_\bbR e^{iy\xi_1} \fraka(\xi_1) \frakn_0(\xi_1) \widehat{\calF}[v_1(s,\cdot) \overline{v_2(s,\cdot)} v_3(s,\cdot)](\xi_1)\,\ud \xi_1, 
\end{equation*}
where 
\begin{equation*}
v_j(s,y) := \int_\bbR e^{iy\eta} e^{- is(\eta^2+\ulomega)} \frakn_j(\eta) \tilf_{+, \ulomega,\ulp}(s,\eta)\,\ud \eta, \quad 1 \leq j \leq 3.
\end{equation*}
We note that the bounds \eqref{equ:preparation_flat_Schrodinger_wave_bound1}--\eqref{equ:preparation_flat_Schrodinger_wave_bound5} also applies to the free Schr\"odinger waves $w_j(s,y)$, $1 \leq j \leq 3$, and $v_j(s,y)$, $1 \leq j \leq 3$. On the regular contribution, we use the fact that $\fraka(\cdot)\frakn_0(\cdot) \in H^1(\bbR)$ (hence $\widehat{\calF}^{-1}[\fraka(\cdot)\frakn_0(\cdot)](y) \in L_y^{2,1}\subset L_y^1$), the bound  \eqref{equ:preparation_flat_Schrodinger_wave_bound1} and Young's convolution inequality to obtain
\begin{equation*}
\begin{split}
&\left| \int_t^T \int_\bbR q(y)  I_{\mathrm{reg}}(s,y) v_-(s,y) \,\ud y \,\ud s \right|\\
&\lesssim \int_t^T \|q \|_{L_y^1} \left\|\widehat{\calF}^{-1}\left[\fraka(\cdot)\frakn_0(\cdot)\right] \ast \big(\varphi(y)w_1(s,y)w_2(s,y)w_3(s,y)\big) \right\|_{L_y^\infty} \|v_-(s)\|_{L_y^\infty} \,\ud s\\
&\lesssim  \int_t^T \|q \|_{L_y^1} \left\|\widehat{\calF}^{-1}\left[\fraka(\cdot)\frakn_0(\cdot)\right]\right\|_{L_y^1} \left\| \varphi(y)w_1(s,y)w_2(s,y)w_3(s,y) \right\|_{L_{y}^\infty} \| v_-(s)\|_{L_y^\infty }\,\ud s\\
&\lesssim \int_t^T \| \varphi \|_{L_y^\infty} \| w_1(s)\|_{L_y^\infty}\| w_2(s)\|_{L_y^\infty}\| w_3(s)\|_{L_y^\infty} \|v_-(s)\|_{L_y^\infty} \,\ud s\\
&\lesssim \int_t^T \eps^4 \js^{-2} \,\ud s \lesssim \eps^4 \jt^{-1}.
\end{split}
\end{equation*}
Arguing in the same manner, we conclude that 
\begin{equation*}
\begin{split}
\left| \int_t^T \int_\bbR q(y)  I_{\pvdots}(s,y) v_-(s,y) \,\ud y \,\ud s \right| &\lesssim \int_t^T \|\tanh(\sqrt{\ulomega}\cdot)\|_{L_y^\infty} \| v_1(s)\|_{L_y^\infty}\| v_2(s)\|_{L_y^\infty}\| v_3(s)\|_{L_y^\infty} \|v_-(s)\|_{L_y^\infty} \,\ud s\\
&\lesssim \eps^4 \jt^{-1},
\end{split}
\end{equation*}
and
\begin{equation*}
\left| \int_t^T \int_\bbR q(y)  I_{\delta_0}(s,y) v_-(s,y) \,\ud y \,\ud s \right| \lesssim \int_t^T \| v_1(s)\|_{L_y^\infty}\| v_2(s)\|_{L_y^\infty}\| v_3(s)\|_{L_y^\infty} \|v_-(s)\|_{L_y^\infty} \,\ud s\lesssim \eps^4 \jt^{-1}.
\end{equation*}

\underline{Case 2:} $\fraka(\xi_1) = \frac{\xi_1^2}{(|\xi_1|-i\sqrt{\ulomega})^2}$.

In this case, we turn one factor of $\xi_1$ into a spatial derivative via integration by parts and write
\begin{equation*}
\begin{split}
\calI^{2,6}(t;T) &= \int_t^T \int_\bbR q(y)  \left(\int_\bbR e^{iy\xi_1} \frac{\xi_1^2}{(|\xi_1|-i\sqrt{\ulomega})^2} \wtilcalF_{+, \ulomega}[\calC(\ulPe \Vp(s))](\xi_1) \,\ud \xi_1 \right)   v_-(s,y) \,\ud y \,\ud s\\
&= i \int_t^T \int_\bbR \py q(y)  \left(\int_\bbR e^{iy\xi_1} \frac{\xi_1}{(|\xi_1|-i\sqrt{\ulomega})^2} \wtilcalF_{+, \ulomega}[\calC(\ulPe \Vp(s))](\xi_1) \,\ud \xi_1 \right)   v_-(s,y) \,\ud y \,\ud s\\
&\quad + i\int_t^T \int_\bbR q(y)  \left(\int_\bbR e^{iy\xi_1} \frac{\xi_1}{(|\xi_1|-i\sqrt{\ulomega})^2} \wtilcalF_{+, \ulomega}[\calC(\ulPe \Vp(s))](\xi_1) \,\ud \xi_1 \right)   \py v_-(s,y) \,\ud y \,\ud s.
\end{split}
\end{equation*}
The first term on the right-hand side with the factor $\py q(y)$ can be estimated like in the previous case since $q(y)$ is a Schwartz function. For the other term, we use Plancherel's identity, the $L^2$-boundedness of distorted Fourier transform along with the bounds \eqref{equ:consequences_sobolev_bound_V}, \eqref{equ:consequences_U_disp_decay}, \eqref{equ:preparation_flat_Schrodinger_wave_bound2} to conclude that 
\begin{equation*}
\begin{split}
&\left| \int_t^T \int_\bbR q(y)  \left(\int_\bbR e^{iy\xi_1} \frac{\xi_1}{(|\xi_1|-i\sqrt{\ulomega})^2} \wtilcalF_{+, \ulomega}[\calC(\ulPe \Vp(s))](\xi_1) \,\ud \xi_1 \right)   \py v_-(s,y) \,\ud y \,\ud s\right|\\
&\lesssim \int_t^T \| \jy q \|_{L_y^2} \| \calC(\ulPe \Vp(s))\|_{L_y^2} \| \jy^{-1} \py v_-(s,y)\|_{L_y^2}\,\ud s\\
&\lesssim \int_t^T \| \Vp(s)\|_{L_y^2}  \| \Vp(s)\|_{L_y^\infty} \| \Vp(s)\|_{L_y^\infty} \| \jy^{-1} \py v_-(s,y)\|_{L_y^2}\,\ud s\\
&\lesssim \int_t^T \eps \cdot \eps^2 \js^{-1} \cdot \eps \js^{-1+\delta} \,\ud s \lesssim \eps^4 \jt^{-1+\delta}.
\end{split}
\end{equation*}

Thus, we finish the discussion for estimating the integral $\calI^{2,6}(t;T)$. Finally, we have 
\begin{equation*}
\begin{split}
\calI^{2,7}(t;T) = - \int_t^T \int_\bbR q(y)  \left(\int_\bbR e^{iy\xi_1} \fraka(\xi_1) \wtilcalF_{+, \ulomega}[\ulPe \calMod(s) + \calE_1(s) + \calE_2(s) + \calE_3(s)](\xi_1) \,\ud \xi_1 \right)\\
\times  v_-(s,y) \,\ud y \,\ud s.
\end{split}
\end{equation*}
Using Plancherel's identity, the $L^2$ boundedness of the distorted Fourier transform (by Proposition~\ref{prop:mapping_properties_dist_FT}) along with the bounds \eqref{equ:consequences_ulPe_Mod_bounds}, \eqref{equ:consequences_calE1_bounds}, \eqref{equ:consequences_calE2_bounds}, \eqref{equ:consequences_calE3_bounds}, \eqref{equ:preparation_flat_Schrodinger_wave_bound1}, we conclude that 
\begin{equation*}
\begin{split}
\big|\calI^{2,7}(t;T)\big| &\lesssim \int_t^T \| q \|_{L_y^2} \left(\| \ulPe \calMod(s) \|_{L_y^2} + \| \calE_1(s) \|_{L_y^2} +  \| \calE_2(s) \|_{L_y^2} +  \| \calE_3(s) \|_{L_y^2}\right) \| v_-(s)\|_{L_y^\infty} \,\ud s\\
&\lesssim \int_t^T \eps^2 \js^{-\frac32+ \delta} \cdot \eps \js^{-\frac12} \,\ud s \lesssim \eps^3 \jt^{-1+\delta}.
\end{split}
\end{equation*}
By combining the preceeding estimates, we obtain that
\begin{equation*}
|\calI^2(t;T)| \lesssim \eps^2 \jt^{-1+\delta}.
\end{equation*}
In conclusion, we have shown that 
\begin{equation*}
|\calI(t;T) | \lesssim \eps^2 \jt^{-1+\delta},
\end{equation*}
and hence
\begin{equation*}
|\calI_{+-}(t;T) | + |\calJ_{+-}(t;T) | \lesssim \eps^2 \jt^{-1+\delta}.
\end{equation*}

The contribution of the non-resonant terms $|\calI_{++}(t;T) |$, $|\calI_{--}(t;T) |$, $|\calJ_{++}(t;T) |$, $|\calJ_{--}(t;T) |$ can be treated in the same way since the spectral distributions in \eqref{eqn: nu++}, \eqref{eqn: nu--}, \eqref{eqn: nu_p++}, \eqref{eqn: nu_p--} contain the phase $(\xi_1^2+\xi_2^2 + 2 \ulomega)$. Thus, the same arguments will show that 
\begin{equation*}
|\calI_{++}(t;T)| + |\calI_{--}(t;T)| + |\calJ_{++}(t;T)| +|\calJ_{--}(t;T)|\lesssim \eps^2 \jt^{-1+\delta}.
\end{equation*}
However, we emphasize that the presence of the factor $(\xi_1+\xi_2+2\ulomega)$ in the spectral distributions is strictly not necessary since the phase does not vanish for any $\xi_1,\xi_2 \in \bbR$. That is, one can still perform an integration by parts in time to achieve the same estimates. 

\medskip \noindent \underline{Contribution of $\int_t^T \bbM_{\ulomega}^{-1} \bfC(s)\,\ud  s$ and $\int_t^T \bbM_{\ulomega}^{-1} \bbA_\ulomega(s) \bbM_{\ulomega}^{-1} \bfQ(s) \,\ud s$ to $|\omega(t)-\omega(T)|$ and $|p(t)-p(T)|$}:

We recall that the components in the eigenfunctions $\big( Y_{j,\ulomega}(y)\big)_{1 \leq j \leq 4}$ are all Schwartz functions. Upon inspecting \eqref{equ:proof_bbM_ulomega_inv}, \eqref{equ:proof_vector_def}, we find that every component of the vector $\int_t^T \bbM_{\ulomega}^{-1} \bfC(s)\,\ud s$ is a linear combination of the cubic expression (or its complex conjugate)
\begin{equation}\label{equ:proof_mod_I}
I(t;T) := \int_t^T \langle \ve(s) \barve(s) \ve(s), \varphi\rangle \,\ud s 
\end{equation}
for some Schwartz function $\varphi(y)$. 

 In order to infer decay for \eqref{equ:proof_mod_I}, we insert the leading order decomposition \eqref{equ:consequences_usube_leading_order_local_decay_decomp} for $\ve(s,y)$ and respectively  \eqref{equ:consequences_barusube_leading_order_local_decay_decomp} for $\barve(s,y)$. We find that the leading expression for $I(t;T)$ are sums of terms of the form
\begin{equation}\label{equ:proof_mod_I_lead}
I_{\mathrm{lead}}(t;T) := \int_t^T h_{j_1,\ulomega,\ulp}(s) h_{j_2,\ulomega,\ulp}(s) h_{j_3,\ulomega,\ulp}(s) \,\ud s \cdot \langle \Phi_{k_1,\ulomega}\Phi_{k_2,\ulomega}\Phi_{k_3,\ulomega},\varphi\rangle 
\end{equation}
with $j_1,j_2,j_3,k_1,k_2,k_3 \in\{1,2\}$, while the lower order terms for $I(t;T)$ are given by
\begin{equation}
I_{\mathrm{lower}}(t;T) = \int_t^T \langle f_1(s) f_2(s) f_3(s), \varphi\rangle \,\ud s
\end{equation}
where at least one of the inputs $f_1(s,y)$, $f_2(s,y)$, or $f_3(s,y)$ are given by $R_{\vp,\ulomega}(s,y)$ or $R_{\barvp,\ulomega}(s,y)$, while the other two inputs are arbitrary among $h_{j,\ulomega,\ulp}(s) \Phi_{k,\ulomega}(y)$, $j,k\in\{1,2\}$, and $R_{\vp,\ulomega}(s,y)$, $R_{\barvp,\ulomega}(s,y)$. Thanks to the spatial localization of $\varphi(y)$ and the estimates \eqref{equ:consequences_h12_decay}, \eqref{equ:consequences_Ru_local_decay}, we infer the sufficient bound for the lower order term,
\begin{equation*}
\begin{split}
\left |I_{\mathrm{lower}}(t;T)\right| 
&\lesssim \int_t^T \left( |h_{1,\ulomega,\ulp}(s)| + |h_{2,\ulomega,\ulp}(s)|  + \big\|\jy^{-2} R_{\vp,\ulomega}(s,y)\big\|_{L_y^\infty } +\big\|\jy^{-2} R_{\barvp,\ulomega}(s,y)\big\|_{L_y^\infty }\right)^2 \times\\
&\qquad \qquad \times \left( \big\|\jy^{-2} R_{\vp,\ulomega}(s,y)\big\|_{L_y^\infty } +\big\|\jy^{-2} R_{\barvp,\ulomega}(s,y)\big\|_{L_y^\infty } \right)\,\ud s\\
&\lesssim \int_t^T \eps^2 \js^{-1} \cdot \eps \js^{-1+\delta} \,\ud s \lesssim \eps^3 \jt^{-1+\delta}.
\end{split}
\end{equation*}

For the leading term \eqref{equ:proof_mod_I_lead}, we exploit the fact that there are no time resonances. Let us define the sign function $\frakl(1) = +1$ and $\frakl(2) = -1$, and the function $\Lambda(j_1,j_2,j_3) := \frakl(j_1)+\frakl(j_2)+\frakl(j_3)$. The observation is that the phase $\Lambda = \Lambda(j_1,j_2,j_3)$ only takes values in $\{-3,-1,1,3\}$ which allows us integrate by parts in time to obtain
\begin{equation}\label{equ:proof_h3}
\begin{split}
&\int_t^T h_{j_1,\ulomega,\ulp}(s) h_{j_2,\ulomega}(s) h_{j_3,\ulomega,\ulp}(s) \,\ud s \\
&= \int_t^T e^{-is\Lambda(j_1,j_2,j_3)\ulomega} \big(e^{i s\frakl(j_1)\ulomega}h_{j_1,\ulomega,\ulp}(s)\big) \big(e^{i s\frakl(j_2)\ulomega}h_{j_2,\ulomega,\ulp}(s)\big)\big(e^{i s\frakl(j_3)\ulomega}h_{j_3,\ulomega,\ulp}(s)\big)\,\ud s\\
&= \frac{i}{\Lambda \ulomega} \bigg(\left[e^{-is\Lambda\ulomega} \big(e^{i s\frakl(j_1)\ulomega}h_{j_1,\ulomega,\ulp}(s)\big) \big(e^{i s\frakl(j_2)\ulomega}h_{j_2,\ulomega,\ulp}(s)\big)\big(e^{i s\frakl(j_3)\ulomega}h_{j_3,\ulomega,\ulp}(s)\big) \right]_{s=t}^{s=T}\\
&\qquad \qquad - \int_t^T e^{-is\Lambda\ulomega} \ps \big(e^{i s\frakl(j_1)\ulomega}h_{j_1,\ulomega,\ulp}(s)\big) \big(e^{i s\frakl(j_2)\ulomega}h_{j_2,\ulomega,\ulp}(s)\big)\big(e^{i s\frakl(j_3)\ulomega}h_{j_3,\ulomega,\ulp}(s)\big)\,\ud s + \{\text{similar terms}\}\bigg).
\end{split}
\end{equation}
Thus, using the decay estimates \eqref{equ:consequences_h12_decay}, \eqref{equ:consequences_h12_phase_filtered_decay} for the ampltitudes we obtain 
\begin{equation*}
\begin{split}
\left| I_{\mathrm{lead}}(t;T)\right| &\lesssim \left | \int_t^T h_{j_1,\ulomega,\ulp}(s) h_{j_2,\ulomega,\ulp}(s) h_{j_3,\ulomega,\ulp}(s) \,\ud s\right |\\ &\lesssim \eps^3\jt^{-\frac32}  + \eps^3 \jap{T}^{-\frac32} + \int_t^T \eps \js^{-1+\delta} \cdot \eps^2 \js^{-1} \,\ud s \lesssim \eps^3 \jt^{-1+\delta}.
\end{split}
\end{equation*}

Finally we inspect the defintions \eqref{equ:proof_bbA}, \eqref{equ:proof_bbM_ulomega_inv}, \eqref{equ:proof_vector_def} and find that every component of the vector $\int_t^T \bbM_{\ulomega}^{-1} \bbA_\ulomega(s) \bbM_{\ulomega}^{-1} \bfQ(s) \,\ud s$ is a linear combination of the cubic expression
\begin{equation}\label{equ:proof_mod_II}
II(t;T) := \int_t^T \langle g_1(s,\cdot),q_1(\cdot)\rangle \langle \phi_\ulomega(\cdot) g_2(s,\cdot),g_3(s,\cdot),q_2(\cdot) \rangle\,\ud s
\end{equation} 
where $q_1(y), q_2(y)$ are Schwartz functions and where the inputs $g_1(s,y)$, $g_2(s,y)$, $g_3(s,y)$ are given by either $\ve(s,y)$ or $\barve(s,y)$. Upon inserting the leading order decomposition \eqref{equ:consequences_usube_leading_order_local_decay_decomp} for $\ve(s,y)$ and respectively  \eqref{equ:consequences_barusube_leading_order_local_decay_decomp} for $\barve(s,y)$, we find that estimating $II(t;T)$ can be handled analogously to $I(t;T)$. Therefore, we conclude that
\begin{equation*}
\big| I(t;T) \big|+\big| II(t;T)\big| \lesssim \eps^3 \jt^{-1+\delta}.
\end{equation*}

\medskip \noindent \underline{Conclusion}: By combining all of the preceeding estimates, we arrive at the bound \eqref{equ:proof_mod_bootstrap}, which closes the bootstrap argument for Proposition~\ref{prop:modulation_parameters}.

\section{Weighted Energy Estimates for the Profile}\label{sec:energy_estimates}
In this section we derive slowly growing weighted energy estimates for the distorted Fourier transform of the profile.
\subsection{Estimates for the bilinear form and remainder terms}
In this subsection, we collect some preparatory estimates that will be used in the derivation of the weighted energy estimates and in the derivation of the pointwise estimates. We start with decay estimates for the remainder terms in the renormalized part of the quadratic nonlinearity.
\begin{lemma}\label{lemma:prep_Q_r-ulomega}
Suppose the assumptions in the statement of Proposition~\ref{prop:profile_bounds} are in place. Then we have 
\begin{align}
\sup_{0 \leq t \leq T} \jt^{\frac32 - \delta} \| \jym \calQ_{\mathrm{r},\ulomega}\big((\ulPe \Vp)(t)\big)\|_{L_y^2} &\lesssim \eps^2,\label{equ:consequences_calQ_r}\\
\sup_{0 \leq t \leq T} \jt^{\frac32 - \delta} \| \jym \py \calQ_{\mathrm{r},\ulomega}\big((\ulPe \Vp)(t)\big)\|_{L_y^2} &\lesssim \eps^2.\label{equ:consequences_pycalQ_r}
\end{align}
\end{lemma}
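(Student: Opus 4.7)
The plan is to estimate $\calQ_{\mathrm{r},\ulomega}$ term by term according to its three-piece decomposition $\calQ_{\mathrm{r},1,\ulomega} + \calQ_{\mathrm{r},2,\ulomega} + \calQ_{\mathrm{r},3,\ulomega}$ from \eqref{eqn:def_renormalized_quadratic}, using Hölder's inequality together with the pointwise bounds \eqref{equ:consequences_h12_decay}, \eqref{equ:consequences_Ru_local_decay}, and \eqref{equ:consequences_px_Ru_local_decay}. The key structural observation is that every component of $\calQ_{\mathrm{r},\ulomega}\bigl((\ulPe\Vp)(t)\bigr)$ carries a factor of $\phi_\ulomega(y)$, which is a Schwartz function; the threshold resonance factors $\Phi_{1,\ulomega}, \Phi_{2,\ulomega}$ are merely bounded, but this is harmless once they are paired with $\phi_\ulomega$.

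For the estimate \eqref{equ:consequences_calQ_r}, I would first handle $\calQ_{\mathrm{r},1,\ulomega}$ and $\calQ_{\mathrm{r},2,\ulomega}$, which are schematically of the form $\phi_\ulomega(y)\bigl(h_{1,\ulomega,\ulp}(t)\Phi_{j,\ulomega}(y) - h_{2,\ulomega,\ulp}(t)\Phi_{k,\ulomega}(y)\bigr) R_{\bullet,\ulomega}(t,y)$. Writing $R_{\bullet,\ulomega} = \jy^{2}\bigl(\jy^{-2}R_{\bullet,\ulomega}\bigr)$ and using the boundedness of $\Phi_{j,\ulomega}$, I can estimate
\begin{equation*}
\bigl\|\jym \calQ_{\mathrm{r},1,\ulomega}(t)\bigr\|_{L^2_y} \lesssim \bigl(|h_{1,\ulomega,\ulp}(t)| + |h_{2,\ulomega,\ulp}(t)|\bigr)\|\jy\phi_\ulomega\|_{L^2_y}\bigl\|\jy^{-2}R_{\bullet,\ulomega}(t)\bigr\|_{L^\infty_y}\lesssim \varepsilon^2 \jt^{-\frac32+\delta},
\end{equation*}
which is acceptable. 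The piece $\calQ_{\mathrm{r},3,\ulomega}$, being quadratic in $R_{\bullet,\ulomega}$, requires $\|\jy^{3}\phi_\ulomega\|_{L^2_y}$ and yields the even stronger bound $\varepsilon^2 \jt^{-2+2\delta}$ by the same logic.

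For the derivative estimate \eqref{equ:consequences_pycalQ_r}, I apply the product rule and distinguish where $\py$ lands. When $\py$ hits $\phi_\ulomega$, $\Phi_{j,\ulomega}$, or their products (all Schwartz or smooth bounded with Schwartz-type localization after pairing with $\phi_\ulomega$), the resulting term has the same structure as in the undifferentiated case and obeys the same $\varepsilon^2 \jt^{-\frac32+\delta}$ bound. When $\py$ falls on $R_{\vp,\ulomega}$ or $R_{\barvp,\ulomega}$, I use the companion local decay bound \eqref{equ:consequences_px_Ru_local_decay} together with the extra factor $\jy^3$ absorbed into $\phi_\ulomega$:
\begin{equation*}
\bigl\|\jym \phi_\ulomega h_{j,\ulomega,\ulp}(t) \Phi_{k,\ulomega}\, \py R_{\bullet,\ulomega}(t)\bigr\|_{L^2_y} \lesssim |h_{j,\ulomega,\ulp}(t)| \|\jy^{2}\phi_\ulomega\|_{L^\infty_y}\bigl\|\jy^{-3}\py R_{\bullet,\ulomega}(t)\bigr\|_{L^2_y},
\end{equation*}
which again yields $\varepsilon^2\jt^{-\frac32+\delta}$.

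There is no substantial obstacle here; the lemma is essentially a bookkeeping exercise whose success hinges on the Schwartz localization of $\phi_\ulomega$ absorbing the $\jy^k$ weights coming from the pointwise remainder estimates. The only mild care needed is to keep track of which factors are Schwartz ($\phi_\ulomega$ and $\Phi_{2,\ulomega}$) versus merely bounded ($\Phi_{1,\ulomega}$), so that all $\jy$-weights can be paid for by $\phi_\ulomega$ alone.
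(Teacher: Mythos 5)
Your proof is correct and follows the same route as the paper: expand $\calQ_{\mathrm{r},\ulomega}$ into its three constituents, observe that every term is $\phi_\ulomega$ times a product where at least one factor is a remainder $R_{\bullet,\ulomega}$, and close using the Schwartz localization of $\phi_\ulomega$ to absorb the $\jy^k$ weights together with \eqref{equ:consequences_h12_decay}, \eqref{equ:consequences_Ru_local_decay}, \eqref{equ:consequences_px_Ru_local_decay}. The bookkeeping you carry out explicitly (including the distinction that $\Phi_{1,\ulomega}$ is only bounded while $\phi_\ulomega$ provides all the localization) is exactly what the paper's one-paragraph proof leaves implicit.
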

\begin{proof}
By inspection, we find that every component in $\calQ_{\mathrm{r},\ulomega}\big((\ulPe \Vp)(t)\big)$ given in \eqref{eqn:def_renormalized_quadratic} is a linear combination of terms of the form $\phi_{\ulomega}(y)g_1(s,y)g_2(s,y)$ where $\phi_{\ulomega}(y) = \sqrt{2\ulomega}\sech(\sqrt{\ulomega}y)$ is a Schwartz function, with input functions $g_1(s,y)$ given by $h_{1,\ulomega,\ulp}\Phi_{1,\ulomega}(y)$, $h_{1,\ulomega,\ulp}\Phi_{2,\ulomega}(y)$, $h_{2,\ulomega,\ulp}\Phi_{1,\ulomega}(y)$, $h_{2,\ulomega,\ulp}\Phi_{2,\ulomega}(y)$, $R_{\vp,\ulomega}(t,y)$, or $R_{\barvp,\ulomega}(t,y)$, and $g_2(s,y)$ given by $R_{\vp,\ulomega}(t,y)$, or $R_{\barvp,\ulomega}(t,y)$. Hence, the asserted estimates \eqref{equ:consequences_calQ_r}, \eqref{equ:consequences_pycalQ_r} follows from spatial localization of $\phi_{\ulomega}$, the boundedness of $\Phi_{1,\ulomega},\Phi_{2,\ulomega}$ and its derivatives, and from the decay estimates \eqref{equ:consequences_h12_decay}, \eqref{equ:consequences_Ru_local_decay}, \eqref{equ:consequences_px_Ru_local_decay}.
\end{proof}

The following lemma derives the pointwise and weighted energy estimates for the bilinear form $\wtilB_{\ulomega,\ulp}(t,\xi)$ and the for remainder term $\wtilcalR_{\frakq,\ulomega,\ulp}$ in the renormalized evolution equation \eqref{equ:setup_evol_equ_renormalized_tilfplus}.
\begin{lemma}\label{lemma:prep_B-ulomega} Let $\wtilB_{\ulomega,\ulp}(t,\xi)$ and $\wtilcalR_{\frakq,\ulomega,\ulp}$ be given by \eqref{equ:setup_definition_wtilBulomega} and \eqref{equ:setup_definition_wtilcalR_qulomega} respectively. Suppose the assumptions in the statement of Proposition~\ref{prop:profile_bounds} are in place. Then we have 
\begin{align}
\sup_{0 \leq t \leq T} \jt \| \jxi^2\wtilB_{\ulomega,\ulp}(t,\xi)\|_{L_\xi^\infty} &\lesssim \eps^2,\label{equ:consequences_wtilcalB}\\
\sup_{0 \leq t \leq T} \| \jxi \pxi \wtilB_{\ulomega,\ulp}(t,\xi)\|_{L_\xi^2} &\lesssim \eps^2,\label{equ:consequences_pxi_wtilcalB}\\
\sup_{0 \leq t \leq T} \jt^{\frac32-\delta} \|\jxi^2 \wtilcalR_{\frakq,\ulomega,\ulp}(t,\xi)\|_{L_\xi^\infty} &\lesssim \eps^2,\label{equ:consequences_wtilcalR_q}\\
\sup_{0 \leq t \leq T} \jt^{\frac32-\delta} \|\jxi \pxi \wtilcalR_{\frakq,\ulomega,\ulp}(t,\xi)\|_{L_\xi^2} &\lesssim \eps^2.\label{equ:consequences_pxi_wtilcalR_q}
\end{align}
\end{lemma}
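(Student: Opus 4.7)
The plan is to prove all four estimates by direct inspection, exploiting the explicit structure of $\wtilB_{\ulomega,\ulp}(t,\xi)$ and $\wtilcalR_{\frakq,\ulomega,\ulp}(t,\xi)$ from \eqref{equ:setup_definition_wtilBulomega} and \eqref{equ:setup_definition_wtilcalR_qulomega}, combined with the decay bounds \eqref{equ:consequences_h12_decay}--\eqref{equ:consequences_h12_phase_filtered_decay} on the resonance amplitudes and the regularity of $\frakq_{j,\ulomega}$ described in Remark~\ref{remark:smoothness_of_q}. The key point is that each $\frakq_{j,\ulomega}(\xi)$ is a Schwartz-class function in $\xi$ up to a factor of $(|\xi|+i\sqrt{\ulomega})^{-2}$, so we have uniformly for $\ulomega \sim \omega_0$ that
\begin{equation*}
\|\jxi^N \frakq_{j,\ulomega}\|_{L^\infty_\xi} + \|\jxi^N \pxi \frakq_{j,\ulomega}\|_{L^2_\xi} \lesssim 1, \quad j\in\{1,2,3\}, \ N \geq 0.
\end{equation*}

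For the pointwise bound \eqref{equ:consequences_wtilcalB}, I would simply use the triangle inequality together with $|h_{1,\ulomega,\ulp}(t)|^2 + |h_{2,\ulomega,\ulp}(t)|^2 \lesssim \eps^2 \jt^{-1}$ from \eqref{equ:consequences_h12_decay} to obtain $\|\jxi^2 \wtilB_{\ulomega,\ulp}(t)\|_{L^\infty_\xi} \lesssim \eps^2 \jt^{-1}$. For the weighted energy bound \eqref{equ:consequences_pxi_wtilcalB}, the $\xi$-derivative distributes onto the phases $e^{it(\xi^2 + \cdots)}$ and the symbols $\frakq_{j,\ulomega}(\xi)$. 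The first piece produces factors of $2it\xi$, which, combined with $|h|^2 \lesssim \eps^2 \jt^{-1}$ and the Schwartz decay of $\xi \jxi \frakq_{j,\ulomega}(\xi)$ in $L^2_\xi$, gives a term bounded uniformly by $\eps^2$; the second piece bounds directly by $\eps^2 \jt^{-1}\|\jxi \pxi \frakq_{j,\ulomega}\|_{L^2_\xi} \lesssim \eps^2 \jt^{-1}$. These are exactly the two bounds claimed.

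For the remainder term $\wtilcalR_{\frakq,\ulomega,\ulp}(t,\xi)$ in \eqref{equ:setup_definition_wtilcalR_qulomega}, each summand has the schematic form $(\text{oscillating phase}) \cdot h_{j,\ulomega,\ulp}(t) \cdot \pt\bigl( e^{\pm it\ulomega} h_{j',\ulomega,\ulp}(t)\bigr) \cdot \frakq_{k,\ulomega}(\xi)$. Using \eqref{equ:consequences_h12_decay} and \eqref{equ:consequences_h12_phase_filtered_decay}, the product of the two temporal coefficients enjoys the cubic-type decay $\eps^2 \jt^{-\frac32+\delta}$, and then the Schwartz decay of $\frakq_{k,\ulomega}$ absorbs the $\jxi^2$ weight in \eqref{equ:consequences_wtilcalR_q} and the $\jxi$ weight together with a $\xi$-derivative in \eqref{equ:consequences_pxi_wtilcalR_q} exactly as above. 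Crucially, no factor of $t$ appears here because $\wtilcalR_{\frakq,\ulomega,\ulp}$ does not contain any surviving phases that depend on $\xi$ — the normal form has already removed them — so the $\pxi$ falls only on $\frakq_{k,\ulomega}$.

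The routine check that needs to be executed carefully is the regularity claim on $\frakq_{j,\ulomega}$: the factor $(|\xi|+i\sqrt{\ulomega})^{-2}$ is Lipschitz (not $C^1$) at $\xi=0$, but its derivative lies in $L^\infty$, which is enough to place $\pxi \frakq_{j,\ulomega}$ in $L^2_\xi$ with rapid decay. The rest is bookkeeping, and I do not anticipate any substantive obstacle; none of the four estimates requires the null structure from Lemma~\ref{lem:null_structure_radiation} beyond what is already implicitly encoded in the definitions \eqref{equ:def_frakq_1}--\eqref{equ:def_frakq_3}, which make $\frakq_{j,\ulomega}$ smooth up to the aforementioned $(|\xi|+i\sqrt{\ulomega})^{-2}$ factor.
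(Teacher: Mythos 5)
Your proposal is correct and follows essentially the same approach as the paper: factor $\wtilB_{\ulomega,\ulp}$ and $\wtilcalR_{\frakq,\ulomega,\ulp}$ into time-dependent amplitude coefficients times the spatially-localized $\frakq_{j,\ulomega}(\xi)$, bound the coefficients via \eqref{equ:consequences_h12_decay}--\eqref{equ:consequences_h12_phase_filtered_decay}, let the factor of $t$ from $\pxi e^{it\xi^2}$ cancel the $\jt^{-1}$ decay of $b_j(t)$ in the $\wtilB$ estimates, and observe that $\wtilcalR_{\frakq,\ulomega,\ulp}$ contains no $\xi$-dependent phase so $\pxi$ never produces a growing $t$. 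Your remark about the Lipschitz (rather than $C^1$) regularity of $(|\xi|+i\sqrt{\ulomega})^{-2}$ at $\xi=0$ is a valid and appropriately routine check that the paper treats implicitly via Remark~\ref{remark:smoothness_of_q}.
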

\begin{proof}
From \eqref{equ:setup_definition_wtilBulomega} we have
\begin{equation*}
\wtilB_{\ulomega,\ulp}(t,\xi) = e^{it(\xi^2-\ulomega)} b_1(t)\frakq_{1,\ulomega}(\xi) + e^{it(\xi^2+\ulomega)} b_2(t)\frakq_{2,\ulomega}(\xi) + e^{it(\xi^2+3\ulomega)} b_3(t)\frakq_{3,\ulomega}(\xi),
\end{equation*}
where $\frakq_{1,\ulomega}(\xi),\frakq_{2,\ulomega}(\xi),\frakq_{3,\ulomega}(\xi)$ are defined in \eqref{equ:def_frakq_1}--\eqref{equ:def_frakq_3} respectively, and where we use the shortened notation 
\begin{equation}\label{equ:def_b1b2b3}
b_1(t) := \big(e^{it\ulomega}h_{1,\ulomega,\ulp}(t)\big)^2, \quad b_2(t) := \big(e^{it\ulomega}h_{1,\ulomega,\ulp}(t)\big)\big(e^{-it\ulomega}h_{2,\ulomega,\ulp}(t)\big),\quad b_3(t) := \big(e^{-it\ulomega}h_{2,\ulomega,\ulp}(t)\big)^2.
\end{equation}
By Remark~\eqref{remark:smoothness_of_q}, the functions  $\frakq_{1,\ulomega}(\xi),\frakq_{2,\ulomega}(\xi),\frakq_{3,\ulomega}(\xi)$ and its derivative are spatially localized, and by \eqref{equ:consequences_h12_decay} we infer that
\begin{equation*}
\sup_{0 \leq t \leq T} \jt \big(|b_1(t)|+|b_2(t)|+|b_3(t)|\big)\lesssim \eps^2.
\end{equation*}
Hence, the asserted bounds \eqref{equ:consequences_wtilcalB} and \eqref{equ:consequences_pxi_wtilcalB} follow from the preceeding estimates. In a similar manner, we rewrite \eqref{equ:setup_definition_wtilcalR_qulomega} as
\begin{equation*}
\wtilcalR_{\frakq,\ulomega,\ulp}(t,\xi) = r_1(t)\frakq_{1,\ulomega}(\xi) +  r_2(t)\frakq_{2,\ulomega}(\xi) + r_3(t)\frakq_{3,\ulomega}(\xi),
\end{equation*}
where
\begin{align}
r_1(t) &:= 2i e^{-2it\ulomega}\big(e^{it\ulomega}h_{1,\ulomega,\ulp}(t)\big)\pt\big(e^{it\ulomega}h_{1,\ulomega,\ulp}(t)\big),\label{equ:def_r1}\\
r_2(t) &:= i\pt\big(e^{it\ulomega}h_{1,\ulomega,\ulp}(t)\big)\big(e^{-it\ulomega}h_{2,\ulomega,\ulp}(t)\big)+i\big(e^{it\ulomega}h_{1,\ulomega,\ulp}(t)\big)\pt\big(e^{-it\ulomega}h_{2,\ulomega,\ulp}(t)\big),\label{equ:def_r2}\\
r_3(t) &:=2i e^{2it\ulomega}\big(e^{it\ulomega}h_{1,\ulomega,\ulp}(t)\big)\pt\big(e^{it\ulomega}h_{1,\ulomega,\ulp}(t)\big),\label{equ:def_r3}
\end{align}
and by \eqref{equ:consequences_h12_decay}, \eqref{equ:consequences_h12_phase_filtered_decay} we find that 
\begin{equation*}
\sup_{0 \leq t \leq T} \jt^{\frac32-\delta} \big(|r_1(t)|+|r_2(t)|+|r_3(t)|\big)\lesssim \eps^2.
\end{equation*}
The asserted bounds \eqref{equ:consequences_wtilcalR_q} and \eqref{equ:consequences_pxi_wtilcalR_q} also holds.
\end{proof}
The next corollary bounds the remainder term $\wtilcalR_{\ulomega,\ulp}(t,\xi)$ defined in  \eqref{equ:setup_definition_wtilcalRulomega}.
\begin{corollary}\label{corollary:prep_Q_r-ulomega} Let $\wtilcalR_{\ulomega,\ulp}(t,\xi)$ be given by \eqref{equ:setup_definition_wtilcalRulomega}.
Suppose the assumptions in the statement of Proposition~\ref{prop:profile_bounds} are in place. Then we have 
\begin{equation}\label{equ:consequences_wtilcalR}
\sup_{0 \leq t \leq T} \jt^{\frac32 - \delta} \| \wtilcalR_{\ulomega,\ulp}(t,\xi)\|_{H_\xi^1} \lesssim \eps^2.
\end{equation}
\end{corollary}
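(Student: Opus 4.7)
The plan is to bound each of the seven summands on the right-hand side of the definition \eqref{equ:setup_definition_wtilcalRulomega} separately and then apply the triangle inequality. Every summand turns out to have the desired near-cubic time decay $\eps^2 \jt^{-3/2+\delta}$ in $H^1_\xi$, using only bounds that have already been established in Corollary~\ref{cor:consequences} and in Lemma~\ref{lemma:prep_Q_r-ulomega} and Lemma~\ref{lemma:prep_B-ulomega}.

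First I would dispatch the five distorted-Fourier-transform terms
$$\wtilcalF_{+,\ulomega}[\calQ_{\mathrm{r},\ulomega}(\ulPe\Vp)], \quad \wtilcalF_{+,\ulomega}[\ulPe\calMod], \quad \wtilcalF_{+,\ulomega}[\calE_1], \quad \wtilcalF_{+,\ulomega}[\calE_2], \quad \wtilcalF_{+,\ulomega}[\calE_3].$$
For each of these, Proposition~\ref{prop:mapping_properties_dist_FT} provides the mapping property $\wtilcalF_{+,\ulomega}\colon H^1_y \to L^{2,1}_\xi$ and, what we actually need, $\wtilcalF_{+,\ulomega}\colon L^{2,1}_y \to H^1_\xi$, so it suffices to control the $L^{2,1}_y$ norm of the input (which in turn is controlled by the weighted $L^2_y$ norms appearing in the bounds \eqref{equ:consequences_calQ_r}, \eqref{equ:consequences_ulPe_Mod_bounds}, \eqref{equ:consequences_calE1_bounds}, \eqref{equ:consequences_calE2_bounds}, \eqref{equ:consequences_calE3_bounds}, since all of $\calQ_{\mathrm{r},\ulomega}(\ulPe\Vp)$, $\ulPe\calMod$, $\calE_1,\calE_2,\calE_3$ are spatially localized by Schwartz-type factors $\phi_\ulomega$, $Y_{j,\ulomega}$, or $\phi_\omega-\phi_\ulomega$). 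The worst of these decay rates is $\jt^{-3/2+\delta}$, coming from $\calE_1$ in \eqref{equ:consequences_calE1_bounds}; all the others are faster, and all are of size $\eps^2$ or better.

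Next I would handle the two modulation-driven terms $\dot{\theta}_1\calL_{+,\ulomega}[\Vp]$ and $\dot{\theta}_2\calK_{+,\ulomega}[\Vp]$. These follow immediately by pairing the decay estimate \eqref{equ:consequences_aux_bound_modulation2}, which gives $|\dot{\theta}_1(t)|+|\dot{\theta}_2(t)| \lesssim \eps\jt^{-1+\delta}$, with the $H^1_\xi$-bounds \eqref{equ:consequences_calL_bounds} and \eqref{equ:consequences_calK_bounds}, which provide an additional factor of $\eps\jt^{-1/2}$. The product is of size $\eps^2\jt^{-3/2+\delta}$ in $H^1_\xi$, as required. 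Finally, for $\wtilcalR_{\frakq,\ulomega,\ulp}$ itself, I would invoke Lemma~\ref{lemma:prep_B-ulomega}: the bound \eqref{equ:consequences_wtilcalR_q} gives the $L^2_\xi$ control via
$$\|\wtilcalR_{\frakq,\ulomega,\ulp}(t)\|_{L^2_\xi} \leq \|\jxi^{-2}\|_{L^2_\xi} \|\jxi^2 \wtilcalR_{\frakq,\ulomega,\ulp}(t)\|_{L^\infty_\xi} \lesssim \eps^2\jt^{-3/2+\delta},$$
and the $L^2_\xi$ control of $\pxi$ is immediate from \eqref{equ:consequences_pxi_wtilcalR_q}. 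Summing over the seven pieces concludes the proof.

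Essentially this is a bookkeeping exercise — there is no genuine analytic obstacle, since the delicate work (the null-structure cancellation for the quadratic source terms, the extraction of $\wtilB_{\ulomega,\ulp}$, and the local-smoothing-based weighted bounds) has already been carried out in Section~\ref{sec:setting_up} and Lemmas~\ref{lemma:prep_Q_r-ulomega}--\ref{lemma:prep_B-ulomega}. The only point requiring care is to verify that each of the $L^{2,1}_y$-norm estimates is consistent with the macro $\jym$ used in Corollary~\ref{cor:consequences}(6), so that Proposition~\ref{prop:mapping_properties_dist_FT} can be applied to pass to the $H^1_\xi$ norm on the spectral side.
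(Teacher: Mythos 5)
Your proof is correct and takes essentially the same route as the paper's own (very terse) proof: the paper also obtains the estimate by applying the $L^{2,1}_y \to H^1_\xi$ mapping property of the distorted Fourier transform from Proposition~\ref{prop:mapping_properties_dist_FT}, pairing the decay of $\dot\theta_1,\dot\theta_2$ in \eqref{equ:consequences_aux_bound_modulation2} with the $H^1_\xi$ bounds on $\calL_{+,\ulomega}$, $\calK_{+,\ulomega}$ from \eqref{equ:consequences_calL_bounds}--\eqref{equ:consequences_calK_bounds}, invoking \eqref{equ:consequences_ulPe_Mod_bounds}--\eqref{equ:consequences_calE3_bounds} and \eqref{equ:consequences_calQ_r}--\eqref{equ:consequences_pycalQ_r} for the localized source terms, and \eqref{equ:consequences_wtilcalR_q}--\eqref{equ:consequences_pxi_wtilcalR_q} for $\wtilcalR_{\frakq,\ulomega,\ulp}$. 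The only tiny inaccuracy is the aside that $\calE_1$ alone determines the worst rate; $\calQ_{\mathrm{r},\ulomega}$ has the same $\jt^{-\frac32+\delta}$ rate from \eqref{equ:consequences_calQ_r}, but this has no bearing on the argument.
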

\begin{proof}The estimate \eqref{equ:consequences_wtilcalR} follows from the $L_y^{2,1}\rightarrow H_{\xi}^1$ boundedness of the distorted Fourier transform by Proposition~\ref{prop:mapping_properties_dist_FT}, and from the decay estimates \eqref{equ:consequences_aux_bound_modulation2}, \eqref{equ:consequences_calL_bounds}--\eqref{equ:consequences_calE3_bounds}, \eqref{equ:consequences_calQ_r}, \eqref{equ:consequences_pycalQ_r},  \eqref{equ:consequences_wtilcalR_q}, \eqref{equ:consequences_pxi_wtilcalR_q}.
\end{proof}

\subsection{Main weighted energy estimates}
The proposition in this section now goes into the derivation of the weighted energy estimates for the profile. 
\begin{proposition}\label{prop: weighted-energy-estimate}
Suppose the assumptions in the statement of Proposition~\ref{prop:profile_bounds} are in place. Then we have for all $0 \leq t \leq T$ that 
\begin{equation}\label{equ:weighted_estimate}
\big \| \pxi \tilf_{+, \ulomega,\ulp}(t,\xi)\big \|_{L_\xi^2}+\big \| \pxi \tilf_{-, \ulomega,\ulp}(t,\xi)\big \|_{L_\xi^2} \lesssim \eps + \eps^2 \jt^\delta .
\end{equation}
\end{proposition}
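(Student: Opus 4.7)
The plan is to work from the renormalized evolution equation \eqref{equ:setup_evol_equ_renormalized_tilfplus} for $\tilf_{+,\ulomega,\ulp}(t,\xi)$, apply $\pxi$, integrate in time, and take $L_\xi^2$ norm. By the scattering relation \eqref{equ:setup_components_relation} together with the chain rule and the fact that $(|\xi|-i\sqrt{\ulomega})^2/(|\xi|+i\sqrt{\ulomega})^2$ is a bounded Lipschitz symbol, bounds on $\pxi \tilf_{+,\ulomega,\ulp}$ transfer directly to $\pxi \tilf_{-,\ulomega,\ulp}$ up to an acceptable $L^\infty_\xi$ loss that is controlled by the pointwise bootstrap assumption \eqref{equ:prop_profile_bounds_assumption2}. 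Hence it suffices to estimate $\|\pxi \tilf_{+,\ulomega,\ulp}(t)\|_{L^2_\xi}$.

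Differentiating \eqref{equ:setup_evol_equ_renormalized_tilfplus} in $\xi$ and integrating in time using $|\theta_1(t)|,|\theta_2(t)|\lesssim \eps \jt^\delta$ from \eqref{equ:consequences_growth_bound_theta}, the estimate reduces to controlling in $L^2_\xi$ the initial contribution, the bilinear correction $\pxi \wtilB_{\ulomega,\ulp}(t,\xi)$, and time-integrals over $s\in[0,t]$ of the following schematic contributions arising from Leibniz:
(i) the term with $\pxi$ hitting $e^{-i\theta_2(s)\xi}$, which produces a factor $\theta_2(s)$ multiplying the unreduced right-hand side;
(ii) the term with $\pxi$ hitting $e^{is(\xi^2+\ulomega)}$, which produces a singular factor $2is\xi$;
(iii) the term with $\pxi$ falling on the cubic nonlinearity $\wtilcalF_{+,\ulomega}[\calC(\ulPe \Vp)](\xi)$;
(iv) the term with $\pxi$ falling on the remainder $\wtilcalR_{\ulomega,\ulp}(s,\xi)$; and
(v) the term $(\dot\theta_1-\dot\theta_2 \xi)\wtilB_{\ulomega,\ulp}$ together with $\pxi$ falling on it. The initial contribution contributes $\lesssim\varepsilon$ by Proposition~\ref{prop:mapping_properties_dist_FT}, and $\|\pxi\wtilB_{\ulomega,\ulp}(t)\|_{L^2_\xi}\lesssim \eps^2$ is exactly \eqref{equ:consequences_pxi_wtilcalB}. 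For (i) the factor $|\theta_2(s)|\lesssim\eps \js^\delta$ is absorbed by the additional $\js^{-1-\delta'}$ decay that the source terms already possess (cubic nonlinearity gives $\js^{-1}$ in $L^\infty_\xi$, combined with the other decaying factors), while (v) is handled directly by \eqref{equ:consequences_aux_bound_modulation2} and \eqref{equ:consequences_wtilcalB}.

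The technically demanding contributions are (ii), (iii) and (iv). For (iv), after pulling the prefactor $\xi$ out and splitting $\wtilcalR_{\ulomega,\ulp}(s,\xi)$ via the distorted Fourier transform into a spatially localized term of cubic-type decay $\js^{-3/2+\delta}$ (by Corollary~\ref{corollary:prep_Q_r-ulomega} and the definitions \eqref{equ:setup_definition_wtilcalRulomega}, together with Lemmas~\ref{lemma:prep_Q_r-ulomega}, \ref{lemma:prep_B-ulomega}), the resulting time-integral has exactly the form
\begin{equation*}
\int_0^t e^{is\xi^2}\bigl(\xi\eta(\xi)+(1-\eta(\xi))\bigr)e^{-i\theta_2(s)\xi}\!\int_\bbR e^{-iy\xi}\fraka(y)\frakb(\xi)\calR(s,y)\,\ud y\,\ud s,
\end{equation*}
to which Proposition~\ref{prop:local-smoothing} applies with $F(s,y)=\calR(s,y)\lesssim \eps^2\jy^M\js^{-3/2+\delta}$, yielding an $L^2_\xi$ bound of order $\eps^2$ after pairing with $\|\js^{-3/2+\delta}\|_{L^2_s([0,t])}\lesssim 1$. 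This is precisely the point where the moving-center smoothing estimate is essential, as $\theta_2(s)$ is not integrable. For (ii) applied to the cubic nonlinearity, the $\xi$ factor is distributed onto the multilinear inputs (as in the treatment of the quadratic modulation integrals) and produces Schwartz-localized bilinear densities against the free Schr\"odinger waves from Corollary~\ref{cor:consequences}(8); bounding these by $L^2_y\cdot L^2_y$ Plancherel and the local decay \eqref{equ:prep_flat_local_decay}, \eqref{equ:preparation_flat_Schrodinger_wave_bound5} together with the pointwise bound on $\tilf_\pm$ gives an integrable-in-time bound $\lesssim\eps^3\js^{-1-\delta'}$.

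For (iii), the $\pxi$ derivative acts on the cubic spectral distributions analyzed in Lemma~\ref{lemma:cubic_NSD}. The regular, Hilbert-type, and $\delta_0$-type pieces are handled separately. For the regular and Hilbert-type pieces, the structural multipliers $\frakb_j$ in $W^{1,\infty}$ together with the boundedness of the flat Fourier transform allow us to bound the $L^2_\xi$ norm by a product of three $L^\infty_\xi$ bounds on $\tilf_{\pm,\ulomega,\ulp}$ and one $L^2_\xi$ bound on $\pxi\tilf_{\pm,\ulomega,\ulp}$, giving $\lesssim \varepsilon^3\js^{-1}\cdot \eps\js^\delta$, whose time integral produces the target $\varepsilon^2\jt^\delta$ up to a small power. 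For the $\delta_0$ piece $\mu_{\delta_0,\ulomega}$, we exploit the diagonal property \eqref{eqn:cubic-diagonal-property} to extract the resonant gauge phase (which becomes the log-type modified scattering phase) and then apply a standard $\int_0^t \js^{-1}$ argument; the resulting $\log\jt$ is absorbed into $\jt^\delta$. Summing the contributions yields \eqref{equ:weighted_estimate}.

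The main obstacle is the interplay between the moving center and the localized cubic-decay remainder: without Proposition~\ref{prop:local-smoothing} the factor $|\theta_2(s)|\lesssim \eps\js^\delta$ destroys the standard local smoothing bound. The structural point that makes the argument close is the combination of the variable-coefficient normal form in Subsection~\ref{subsec:normalform}, which converts the slow quadratic into the bilinear form $\wtilB_{\ulomega,\ulp}$ plus a cubic-type remainder, and the moving-center smoothing estimate developed in Section~\ref{sec:linear_decay}.
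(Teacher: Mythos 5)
Your overall architecture is correct and matches the paper: reduce to $\pxi\tilf_{+,\ulomega,\ulp}$ via the scattering relation \eqref{equ:setup_components_relation}, integrate the renormalized profile equation \eqref{equ:setup_evol_equ_renormalized_tilfplus} in time, apply Leibniz, bound the bilinear-form contributions by Lemma~\ref{lemma:prep_B-ulomega}, and use the moving-center smoothing estimate of Proposition~\ref{prop:local-smoothing} for the remainder $\wtilcalR_{\ulomega,\ulp}$ with the crude factor $2is\xi$. That much is the paper's route.

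However, your treatment of the $\mu_{\delta_0,\ulomega}$ piece is wrong for this proposition. You invoke the diagonal property \eqref{eqn:cubic-diagonal-property} to extract a resonant gauge phase and then appeal to a $\log$-absorption argument --- but that is the mechanism of Section~\ref{sec:pointwise_profile} (modified scattering for the $L^\infty_\xi$ bound), not the weighted energy estimate. Here the crucial structural point is different: after the change of variables $\eta_1=\xi-\xi_1$, $\eta_2=\xi_1-\xi_2$, the phase becomes $e^{2is\eta_1\eta_2}$, which has no $\xi$-dependence. Hence $\pxi$ falls only on the outer symbol and the three shifted inputs, never produces the singular $s\xi$ factor for the Dirac piece, and the resulting convolutions are bounded by Plancherel together with $L^\infty_y\cdot L^\infty_y\cdot L^2_y$ estimates on the associated free Schr\"odinger waves $w_j(s,y)$ (the $\js^{-1}$ decay comes from the dispersive $\|w_j\|_{L^\infty_y}\lesssim\eps\js^{-1/2}$, not from $L^\infty_\xi$ bounds on the profile, as your schematic ``three $L^\infty_\xi$ times one $L^2_\xi$'' suggests). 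No gauge phase, no diagonal property, no logarithm appears in this part of the argument.

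Relatedly, your separation of the contributions into ``(ii): $\pxi$ hitting $e^{is(\xi^2+\ulomega)}$'' and ``(iii): $\pxi$ hitting the cubic nonlinearity'' is artificial and obscures what actually happens. The paper absorbs $e^{is(\xi^2+\ulomega)}$ into the phase of each of the three cubic interaction types (Dirac, Hilbert, regular) before differentiating. For the Dirac piece, as above, the $s\xi$ factor disappears entirely after the change of variables. For the Hilbert piece, the derivative of the phase is $\pxi\Phi=-\partial_{\xi_1}\Phi-\partial_{\xi_2}\Phi+2\xi_4$; the first two pieces are integrated by parts onto the inputs, and only the $2\xi_4$ piece contributes a genuine $s$ factor, which is then killed using the vanishing property (Item~(3) of Lemma~\ref{lemma:cubic_NSD}) and the improved local decay \eqref{equ:prep_flat_local_decay}. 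Only for the regular piece does an explicit $2is\xi$ factor survive, and this is where the moving-center smoothing estimate is applied a second time, on the spatially localized density $G(s,y)$ with $\|\jym G(s,y)\|_{L^2_y}+\|\jym\py G(s,y)\|_{L^2_y}\lesssim\eps^3\js^{-3/2+\delta}$. Your proposal does identify the remainder term and the regular piece as places where Proposition~\ref{prop:local-smoothing} enters, which is right, but without the phase factorization and the change of variables for the Dirac/Hilbert pieces the argument does not close as written.
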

\begin{proof}
We recall from \eqref{equ:setup_components_relation} that 
\begin{equation*}
\tilf_{-,\ulomega,\ulp}(t,\xi) = - \frac{\big(|\xi|-i\sqrt{\ulomega}\big)^2}{\big(|\xi|+i\sqrt{\ulomega}\big)^2}\overline{\tilf_{+,\ulomega,\ulp}(t,-\xi)},
\end{equation*}
which implies via \eqref{equ:consequences_sobolev_bound_profile} that
\begin{equation*}
\big \| \pxi \tilf_{-, \ulomega,\ulp}(t,\xi)\big \|_{L_\xi^2} \lesssim \big \|  \tilf_{+, \ulomega,\ulp}(t,\xi)\big \|_{L_\xi^2}+\big \| \pxi \tilf_{+, \ulomega,\ulp}(t,\xi)\big \|_{L_\xi^2} \lesssim \eps + \big \| \pxi \tilf_{+, \ulomega,\ulp}(t,\xi)\big \|_{L_\xi^2}.
\end{equation*}
It therefore suffices to consider the bounds on $\pxi \tilf_{+, \ulomega,\ulp}(t,\xi)$ to obtain control over both $\tilf_{\pm, \ulomega,\ulp}(t,\xi)$. We filter out the phases to write 
\begin{equation*}
\pxi \tilf_{+, \ulomega,\ulp}(t,\xi) = i\theta_2(t) \tilf_{+, \ulomega,\ulp}(t,\xi) + e^{-i\theta_1(t)} e^{i\theta_2(t)\xi}\pxi \big(e^{i\theta_1(t)} e^{-i\theta_2(t)\xi} \tilf_{+, \ulomega,\ulp}(t,\xi) \big),
\end{equation*}
and use \eqref{equ:consequences_growth_bound_theta} to bound
\begin{equation*}
\begin{split}
\big \| \pxi \tilf_{+, \ulomega,\ulp}(t,\xi)\big \|_{L_\xi^2} &\lesssim |\theta_2(t)|\ \big \| \tilf_{+, \ulomega,\ulp}(t,\xi)\big \|_{L_\xi^2} + \big \| \pxi\big(e^{i\theta_1(t)} e^{-i\theta_2(t)\xi} \tilf_{+, \ulomega,\ulp}(t,\xi) \big)\big \|_{L_\xi^2}\\
&\lesssim \eps^2 \jt^{\delta} + \big \| \pxi\big(e^{i\theta_1(t)} e^{-i\theta_2(t)\xi} \tilf_{+, \ulomega,\ulp}(t,\xi) \big)\big \|_{L_\xi^2}.
\end{split}
\end{equation*}
By integrating the renormalized profile equation \eqref{equ:setup_evol_equ_renormalized_tilfplus} in time, we get 
\begin{align}
&\big \| \pxi\big(e^{i\theta_1(t)} e^{-i\theta_2(t)\xi} \tilf_{+, \ulomega,\ulp}(t,\xi) \big)\big \|_{L_\xi^2} \nonumber  \\
&\lesssim \big \| \pxi \tilf_{+, \ulomega,\ulp}(0,\xi)\big \|_{L_\xi^2} + \sup_{0\leq s \leq t} \Big\|\pxi \big(e^{i\theta_1(s)} e^{-i\theta_2(s)\xi} \wtilB_{\ulomega,\ulp}(s,\xi) \big) \Big\|_{L_\xi^2} \label{equ:proof_weighted_estimate_1}\\
&\quad + \left\|\int_0^t \pxi \Big(e^{i\theta_1(s)} e^{-i\theta_2(s)\xi}\big(\dot{\theta}_1(s)-\dot{\theta}_2(s)\xi\big)\wtilB_{\ulomega,\ulp}(s,\xi) \Big) \,\ud s \right\|_{L_\xi^2}\label{equ:proof_weighted_estimate_2}\\
&\quad + \left\|\int_0^t \pxi \Big(e^{i\theta_1(s)} e^{-i\theta_2(s)\xi} e^{is(\xi^2+\ulomega)} \wtilcalR_{\ulomega,\ulp}(s,\xi) \Big) \,\ud s \right\|_{L_\xi^2}\label{equ:proof_weighted_estimate_3}\\
&\quad + \left\|\int_0^t \pxi \Big(e^{i\theta_1(s)} e^{-i\theta_2(s)\xi} e^{is(\xi^2+\ulomega)} \wtilcalF_{+, \ulomega}\big[\calC\big((\ulPe \Vp)(s)\big)\big](\xi) \Big) \,\ud s \right\|_{L_\xi^2}.\label{equ:proof_weighted_estimate_4}
\end{align}

In what follows, we bound \eqref{equ:proof_weighted_estimate_1}--\eqref{equ:proof_weighted_estimate_4} term by term. 
Using the mapping properties of the distorted Fourier transform from Proposition~\ref{prop:mapping_properties_dist_FT} and the stability bound \eqref{equ:consequences_sobolev_bound_V}, we have
\begin{equation*}
\big \| \pxi \tilf_{+, \ulomega,\ulp}(0,\xi)\big \|_{L_\xi^2} = \big \| \pxi \wtilcalF_{+,\ulomega}[\Vp(0)](\xi)\big \|_{L_\xi^2} \lesssim_{\omega_0} \|U(0)\|_{L^{2,1}} \lesssim \eps.
\end{equation*}
Moreover, the bilinear estimates \eqref{equ:consequences_wtilcalB}--\eqref{equ:consequences_pxi_wtilcalB} and the decay estimates for the phases \eqref{equ:consequences_growth_bound_theta} imply for $0 \leq s \leq t$ that
\begin{equation*}
\begin{split}
\Big\|\pxi \big(e^{i\theta_1(s)} e^{-i\theta_2(s)\xi} \wtilB_{\ulomega,\ulp}(s,\xi) \big) \Big\|_{L_\xi^2} &\lesssim  |\theta_2(s)| \, \Big\|\wtilB_{\ulomega,\ulp}(s,\xi)  \Big\|_{L_\xi^2} +  \Big\|\pxi \wtilB_{\ulomega,\ulp}(s,\xi) \Big\|_{L_\xi^2} \\
&\lesssim \eps \js^{\delta} \eps^2 \js^{-1} + \eps^2 \lesssim \eps^2.
\end{split}
\end{equation*}
Hence we obtain acceptable bounds for \eqref{equ:proof_weighted_estimate_1}. Similarly, using again the bilinear estimates \eqref{equ:consequences_wtilcalB}--\eqref{equ:consequences_pxi_wtilcalB} and the bounds \eqref{equ:consequences_aux_bound_modulation2}, \eqref{equ:consequences_growth_bound_theta}, we estimate \eqref{equ:proof_weighted_estimate_2} by 
\begin{equation*}
\begin{split}
&\left\|\int_0^t \pxi \Big(e^{i\theta_1(s)} e^{-i\theta_2(s)\xi}\big(\dot{\theta}_1(s)-\dot{\theta}_2(s)\xi\big)\wtilB_{\ulomega,\ulp}(s,\xi) \Big) \,\ud s \right\|_{L_\xi^2}\\
&\lesssim  \int_0^t \big(1+|\theta_2(s)|\big) \big(|\dot{\theta}_1(s)|+|\dot{\theta}_2(s)|\big) \big\|\jxi \wtilB_{\ulomega,\ulp}(s,\xi) \big \|_{L_\xi^2} \,\ud s \\
&\quad  +  \int_0^t \big(|\dot{\theta}_1(s)|+|\dot{\theta}_2(s)|\big) \big\|\jxi \pxi \wtilB_{\ulomega,\ulp}(s,\xi) \big \|_{L_\xi^2} \,\ud s\\
&\lesssim \int_0^t \big(1+\eps \js^\delta\big) \cdot \eps \js^{-1+\delta} \cdot \eps^2 \js^{-1} \,\ud s + \int_0^t \eps\js^{-1+\delta} \cdot \eps^2 \,\ud s \lesssim \eps^3 \jt^\delta.
\end{split}
\end{equation*}
The estimates for \eqref{equ:proof_weighted_estimate_3} and \eqref{equ:proof_weighted_estimate_4} are more technical when the partial derivative hits the  phase $\pxi e^{is(\xi^2+\ulomega)} = (2is\xi) e^{is(\xi^2+\ulomega)}$ due to the growing factor of $2is\xi$. The details for estimating \eqref{equ:proof_weighted_estimate_3} and \eqref{equ:proof_weighted_estimate_4} will now  occupy the rest of the proof. 

\medskip \noindent \underline{Contribution of the remainder term \eqref{equ:proof_weighted_estimate_3}}: Let us differentiate in $\xi$ and write
\begin{equation}\label{equ:proof_weighted_estimate_3_expanded}
\begin{split}
\int_0^t \pxi \Big(e^{i\theta_1(s)} e^{-i\theta_2(s)\xi} e^{is(\xi^2+\ulomega)} \wtilcalR_{\ulomega,\ulp}(s,\xi) \Big) \,\ud s &= -i\int_0^t \theta_2(s) e^{i\theta_1(s)} e^{-i\theta_2(s)\xi} e^{is(\xi^2+\ulomega)} \wtilcalR_{\ulomega,\ulp}(s,\xi)  \,\ud s\\
&\quad + \int_0^t e^{i\theta_1(s)} e^{-i\theta_2(s)\xi} (2is \xi) e^{is(\xi^2+\ulomega)} \wtilcalR_{\ulomega,\ulp}(s,\xi)  \,\ud s \\
&\quad + \int_0^t e^{i\theta_1(s)} e^{-i\theta_2(s)\xi} e^{is(\xi^2+\ulomega)} \pxi \wtilcalR_{\ulomega,\ulp}(s,\xi) \,\ud s.
\end{split}
\end{equation}
By \eqref{equ:consequences_wtilcalR} and \eqref{equ:consequences_growth_bound_theta}, it is straightforward to deduce acceptable $L_\xi^2$-bounds for the first term on the right-hand side of \eqref{equ:proof_weighted_estimate_3_expanded}:
\begin{equation*}
\begin{split}
&\left \| \int_0^t \theta_2(s) e^{i\theta_1(s)} e^{-i\theta_2(s)\xi} e^{is(\xi^2+\ulomega)} \wtilcalR_{\ulomega,\ulp}(s,\xi)  \,\ud s\right \|_{L_\xi^2}\\
&\lesssim \int_0^t |\theta_2(s)| \, \left \| \wtilcalR_{\ulomega,\ulp}(s,\xi)\right \|_{L_\xi^2}  \,\ud s \lesssim \int_0^t \eps \js^{\delta} \eps^2 \js^{-\frac32 + \delta} \,\ud s \lesssim \eps^3 \jt^\delta,
\end{split}
\end{equation*}
as well as for the third term on the right-hand side of \eqref{equ:proof_weighted_estimate_3_expanded}:
\begin{equation*}
\left \| \int_0^t e^{i\theta_1(s)} e^{-i\theta_2(s)\xi} e^{is(\xi^2+\ulomega)} \pxi \wtilcalR_{\ulomega,\ulp}(s,\xi)  \,\ud s\right \|_{L_\xi^2} \lesssim \int_0^t \left \| \pxi \wtilcalR_{\ulomega,\ulp}(s,\xi)\right \|_{L_\xi^2}  \,\ud s \lesssim \int_0^t \eps^2 \js^{-\frac32 + \delta} \,\ud s \lesssim \eps^2 \jt^\delta.
\end{equation*}
For the weighted estimate of the second term in \eqref{equ:proof_weighted_estimate_3_expanded}, we need to ``undo" the (distorted) Fourier transform of every term in the definition of $\wtilcalR_{\ulomega,\ulp}(s,\xi)$. More precisely, we claim that $\wtilcalR_{\ulomega,\ulp}(s,\xi)$ is a linear combination of terms of the form
\begin{equation}\label{equ:proof_weighted_assp1} 
\int_\bbR e^{-iy\xi} \fraka(y)\frakb(\xi) H(s,y) \,\ud y
\end{equation}
where $\fraka(y)$ is smooth with bounded derivatives, $\frakb(\xi) \in W^{1,\infty}$, and $H(s,y)$ satisfies the local decay 
\begin{equation}\label{equ:proof_weighted_assp2}
\|\jym H(s,y)\|_{L_y^2} + \|\jym \py H(s,y)\|_{L_y^2}\lesssim \eps^2 \js^{-\frac32 + \delta}.
\end{equation}
From \eqref{equ:setup_definition_wtilcalRulomega} we recall that
\begin{equation*}
\begin{split}
\widetilde{\calR}_{\ulomega,\ulp}(s,\xi) &:= \wtilcalF_{+,\ulomega}\bigl[ \calQ_{\mathrm{r},\ulomega}\bigl((\ulPe \Vp)(s)\bigr) +\ulPe \calMod(s)+\calE_1(s)+\calE_2(s)+\calE_3(s)\bigr](\xi)\\
&\quad + \dot{\theta}_1(s) \calL_{+, \ulomega}[\Vp(s)](\xi)  + i \dot{\theta}_2(s) \calK_{+,\ulomega}[\Vp(s)](\xi)+\widetilde{\calR}_{\frakq,\ulomega,\ulp}(s,\xi).
\end{split}
\end{equation*}
The term $\wtilcalF_{+, \ulomega}[\calQ_{\mathrm{r},\ulomega}\bigl((\ulPe \Vp)(s)\bigr)+\cdots +\calE_3(s)](\xi)$ satisfy the claims \eqref{equ:proof_weighted_assp1} and \eqref{equ:proof_weighted_assp2} thanks to the mapping properties of the distorted Fourier transform by Lemma~\ref{lemma: PDO on m12}, and the bounds given by \eqref{equ:consequences_calQ_r}, \eqref{equ:consequences_pycalQ_r} of Lemma~\ref{lemma:prep_Q_r-ulomega} and by \eqref{equ:consequences_ulPe_Mod_bounds}--\eqref{equ:consequences_calE3_bounds} of Corollary~\ref{cor:consequences}. Moreover, as in the proof of \eqref{equ:consequences_calL_bounds} and \eqref{equ:consequences_calK_bounds}, we recall that 
\begin{equation*}
\calL_{+, \ulomega}[\Vp(t)](\xi) = \frac{2}{\sqrt{2\pi}} \int_\bbR e^{-iy\xi} \vp(s,y)\fraka_{\ulomega}(y)\frakb_{\ulomega}(\xi)\,\ud y,
\end{equation*}
and
\begin{equation*}
\calK_{+,\ulomega}[\Vp(t)](\xi) = \frac{1}{\sqrt{2\pi}}\int_\bbR e^{-iy\xi} \Big(\vp(s,y)\big(\fraka_{\ulomega}(y)\frakc_{\ulomega}(\xi)+\fraka_{\ulomega}'(y)\frakb_{\ulomega}(\xi)\big)+\barvp(s,y)\fraka_{\ulomega}'(y)\frakb_{\ulomega}(\xi) \Big)\,\ud y.
\end{equation*}
with $\fraka_{\ulomega}(y) \in \calS(\bbR)$, and $\frakb_{\ulomega}(\xi),\frakc_{\ulomega}(\xi) \in W^{1,\infty}(\bbR)$ defined in \eqref{equ:def_fraka-b-c}. Hence, by pairing the spatial components with $\dot{\theta}_1,\dot{\theta}_2$, we obtain from \eqref{equ:consequences_aux_bound_modulation2}, \eqref{equ:consequences_U_disp_decay},  \eqref{equ:consequences_pyU_disp_decay} and from the spatial localization of $\fraka_{\ulomega}(y)$ for all $0 \leq s \leq T$ that 
\begin{equation*}
\begin{split}
&\left\|\jym \big(\dot{\theta}_1(s) \Vp(s,y)\fraka_{\ulomega}(y)\big) \right\|_{L_y^2}+\left\|\jym \py\big(\dot{\theta}_1(s) \Vp(s,y)\fraka_{\ulomega}(y)\big) \right\|_{L_y^2}\\
&\quad+ \left\|\jym \big(\dot{\theta}_2(s) \Vp(s,y)\big(\fraka_{\ulomega}(y)+\fraka_{\ulomega}'(y)\big)\big) \right\|_{L_y^2}+\left\|\jym \py\big(\dot{\theta}_2(s) \Vp(s,y)\big(\fraka_{\ulomega}(y)+\fraka_{\ulomega}'(y)\big)\big) \right\|_{L_y^2}\\
&\lesssim \big(|\dot{\theta}_1(s)| + |\dot{\theta}_2(s)|\big)\, \big( \| \Vp(s,y) \|_{L_y^\infty} + \| \jy^{-3} \py \Vp(s,y) \|_{L_y^2}\big) \lesssim \eps^2 \js^{-\frac32+\delta}.
\end{split}
\end{equation*}
The claims \eqref{equ:proof_weighted_assp1} and \eqref{equ:proof_weighted_assp2} are now satisfied for the terms $\dot{\theta}_1(s) \calL_{+, \ulomega}[\Vp(s)](\xi)$ and  $i \dot{\theta}_2(s) \calK_{+,\ulomega}[\Vp(s)](\xi)$. Finally, we examine the structure of the remainder term $\widetilde{\calR}_{\frakq,\ulomega,\ulp}(s,\xi)$ defined in \eqref{equ:setup_definition_wtilcalR_qulomega}. We recall that in the proof of Lemma~\ref{lemma:prep_B-ulomega}, we have
\begin{equation*}
\widetilde{\calR}_{\frakq,\ulomega,\ulp}(s,\xi) = r_1(s)\frakq_{1,\ulomega}(\xi)+r_2(s)\frakq_{2,\ulomega}(\xi)+r_3(s)\frakq_{3,\ulomega}(\xi),
\end{equation*}
where the coefficients $r_1,r_2,r_3$ defined in \eqref{equ:def_r1}--\eqref{equ:def_r3} satisfy the decay estimate
\begin{equation*}
\sup_{0\leq s \leq T} \js^{\frac32-\delta}\big(|r_1(s)|+|r_2(s)|+|r_3(s)|\big) \lesssim \eps^2,
\end{equation*}
and where $\frakq_{1,\ulomega},\frakq_{2,\ulomega},\frakq_{3,\ulomega}$ were defined in \eqref{equ:def_frakq_1}--\eqref{equ:def_frakq_3}. In view of Lemma~\ref{lem:null_structure_radiation}, Remark~\ref{remark:smoothness_of_q}, and the mapping properties of the distorted Fourier transform by Lemma~\ref{lemma: PDO on m12}, the coefficients $\frakq_{1,\ulomega}(\xi),\frakq_{2,\ulomega}(\xi),\frakq_{3,\ulomega}(\xi)$ can be expressed as linear combination of terms of the form 
\begin{equation*}
\int_{\bbR} e^{-iy\xi} \varphi(y)\frakd(\xi)\,\ud y
\end{equation*}
for some $\varphi \in \calS(\bbR)$ and $\frakd \in W^{1,\infty}(\bbR)$. Therefore the term $\widetilde{\calR}_{\frakq,\ulomega,\ulp}(s,\xi)$ is of the generic form  \eqref{equ:proof_weighted_assp1} and satisfies \eqref{equ:proof_weighted_assp2}. Thus, we are in the position to estimate the second term in \eqref{equ:proof_weighted_estimate_3_expanded} where we may replace $\wtilcalR_{\ulomega,\ulp}(s,\xi)$ with the generic term \eqref{equ:proof_weighted_assp1}. Let us insert the cutoff function $\eta(\xi)$ specified in Proposition~\ref{prop:local-smoothing} into the generic expression \eqref{equ:proof_weighted_assp1} to obtain
\begin{equation}\label{equ:proof_weighted_local_smooth1}
\begin{split}
&\int_0^t e^{i\theta_1(s)} e^{-i\theta_2(s)\xi} (2is \xi) e^{is(\xi^2+\ulomega)} \wtilcalR_{\ulomega,\ulp}(s,\xi)  \,\ud s \\
&\simeq \int_0^t e^{is\xi^2} e^{-i\theta_2(s)\xi} \xi \left[\int_\bbR e^{-iy\xi} \fraka(y)\frakb(\xi) \big(s e^{is\ulomega} e^{i\theta_1(s)}H(s,y)\big)\,\ud y\right]\,\ud  s\\
&= \int_0^t e^{is\xi^2} e^{-i\theta_2(s)\xi} \big(\xi\eta(\xi)\big)  \left[\int_\bbR e^{-iy\xi} \fraka(y)\frakb(\xi) \big(s e^{is\ulomega} e^{i\theta_1(s)}H(s,y)\big)\,\ud y\right]\,\ud  s \\
&\quad + \int_0^t e^{is\xi^2} e^{-i\theta_2(s)\xi} \big(1 - \eta(\xi)\big)\xi \left[\int_\bbR e^{-iy\xi} \fraka(y)\frakb(\xi) \big(s e^{is\ulomega} e^{i\theta_1(s)}H(s,y)\big)\,\ud y\right]\,\ud  s\\
&= \int_0^t e^{is\xi^2} e^{-i\theta_2(s)\xi} \big(\xi\eta(\xi)\big) \left[\int_\bbR e^{-iy\xi} \fraka(y)\frakb(\xi) \big(s e^{is\ulomega} e^{i\theta_1(s)}H(s,y)\big)\,\ud y\right]\,\ud  s \\
&\quad -i \int_0^t e^{is\xi^2} e^{-i\theta_2(s)\xi} \big(1 - \eta(\xi)\big) \left[\int_\bbR e^{-iy\xi} \fraka'(y)\frakb(\xi) \big(s e^{is\ulomega} e^{i\theta_1(s)}H(s,y)\big)\,\ud y\right]\,\ud  s\\
&\quad -i \int_0^t e^{is\xi^2} e^{-i\theta_2(s)\xi} \big(1 - \eta(\xi)\big) \left[\int_\bbR e^{-iy\xi} \fraka(y)\frakb(\xi) \big(s e^{is\ulomega} e^{i\theta_1(s)} \py H(s,y)\big)\,\ud y\right]\,\ud  s,
\end{split}
\end{equation}
noting an integration by parts on the last equality. By invoking the smoothing estimates with a moving center \eqref{equ:estimate_smoothing1} and \eqref{equ:estimate_smoothing2} of Proposition~\ref{prop:local-smoothing} along with the bound \eqref{equ:proof_weighted_assp2}, we find that 
\begin{equation}\label{equ:proof_weighted_local_smooth2}
\begin{split}
&\left \|\int_0^t e^{i\theta_1(s)} e^{-i\theta_2(s)\xi} (2is \xi) e^{is(\xi^2+\ulomega)} \wtilcalR_{\ulomega,\ulp}(s,\xi)  \,\ud s \right \|_{L_\xi^2}\\
&\simeq \left\|\int_0^t e^{is\xi^2} e^{-i\theta_2(s)\xi} \xi \left[\int_\bbR e^{-iy\xi} \fraka(y)\frakb(\xi) \big(s e^{is\ulomega} e^{i\theta_1(s)}H(s,y)\big)\,\ud y\right]\,\ud  s\right \|_{L_\xi^2} \\
&\lesssim \left\|\int_0^t e^{is\xi^2} e^{-i\theta_2(s)\xi} \big(\xi\eta(\xi)\big) \left[\int_\bbR e^{-iy\xi} \fraka(y)\frakb(\xi) \big(s e^{is\ulomega} e^{i\theta_1(s)}H(s,y)\big)\,\ud y\right]\,\ud  s\right \|_{L_\xi^2} \\
&\quad + \left\|\int_0^t e^{is\xi^2} e^{-i\theta_2(s)\xi} \big(1-\eta(\xi)\big) \left[\int_\bbR e^{-iy\xi} \fraka'(y)\frakb(\xi) \big(s e^{is\ulomega} e^{i\theta_1(s)}H(s,y)\big)\,\ud y\right]\,\ud  s\right \|_{L_\xi^2}\\
&\quad + \left\|\int_0^t e^{is\xi^2} e^{-i\theta_2(s)\xi} \big(1-\eta(\xi)\big) \left[\int_\bbR e^{-iy\xi} \fraka(y)\frakb(\xi) \big(s e^{is\ulomega} e^{i\theta_1(s)} \py H(s,y)\big)\,\ud y\right]\,\ud  s\right \|_{L_\xi^2}\\
&\lesssim  \|s \jy^2  H(s,y)\|_{L_s^2([0,t];L_y^2)} + \|s \jy^2  \py H(s,y)\|_{L_s^2([0,t];L_y^2)}\lesssim \eps^2 \| s \cdot \js^{-\frac32 + \delta} \|_{L_s^2([0,t])} \lesssim \eps^2 \jt^{\delta}.
\end{split}
\end{equation}
This completes the discussion for \eqref{equ:proof_weighted_estimate_3}.

\medskip \noindent \underline{Contribution of the cubic term \eqref{equ:proof_weighted_estimate_4}}:
We have
\begin{align}
\text{\eqref{equ:proof_weighted_estimate_4}}& =  \int_0^t e^{i\theta_1(s)} (-i)\theta_2(s) e^{-i\theta_2(s)\xi} e^{is(\xi^2+\ulomega)} \wtilcalF_{+, \ulomega}\big[\calC\big((\ulPe \Vp)(s)\big)\big](\xi) \,\ud s \label{equ:proof_weighted_cubic1}\\
&\quad + \int_0^t e^{i\theta_1(s)}  e^{-i\theta_2(s)\xi} \pxi \Big( e^{is(\xi^2+\ulomega)} \wtilcalF_{+, \ulomega}\big[\calC\big((\ulPe \Vp)(s)\big)\big](\xi)  \Big)\,\ud s.\label{equ:proof_weighted_cubic2}
\end{align}
The $L_\xi^2$-norm of \eqref{equ:proof_weighted_cubic1} can be quickly estimated as follows: we use the growth bound \eqref{equ:consequences_growth_bound_theta} on $\theta_2$, the $L^2$-boundedness of the distorted Fourier transform (c.f. Proposition~\ref{prop:mapping_properties_dist_FT}), \eqref{equ:consequences_sobolev_bound_V}, and \eqref{equ:consequences_ulPe_U_disp_decay} to obtain the acceptable bound
\begin{equation*}
\begin{split}
&\left \| \int_0^t e^{i\theta_1(s)} (-i)\theta_2(s) e^{-i\theta_2(s)\xi} e^{is(\xi^2+\ulomega)} \wtilcalF_{+, \ulomega}\big[\calC\big((\ulPe \Vp)(s)\big)\big](\xi) \,\ud s \right \|_{L_\xi^2}\\
&\lesssim \int_0^t |\theta_2(s)| \big \| \wtilcalF_{+, \ulomega}\big[\calC\big((\ulPe \Vp)(s)\big)\big](\xi) \big \|_{L_\xi^2} \,\ud s \lesssim \int_0^t |\theta_2(s)| \big \| |\ve|^2 \ve \big \|_{L_y^2} \,\ud s\\
&\lesssim \int_0^t |\theta_2(s)| \| \ve(s) \|_{L_y^\infty}^2 \| \ve(s) \|_{L_y^2} \,\ud s \lesssim \int_0^t \eps \js^\delta \cdot \eps^2 \js^{-1} \cdot \eps \,\ud s \lesssim \eps^4 \jt^\delta.
\end{split}
\end{equation*}
For \eqref{equ:proof_weighted_cubic2}, we use the precise structure of the cubic nonlinearity described in Sect.~\ref{subsec:cubic_spectral_distributions}. We have the following decomposition
\begin{equation}\label{equ:cubic_decomp}
e^{is(\xi^2+\ulomega)} \wtilcalF_{+, \ulomega}\big[\calC\big((\ulPe \Vp)(s)\big)\big](\xi) = \calJ_{\delta_0}(s,\xi) + \calJ_{\pvdots}(s,\xi) + \calJ_{\mathrm{reg}}(s,\xi),
\end{equation}
where the terms on the right-hand side represent trilinear expressions involving a Dirac delta kernel, a Hilbert-type kernel, and a regular kernel respectively. In order to lighten the notation, we write $\tilf_{\pm} = \tilf_{\pm, \ulomega,\ulp}$. After some direct computations, the expressions for the Dirac delta and Hilbert-type cubic interactions are given respectively by 
\begin{equation}\label{equ:calJ_delta}
\begin{split}
\calJ_{\delta_0}(s,\xi) := \frac{-1}{2\pi} \iint e^{is\Phi(\xi,\xi_1,\xi_2,\xi-\xi_1+\xi_2)} \tilf_+(s,\xi_1)&\overline{\tilf_+(s,\xi_2)}\tilf_+(s,\xi-\xi_1+\xi_2) \times\\
&\times \frac{\frakp_1\Big(\frac{\xi}{\sqrt{\ulomega}},\frac{\xi_1}{\sqrt{\ulomega}},\frac{\xi_2}{\sqrt{\ulomega}},\frac{\xi-\xi_1+\xi_2}{\sqrt{\ulomega}}\Big)}{\frakp\Big(\frac{\xi}{\sqrt{\ulomega}},\frac{\xi_1}{\sqrt{\ulomega}},\frac{\xi_2}{\sqrt{\ulomega}},\frac{\xi-\xi_1+\xi_2}{\sqrt{\ulomega}}\Big)}\,\ud \xi_1\,\ud \xi_2,
\end{split}
\end{equation}
\begin{equation}\label{equ:calJ_pv}
\begin{split}
\calJ_{\pvdots}(s,\xi) := \frac{-1}{4\pi\sqrt{\ulomega}}\iiint e^{is \Phi(\xi,\xi_1,\xi_2,\xi-\xi_1+\xi_2-\xi_4)} \tilf_+(s,\xi_1)&\overline{\tilf_+(s,\xi_2)} \tilf_+(s,\xi-\xi_1+\xi_2-\xi_4) \times\\
\times \frac{\frakp_2\Big(\frac{\xi}{\sqrt{\ulomega}},\frac{\xi_1}{\sqrt{\ulomega}},\frac{\xi_2}{\sqrt{\ulomega}},\frac{\xi-\xi_1+\xi_2}{\sqrt{\ulomega}}\Big)}{\frakp\Big(\frac{\xi}{\sqrt{\ulomega}},\frac{\xi_1}{\sqrt{\ulomega}},\frac{\xi_2}{\sqrt{\ulomega}},\frac{\xi-\xi_1+\xi_2}{\sqrt{\ulomega}}\Big)} &\pvdots \cosech\Big(\frac{\pi\xi_4}{2 \sqrt{\ulomega}}\Big) \,\ud \xi_1\,\ud \xi_2\,\ud \xi_4,
\end{split}
\end{equation}
with $\frakp,\frakp_1,\frakp_2$ defined in \eqref{eqn: cubic-frakp}--\eqref{eqn: cubic-frakp2} from Lemma ~\ref{lemma:cubic_NSD}, and 
\begin{equation}
\Phi(\xi_0,\xi_1,\xi_2,\xi_3) := \xi_0^2 - \xi_1^2 + \xi_2^2 - \xi_3^2.
\end{equation}
The regular cubic interaction $\calJ_{\mathrm{reg}}(s,\xi)$ is a linear combination of terms of the schematic form 
\begin{equation}\label{equ:calJ_reg}
\begin{split}
\calJ_{\mathrm{reg}}^{\mathrm{schem}}(s,\xi) &:= e^{is(\xi^2+\ulomega)} \overline{\frakb_0(\xi)} \int_\bbR w_1(s,y) \overline{w_2(s,y)} w_3(s,y) \varphi(y) e^{-iy \xi} \,\ud y\\
&\quad \text{or} \quad  e^{is(\xi^2+\ulomega)} \overline{\frakb_0(\xi)} \int_\bbR \overline{w_1(s,y)} w_2(s,y) \overline{w_3(s,y)} \varphi(y) e^{-iy \xi} \,\ud y,
\end{split}
\end{equation}
where $\varphi(y)$ is some Schwartz function, and where  $w_{j}(s,y)$, $1\leq j \leq 3$ are free Schr\"odinger waves given by 
\begin{equation}\label{equ:proof_free_schr_waves}
w_{j}(s,y) = \int_\bbR e^{iy \eta} e^{-is(\eta^2+\ulomega)}\frakb_j(\eta) \tilf_+(s,\eta) \,\ud  \eta \quad \text{or} \quad \int_\bbR e^{iy \eta} e^{is(\eta^2+\ulomega)}\frakb_j(\eta) \tilf_-(s,\eta) \,\ud  \eta
\end{equation}
with symbols $\frakb_0,\ldots,\frakb_3 \in W^{1,\infty}(\bbR)$. By \eqref{equ:preparation_flat_Schrodinger_wave_bound1}--\eqref{equ:preparation_flat_Schrodinger_wave_bound3} the free waves $w_j(s,y)$ enjoy the following estimates 
\begin{align}
\big \| w_j(s,y) \big \|_{L_y^\infty} &\lesssim \eps \js^{-\frac12},\label{equ:proof-free-disp1}\\
\big \| w_j(s,y) \big \|_{L_y^2} &\lesssim \eps,\label{equ:proof-free-disp2}\\
\big \| \jy^{-1} \py w_j(s,y) \big \|_{L_y^2} &\lesssim \eps \js^{-1+\delta},\label{equ:proof-free-disp5}
\end{align}
for $1 \leq j \leq 3$. In the rest of the proof, we proceed by estimating each different type of cubic interaction.

\medskip \noindent \underline{Case 1: Cubic interactions with a Dirac kernel.} We observe that the phase in \eqref{equ:calJ_delta} can be factorized
\begin{equation*}
\Phi(\xi,\xi_1,\xi_2,\xi-\xi_1+\xi_2) = \xi^2 - \xi_1^2 + \xi_2^2 - (\xi-\xi_1+\xi_2)^2 = 2(\xi-\xi_1)(\xi_1-\xi_2).
\end{equation*}
By making the change of variables $\eta_1 = \xi - \xi_1$, $\eta_2 = \xi_1 - \xi_2$, we may rewrite \eqref{equ:calJ_delta} as
\begin{equation*}
\begin{split}
\calJ_{\delta_0}(s,\xi) = \iint e^{2 i s \eta_1 \eta_2} \tilf_+(s,\xi-\eta_1) &\overline{\tilf_+(s,\xi-\eta_1-\eta_2)} \tilf_+(s,\xi-\eta_2) \times\\
&\times \frac{\frakp_1\Big(\frac{\xi}{\sqrt{\ulomega}},\frac{\xi-\eta_1}{\sqrt{\ulomega}},\frac{\xi - \eta_1 - \eta_2 }{\sqrt{\ulomega}},\frac{\xi-\eta_2}{\sqrt{\ulomega}}\Big)}{\frakp\Big(\frac{\xi}{\sqrt{\ulomega}},\frac{\xi-\eta_1}{\sqrt{\ulomega}},\frac{\xi - \eta_1 - \eta_2  }{\sqrt{\ulomega}},\frac{\xi-\eta_2}{\sqrt{\ulomega}}\Big)}\,\ud \eta_1\,\ud \eta_2.
\end{split}
\end{equation*}
In view of tensorized structure of the symbol $\frakp_1/\frakp$ given by Item~(1) of Lemma~\ref{lemma:cubic_NSD}, the term $\calJ_{\delta_0}$ is a linear combination of terms of the schematic form 
\begin{equation*}
\calJ_{\delta_0}^{\mathrm{schem}}(s,\xi) := \overline{\frakb_0(\xi)}\iint e^{2is \eta_1 \eta_2} g_1(s,\xi-\eta_1) \overline{g_2(s,\xi-\eta_1-\eta_2)} g_3(s,\xi-\eta_2) \,\ud \eta_1\,\ud \eta_2
\end{equation*}
with inputs $g_j(s,\xi_j) := \frakb_j(\xi_j)\tilf_+(s,\xi_j)$ and  symbols $\frakb_j \in W^{1,\infty}(\bbR)$, $0 \leq j \leq 3$. We find that the partial derivative of the schematic term is given by
\begin{equation*}
\begin{split}
\pxi \calJ_{\delta_0}^{\mathrm{schem}}(s,\xi)  &= \overline{(\partial_\xi\frakb_0)(\xi)}\iint e^{2is \eta_1 \eta_2} g_1(s,\xi-\eta_1) \overline{g_2(s,\xi-\eta_1-\eta_2)} g_3(s,\xi-\eta_2) \,\ud \eta_1\,\ud \eta_2\\
&\quad + \overline{\frakb_0(\xi)}\iint e^{2is \eta_1 \eta_2} (\pxi g_1)(s,\xi-\eta_1) \overline{g_2(s,\xi-\eta_1-\eta_2)} g_3(s,\xi-\eta_2) \,\ud \eta_1\,\ud \eta_2\\
&\quad + \overline{\frakb_0(\xi)} \iint e^{2is \eta_1 \eta_2} g_1(s,\xi-\eta_1) \overline{(\pxi g_2)(s,\xi-\eta_1-\eta_2)} g_3(s,\xi-\eta_2) \,\ud \eta_1\,\ud \eta_2\\
&\quad + \overline{\frakb_0(\xi)} \iint e^{2is \eta_1 \eta_2} g_1(s,\xi-\eta_1) \overline{g_2(s,\xi-\eta_1-\eta_2)} g_3(s,\xi-\eta_2) \,\ud \eta_1\,\ud \eta_2\\
&=: \calI_{\delta_0}^1(s,\xi)  + \calI_{\delta_0}^2(s,\xi) + \calI_{\delta_0}^3(s,\xi) + \calI_{\delta_0}^4(s,\xi). 
\end{split}
\end{equation*}
In order to exploit the oscillatory nature of the inputs, we undo the change of variables back to $\xi_1$ and $\xi_2$ and express each of the preceeding terms as a convolution. For $1\leq j \leq3$, we define
\begin{equation*}
\wtilw_j(s,\xi_j) := e^{-is\xi_j^2}\frakb_j(\xi_j) \tilf_+(s,\xi_j), \quad \text{and} \quad \wtilw_j^\#(s,\xi_j) := e^{-is\xi_j^2}\pxi \big(\frakb_j(\xi_j) \tilf_+(s,\xi_j)\big),
\end{equation*}
and we find that
\begin{equation*}
\begin{split}
\calI_{\delta_0}^1(s,\xi) &= e^{is\xi^2}\overline{(\pxi\frakb_0)(\xi)} \Big(\wtilw_1(s,\cdot) \ast \overline{\wtilw_2(s,\cdot)} \ast \wtilw_3(s,\cdot)\Big)(\xi),\\
\calI_{\delta_0}^2(s,\xi) &=e^{is\xi^2}\overline{(\frakb_0)(\xi)} \Big(\wtilw_1^\#(s,\cdot) \ast \overline{\wtilw_2(s,\cdot)} \ast \wtilw_3(s,\cdot)\Big)(\xi),
\end{split}
\end{equation*}
with similar expressions for $\calI_{\delta_0}^3$ and $\calI_{\delta_0}^4$. Noting that $w_j = \widehat{\calF}^{-1}[\wtilw_j]$ with $w_j(s,y)$ given in \eqref{equ:proof_free_schr_waves}, we obtain from \eqref{equ:preparation_flat_Schrodinger_wave_bound1} and \eqref{equ:preparation_flat_Schrodinger_wave_bound2} respectively the bounds 
\begin{align*}
\big\|  \widehat{\calF}^{-1}[\wtilw_j(s,\cdot)](y)\big \|_{L_y^\infty} &= \big \| w_j(s,y) \big \|_{L_y^\infty} \lesssim \eps \js^{-\frac12},\\
\big\|  \widehat{\calF}^{-1}[\wtilw_j(s,\cdot)](y)\big \|_{L_y^2} &= \big \| w_j(s,y) \big \|_{L_y^2} \lesssim \eps,
\end{align*}
for $1 \leq j \leq 3$. On the other hand, the $L^2$-boundedness of the free Schr\"odinger group gives 
\begin{equation}\label{equ:proof-free-disp3}
\begin{split}
\big\|  \widehat{\calF}^{-1}[\wtilw_j^\#(s,\cdot)](y)\big \|_{L_y^2} &= \big\|  e^{is \py^2}\widehat{\calF}^{-1}[\pxi \big(\frakb_j(\xi_j) \tilf_+(s,\xi_j)\big)](y)\big \|_{L_y^2} \\
&\lesssim \big \| \pxi \big(\frakb_j(\xi_j) \tilf_+(s,\xi_j)\big) \|_{L_\xi^2} \lesssim \| \tilf_+(s,\cdot)\|_{H_\xi^1} \lesssim \eps \js^\delta,
\end{split}
\end{equation}
for $1 \leq j \leq 3$. Using the Plancherel's identity, the convolution property of the standard Fourier transform, \eqref{equ:proof-free-disp1}, \eqref{equ:proof-free-disp2}, and \eqref{equ:proof-free-disp3}, we obtain 
\begin{equation*}
\begin{split}
&\big \| \calI_{\delta_0}^1(s,\cdot) \big \|_{L_\xi^2} \lesssim \Big \| \big(\wtilw_1(s,\cdot) \ast \overline{\wtilw_2(s,\cdot)} \ast \wtilw_3(s,\cdot)\big)(\xi) \Big \|_{L_\xi^2}\lesssim \Big \| \widehat{\calF}^{-1}\big[\big(\wtilw_1(s,\cdot) \ast \overline{\wtilw_2(s,\cdot)} \ast \wtilw_3(s,\cdot)\big)\big](y) \Big \|_{L_y^2}\\
&\lesssim \Big \|\widehat{\calF}^{-1}[\wtilw_1(s,\cdot)]\cdot  \widehat{\calF}^{-1}[ \overline{\wtilw_2(s,\cdot)}] \cdot  \widehat{\calF}^{-1}[\wtilw_3(s,\cdot)]  \Big \|_{L_y^2}\lesssim \Big \|\widehat{\calF}^{-1}[\wtilw_1(s,\cdot)] \Big \|_{L_y^2}\Big \|\widehat{\calF}^{-1}[\wtilw_2(s,\cdot)] \Big \|_{L_y^\infty}\Big \|\widehat{\calF}^{-1}[\wtilw_3(s,\cdot)] \Big \|_{L_y^\infty}\\
&\lesssim \eps^3 \js^{-1},
\end{split}
\end{equation*}
and analogously
\begin{equation*}
\begin{split}
\big \| \calI_{\delta_0}^2(s,\cdot) \big \|_{L_\xi^2} \lesssim \Big \|\widehat{\calF}^{-1}[\wtilw_1^\#(s,\cdot)] \Big \|_{L_y^2}\Big \|\widehat{\calF}^{-1}[\wtilw_2(s,\cdot)] \Big \|_{L_y^\infty}\Big \|\widehat{\calF}^{-1}[\wtilw_3(s,\cdot)] \Big \|_{L_y^\infty} \lesssim \eps^3 \js^{-1+\delta}.
\end{split}
\end{equation*}
We estimate $\calI_{\delta_0}^3(s,\xi)$ and $\calI_{\delta_0}^4(s,\xi)$ identically to $\calI_{\delta_0^2}(s,\xi)$. Therefore, the contribution of cubic interations with a Dirac kernel to \eqref{equ:proof_weighted_cubic2} is given by 
\begin{equation*}
\begin{split}
&\left\| \int_0^t e^{i\theta_1(s)}  e^{-i\theta_2(s)\xi} \pxi \calJ_{\delta_0}(s,\xi) \,\ud s \right \|_{L_\xi^2} \lesssim \int_0^t \left \| \pxi \calJ_{\delta_0}^{\mathrm{schem}}(s,\xi) \right \|_{L_\xi^2} \,\ud s\\
&\lesssim \int_0^t \Big( \left \| \calI_{\delta_0}^1(s,\xi) \right \|_{L_\xi^2}+\left \| \calI_{\delta_0}^2(s,\xi) \right \|_{L_\xi^2}+\left \| \calI_{\delta_0}^3(s,\xi) \right \|_{L_\xi^2}+\left \| \calI_{\delta_0}^4(s,\xi) \right \|_{L_\xi^2} \Big) \,\ud s\\
&\lesssim \int_0^t \big(\eps^3 \js^{-1} + \eps^3 \js^{-1+\delta} \big)\,\ud s \lesssim \eps^3 \jt^\delta.
\end{split}
\end{equation*}
\medskip \noindent \underline{Case 2: Cubic interactions with a Hilbert-type kernel.} Considering the formula for the Fourier transform of $\tanh(\cdot)$ in \eqref{equ:preliminaries_FT_tanh} and the tensorized structure of the symbol $\frakp_2/\frakp$ in Item~(1) of Lemma~\ref{lemma:cubic_NSD}, we may write $\calJ_{\pvdots}(s,\xi)$ as a linear combination of terms of the schematic form 
\begin{equation*}
\calJ_{\pvdots}^{\mathrm{schem}}(s,\xi) := \overline{\fraka_0(\xi)} \iiint e^{is\Phi} g_1(s,\xi_1) \overline{g_2(s,\xi_2)} g_3(s,\xi-\xi_1+\xi_2-\xi_4)  \widehat{\calF}[\tanh(\sqrt{\ulomega}\cdot)](\xi_4) \,\ud \xi_1\,\ud \xi_2\,\ud \xi_4,
\end{equation*}
with inputs $g_j(s,\xi_j) = \fraka_j(\xi_j) \tilf_+(s,\xi_j)$ and symbols $\fraka_j \in W^{1,\infty}$ for $1 \leq j \leq 3$. In view of Item~(3) of Lemma~\ref{lemma:cubic_NSD}, at least one of the symbols satisfies the vanishing property $\fraka_j(0)=0$. By taking partial derivative in $\xi$, we get
\begin{equation*}
\begin{split}
&\pxi \calJ_{\pvdots}^{\mathrm{schem}}(s,\xi) \\
&= \overline{\fraka_0(\xi)} \iiint (is \pxi \Phi) e^{is\Phi} g_1(s,\xi_1) \overline{g_2(s,\xi_2)} g_3(s,\xi-\xi_1+\xi_2-\xi_4)  \widehat{\calF}[\tanh(\sqrt{\ulomega}\cdot)](\xi_4) \,\ud \xi_1\,\ud \xi_2\,\ud \xi_4\\
&\quad + \overline{\pxi \fraka_0(\xi)} \iiint e^{is\Phi} g_1(s,\xi_1) \overline{g_2(s,\xi_2)} g_3(s,\xi-\xi_1+\xi_2-\xi_4)  \widehat{\calF}[\tanh(\sqrt{\ulomega}\cdot)](\xi_4) \,\ud \xi_1\,\ud \xi_2\,\ud \xi_4\\
&\quad + \overline{\fraka_0(\xi)} \iiint e^{is\Phi} g_1(s,\xi_1) \overline{g_2(s,\xi_2)} (\pxi g_3)(s,\xi-\xi_1+\xi_2-\xi_4)  \widehat{\calF}[\tanh(\sqrt{\ulomega}\cdot)](\xi_4) \,\ud \xi_1\,\ud \xi_2\,\ud \xi_4\\
&=: \calI_{\pvdots}^1(s,\xi) + \calI_{\pvdots}^2(s,\xi)+\calI_{\pvdots}^3(s,\xi).
\end{split}
\end{equation*}
By direct computation, one has the following identity for the phase $\Phi \equiv  \xi^2 - \xi_1^2 + \xi_2^2 - (\xi-\xi_1+\xi_2-\xi_4)^2$,
\begin{equation*}
\pxi \Phi = - \partial_{\xi_1} \Phi - \partial_{\xi_2} \Phi + 2 \xi_4.
\end{equation*}
We insert this identity into the term $\calI_{\pvdots}^1(s,\xi)$ and integrate by parts in $\xi_1$ or $\xi_2$ using the identity $\partial_{\xi_j} e^{is\Phi} = (is \partial_{\xi_j}\Phi)e^{is\Phi}$ to obtain 
\begin{equation*}
\begin{split}
&\calI_{\pvdots}^1(s,\xi)\\
&=\overline{\fraka_0(\xi)} \iiint e^{is\Phi} (\pxi g_1)(s,\xi_1) \overline{g_2(s,\xi_2)} g_3(s,\xi-\xi_1+\xi_2-\xi_4)  \widehat{\calF}[\tanh(\sqrt{\ulomega}\cdot)](\xi_4) \,\ud \xi_1\,\ud \xi_2\,\ud \xi_4\\
&\quad + \overline{\fraka_0(\xi)} \iiint  e^{is\Phi} g_1(s,\xi_1) \overline{(\pxi g_2)(s,\xi_2)} g_3(s,\xi-\xi_1+\xi_2-\xi_4)  \widehat{\calF}[\tanh(\sqrt{\ulomega}\cdot)](\xi_4) \,\ud \xi_1\,\ud \xi_2\,\ud \xi_4\\
&\quad + 2 \overline{\fraka_0(\xi)} \iiint s \cdot e^{is\Phi} g_1(s,\xi_1) \overline{g_2(s,\xi_2)} g_3(s,\xi-\xi_1+\xi_2-\xi_4)  \hat{\varphi}(\xi_4) \,\ud \xi_1\,\ud \xi_2\,\ud \xi_4\\
&=: \calI_{\pvdots}^{1,1}(s,\xi) + \calI_{\pvdots}^{1,2}(s,\xi) +\calI_{\pvdots}^{1,3}(s,\xi),
\end{split}
\end{equation*}
with $\hat{\varphi}(\xi) := i \xi \widehat{\calF}[\tanh(\sqrt{\ulomega}\cdot)](\xi) = \sqrt{\ulomega} \widehat{\calF}[\sech^2(\sqrt{\ulomega}\cdot)](\xi)$ a Schwartz function. Let us use the same notation for the input functions 
\begin{equation*}
\wtilw_j(s,\xi_j) := e^{-is\xi_j^2}\fraka_j(\xi_j) \tilf_+(s,\xi_j), \quad \text{and} \quad \wtilw_j^\#(s,\xi_j) := e^{-is\xi_j^2}\pxi \big(\fraka_j(\xi_j) \tilf_+(s,\xi_j)\big),
\end{equation*}
for $1 \leq j \leq 3$. These input functions enjoy the same decay estimates \eqref{equ:proof-free-disp1}--\eqref{equ:proof-free-disp3} as in the previous case. We make a change of variables by setting $\xi_3 = \xi-\xi_1+\xi_2-\xi_4$ to obtain a convolution
\begin{equation*}
\begin{split}
\calI_{\pvdots}^{1,1}(s,\xi) = \overline{\fraka_0(\xi)} \iiint e^{is(\xi^2 - \xi_1^2 + \xi_2^2 - \xi_3^2)} (\pxi g_1)(s,\xi_1) \overline{g_2(s,\xi_2)} g_3(s,\xi_3) \times & \\
\times \widehat{\calF}[\tanh(\sqrt{\ulomega}\cdot)](\xi - \xi_1 + \xi_2 - \xi_3) &\,\ud \xi_1\,\ud \xi_2\,\ud \xi_3\\
= e^{is\xi^2 }\overline{\fraka_0(\xi)} \Big(\wtilw_1^\#(s,\cdot) \ast \overline{\wtilw_2(s,\cdot)} \ast \wtilw_3(s,\cdot) \ast \widehat{\calF}[\tanh(\sqrt{\ulomega}\cdot)] \Big)(\xi),
\end{split}
\end{equation*}
and we note that all other terms $\calI_{\pvdots}^2(s,\xi)$, $\calI_{\pvdots}^3(s,\xi)$, $\calI_{\pvdots}^{1,2}(s,\xi)$, $\calI_{\pvdots}^{1,3}(s,\xi)$ can also be rewritten as a convolution in a similar manner. Using the Plancherel's identity, the convolution property, \eqref{equ:proof-free-disp1}, \eqref{equ:proof-free-disp2}, and  \eqref{equ:proof-free-disp3}, we obtain that 
\begin{equation*}
\begin{split}
\left \| \calI_{\pvdots}^{1,1}(s,\xi) \right \|_{L_\xi^2} &\lesssim \left \| e^{is\xi^2 }\overline{\fraka_0(\xi)} \Big(\wtilw_1^\#(s,\cdot) \ast \overline{\wtilw_2(s,\cdot)} \ast \wtilw_3(s,\cdot) \ast \widehat{\calF}[\tanh(\sqrt{\ulomega}\cdot)] \Big)(\xi) \right \|_{L_\xi^2}\\
&\lesssim \left \| \wtilw_1^\#(s,\cdot) \ast \overline{\wtilw_2(s,\cdot)} \ast \wtilw_3(s,\cdot) \ast \widehat{\calF}[\tanh(\sqrt{\ulomega}\cdot)] \right \|_{L_\xi^2} \\
&= \left \| \widehat{\calF}^{-1}\big[ \wtilw_1^\#(s,\cdot) \ast \overline{\wtilw_2(s,\cdot)} \ast \wtilw_3(s,\cdot) \ast \widehat{\calF}[\tanh(\sqrt{\ulomega}\cdot)] \big](y) \right \|_{L_y^2}\\
&\lesssim  \Big \|\widehat{\calF}^{-1}[\wtilw_1^\#(s,\cdot)] \Big \|_{L_y^2}\Big \|\widehat{\calF}^{-1}[\wtilw_2(s,\cdot)] \Big \|_{L_y^\infty}\Big \|\widehat{\calF}^{-1}[\wtilw_3(s,\cdot)] \Big \|_{L_y^\infty} \Big \|\tanh(\sqrt{\ulomega}\cdot) \Big \|_{L_y^\infty} \\
&\lesssim \eps^3 \js^{-1+\delta},
\end{split}
\end{equation*}
and the following bounds hold analogously
\begin{equation*}
\left \| \calI_{\pvdots}^{1,2}(s,\xi) \right \|_{L_\xi^2} +\left \| \calI_{\pvdots}^{2}(s,\xi) \right \|_{L_\xi^2} +\left \| \calI_{\pvdots}^{3}(s,\xi) \right \|_{L_\xi^2} \lesssim \eps^3 \js^{-1+\delta}.
\end{equation*} 
For the remaining term $\calI_{\pvdots}^{1,3}(s,\xi)$, we use the vanishing property of the symbols $\fraka_j(\xi_j)$. More precisely, we assume either
\begin{equation}\label{equ:proof_pv_case1}
\fraka_0(\xi) = \frac{\xi}{(|\xi|-i\sqrt{\ulomega})^2},
\end{equation}
or 
\begin{equation}\label{equ:proof_pv_case2}
\fraka_k(\xi_k) = \frac{\xi_k}{(|\xi_k|-i\sqrt{\ulomega})^2}, \quad \text{for some $k \in \{1,2,3\}$}.
\end{equation}
In the former case \eqref{equ:proof_pv_case1}, the details proceed identically to the bound \eqref{equ:proof_cubic_reg_smoothing} considered in the regular cubic interactions case below. For the latter case \eqref{equ:proof_pv_case2}, we assume without lost of generality that $k = 1$ so from \eqref{equ:prep_flat_local_decay} we have the improved locay decay estimate 
\begin{equation}\label{equ:proof-free-disp4}
\left \| \jy^{-1} \widehat{\calF}^{-1} [ \wtilw_1(s,\cdot)](y) \right \|_{L_y^2} \lesssim \eps \js^{-1+\delta}.
\end{equation}
Using the Plancherel's identity, the spatial localization of $\varphi(y)$, \eqref{equ:proof-free-disp1}, and \eqref{equ:proof-free-disp4}, we obtain 
\begin{equation*}
\begin{split}
\left \| \calI_{\pvdots}^{1,3}(s,\xi) \right \|_{L_\xi^2} &\lesssim s \cdot  \Big \| \big(\wtilw_1(s,\cdot) \ast \overline{\wtilw_2(s,\cdot)} \ast \wtilw_3(s,\cdot) \ast \widehat{\varphi}(\cdot)\big)(\xi) \Big \|_{L_\xi^2}\\
&\lesssim s \cdot  \Big \| \widehat{\calF}^{-1}\big[ \big(\wtilw_1(s,\cdot) \ast \overline{\wtilw_2(s,\cdot)} \ast \wtilw_3(s,\cdot) \ast \widehat{\varphi}(\cdot)\big) \big] \Big \|_{L_y^2}\\
&\lesssim s \cdot \left \| \jy^{-1} \widehat{\calF}^{-1} [ \wtilw_1(s,\cdot)](y) \right \|_{L_y^2} \cdot \Big \|\widehat{\calF}^{-1}[\wtilw_2(s,\cdot)] \Big \|_{L_y^\infty}\Big \|\widehat{\calF}^{-1}[\wtilw_3(s,\cdot)] \Big \|_{L_y^\infty} \Big \| \jy \varphi(y)\Big \|_{L_y^\infty}\\
&\lesssim s \cdot \eps \js^{-1+\delta} \cdot \eps^2 \js^{-1} \lesssim \eps^3 \js^{-1+\delta}.
\end{split}
\end{equation*}
Thus, we gather all the preceeding estimates and conclude that the contribution of cubic interations with a Hilbert-type kernel to \eqref{equ:proof_weighted_cubic2} is given by 
\begin{equation*}
\begin{split}
&\left\| \int_0^t e^{i\theta_1(s)}  e^{-i\theta_2(s)\xi} \pxi \calJ_{\pvdots}(s,\xi) \,\ud s \right \|_{L_\xi^2} \lesssim \int_0^t \left \| \pxi \calJ_{\pvdots}^{\mathrm{schem}}(s,\xi) \right \|_{L_\xi^2} \,\ud s\\
&\lesssim \int_0^t \Big( \left \| \calI_{\pvdots}^1(s,\xi) \right \|_{L_\xi^2}+\left \| \calI_{\pvdots}^2(s,\xi) \right \|_{L_\xi^2}+\left \| \calI_{\pvdots}^3(s,\xi) \right \|_{L_\xi^2} \Big) \,\ud s\\
&\lesssim \int_0^t  \eps^3 \js^{-1+\delta} \,\ud s \lesssim \eps^3 \jt^\delta.
\end{split}
\end{equation*}

\medskip \noindent \underline{Case 3: Regular cubic interactions.} We consider one of the schematic terms from \eqref{equ:calJ_reg} for the cubic interactions with a regular kernel. Taking a partial derivative, we find that 
\begin{equation}\label{equ:proof_weighted_reg}
\begin{split}
&\int_0^t e^{i\theta_1(s)}  e^{-i\theta_2(s)\xi} \pxi \calJ_{\mathrm{reg}}^{\mathrm{schem}}(s,\xi) \,\ud s = \int_0^t e^{i\theta_1(s)}  e^{-i\theta_2(s)\xi} (2is\xi) e^{is(\xi^2+\ulomega)}\overline{\frakb_0(\xi)} \widehat{\calF}[G(s,\cdot)](\xi)\,\ud s \\
&\quad + \int_0^t e^{i\theta_1(s)}  e^{-i\theta_2(s)\xi}  e^{is(\xi^2+\ulomega)} \Big(\overline{(\pxi \frakb_0)(\xi)} \widehat{\calF}[G(s,\cdot)](\xi) +\overline{\frakb_0(\xi)} \pxi\widehat{\calF}[G(s,\cdot)](\xi) \Big)\,\ud s 
\end{split}
\end{equation}
with 
\begin{equation*}
G(s,y) = w_1(s,y) \overline{w_2(s,y)} w_3(s,y) \varphi(y) \quad \text{or} \quad G(s,y) = \overline{w_1(s,y)} w_2(s,y) \overline{w_3(s,y)} \varphi(y)
\end{equation*}
where $\varphi(y)$ is some fixed Schwartz function and the inputs $w_j(s,y)$ defined in \eqref{equ:proof_free_schr_waves}. Thanks to the estimates \eqref{equ:proof-free-disp1}, \eqref{equ:proof-free-disp5}, and the spatial localization of $\varphi(y)$, we infer that 
\begin{equation}\label{equ:proof_cubic_reg_input}
\big \| \jym G(s,y) \big \|_{L_y^2} + \big \| \jym \py G(s,y) \big \|_{L_y^2} \lesssim \eps^3 \js^{-\frac32 + \delta}.
\end{equation}
Hence, we are put back in the similar setting of \eqref{equ:proof_weighted_local_smooth1} and \eqref{equ:proof_weighted_local_smooth2} for the first term on the right hand side of \eqref{equ:proof_weighted_reg} which allow us to conclude that 
\begin{equation}\label{equ:proof_cubic_reg_smoothing}
\left \| \int_0^t e^{i\theta_1(s)}  e^{-i\theta_2(s)\xi} (2is\xi) e^{is(\xi^2+\ulomega)}\overline{\frakb_0(\xi)} \widehat{\calF}[G(s,\cdot)](\xi)\,\ud s \right \|_{L_\xi^2} \lesssim \eps^3 \jt^\delta,
\end{equation}
and we use \eqref{equ:proof_cubic_reg_input} to crudely bound the second term on the right-hand side of \eqref{equ:proof_weighted_reg} by
\begin{equation*}
\begin{split}
&\left \| \int_0^t e^{i\theta_1(s)}  e^{-i\theta_2(s)\xi}  e^{is(\xi^2+\ulomega)} \Big(\overline{(\pxi \frakb_0)(\xi)} \widehat{\calF}[G(s,\cdot)](\xi) +\overline{\frakb_0(\xi)} \pxi\widehat{\calF}[G(s,\cdot)](\xi) \Big)\,\ud s   \right \|_{L_\xi^2}\\
&\lesssim  \int_0^t  \big \| \frakb_0 \big \|_{W^{1,\infty}} \cdot \big \| \widehat{\calF}[G(s,\cdot)] \big \|_{H^1} \,\ud s \lesssim \int_0^t \big \| \jy G(s,y) \big \|_{L_y^2} \,\ud s \lesssim \int_0^t \eps^3 \js^{-1+\delta} \,\ud s \lesssim \eps^3 \jt^\delta.
\end{split}
\end{equation*}
This finishes the discussion of the weighted energy estimates for the regular cubic interaction terms and also the proof of the proposition.
\end{proof}

\section{Pointwise Estimates for the Profile}\label{sec:pointwise_profile}
In this section we establish uniform-in-time pointwise bounds for the profile. We refer the reader to \cite[Section~10]{LL24} for details on the nonlinear stationary phase estimates for the cubic terms.
 \begin{lemma} \label{lemma: effective-ODE-profile}
 Suppose the assumptions in the statement of Proposition~\ref{prop:profile_bounds} are in place. Assume $T \geq 1$. Then we have for all $\xi \in \bbR$ and for all $1 \leq t \leq T$ that
  \begin{equation}\label{eqn: effective-ODE-profile}
\begin{split}
 \partial_t \Big( e^{i\theta_1(t)}e^{-i\theta_2(t)\xi}&\big(\tilfplusulo(t,\xi)+\wtilB_{\ulomega,\ulp}(t,\xi)\big) \Big) \\
&= e^{i\theta_1(t)}e^{-i\theta_2(t)\xi}\Big(\frac{i}{2t} \vert \tilfplusulo(t,\xi)\vert^2 \tilfplusulo(t,\xi) + \wtilcalE_{\ulomega,\ulp}(t,\xi)\Big),
\end{split}
  \end{equation}
  where the remainder term  
  \begin{equation}\label{eqn: def-wtilcalE}
  \begin{split}
 \wtilcalE_{\ulomega,\ulp}(t,\xi) &:= \Big(-\frac{i}{2t}\vert \tilfplusulo(t,\xi)\vert^2 \tilfplusulo(t,\xi) -i e^{it(\xi^2+\ulomega)}\wtilcalF_{+,\ulomega}\big[\calC\big((\ulPe \Vp)(t)\big)\big](\xi) \Big) \\
 &\qquad -ie^{it(\xi^2+\ulomega)}\wtilcalR_{\ulomega,\ulp}(t,\xi) +i \big(\dot{\theta}_1(t) - \dot{\theta}_2(t)\xi\big)\wtilB_{\ulomega,\ulp}(t,\xi)
  \end{split}     
  \end{equation}
  satisfies
 \begin{equation}\label{eqn: pointwise_estimate_wtilcalE}
 \sup_{1 \leq t \leq T} \jt^{\frac{11}{10}-\delta} \big\Vert \wtilcalE_\ulomega(t,\xi) \big\Vert_{L_\xi^\infty} \lesssim \varepsilon^2.
 \end{equation}
 \end{lemma}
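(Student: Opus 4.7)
The plan is to start from the renormalized profile equation \eqref{equ:setup_evol_equ_renormalized_tilfplus} and isolate the resonant cubic contribution via nonlinear stationary phase. Subtracting $\frac{i}{2t}|\tilfplusulo|^2 \tilfplusulo$ from both sides produces the asserted identity \eqref{eqn: effective-ODE-profile}, so the task reduces to showing that each of the four pieces making up $\wtilcalE_{\ulomega,\ulp}(t,\xi)$ in \eqref{eqn: def-wtilcalE} has $L^\infty_\xi$ norm bounded by $\varepsilon^2 \jt^{-\frac{11}{10}+\delta}$. Three of the four pieces are immediate: using the bilinear estimates \eqref{equ:consequences_wtilcalB} and the modulation decay \eqref{equ:consequences_aux_bound_modulation2}, one absorbs the $\xi$ in $\dot\theta_2 \xi \wtilB$ into the weight $\jxi^2$ to get $\|(\dot\theta_1 - \dot\theta_2\xi)\wtilB_{\ulomega,\ulp}(t,\cdot)\|_{L^\infty_\xi} \lesssim \varepsilon^3 \jt^{-2+\delta}$; and the remainder estimate \eqref{equ:consequences_wtilcalR} combined with the Sobolev embedding $H^1_\xi \hookrightarrow L^\infty_\xi$ yields $\|\wtilcalR_{\ulomega,\ulp}(t,\cdot)\|_{L^\infty_\xi} \lesssim \varepsilon^2 \jt^{-\frac{3}{2}+\delta}$, both of which comfortably beat the target rate.

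The substantive work is to extract the cubic resonant term from $e^{it(\xi^2+\ulomega)}\wtilcalF_{+,\ulomega}[\calC(\ulPe \Vp(t))](\xi)$. I would invoke the spectral decomposition from Lemma~\ref{lemma:cubic_NSD} to split the cubic nonlinearity into a Dirac $\delta_0$-type contribution $\calJ_{\delta_0}$, a Hilbert-type contribution $\calJ_{\pvdots}$, and a regular piece $\calJ_{\mathrm{reg}}$. For $\calJ_{\delta_0}$, change variables $\eta_1 = \xi-\xi_1$, $\eta_2 = \xi_1-\xi_2$ so that the phase becomes $e^{2it\eta_1\eta_2}$ with stationary point at $(\eta_1,\eta_2)=(0,0)$; a standard 2D stationary phase expansion (see \cite[Section~10]{LL24}) yields
\[
\calJ_{\delta_0}(t,\xi) = \frac{\pi}{t}\,\frac{\frakp_1(\xi/\sqrt{\ulomega},\ldots,\xi/\sqrt{\ulomega})}{\frakp(\xi/\sqrt{\ulomega},\ldots,\xi/\sqrt{\ulomega})}\,|\tilfplusulo(t,\xi)|^2\tilfplusulo(t,\xi) + \calO\bigl(t^{-\frac{11}{10}+\delta}\varepsilon^2\bigr)
\]
in $L^\infty_\xi$, where the diagonal property \eqref{eqn:cubic-diagonal-property} forces the symbol quotient to equal $1$. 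After multiplication by $-i/(2\pi)$ coming from the normalization in \eqref{equ:calJ_delta}, this reproduces exactly the main term $\frac{i}{2t}|\tilfplusulo|^2\tilfplusulo$.

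For $\calJ_{\pvdots}$ the phase factorizes with an extra frequency variable $\xi_4$ carrying the Hilbert kernel, and the key is the vanishing property (item~(3) of Lemma~\ref{lemma:cubic_NSD}): at least one tensor factor in $\frakp_2/\frakp$ vanishes at the frequency that would otherwise produce a stationary contribution, so one gains an additional factor of $t^{-1/2}$ (or better, using the improved local decay \eqref{equ:prep_flat_local_decay} for waves with vanishing symbol at zero) and lands in $\calO(\varepsilon^3 t^{-\frac{3}{2}+\delta})$. The regular piece $\calJ_{\mathrm{reg}}$ is straightforward: each of the three free Schr\"odinger factors $w_j(t,y)$ from \eqref{equ:proof_free_schr_waves} obeys $\|w_j(t)\|_{L^\infty_y} \lesssim \varepsilon t^{-1/2}$ by \eqref{equ:preparation_flat_Schrodinger_wave_bound1}, and the Schwartz cutoff $\varphi(y)$ combined with the $L^1 \to L^\infty$ boundedness of $\widehat{\calF}$ on the remaining symbol gives $\|\calJ_{\mathrm{reg}}(t,\cdot)\|_{L^\infty_\xi} \lesssim \varepsilon^3 t^{-\frac{3}{2}}$.

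The main obstacle is controlling the error in the stationary phase expansion for $\calJ_{\delta_0}$ with the limited regularity available: we only propagate $H^1_\xi$ control on $\tilfplusulo$ with slow growth $\jt^\delta$, which is the reason the error is only $t^{-\frac{11}{10}+\delta}$ rather than the classical $t^{-\frac{3}{2}}$. Following the scheme in \cite[Section~10]{LL24}, one interpolates between the $L^\infty$ bootstrap bound and the weighted $L^2$ bound via a dyadic frequency decomposition around the stationary point, optimizing the cutoff scale to get the exponent $\frac{11}{10}$; this interpolation is the only delicate quantitative computation in the proof and must be carried out with the same care as in the cited reference. Once each of the three cubic pieces is controlled this way, summing them proves \eqref{eqn: pointwise_estimate_wtilcalE}.
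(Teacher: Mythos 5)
Your overall strategy matches the paper's: read off the identity \eqref{eqn: effective-ODE-profile} from the renormalized equation \eqref{equ:setup_evol_equ_renormalized_tilfplus}, dispatch the $(\dot\theta_1-\dot\theta_2\xi)\wtilB_{\ulomega,\ulp}$ and $\wtilcalR_{\ulomega,\ulp}$ pieces via \eqref{equ:consequences_wtilcalB}, \eqref{equ:consequences_aux_bound_modulation2}, \eqref{equ:consequences_wtilcalR} and Sobolev embedding, and split the cubic term into $\calJ_{\delta_0}+\calJ_{\pvdots}+\calJ_{\mathrm{reg}}$ with stationary phase on the Dirac piece (using the diagonal normalization \eqref{eqn:cubic-diagonal-property}), the vanishing property on the Hilbert piece, and direct bounds on the regular piece. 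That part is right, and the paper itself delegates the quantitative cubic estimates to \cite[Section~10]{LL24} just as you do.

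However, your attribution of which term drives the exponent $\tfrac{11}{10}$ is reversed relative to what the cited analysis actually gives. You claim the stationary-phase error for $\calJ_{\delta_0}$ is only $\calO(\varepsilon^2 t^{-\frac{11}{10}+\delta})$ (blaming the limited $H^1_\xi$ regularity) and that $\calJ_{\pvdots}$ can be pushed to $\calO(\varepsilon^3 t^{-\frac{3}{2}+\delta})$ because the vanishing factors in $\frakp_2/\frakp$ let you gain an extra half power of $t$. The paper records the opposite: the Dirac error is bounded by $\varepsilon^3\jt^{-\frac{6}{5}+3\delta}$ (strictly better than $\jt^{-\frac{11}{10}+\delta}$ for small $\delta$), while the Hilbert-type contribution is the bottleneck at $\varepsilon^3\jt^{-\frac{11}{10}+\delta}$. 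The vanishing of one tensor factor in $\frakp_2/\frakp$ does not fully tame the $\pvdots\cosech$ singularity at the zero of $\xi_4$ in the way you suggest; Hilbert-kernel cubic interactions are the standard obstruction to the $t^{-3/2}$ rate in modified-scattering arguments, and claiming $t^{-3/2+\delta}$ for $\calJ_{\pvdots}$ would require a justification that neither the paper nor \cite{LL24} supplies. Since both bookkeepings happen to produce the same final rate $\varepsilon^2\jt^{-\frac{11}{10}+\delta}$, your conclusion is still correct, but the explanation of where the $\tfrac{11}{10}$ comes from is wrong, and that is precisely the delicate quantitative part of the proof; were you to carry out the stationary-phase computation you claim to interpolate through, you would find the Dirac term better and the Hilbert term worse than you state.
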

\begin{proof}
We note that \eqref{eqn: effective-ODE-profile} and \eqref{eqn: def-wtilcalE} are equivalent to the equations \eqref{equ:setup_evol_equ_renormalized_tilfplus} and \eqref{equ:setup_definition_wtilcalRulomega}. It remains to prove the estimate \eqref{eqn: pointwise_estimate_wtilcalE}. We note that the stationary phase analysis to infer the asymptotics for the cubic terms carries verbatim over from our previous work \cite{LL24}. More precisely, recall from \eqref{equ:cubic_decomp} that one has the following decomposition for the cubic terms
\begin{equation*}
e^{it(\xi^2+\ulomega)} \wtilcalF_{+, \ulomega}\big[\calC\big((\ulPe \Vp)(t)\big)\big](\xi) = \calJ_{\delta_0}(t,\xi) + \calJ_{\pvdots}(t,\xi) + \calJ_{\mathrm{reg}}(t,\xi),
\end{equation*}
where the right-hand side of the above equation represent trilinear expressions involving a Dirac delta kernel \eqref{equ:calJ_delta}, a Hilbert-type kernel \eqref{equ:calJ_pv}, and a regular kernel \eqref{equ:calJ_reg} respectively. We refer the reader to \cite[(10.21)--(10.23)]{LL24} for the proof of the following estimates 
\begin{align*}
\sup_{1 \leq t \leq T} \jt^{\frac65 - 3\delta} \left \| \calJ_{\delta_0}(t,\xi) + \frac{1}{2t}\vert \tilfplusulo(t,\xi)\vert^2 \tilfplusulo(t,\xi) \right \|_{L_\xi^\infty} &\lesssim \eps^3 ,\\
\sup_{1 \leq t \leq T} \jt^{\frac{11}{10} - \delta} \left \| \calJ_{\pvdots}(t,\xi)\right \|_{L_\xi^\infty} &\lesssim \eps^3 ,\\
\sup_{1 \leq t \leq T} \jt^{\frac32} \left \| \calJ_{\mathrm{reg}}(t,\xi) \right \|_{L_\xi^\infty} &\lesssim \eps^3 .
\end{align*}

Next, we estimate the contributions of the remaining terms in \eqref{eqn: def-wtilcalE}. By \eqref{equ:consequences_aux_bound_modulation2} and \eqref{equ:consequences_wtilcalB} we obtain for all $1 \leq t \leq T$ that 
\begin{equation*}
\left \|\big(\dot{\theta}_1(t) - \dot{\theta}_2(t)\xi\big)\wtilB_{\ulomega,\ulp}(t,\xi) \right \|_{L_\xi^\infty} \lesssim \big(|\theta_1(t)|+|\theta_2(t)|\big) \big \| \jxi \wtilB_{\ulomega,\ulp}(t,\xi) \big \|_{L_\xi^\infty} \lesssim \jt^{-2+\delta}\eps^3,
\end{equation*}
and using the bound \eqref{equ:consequences_wtilcalR} and Sobolev inequality, we conclude for all $1 \leq t \leq T$ that 
\begin{equation*}
\left\| e^{it(\xi^2+\ulomega)}\wtilcalR_{\ulomega,\ulp}(t,\xi) \right\|_{L_\xi^\infty} \lesssim \left\| \wtilcalR_{\ulomega,\ulp}(t,\xi) \right\|_{H_\xi^1} \lesssim \jt^{-\frac32+\delta}\eps^2.
\end{equation*}
Thus, the preceeding estimates imply the asserted bound \eqref{eqn: pointwise_estimate_wtilcalE}.
\end{proof}

The main uniform-in-time pointwise estimates for the profile are summarized in the following proposition.
 \begin{proposition}\label{prop:pointwise_estimate}
  Suppose the assumptions in the statement of Proposition~\ref{prop:profile_bounds} are in place. Assume $T \geq 1$. Then we have uniformly for all $0 \leq t \leq T$ that
  \begin{equation}\label{equ:pointwise_esimate}
   \bigl\| \tilf_{+, \ulomega,\ulp}(t,\xi) \bigr\|_{L^\infty_\xi} + \bigl\| \tilf_{-, \ulomega,\ulp}(t,\xi) \bigr\|_{L^\infty_\xi} \lesssim \eps + \varepsilon^2.
  \end{equation}
 Moreover, we obtain for arbitrary times $1 \leq t_1 \leq t_2 \leq T$ that
 \begin{align}
 \Big\Vert e^{-i\Lambda_+(t_2,\xi)} e^{i\theta_1(t_2)}e^{-i\theta_2(t_2)\xi}\tilf_{+,\ulomega,\ulp}(t_2,\xi) -e^{-i\Lambda_+(t_1,\xi)} e^{i\theta_1(t_1)}e^{-i\theta_2(t_1)\xi}\tilf_{+,\ulomega,\ulp}(t_1,\xi)\Big \|_{L_\xi^\infty} &\lesssim \varepsilon^2 t_1^{-\frac{1}{10}+\delta}, \label{eqn: Cauchy-in-time-estimate}\\
  \Big\Vert e^{i\Lambda_-(t_2,\xi)} e^{-i\theta_1(t_2)}e^{-i\theta_2(t_2)\xi}\tilf_{-,\ulomega,\ulp}(t_2,\xi) -e^{i\Lambda_-(t_1,\xi)} e^{-i\theta_1(t_1)}e^{-i\theta_2(t_1)\xi}\tilf_{-,\ulomega,\ulp}(t_1,\xi) \Big \|_{L_\xi^\infty} &\lesssim \varepsilon^2 t_1^{-\frac{1}{10}+\delta}, \label{eqn: Cauchy-in-time-estimate2}
 \end{align}
 where
 \begin{equation}\label{eqn: integrating-factor-plus}
     \Lambda_\pm(t,\xi) := \frac{1}{2}\int_1^t \frac{1}{s} \, \vert \tilf_{\pm,\ulomega,\ulp}(s,\xi)\vert^2 \,\ud s.
 \end{equation}
 \end{proposition}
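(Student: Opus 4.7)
\medskip
\noindent\textit{Proof proposal.} The plan is to use the effective ODE \eqref{eqn: effective-ODE-profile} from Lemma~\ref{lemma: effective-ODE-profile} to run the standard modified scattering argument. Set
\begin{equation*}
g_+(t,\xi) := e^{-i\Lambda_+(t,\xi)} e^{i\theta_1(t)} e^{-i\theta_2(t)\xi} \tilf_{+,\ulomega,\ulp}(t,\xi),
\qquad
\tilde g_+(t,\xi) := e^{-i\Lambda_+(t,\xi)} e^{i\theta_1(t)} e^{-i\theta_2(t)\xi} \bigl(\tilf_{+,\ulomega,\ulp}(t,\xi) + \wtilB_{\ulomega,\ulp}(t,\xi)\bigr).
\end{equation*}
Since $\partial_t \Lambda_+(t,\xi) = \tfrac{1}{2t}|\tilf_{+,\ulomega,\ulp}(t,\xi)|^2$, a direct computation combining the definition of $\Lambda_+$ with \eqref{eqn: effective-ODE-profile} yields
\begin{equation*}
\partial_t \tilde g_+(t,\xi) = -\frac{i}{2t} |\tilf_{+,\ulomega,\ulp}(t,\xi)|^2 \, e^{-i\Lambda_+(t,\xi)} e^{i\theta_1(t)} e^{-i\theta_2(t)\xi}\, \wtilB_{\ulomega,\ulp}(t,\xi)
+ e^{-i\Lambda_+(t,\xi)} e^{i\theta_1(t)} e^{-i\theta_2(t)\xi}\, \wtilcalE_{\ulomega,\ulp}(t,\xi),
\end{equation*}
so that the resonant cubic term has been removed, as expected for a modified scattering normal form.

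Next, I would estimate the right-hand side pointwise in $\xi$. The bootstrap assumption \eqref{equ:prop_profile_bounds_assumption2} gives $\|\tilf_{+,\ulomega,\ulp}(t)\|_{L^\infty_\xi} \lesssim \varepsilon$, the bilinear bound \eqref{equ:consequences_wtilcalB} gives $\|\wtilB_{\ulomega,\ulp}(t)\|_{L^\infty_\xi} \lesssim \varepsilon^2 \jt^{-1}$, and the main remainder estimate \eqref{eqn: pointwise_estimate_wtilcalE} gives $\|\wtilcalE_{\ulomega,\ulp}(t)\|_{L^\infty_\xi} \lesssim \varepsilon^2 \jt^{-11/10+\delta}$. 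Integrating from $t_1$ to $t_2$ therefore produces
\begin{equation*}
\bigl\| \tilde g_+(t_2,\xi) - \tilde g_+(t_1,\xi) \bigr\|_{L^\infty_\xi} \lesssim \int_{t_1}^{t_2} \Bigl( \varepsilon^4 s^{-2} + \varepsilon^2 s^{-\tfrac{11}{10}+\delta} \Bigr) \, \ud s \lesssim \varepsilon^2 \, t_1^{-\tfrac{1}{10}+\delta}.
\end{equation*}
Since $|\tilde g_+ - g_+| = |\wtilB_{\ulomega,\ulp}| \lesssim \varepsilon^2 \jt^{-1}$, the same estimate with the same decay rate transfers to $g_+$, which yields the Cauchy-in-time estimate \eqref{eqn: Cauchy-in-time-estimate}. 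For the pointwise bound \eqref{equ:pointwise_esimate}, I would combine this Cauchy bound at $t_1 = 1$ with the Sobolev embedding $H^1_\xi \hookrightarrow L^\infty_\xi$ and the weighted energy bound \eqref{equ:consequences_sobolev_bound_profile} together with Proposition~\ref{prop: weighted-energy-estimate} at time $t=1$ to obtain $\|\tilf_{+,\ulomega,\ulp}(1)\|_{L^\infty_\xi} \lesssim \varepsilon$; the short-time interval $[0,1]$ is handled similarly.

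Finally, the corresponding statements for $\tilf_{-,\ulomega,\ulp}$ are recovered from the scattering relation \eqref{equ:setup_components_relation}, which implies in particular that $\Lambda_-(t,\xi) = \Lambda_+(t,-\xi)$ and that the quantity
\begin{equation*}
e^{i\Lambda_-(t,\xi)} e^{-i\theta_1(t)} e^{-i\theta_2(t)\xi} \tilf_{-,\ulomega,\ulp}(t,\xi) = -\frac{(|\xi|-i\sqrt{\ulomega})^2}{(|\xi|+i\sqrt{\ulomega})^2} \, \overline{g_+(t,-\xi)}
\end{equation*}
is just a unimodular reflection of $\overline{g_+}$. Hence both \eqref{equ:pointwise_esimate} and \eqref{eqn: Cauchy-in-time-estimate2} follow by changing variables $\xi \mapsto -\xi$ and complex conjugating. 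The main obstacle in this argument is really confined to verifying \eqref{eqn: pointwise_estimate_wtilcalE}, where one must extract the resonant cubic $\tfrac{1}{2t}|\tilf_+|^2 \tilf_+$ via stationary phase from the full cubic distorted-Fourier term $\wtilcalF_{+,\ulomega}[\calC(\ulPe\Vp)]$ (using the singular/regular decomposition of Lemma~\ref{lemma:cubic_NSD} and in particular the vanishing property of $\mu_{\pvdots,\ulomega}$ at zero frequency), but this analysis proceeds exactly as in \cite[Sect.~10]{LL24}; once \eqref{eqn: pointwise_estimate_wtilcalE} is in hand, the rest of the proof is the short ODE argument outlined above.
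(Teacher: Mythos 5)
Your proof is correct and follows essentially the same route as the paper's: integrate the effective ODE from Lemma~\ref{lemma: effective-ODE-profile} with the integrating factor $e^{-i\Lambda_+}$, observe that the resonant cubic term cancels leaving only the $\wtilB$-coupling term and the remainder $\wtilcalE_{\ulomega,\ulp}$, integrate in time using \eqref{equ:consequences_wtilcalB} and \eqref{eqn: pointwise_estimate_wtilcalE}, and transfer the conclusion to $\tilf_{-,\ulomega,\ulp}$ via the scattering relation \eqref{equ:setup_components_relation}. The only minor cosmetic difference is that you first state the ODE for $\tilde g_+$ and then subtract off $\wtilB$ at the end, whereas the paper records the identity directly for the sum $\tilfplusulo + \wtilB$ and invokes \eqref{equ:consequences_wtilcalB} after integration; and you obtain the short-time bound $0\leq t\leq 1$ in passing, whereas the paper writes it out via Sobolev embedding from \eqref{equ:consequences_sobolev_bound_profile} and Proposition~\ref{prop: weighted-energy-estimate}. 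Both are the same argument in substance.
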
 
\begin{proof}
For short times $0\leq t \leq 1$, using the Sobolev's inequality with bounds \eqref{equ:consequences_sobolev_bound_profile} and \eqref{equ:weighted_estimate}, we obtain
\begin{equation*}
\begin{split}
\bigl\| \tilf_{+, \ulomega,\ulp}(t,\xi) \bigr\|_{L^\infty_\xi} + \bigl\| \tilf_{-, \ulomega,\ulp}(t,\xi) \bigr\|_{L^\infty_\xi} \lesssim \bigl\| \tilf_{+, \ulomega,\ulp}(t,\xi) \bigr\|_{H_\xi^1} + \bigl\| \tilf_{-, \ulomega,\ulp}(t,\xi) \bigr\|_{H_\xi^1} \lesssim   \eps + \varepsilon^2,
\end{split}
\end{equation*}
while for times $t \geq 1$, the pointwise estimate \eqref{equ:pointwise_esimate} follows from \eqref{eqn: Cauchy-in-time-estimate} and \eqref{eqn: Cauchy-in-time-estimate2} evaluated at times $t_2 = t$ and $t_1 = 1$, and by triangle inequality.  Let us show \eqref{eqn: Cauchy-in-time-estimate} using the previous lemma. From \eqref{eqn: effective-ODE-profile}, we multiply the integrating factor \eqref{eqn: integrating-factor-plus} to infer that 
 \begin{equation} \label{equ:effective_phased_ODE}
\begin{split}
& \partial_t \Big(e^{-i\Lambda_+(t,\xi)} e^{i\theta_1(t)}e^{-i\theta_2(t)\xi}\big(\tilfplusulo(t,\xi)+\wtilB_{\ulomega,\ulp}(t,\xi)\big) \Big) \\
&= -i \dot{\Lambda}_+(t,\xi)\Big( e^{-i\Lambda_+(t,\xi)} e^{i\theta_1(t)}e^{-i\theta_2(t)\xi}\big(\tilfplusulo(t,\xi)+\wtilB_{\ulomega,\ulp}(t,\xi)\big) \Big) \\
&\quad +  e^{-i\Lambda_+(t,\xi)}e^{i\theta_1(t)}e^{-i\theta_2(t)\xi}\Big(\frac{i}{2t} \vert \tilfplusulo(t,\xi)\vert^2 \tilfplusulo(t,\xi) + \wtilcalE_{\ulomega,\ulp}(t,\xi)\Big)\\
&= e^{-i\Lambda_+(t,\xi)}e^{i\theta_1(t)}e^{-i\theta_2(t)\xi}\Big(-\frac{i}{2t} \vert \tilfplusulo(t,\xi)\vert^2 \wtilB_{\ulomega,\ulp}(t,\xi) + \wtilcalE_{\ulomega,\ulp}(t,\xi)\Big).
\end{split}
  \end{equation}
By \eqref{equ:prop_profile_bounds_assumption2}, \eqref{equ:consequences_wtilcalB}, and \eqref{eqn: pointwise_estimate_wtilcalE}, we obtain the following estimate for the right-hand side of \eqref{equ:effective_phased_ODE} for all $1 \leq t \leq T$,
\begin{equation}\label{equ:RHS-ODE-estimate}
\begin{split}
&\left \|e^{-i\Lambda_+(t,\xi)}e^{i\theta_1(t)}e^{-i\theta_2(t)\xi}\Big(-\frac{i}{2t} \vert \tilfplusulo(t,\xi)\vert^2 \wtilB_{\ulomega,\ulp}(t,\xi) + \wtilcalE_{\ulomega,\ulp}(t,\xi)\Big) \right \|_{L_\xi^\infty}\\
&\lesssim t^{-1} \left \|\vert \tilfplusulo(t,\xi)\vert^2 \wtilB_{\ulomega,\ulp}(t,\xi) \right \|_{L_\xi^\infty} + \left \|\wtilcalE_{\ulomega,\ulp}(t,\xi) \right \|_{L_\xi^\infty} \\
&\lesssim t^{-2} \eps^4 + t^{-\frac{11}{10}+\delta}\eps^2.
\end{split}
\end{equation}
Using \eqref{equ:consequences_wtilcalB} and the preceeding estimate, we obtain the asserted bound  \eqref{eqn: Cauchy-in-time-estimate} by integrating \eqref{equ:effective_phased_ODE} from $t_1$ to $t_2$. It remains to prove \eqref{eqn: Cauchy-in-time-estimate2}. From \eqref{equ:setup_components_relation} we have 
\begin{equation*}
\big| \tilf_{-, \ulomega,\ulp}(t,\xi)\big|^2 = \big| \tilf_{+, \ulomega,\ulp}(t,-\xi)\big|^2 , \quad \forall \xi \in \bbR,
\end{equation*}
which then implies that $\Lambda_-(t,\xi) \equiv  \Lambda_+(t,-\xi)$.  Applying the relation \eqref{equ:setup_components_relation} to \eqref{equ:effective_phased_ODE} we infer that 
\begin{equation*}
\begin{split}
& \partial_t \Big(e^{i\Lambda_-(t,\xi)} e^{-i\theta_1(t)}e^{-i\theta_2(t)\xi}\big(\tilfplusulo(t,\xi)-\frac{(|\xi|-i\sqrt{\ulomega})^2}{(|\xi|+i\sqrt{\ulomega})^2}\overline{\wtilB_{\ulomega,\ulp}(t,-\xi)}\big) \Big) \\
&= -\frac{(|\xi|-i\sqrt{\ulomega})^2}{(|\xi|+i\sqrt{\ulomega})^2}e^{i\Lambda_-(t,\xi)}e^{-i\theta_1(t)}e^{-i\theta_2(t)\xi}\Big(\frac{i}{2t} \vert \tilfplusulo(t,-\xi)\vert^2 \overline{\wtilB_{\ulomega,\ulp}(t,-\xi)} + \overline{\wtilcalE_{\ulomega,\ulp}(t,-\xi)}\Big).
\end{split}
\end{equation*}
Thus, the asserted bound \eqref{eqn: Cauchy-in-time-estimate2} similarly follows by integrating the above equation from $t_1$ to $t_2$ and by the estimates \eqref{equ:consequences_wtilcalB}, \eqref{equ:RHS-ODE-estimate}.
\end{proof}

\bibliographystyle{amsplain}
\bibliography{references}

\providecommand{\bysame}{\leavevmode\hbox to3em{\hrulefill}\thinspace}
\providecommand{\MR}{\relax\ifhmode\unskip\space\fi MR }
\providecommand{\MRhref}[2]{%
  \href{http://www.ams.org/mathscinet-getitem?mr=#1}{#2}
}
\providecommand{\href}[2]{#2}
\begin{thebibliography}{10}

\bibitem{BorghJenMcL18}
M.~Borghese, R.~Jenkins, and K.~McLaughlin, \emph{Long time asymptotic behavior
  of the focusing nonlinear {S}chr\"odinger equation}, Ann. Inst. H.
  Poincar\'e{} C Anal. Non Lin\'eaire \textbf{35} (2018), no.~4, 887--920.

\bibitem{BusPerel92}
V.~S. Buslaev and G.~S. Perelman, \emph{Scattering for the nonlinear
  {S}chr\"{o}dinger equation: states that are close to a soliton}, Algebra i
  Analiz \textbf{4} (1992), no.~6, 63--102.

\bibitem{BusSul03}
V.~S. Buslaev and C.~Sulem, \emph{On asymptotic stability of solitary waves for
  nonlinear {S}chr\"{o}dinger equations}, Ann. Inst. H. Poincar\'{e} Anal. Non
  Lin\'{e}aire \textbf{20} (2003), no.~3, 419--475.

\bibitem{82CazenaveLions}
T.~Cazenave and P.-L. Lions, \emph{Orbital stability of standing waves for some
  nonlinear {S}chr{\"o}dinger equations}, Comm. Math. Phys. \textbf{85} (1982),
  549--561.

\bibitem{CGNT08}
S.-M. Chang, S.~Gustafson, K.~Nakanishi, and T.-P. Tsai, \emph{{Spectra of
  Linearized Operators for NLS Solitary Waves}}, SIAM Journal on Mathematical
  Analysis \textbf{39} (2008), no.~4, 1070--1111.

\bibitem{Chen21}
G.~Chen, \emph{{Long-time dynamics of small solutions to 1d cubic nonlinear
  Schr\"odinger equations with a trapping potential}}, Preprint
  arXiv:2106.10106.

\bibitem{CL24}
G.~Chen and J.~L\"uhrmann, \emph{{Asymptotic stability of the sine-Gordon
  kink}}, arXiv preprint arXiv:2411.07004 (2024).

\bibitem{25ChenMou2}
G.~Chen and A.~Moutinho, \emph{{Asymptotic Stability of multi-solitons for 1d
  Supercritical NLS}}, arXiv preprint arXiv:2509.03637 (2025).

\bibitem{25ChenMou}
\bysame, \emph{{Dispersive analysis for one-dimensional charge transfer
  models}}, arXiv preprint arXiv:2505.00867 (2025).

\bibitem{25ChenMou3}
\bysame, \emph{{Scattering Theory and dispersive estimates for general 1d
  Charge Transfer Models}}, arXiv preprint arXiv:2510.11807 (2025).

\bibitem{ChenPus19}
G.~Chen and F.~Pusateri, \emph{The 1-dimensional nonlinear {S}chr\"{o}dinger
  equation with a weighted {$L^{1}$} potential}, Anal. PDE \textbf{15} (2022),
  no.~4, 937--982.

\bibitem{ChenPus22}
\bysame, \emph{On the 1d cubic {NLS} with a non-generic potential}, Comm. Math.
  Phys. \textbf{405} (2024), no.~2, Paper No. 35, 59.

\bibitem{15ColesGustafson}
M.~Coles and S.~Gustafson, \emph{{A degenerate edge bifurcation in the 1D
  linearized nonlinear Schr\"odinger equation}}, Discrete and Continuous
  Dynamical Systems \textbf{36} (2015), no.~6, 2991--3009.

\bibitem{CollotGermain23}
C.~Collot and P.~Germain, \emph{Asymptotic stability of solitary waves for one
  dimensional nonlinear {S}chr\"{o}dinger equations}, Journal of the European
  Mathematical Society (2025).

\bibitem{ColGerEli25_embedded}
C.~Collot, P.~Germain, and E.~Pacherie, \emph{{Abscence of embedded spectrum
  for nonlinear Schr\"odinger equations linearized around one dimensional
  ground states}}, arXiv preprint arXiv:2503.02957 (2025).

\bibitem{ColGerEli25}
\bysame, \emph{{Estimates for the Gross-Pitaevskii equation linearized around a
  vortex}}, arXiv preprint arXiv:2503.02953 (2025).

\bibitem{CuccMaeda2405}
S.~Cuccagna and M.~Maeda, \emph{{On the asymptotic stability of ground states
  of the pure power NLS on the line at 3rd and 4th order Fermi Golden Rule}},
  Preprint arXiv:2405.11763.

\bibitem{CuccMaeda19}
\bysame, \emph{On stability of small solitons of the 1-{D} {NLS} with a
  trapping delta potential}, SIAM J. Math. Anal. \textbf{51} (2019), no.~6,
  4311--4331.

\bibitem{CuccMaeda_SurveyII}
\bysame, \emph{A survey on asymptotic stability of ground states of nonlinear
  {S}chr\"odinger equations {II}}, Discrete Contin. Dyn. Syst. Ser. S
  \textbf{14} (2021), no.~5, 1693--1716.

\bibitem{CuccMaeda22}
\bysame, \emph{On selection of standing wave at small energy in the 1{D} cubic
  {S}chr\"odinger equation with a trapping potential}, Comm. Math. Phys.
  \textbf{396} (2022), no.~3, 1135--1186.

\bibitem{CuccMaeda_kink_23}
\bysame, \emph{Asymptotic stability of kink with internal modes under odd
  perturbation}, NoDEA Nonlinear Differential Equations Appl. \textbf{30}
  (2023), no.~1, Paper No. 1, 47.

\bibitem{CuccagnaMaeda25}
\bysame, \emph{{On the asymptotic stability on the line of ground states of the
  pure power NLS with {$0 \leq 2 - p \ll 1$}}}, Journal of Differential
  Equations \textbf{440} (2025), 113451.

\bibitem{CuccMaeda2404}
\bysame, \emph{{The asymptotic stability on the line of ground states of the
  pure power NLS with $0 < |p-3| \ll 1$}}, Journal of Functional Analysis
  \textbf{288} (2025), no.~11, 110861.

\bibitem{CuccMaedaMurgScrob23}
S.~Cuccagna, M.~Maeda, F.~Murgante, and S.~Scrobogna, \emph{{On asymptotic
  stability on a center hypersurface at the soliton for even solutions of the
  NLKG when $2 \geq p > \frac{5}{3}$}}, Preprint arXiv:2307.16527.

\bibitem{CuccPeli14}
S.~Cuccagna and D.~Pelinovsky, \emph{The asymptotic stability of solitons in
  the cubic {NLS} equation on the line}, Appl. Anal. \textbf{93} (2014), no.~4,
  791--822.

\bibitem{Del16}
J.-M. Delort, \emph{Modified scattering for odd solutions of cubic nonlinear
  {S}chr{\"o}dinger equations with potential in dimension one}, Preprint
  hal-01396705.

\bibitem{Del16_KG}
\bysame, \emph{Semiclassical microlocal normal forms and global solutions of
  modified one-dimensional {KG} equations}, Ann. Inst. Fourier (Grenoble)
  \textbf{66} (2016), no.~4, 1451--1528.

\bibitem{Germain24Review}
P.~Germain, \emph{{A review on asymptotic stability of solitary waves in
  nonlinear dispersive problems in dimension one}}, arXiv preprint
  arXiv:2410.04508 (2024).

\bibitem{GP20}
P.~Germain and F.~Pusateri, \emph{Quadratic {K}lein-{G}ordon equations with a
  potential in one dimension}, Forum Math. Pi \textbf{10} (2022), Paper No.
  e17, 172.

\bibitem{GermPusRou18}
P.~Germain, F.~Pusateri, and F.~Rousset, \emph{The nonlinear {S}chr\"{o}dinger
  equation with a potential}, Ann. Inst. H. Poincar\'{e} Anal. Non Lin\'{e}aire
  \textbf{35} (2018), no.~6, 1477--1530.

\bibitem{GermPusZhang22}
P.~Germain, F.~Pusateri, and Z.~Zhang, \emph{On 1d quadratic {K}lein-{G}ordon
  equations with a potential and symmetries}, Arch. Ration. Mech. Anal.
  \textbf{247} (2023), no.~2, Paper No. 17, 39.

\bibitem{GV78}
J.~Ginibre and G.~Velo, \emph{On a class of nonlinear {S}chr\"odinger
  equations. {III}. {S}pecial theories in dimensions {$1$}, {$2$} and {$3$}},
  Ann. Inst. H. Poincar\'e{} Sect. A (N.S.) \textbf{28} (1978), no.~3,
  287--316.

\bibitem{07Goldberg}
M.~Goldberg, \emph{Transport in the one-dimensional {S}chr\"{o}dinger
  equation}, Proc. Amer. Math. Soc. \textbf{135} (2007), no.~10, 3171--3179.

\bibitem{04GoldbergSchlag}
M.~Goldberg and W.~Schlag, \emph{Dispersive estimates for {S}chr\"{o}dinger
  operators in dimensions one and three}, Comm. Math. Phys. \textbf{251}
  (2004), no.~1, 157--178.

\bibitem{87GrillakisShatahStrauss}
M.~Grillakis, J.~Shatah, and W.~Strauss, \emph{Stability theory of solitary
  waves in the presence of symmetry, {I}}, Journal of functional analysis
  \textbf{74} (1987), no.~1, 160--197.

\bibitem{HN98}
N.~Hayashi and P.~Naumkin, \emph{Asymptotics for large time of solutions to the
  nonlinear {S}chr\"{o}dinger and {H}artree equations}, Amer. J. Math.
  \textbf{120} (1998), no.~2, 369--389.

\bibitem{HN08}
\bysame, \emph{The initial value problem for the cubic nonlinear
  {K}lein-{G}ordon equation}, Z. Angew. Math. Phys. \textbf{59} (2008), no.~6,
  1002--1028.

\bibitem{HN10}
\bysame, \emph{The initial value problem for the quadratic nonlinear
  {K}lein-{G}ordon equation}, Adv. Math. Phys. (2010), Art. ID 504324, 35.

\bibitem{HN12}
\bysame, \emph{Quadratic nonlinear {K}lein-{G}ordon equation in one dimension},
  J. Math. Phys. \textbf{53} (2012), no.~10, 103711, 36.

\bibitem{IT15}
M.~Ifrim and D.~Tataru, \emph{Global bounds for the cubic nonlinear
  {S}chr\"{o}dinger equation ({NLS}) in one space dimension}, Nonlinearity
  \textbf{28} (2015), no.~8, 2661--2675.

\bibitem{Jendrej15}
J.~Jendrej, \emph{Nonexistence to two-bubbles with opposite signs for the
  radial energy-critical wave equation}, Ann. Sc. Norm. Super. Pisa Cl. Sci.
  (5) \textbf{18} (2018), no.~2, 735--778.

\bibitem{KairzhanPusateri22}
A.~Kairzhan and F.~Pusateri, \emph{Asymptotic stability near the soliton for
  quartic {K}lein--{G}ordon equation in 1{D}}, Pure Appl. Anal. \textbf{5}
  (2023), no.~4, 795--832.

\bibitem{KatPus11}
J.~Kato and F.~Pusateri, \emph{A new proof of long-range scattering for
  critical nonlinear {S}chr\"{o}dinger equations}, Differential Integral
  Equations \textbf{24} (2011), no.~9-10, 923--940.

\bibitem{KoSchlag25}
H.~Ko and W.~Schlag, \emph{{An introduction to the distorted Fourier
  transform}}, Advanced Nonlinear Studies \textbf{25} (2025), no.~2, 298--312.

\bibitem{KK11_1}
E.~A. Kopylova and A.~I. Komech, \emph{On asymptotic stability of kink for
  relativistic {G}inzburg-{L}andau equations}, Arch. Ration. Mech. Anal.
  \textbf{202} (2011), no.~1, 213--245.

\bibitem{KK11_2}
\bysame, \emph{On asymptotic stability of moving kink for relativistic
  {G}inzburg-{L}andau equation}, Comm. Math. Phys. \textbf{302} (2011), no.~1,
  225--252.

\bibitem{KM22}
M.~Kowalczyk and Y.~Martel, \emph{{Kink dynamics under odd perturbations for
  $(1+1)$-scalar field models with one internal mode}}, Mathematical Research
  Letters \textbf{31} (2024), no.~3, 795--832.

\bibitem{KMM17}
M.~Kowalczyk, Y.~Martel, and C.~Mu\~{n}oz, \emph{Kink dynamics in the
  {$\phi^4$} model: asymptotic stability for odd perturbations in the energy
  space}, J. Amer. Math. Soc. \textbf{30} (2017), no.~3, 769--798.

\bibitem{KMM17_Survey}
\bysame, \emph{On asymptotic stability of nonlinear waves}, S\'{e}minaire
  {L}aurent {S}chwartz---\'{E}quations aux d\'{e}riv\'{e}es partielles et
  applications. {A}nn\'{e}e 2016--2017, Ed. \'{E}c. Polytech., Palaiseau, 2017,
  pp.~Exp. No. XVIII, 27.

\bibitem{KMM19}
\bysame, \emph{Soliton dynamics for the 1{D} {NLKG} equation with symmetry and
  in the absence of internal modes}, J. Eur. Math. Soc. (JEMS) \textbf{24}
  (2022), no.~6, 2133--2167.

\bibitem{KMMV20}
M.~Kowalczyk, Y.~Martel, C.~Mu\~{n}oz, and H.~Van Den~Bosch, \emph{A sufficient
  condition for asymptotic stability of kinks in general {$(1+1)$}-scalar field
  models}, Ann. PDE \textbf{7} (2021), no.~1, Paper No. 10, 98.

\bibitem{KS06}
J.~Krieger and W.~Schlag, \emph{Stable manifolds for all monic supercritical
  focusing nonlinear {S}chr\"{o}dinger equations in one dimension}, J. Amer.
  Math. Soc. \textbf{19} (2006), no.~4, 815--920.

\bibitem{KS09_quintic_NLS}
\bysame, \emph{Non-generic blow-up solutions for the critical focusing {NLS} in
  1-{D}}, J. Eur. Math. Soc. (JEMS) \textbf{11} (2009), no.~1, 1--125.

\bibitem{25KriSchWid}
J.~Krieger, W.~Schlag, and K.~Widmayer, \emph{{The cubic NLS on the line with
  an inverse square potential}}, arXiv preprint arXiv:2508.01919 (2025).

\bibitem{LanSha25}
O.~Landoulsi and S.~Shahshahani, \emph{{The distorted Fourier transform for the
  linearized Gross-Pitaevskii equation in the Hyperbolic plane}}, arXiv
  preprint arXiv:2509.26590 (2025).

\bibitem{Li23}
Y.~Li, \emph{{Dispersive estimates for 1D matrix Schr{\"o}dinger operators with
  threshold resonance}}, Calculus of Variations and Partial Differential
  Equations \textbf{63} (2024), no.~206, 54.

\bibitem{LL1}
Y.~Li and J.~L\"uhrmann, \emph{Soliton dynamics for the 1{D} quadratic
  {K}lein-{G}ordon equation with symmetry}, J. Differential Equations
  \textbf{344} (2023), 172--202.

\bibitem{LL24}
\bysame, \emph{Asymptotic stability of solitary waves for the {1D} focusing
  cubic {S}chr{\"o}dinger equation under even perturbations}, arXiv preprint
  arXiv:2408.15427 (2024).

\bibitem{LLSS}
H.~Lindblad, J.~L\"uhrmann, W.~Schlag, and A.~Soffer, \emph{On modified
  scattering for 1{D} quadratic {K}lein-{G}ordon equations with non-generic
  potentials}, Int. Math. Res. Not. IMRN (2023), no.~6, 5118--5208.

\bibitem{LLS1}
H.~Lindblad, J.~L\"uhrmann, and A.~Soffer, \emph{Decay and asymptotics for the
  1{D} {K}lein-{G}ordon equation with variable coefficient cubic
  nonlinearities}, SIAM J. Math. Anal. \textbf{52} (2020), no.~6, 6379--6411.

\bibitem{LLS2}
\bysame, \emph{Asymptotics for 1{D} {K}lein-{G}ordon equations with variable
  coefficient quadratic nonlinearities}, Arch. Ration. Mech. Anal. \textbf{241}
  (2021), no.~3, 1459--1527.

\bibitem{LS05_1}
H.~Lindblad and A.~Soffer, \emph{A remark on long range scattering for the
  nonlinear {K}lein-{G}ordon equation}, J. Hyperbolic Differ. Equ. \textbf{2}
  (2005), no.~1, 77--89.

\bibitem{LS06}
\bysame, \emph{Scattering and small data completeness for the critical
  nonlinear {S}chr\"odinger equation}, Nonlinearity \textbf{19} (2006), no.~2,
  345--353.

\bibitem{LS15}
\bysame, \emph{Scattering for the {K}lein-{G}ordon equation with quadratic and
  variable coefficient cubic nonlinearities}, Trans. Amer. Math. Soc.
  \textbf{367} (2015), no.~12, 8861--8909.

\bibitem{LS1}
J.~L\"uhrmann and W.~Schlag, \emph{{Asymptotic stability of the sine-Gordon
  kink under odd perturbations}}, Duke Mathematical Journal \textbf{172}
  (2023), no.~14, 2715--2820.

\bibitem{LS2}
\bysame, \emph{On codimension one stability of the soliton for the 1{D}
  focusing cubic {K}lein-{G}ordon equation}, Commun. Am. Math. Soc. \textbf{4}
  (2024), 230--356.

\bibitem{LuhSchSha25}
J.~L\"uhrmann, W.~Schlag, and S.~Shahshahani, \emph{{On the Gross-Pitaevskii
  evolution linearized around the degree-one vortex}}, arXiv preprint
  arXiv:2503.07345 (2025).

\bibitem{Martel22}
Y.~Martel, \emph{Asymptotic stability of solitary waves for the 1{D}
  cubic-quintic {S}chr\"odinger equation with no internal mode}, Probab. Math.
  Phys. \textbf{3} (2022), no.~4, 839--867.

\bibitem{Martel25_survey}
\bysame, \emph{{Asymptotic stability of small solitons for
  one-dimensional~nonlinear Schr\"odinger equations}}, S\'eminaire Laurent
  Schwartz {\textemdash} EDP et applications (2023-2024), 1--22 (en), talk:10.

\bibitem{Martel23}
\bysame, \emph{Asymptotic stability of small standing solitary waves of the
  one-dimensional cubic-quintic {S}chr\"odinger equation}, Invent. Math.
  \textbf{237} (2024), no.~3, 1253--1328.

\bibitem{MartelMerleTsai06}
Y.~Martel, F.~Merle, and T.~Tsai, \emph{{Stability in {$H^1$} of the sum of
  {$K$} solitary waves for some nonlinear Schr{\"o}dinger equations}}, Duke
  Mathematical Journal \textbf{133} (2006), no.~3.

\bibitem{MasMurphSeg19}
S.~Masaki, J.~Murphy, and J.-I. Segata, \emph{Modified scattering for the
  one-dimensional cubic {NLS} with a repulsive delta potential}, Int. Math.
  Res. Not. IMRN (2019), no.~24, 7577--7603.

\bibitem{MasakiMurphySegata}
\bysame, \emph{Asymptotic stability of solitary waves for the 1d {NLS} with an
  attractive delta potential}, Discrete Contin. Dyn. Syst. \textbf{43} (2023),
  no.~6, 2137--2185.

\bibitem{Mizumachi08}
T.~Mizumachi, \emph{Asymptotic stability of small solitary waves to 1{D}
  nonlinear {S}chr\"{o}dinger equations with potential}, J. Math. Kyoto Univ.
  \textbf{48} (2008), no.~3, 471--497.

\bibitem{MizumachiPelinovsky12}
T.~Mizumachi and D.~Pelinovsky, \emph{B\"acklund transformation and
  {$L^2$}-stability of {NLS} solitons}, Int. Math. Res. Not. IMRN (2012),
  no.~9, 2034--2067.

\bibitem{NS12}
K.~Nakanishi and W.~Schlag, \emph{{Global dynamics above the ground state for
  the nonlinear Klein--Gordon equation without a radial assumption}}, Archive
  for Rational Mechanics and Analysis \textbf{203} (2012), no.~3, 809--851.

\bibitem{Naum16}
P.~Naumkin, \emph{Sharp asymptotic behavior of solutions for cubic nonlinear
  {S}chr\"{o}dinger equations with a potential}, J. Math. Phys. \textbf{57}
  (2016), no.~5, 051501, 31.

\bibitem{Naum18}
\bysame, \emph{Nonlinear {S}chr\"{o}dinger equations with exceptional
  potentials}, J. Differential Equations \textbf{265} (2018), no.~9,
  4575--4631.

\bibitem{NaumWed22}
P.~Naumkin and R.~Weder, \emph{The matrix nonlinear {S}chr\"odinger equation
  with a potential}, J. Math. Pures Appl. (9) \textbf{172} (2023), 1--104.

\bibitem{Olmedilla87}
E.~Olmedilla, \emph{Multiple pole solutions of the nonlinear {S}chr\"odinger
  equation}, Phys. D \textbf{25} (1987), no.~1-3, 330--346.

\bibitem{PalaciosPusateri24}
J.~Palacios and F.~Pusateri, \emph{{Local energy control in the presence of a
  zero-energy resonance}}, Preprint arXiv:2401.02623.

\bibitem{PalaciosPusateri24GL}
\bysame, \emph{{Linearized dynamic stability for vortices of Ginzburg-Landau
  evolutions}}, arXiv preprint arXiv:2409.04393 (2024).

\bibitem{PKA98}
D.~Pelinovsky, Y.~Kivshar, and V.~Afanasjev, \emph{{Internal modes of envelope
  solitons}}, Physica D: Nonlinear Phenomena \textbf{116} (1998), no.~1,
  121--142.

\bibitem{Rialland23}
G.~Rialland, \emph{{Asymptotic stability of solitary waves for the 1D
  near-cubic non-linear Schr{\"o}dinger equation in the absence of internal
  modes}}, Nonlinear Analysis \textbf{241} (2024), 113474.

\bibitem{Rialland24}
\bysame, \emph{{Asymptotic stability of solitons for near-cubic NLS equation
  with an internal mode}}, Discrete and Continuous Dynamical Systems
  \textbf{45} (2025), no.~5, 1591--1652.

\bibitem{Saalmann17}
A.~Saalmann, \emph{Asymptotic stability of {$N$}-solitons in the cubic {NLS}
  equation}, J. Hyperbolic Differ. Equ. \textbf{14} (2017), no.~3, 455--485.

\bibitem{Schl07}
W.~Schlag, \emph{Dispersive estimates for {S}chr\"{o}dinger operators: a
  survey}, Mathematical aspects of nonlinear dispersive equations, Ann. of
  Math. Stud., vol. 163, Princeton Univ. Press, Princeton, NJ, 2007,
  pp.~255--285.

\bibitem{85ShatahStrauss}
J.~Shatah and W.~Strauss, \emph{Instability of nonlinear bound states},
  Communications in Mathematical Physics \textbf{100} (1985), no.~2, 173--190.

\bibitem{Sterb16}
J.~Sterbenz, \emph{Dispersive decay for the 1{D} {K}lein-{G}ordon equation with
  variable coefficient nonlinearities}, Trans. Amer. Math. Soc. \textbf{368}
  (2016), no.~3, 2081--2113.

\bibitem{Stingo18}
A.~Stingo, \emph{Global existence and asymptotics for quasi-linear
  one-dimensional {K}lein-{G}ordon equations with mildly decaying {C}auchy
  data}, Bull. Soc. Math. France \textbf{146} (2018), no.~1, 155--213.

\bibitem{Tao_Survey}
T.~Tao, \emph{Why are solitons stable?}, Bull. Amer. Math. Soc. (N.S.)
  \textbf{46} (2009), no.~1, 1--33.

\bibitem{Tsutsumi87}
Y.~Tsutsumi, \emph{{$L^2$}-solutions for nonlinear {S}chr\"odinger equations
  and nonlinear groups}, Funkcial. Ekvac. \textbf{30} (1987), no.~1, 115--125.

\bibitem{00Weder}
R.~Weder, \emph{{$L^p$}-{$L^{\dot p}$} estimates for the {S}chr\"{o}dinger
  equation on the line and inverse scattering for the nonlinear
  {S}chr\"{o}dinger equation with a potential}, J. Funct. Anal. \textbf{170}
  (2000), no.~1, 37--68.

\bibitem{Weinstein85}
M.~I. Weinstein, \emph{{Modulational Stability of Ground States of Nonlinear
  Schrödinger Equations}}, SIAM Journal on Mathematical Analysis \textbf{16}
  (1985), no.~3, 472--491.

\bibitem{Weinstein86}
\bysame, \emph{Lyapunov stability of ground states of nonlinear dispersive
  evolution equations}, Comm. Pure Appl. Math. \textbf{39} (1986), no.~1,
  51--67.

\bibitem{ZakhShab72}
V.~E. Zakharov and A.~B. Shabat, \emph{Exact theory of two-dimensional
  self-focusing and one-dimensional self-modulation of waves in nonlinear
  media}, Soviet Physics JETP (1972), no.~34, 62--69.

\end{thebibliography}

\end{document}